\newcommand{\ds}{\displaystyle}
\newtheorem{theorem}{Theorem}[section]
\newtheorem{lemma}{Lemma}[section]
\newtheorem{proposition}{Proposition}[section]
\newtheorem{corollary}[theorem]{Corollary}
\theoremstyle{definition}
\newtheorem{definition}{Definition}[section]
\newtheorem{remark}{Remark}[section]
\numberwithin{equation}{section}
\newtheorem{example}{Example}[section]
\newtheorem{assumption}{Assumption}
\DeclareMathOperator{\id}{Id}
\DeclareMathOperator{\vol}{vol}
\DeclareMathOperator{\Ca}{Cap}
\DeclareMathOperator{\loc}{loc}
\DeclareMathOperator{\di}{div}\DeclareMathOperator{\Exp}{Exp}
\DeclareMathOperator{\F}{\mathsf{F}}
\DeclareMathOperator{\lo}{loc}
\DeclareMathOperator{\dd}{d}
\DeclareMathOperator{\sgn}{sgn}
\DeclareMathOperator{\Ric}{Ric}
\DeclareMathOperator{\supp}{supp}
\DeclareMathOperator{\tr}{{tr}}
\DeclareMathOperator{\dvol}{dvol}
\DeclareMathOperator{\dii}{div}
\DeclareMathOperator{\dmu}{d\mu}
\DeclareMathOperator{\MM}{\mathbb{M}}
\DeclareMathOperator{\VV}{\mathbb{V}}
\DeclareMathOperator{\B}{\mathsf{B}}
\DeclareMathOperator{\HH}{\mathsf{H}}
\DeclareMathOperator{\T}{\mathsf{T}}
\newcommand{\Lip}{\mathsf{Lip}}
\newcommand{\dil}{\mathsf{dil}}
\newcommand{\e}{{\varepsilon}}
\newcommand{\pa}{{\partial}}
\newcommand{\la}{{\langle}}
\newcommand{\ra}{{\rangle}}
\newcommand{\Bn}{{\bf n}}
\newcommand{\na}{{\nabla}}
\newcommand{\tJ}{\tilde{J}}
\author{Ningwei Cui}
\address{
School of Mathematics\\
Southwest Jiaotong University\\
610031 Chengdu, China}
\email{ningweicui@swjtu.edu.cn}
\author{Alexandru Krist\'aly}
\address{Department of Economics\\
	Babe\c s-Bolyai University\\
	400591 Cluj-Napoca, Romania \&  Institute of Applied Mathematics\\
 \'Obuda University\\
 1034 Budapest, Hungary}
  \email{alex.kristaly@econ.ubbcluj.ro; kristaly.alexandru@nik.uni-obuda.hu}
\author{Wei Zhao}
\address{
School of Mathematics\\
East China University of Science and Technology\\
200237 Shanghai, China}
\email{wzhao@ecust.edu.cn}
\keywords{Riemannian manifolds; submanifolds; curvature; Hardy inequality}
\thanks{A. Krist\'aly is  supported by the
	Excellence Researcher Program \'OE-KP-2-2022 of \'Obuda University, Hungary.  W. Zhao
is supported by Natural Science Foundation of Shanghai (No. 21ZR1418300).}
\subjclass[2010]{Primary 26D10, Secondary  53C21, 53C40, 58J60}
\begin{document}

\title[]{Sharp Hardy inequalities involving distance functions from submanifolds of Riemannian manifolds}

\begin{abstract} We establish various Hardy inequalities involving the distance function from submanifolds of Riemannian manifolds, where the natural weights are expressed in terms of bounds of the mean curvature of the submanifold and sectional/Ricci curvature of the ambient Riemannian manifold. Our approach is based on subtle Heintze--Karcher-type Laplace comparisons of the distance function and on a D'Ambrosio--Dipierro-type weak divergence formula for suitable vector fields, providing Barbatis--Filippas--Tertikas-type Hardy inequalities in the curved setting. Under very mild assumptions, we also establish the sharpness and non-existence of extremal functions within the Hardy inequalities and -- depending on the geometry of the ambient manifold -- their extensibility to various function spaces.  Several examples are provided by showing the applicability of our approach; in particular, well-known Hardy inequalities appear as limit cases of our new inequalities.
\end{abstract}
\maketitle

	\tableofcontents

\section{Introduction}
The theory of Hardy inequalities still hide interesting and unexplored questions, despite the fact that the first versions of such inequalities were formulated more than a hundred years ago, see Hardy \cite{Hardy}. The development of Hardy inequalities is strongly motivated by describing a large class of nonlinear phenomena involving certain singularities.  Up-to-date expositions of this still flourishing theory can be found in the monographs by Ghoussoub
and Moradifam \cite{GM-book}, and Ruzhansky and Suragan \cite{RS}.

Given $p>1$, an open domain $\Omega$ embedded into an ambient measure space $(M,{\rm d}\mathfrak{m})$, and two non-negative potentials $V,W$ on $\Omega $ with  $W(x)\to \infty$ (and perhaps $V(x)\to \infty$ as well) as $x\to x_0$ for some $x_0\in \Omega,$ a general Hardy inequality can be phrased as
\begin{equation}\label{Hardy-initial}
	\int_\Omega V|\nabla u|^p{\rm d}\mathfrak{m}\geq  \int_\Omega W| u|^p{\rm d}\mathfrak{m},\ \ \forall u\in C_0^\infty(\Omega).
\end{equation}
The most familiar form of \eqref{Hardy-initial} is obtained  when $V\equiv 1$ and $W(x)=\left(\frac{m-p}{p}\right)^p\frac{1}{|x|^p}$ with $p<m$, where $\Omega\subseteq \mathbb R^m$ is any domain containing the origin; moreover,  the constant  $\left(\frac{m-p}{p}\right)^p $ is sharp and never achieved.

A prominent setting for Hardy inequalities  is when \eqref{Hardy-initial} has a geometric flavor that is reflected in the geometry of the ambient space. The first key contribution in this direction   is provided by Carron \cite{Ca}, who extended  \eqref{Hardy-initial} to $n(\geq 3)$-dimensional Cartan-Hadamard manifolds (simply connected complete Riemannian manifolds with nonpositive sectional curvature) in the case when  $V\equiv 1$ and $W(x)=\left(\frac{n-2}{2}\right)^2\frac{1}{r(x)^2}$, where $r(x)$ is the Riemannian-distance of the point $x$ to a fixed point  $x_0$. Carron's work generated a huge wave of research on Hardy inequalities for Cartan-Hadamard manifolds;  without completeness, we mention the works by Berchio, Ganguly and Grillo \cite{BGG}, Berchio,  Ganguly,  Grillo and Pinchover \cite{BGGP},
D'Ambrosio and Dipierro \cite{DD}, Kombe and \"Ozaydin \cite{KM-1,KM-2}, Krist\'aly \cite{Kristaly},  Nguyen \cite{Nguyen}, Yang, Su
and Kong \cite{YSK}, etc.

An abstract approach to \eqref{Hardy-initial}  in the Euclidean setting is developed by Ghoussoub and Moradifam \cite{GM-1,GM-2}, who stated  that \eqref{Hardy-initial} holds if and only if $(V,W)$ forms a Bessel pair, which is based on the  solvability of a linear second-order Bessel-type ordinary differetial equation involving the potentials $V$ and $W$. Bessel pairs are extended to Cartan-Hadamard manifolds by Flynn, Lam and Lu \cite{FLL}, 
Flynn, Lam, Lu and Mazumdar \cite{FLLM}. In the same curved setting, a very recent paper by Kaj\'ant\'o, Krist\'aly, Peter and Zhao \cite{KKPZ} provides an alternative approach  to investigate \eqref{Hardy-initial} by using Riccati pairs.

The efficient development of Hardy inequalities on Cartan-Hadamard manifolds is not surprising, since they deeply exploit fundamental properties of such geometric objects, e.g.\ the emptiness of the cut locus of any point (in particular, the exponential map is a diffeomorphism), the availability of sharp Laplace/volume comparison principles with respect to the model Euclidean space, etc.

The present paper is (at least) two-fold. First, we intend to establish Hardy inequalities on Riemannian manifolds by handling not only the case when the sectional curvature  is bounded from above, but also the setting when the Ricci curvature is controlled from below. We notice that such bounds are not necessarily zeros;  in particular, in the latter case the manifold might be even compact. Second, we treat Hardy inequalities where the singularities of the potentials $V$ and $W$  may occur within a whole submanifold of the ambient space; as expected, a careful control on the mean curvature of the submanifold is needed. In  this way, the potentials $V$ and $W$ encode the bounds of both the Ricci/sectional curvature of the ambient space and the mean curvature of the submanifold. As far as we know, these are the first natural Hardy inequalities  involving nonvanishing `double-curvature' in terms of the bounds of the Ricci or sectional curvature of the ambient manifold as well as the mean curvature of the submanifold. In limit cases, we recover classical Hardy inequalities.

In the sequel, we shall provide a more detailed picture about our results; to do so, let $(M,g)$ be an $m$-dimensional complete Riemannian manifold and  $i:\Sigma\hookrightarrow M$ be an $n$-dimensional  submanifold.

 In \S  \ref{section-preliminaries} we present some preliminary notions and results which are indispensable to establish sharp Hardy inequalities. First, in \S \ref{Fermipremin}, some important aspects of the submanifold geometry together with the Fermi coordinates on $\Sigma$ are presented. Then, in \S\ref{section-Heintze--Karcher}, two Heintze--Karcher-type Laplacian estimates for the distance function $r(x):=r_\Sigma(x)=d(\Sigma,x)$ are provided, where $d$ is the natural metric function on $(M,g)$:     Theorem \ref{Laplacian-comparison-Sigma} is a slightly finer version than the original Heintze--Karcher theorem which is valid when the sectional curvature of $(M,g)$ is bounded from below, see Heintze and Karcher \cite{HK}, while  Theorem \ref{main-rem-2} -- which is new in the Riemannian geometry -- handles the counterpart case when the sectional  curvature  is bounded from above. Finally, in \S \ref{section-weak}, we recall and adapt to our setting the weak divergence notion of D'Ambrosio and Dipierro \cite{DD}.

In \S \ref{basichardy}, we state a generic inequality of the form \eqref{Hardy-initial} on any open domain $\Omega\subset M$ which verifies a Barbatis--Filippas--Tertikas-type geometric condition, see \cite{BFT}, i.e.,  $\Sigma=\pa\Omega$ if $n=m-1$ or  $\Sigma\cap \Omega\neq \emptyset$ if $0\leq n\leq m-2$. Under this geometric condition and a mild set of assumptions formulated in terms of joint-integrability of the pair of functions $(\phi,\psi)$, see Assumption \ref{Basciasumptionfunction}, we  prove in  \S \ref{section-construction} that
\begin{equation}
	\int_{\Omega}|\na u|^p \frac{\phi(r)^{p-1}|\psi'(r)|}{ |\nabla(\log \psi(r))|^{p}}\dvol_g\geq p^{-p}\int_{\Omega} |u|^p\phi(r)^{p-1}|\psi'(r)|\dvol_g,\label{intro-0-0}
\end{equation}
for every $u\in C^\infty_0(\Omega_\Sigma)$, where $\Omega_\Sigma=\Omega\setminus \Sigma$ and $\dvol_g$ is the canonical measure in $(M,g)$, see Theorem \ref{Hardy-thm-general}. The proof of \eqref{intro-0-0} relies on the weak divergence of D'Ambrosio and Dipierro \cite{DD}. In addition, in \S \ref{sharpnessfirst}  and \S \ref{nonexsorigin} we study the sharpness and the nonexistence of extremal functions in \eqref{intro-0-0}, respectively.

Since \eqref{intro-0-0} is expected to hold not only on $C^\infty_0(\Omega_\Sigma)$ but also on $C^\infty_0(\Omega)$, \S \ref{bigspaces} is devoted  to extend inequality \eqref{intro-0-0} to some larger function spaces. First, in \S \ref{section-4-1}, we can perform such an extension to the expected space $C^\infty_0(\Omega)$ by requiring a slightly
stronger assumption than in \S \ref{basichardy}, see Assumption \ref{Basciasumptionfunction3} and Theorem \ref{Hardy-thm-2}; the sharpness and nonexistence of extremal functions are also investigated in $C^\infty_0(\Omega)$, see Theorems \ref{sharp-u-general} and \ref{nonexiglobal}. In addition, we also observe that \eqref{intro-0-0} cannot be always extended  to   $C^\infty_0(\Omega)$, for instance, when $\Omega=M$ is compact; indeed, by using constant functions as test functions in \eqref{intro-0-0} -- which in such particular case clearly belong   to $C^\infty_0(\Omega)$ -- gives a contradiction. A natural choice to extend  \eqref{intro-0-0} from $C^\infty_0(\Omega_\Sigma)$ is the space $C_0^\infty(\Omega,\Sigma):=\{u\in C_0^\infty(\Omega):u(\Sigma)=0 \}.$ In this way, in \S \ref{Cinftyomegasigma}, we establish the extension of \eqref{intro-0-0} to  $C_0^\infty(\Omega,\Sigma)$ under the Assumption \ref{Basciasumptionfunction2}, see Theorem \ref{generalhardclosed}.

In \S \ref{applicII}, -- as a direct application of the previous results -- we establish sharp Hardy inequalities on manifolds with upper curvature bounds. In fact, we work under the `double-curvature' condition in the sense that  $(M,\Sigma,\Omega)$ satisfies the upper curvature bound condition $(\lambda,\kappa)$, i.e. the sectional curvature of $(M,g)$ and the mean curvature of $\Sigma$ are controlled from above by $\lambda$ and $\kappa$, respectively. Thus, if $(M,\Sigma,\Omega)$ satisfies the upper curvature bound condition $(\lambda,\kappa)$, and $\Sigma$ is a totally umbilical submanifold of $(M,g)$,   in \S \ref{section-5-1}
we prove the following Hardy inequality
\begin{equation}\label{Hardy-Ineq-curvature-condition52-0}
	\int_{\Omega}|\na u|^p\frac{\mathbf{c}_\Lambda(r)^{1-p}\mathbf{s}_\Lambda(r)^{p+\beta}}{(\mathbf{c}_\Lambda(r)-K \mathbf{s}_\Lambda(r))^n}\dvol_g\geq \left({\beta+m-n\over p}\right)^p\int_{\Omega} |u|^p\frac{\mathbf{c}_\Lambda(r)\mathbf{s}_\Lambda(r)^\beta}{(\mathbf{c}_\Lambda(r)-K \mathbf{s}_\Lambda(r))^n} \dvol_g,
\end{equation}
for any $u\in C^\infty_0(\Omega)$, see Theorem \ref{distance-weight-hardy-genera255}, where $\beta>-(m-n)$, $\lambda\leq \Lambda \leq 0$ and $\kappa \leq K$ with either $K\leq 0$ or
$K\geq  0$  and $K^2\leq -\Lambda$, while $\mathbf{s}_\Lambda$ and $\mathbf{c}_\Lambda$ are solutions to the ordinary differential equation $f''(t)+\Lambda f(t)=0$, $t>0$, see \eqref{s-lambda-Def}.
The proof of \eqref{Hardy-Ineq-curvature-condition52-0} is based on the verification of Assumption \ref{Basciasumptionfunction3} (see Theorem \ref{Hardy-thm-2}) for the  pair of functions $(\phi,\psi)$  defined by
\begin{equation}\label{newphipsi-2-0}
	\phi(t):=(\mathbf{c}_\Lambda(t)-K\mathbf{s}_\Lambda(t))^{-\frac{n}{p-1}}\mathbf{s}_\Lambda(t)^{-\frac{m-n-1}{p-1}}, \quad \psi(t):=\mathbf{s}_\Lambda(t)^{\beta+m-n}, \quad \forall\,t\in (0,\min\{\mathfrak{r}_{\Lambda},\mathfrak{t}_{\Lambda,K} \}),
\end{equation}
where the constants $\mathfrak{r}_{\Lambda}$ and $\mathfrak{t}_{\Lambda,K}$ are related to the first positive zeros of $\mathbf{s}_\Lambda$ and $\mathbf{c}_\Lambda-K\mathbf{s}_\Lambda$, respectively, if they do exist. In addition, it follows that  \eqref{Hardy-Ineq-curvature-condition52-0} is sharp if either $\Sigma$ is compact with $\Sigma\subset \Omega$ or $\Omega$ is  bounded with $\partial\Omega=\Sigma,$ and there is no extremal in either case. In particular, when $\beta=-p$,  $\lambda=\Lambda= \kappa = K=0$ and $\Sigma=\{x_0\}$ for some $x_0\in \Omega$ (thus $n=0$), inequality \eqref{Hardy-Ineq-curvature-condition52-0} reduces to the sharp Hardy inequality of Carron \cite{Ca}; other choices of parameters provide further particular forms of the Hardy inequality. In \S \ref{section-log-1},  sharp Hardy inequalities involving logarithmic double-curvature weights are established, see Theorem \ref{log-weight-thm-global}.

In \S \ref{applicI}, we  study Hardy inequalities on manifolds with lower curvature bounds.\ Although  such inequalities seem to be simple counterparts of the ones from \S \ref{applicII}, it turns out that genuine differences appear between the two settings. For instance, Bonnet--Myers theorem implies the compactness of the manifold  whenever the Ricci curvature is bounded from below by a positive constant, thus Hardy inequalities may yield undesirable phenomena, see \S
\ref{Cinftyomegasigma}. In a similar manner as in \S \ref{applicII},
we consider the `double-curvature' condition in the sense that  $(M,\Sigma,\Omega)$ satisfies the lower curvature bound condition $(\lambda,\kappa)$, i.e.\ the Ricci curvature of $(M,g)$ and the mean curvature of $\Sigma$ are controlled from below by $\lambda$ and $\kappa$, respectively. Under such curvature assumptions,  we establish in \S \ref {section-6-1} the counterpart of  \eqref{Hardy-Ineq-curvature-condition52-0}  for any $u\in C^\infty_0(\Omega_\Sigma)$ and $u\in C^\infty_0(\Omega,\Sigma)$ (instead of  $C^\infty_0(\Omega)$),
see Theorem \ref{distance-weight-hardy-general}/(i)\&(ii), respectively. The sharpness  and  nonexistence of extremal functions are also discussed in both  spaces $C^\infty_0(\Omega_\Sigma)$ and $C^\infty_0(\Omega,\Sigma)$. The proof of Theorem \ref{distance-weight-hardy-general} relies on Theorem \ref{generalhardclosed} by checking Assumption \ref{Basciasumptionfunction2}.
If $\lambda=\Lambda= \kappa = K=0$, Theorem \ref{distance-weight-hardy-general} reduces to the sharp Hardy inequality of Chen, Leung and Zhao \cite{CLZ} and  Meng, Wang and Zhao \cite{MWZ}.
Finally,  a sharp Hardy inequality is  stated with logarithmic double-curvature weights in \S \ref{section-6-2}, see Theorem \ref{log-weight-thm-general}.

\section{Preliminaries}\label{section-preliminaries}

\subsection{Submanifold geometry \& Fermi coordinates}\label{Fermipremin}
We  recall some definitions and properties of submanifolds, focusing mainly on Fermi coordinates; see Chavel \cite[Sections 3.6 \& 7.3]{Cha}, Heintze and Karcher \cite{HK},  and Dajczer and Tojeiro \cite{DT}
for  details. We first introduce the following notations.

Given a constant $\lambda\in \mathbb{R}$, define
\begin{eqnarray}
	\mathbf{s}_\lambda(t):=\left\{
	\begin{array}{lll}
		\frac{\sin(\sqrt{\lambda}t)}{\sqrt{\lambda}}, && \text{ if }\lambda>0,\\
		\ \ \ \ t, && \text{ if }\lambda=0,\\
		\frac{\sinh(\sqrt{-\lambda }t)}{\sqrt{-\lambda}}, && \text{ if }\lambda<0,
	\end{array}
	\right.\qquad
	\mathbf{c}_\lambda(t):=\left\{
	\begin{array}{lll}
		{\cos(\sqrt{\lambda}t)}, && \text{ if }\lambda>0,\\
		\ \ \ \ \  1, && \text{ if }\lambda=0,\\
		{\cosh(\sqrt{-\lambda}t)}, && \text{ if }\lambda<0,
	\end{array}
	\right.
	\label{s-lambda-Def}
\end{eqnarray}
where $0\leq t\leq 2\mathfrak{r}_{\lambda}$ with

\begin{equation}  \mathfrak{r}_\lambda:=\left\{
	\begin{array}{lll}
		\pi\over 2\sqrt{\lambda}, && \text{ if }\lambda>0,\\
		\\
		+\infty, && \text{ if }\lambda\leq0.
	\end{array}
	\right.
	\label{r-0-Def}
\end{equation}
It is easy to check that $\mathbf{s}_\lambda$ is strictly increasing on $[0,\mathfrak{r}_{\lambda}]$ and
both $\mathbf{s}_\lambda$ and $\mathbf{c}_\lambda$ are solutions to the ODE $f''(t)+\lambda f(t)=0$ with additional properties
\begin{equation}
	\mathbf{s}'_{\lambda}(t)=\mathbf{c}_\lambda(t), \quad \mathbf{c}'_{\lambda}(t)=-\lambda \mathbf{s}_\lambda(t), \quad  \lambda \mathbf{s}_\lambda(t)^2+\mathbf{c}_\lambda(t)^2=1.\label{relations}
\end{equation}

Given $\lambda, \kappa\in \mathbb{R}$, let $\mathfrak{c}_{\lambda,\kappa}$ be the first positive zero of $\mathbf{c}_\lambda-\kappa\mathbf{s}_\lambda$ (if it exists), i.e.,
\begin{equation}\label{cklambdafoot}
	\mathfrak{c}_{\lambda,\kappa}:=\sup\left\{t>0\,:\,    \mathbf{c}_\lambda(s)-\kappa\mathbf{s}_\lambda(s)  >0 \text{ for $s\in (0,t)$}  \right\}.
\end{equation}
A direct calculation yields that
\begin{eqnarray}\label{estimatecklambda}
	\mathfrak{c}_{\lambda,\kappa}=\left\{
	\begin{array}{lll}
		\frac{1}{\kappa}<\mathfrak{r}_\lambda, && \text{ if $\lambda=0$ and $\kappa> 0$},\\
		\\
		\frac{1}{\sqrt{\lambda}}\arctan \frac{\sqrt{\lambda}}{|\kappa|}<\mathfrak{r}_\lambda, && \text{ if $\lambda> 0$ and $\kappa> 0$},\\
		\\
		\frac{\pi}{2\sqrt{\lambda}}=\mathfrak{r}_\lambda, && \text{ if $\lambda> 0$ and $\kappa=0$},\\
		\\
		\frac{1}{\sqrt{\lambda}}\left(\pi-\arctan \frac{\sqrt{\lambda}}{|\kappa|}\right)\in (\mathfrak{r}_\lambda,2\mathfrak{r}_\lambda), && \text{ if $\lambda> 0$ and $\kappa< 0$},\\
		\\
		\frac{1}{\sqrt{-\lambda}}\text{artanh} \frac{\sqrt{-\lambda}}{|\kappa|}<\mathfrak{r}_\lambda, && \text{ if $\lambda< 0$ and $\kappa> 0$ with $\kappa^2> -\lambda$},\\
		\\
		+\infty=\mathfrak{r}_\lambda, && \text{ otherwise}.
	\end{array}
	\right.
\end{eqnarray}
In particular, it follows that
\begin{equation}\label{relationcandr}
	\mathfrak{c}_{\lambda,\kappa}\leq 2\mathfrak{r}_{\lambda}.
\end{equation}
Moreover,  it is immediate that
\[
\frac{\partial}{\partial \lambda}\left[\frac{\mathbf{c}_\lambda(t)}{ \mathbf{s}_\lambda(t) }-\kappa \right]< 0,\quad\frac{\partial}{\partial \kappa}\left[\frac{\mathbf{c}_\lambda(t)}{ \mathbf{s}_\lambda(t) }-\kappa \right]<0,\quad \forall\,t\in (0,2\mathfrak{r}_\lambda),
\]
which implies
\begin{equation}\label{footcomprison}
	\mathfrak{c}_{\Lambda,K}\leq \mathfrak{c}_{\lambda,\kappa}  \quad \text{ for any }\lambda\leq \Lambda, \   \kappa\leq K.
\end{equation}

The following result plays an important role in Sections \ref{applicII}--\ref{applicI}; for completeness, the proof is given in Appendix \ref{monotG}.

\begin{proposition}\label{gestiamtegra}Given $m,n\in \mathbb{N}$ and  $\lambda,\kappa\in \mathbb{R}$, let
	\begin{equation}\label{tlkdefin} \mathfrak{t}_{\lambda,\kappa}:=\mathfrak{t}_{n,\lambda,\kappa}:=\left\{
		\begin{array}{lll}
			\mathfrak{c}_{\lambda,\kappa}, && \text{ if }n>0,\\
			\\
			2\mathfrak{r}_{\lambda}, && \text{ if }n=0.
		\end{array}
		\right.
	\end{equation}
	Define
	a function $G_{\lambda,\kappa}:(0,\mathfrak{t}_{\lambda,\kappa})\rightarrow \mathbb{R}$ by
	\begin{equation}\label{Gfunctiondefined}  G_{\lambda,\kappa}(t):=G_{m,n,\lambda,\kappa}(t)=\left\{
		\begin{array}{lll}
			-n{\lambda\mathbf{s}_\lambda(t)+\kappa \mathbf{c}_\lambda(t)\over \mathbf{c}_\lambda(t)-\kappa \mathbf{s}_\lambda(t)}+(m-n-1){\mathbf{c}_\lambda(t)\over \mathbf{s}_\lambda(t)}, && \text{ if }n>0,\\
			\\
			(m-1){\mathbf{c}_\lambda(t)\over \mathbf{s}_\lambda(t)}, && \text{ if }n=0.
		\end{array}
		\right.
	\end{equation}
If $m\geq n+1$, then $G_{\lambda,\kappa}$ is decreasing in both $\lambda$ and $\kappa$, i.e., for any $\lambda\leq \Lambda$ and $ \kappa\leq K$,
	\[
	G_{\Lambda,K}(t)\leq G_{\lambda,\kappa}(t), \quad \forall\,t\in (0,\mathfrak{t}_{\Lambda,K}).
	\]
\end{proposition}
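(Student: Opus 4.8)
The plan is to establish the two one-parameter monotonicities separately --- in $\kappa$ with $\lambda$ frozen, and in $\lambda$ with $\kappa$ frozen --- and then to concatenate them as
\[
G_{\Lambda,K}(t)\ \le\ G_{\Lambda,\kappa}(t)\ \le\ G_{\lambda,\kappa}(t),\qquad t\in(0,\mathfrak{t}_{\Lambda,K}).
\]
For this concatenation to be legitimate, I would first record that $\mathfrak{t}_{\Lambda,K}\le\mathfrak{t}_{\Lambda,\kappa}\le\mathfrak{t}_{\lambda,\kappa}$: when $n>0$ this is exactly \eqref{footcomprison}, and when $n=0$ it reduces to the elementary fact that $\lambda\mapsto\mathfrak{r}_\lambda$ is non-increasing. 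I would also note that on $(0,\mathfrak{t}_{\lambda,\kappa})$ one has $\mathbf{s}_\lambda>0$ (because $\mathfrak{c}_{\lambda,\kappa}\le2\mathfrak{r}_\lambda$ by \eqref{relationcandr} and $\mathbf{s}_\lambda>0$ on $(0,2\mathfrak{r}_\lambda)$) and $\mathbf{c}_\lambda-\kappa\mathbf{s}_\lambda>0$ (by the definition \eqref{cklambdafoot} of $\mathfrak{c}_{\lambda,\kappa}$), so every quotient in \eqref{Gfunctiondefined} is well defined there.

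The monotonicity in $\kappa$ is the easy direction. When $n=0$ there is nothing to prove since $G_{\lambda,\kappa}$ is $\kappa$-independent, and when $n>0$ only the first summand of \eqref{Gfunctiondefined} involves $\kappa$; differentiating it and simplifying with the identity $\lambda\mathbf{s}_\lambda(t)^2+\mathbf{c}_\lambda(t)^2=1$ from \eqref{relations} should give
\[
\frac{\partial}{\partial\kappa}\,G_{\lambda,\kappa}(t)=-\,\frac{n}{\bigl(\mathbf{c}_\lambda(t)-\kappa\mathbf{s}_\lambda(t)\bigr)^{2}}\le 0,
\]
so $G_{\lambda,\kappa}(t)$ is non-increasing in $\kappa$ on $(0,\mathfrak{t}_{\lambda,\kappa})$.

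The monotonicity in $\lambda$ is where the real work lies. The idea is to read off from \eqref{relations} that $G_{\lambda,\kappa}$ is a combination of logarithmic derivatives whose coefficients are nonnegative precisely because $m\ge n+1$:
\[
G_{\lambda,\kappa}(t)=n\,\bigl(\log(\mathbf{c}_\lambda-\kappa\mathbf{s}_\lambda)\bigr)'(t)+(m-n-1)\,\bigl(\log\mathbf{s}_\lambda\bigr)'(t)\quad(n>0),
\]
while $G_{\lambda,\kappa}=(m-1)(\log\mathbf{s}_\lambda)'$ if $n=0$. The term $(\log\mathbf{s}_\lambda)'(t)=\mathbf{c}_\lambda(t)/\mathbf{s}_\lambda(t)$ is already known to decrease in $\lambda$ --- this is the inequality $\partial_\lambda[\mathbf{c}_\lambda(t)/\mathbf{s}_\lambda(t)-\kappa]<0$ recorded just before the statement --- so everything reduces to showing that $w_\lambda:=\bigl(\log(\mathbf{c}_\lambda-\kappa\mathbf{s}_\lambda)\bigr)'$ decreases in $\lambda$. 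The plan here is to use that $w_\lambda$ solves the Riccati equation $w_\lambda'+w_\lambda^{2}+\lambda=0$ on $(0,\mathfrak{t}_{\lambda,\kappa})$ with the $\lambda$-independent initial datum $w_\lambda(0)=-\kappa$, and then to invoke the standard ODE comparison: for $\lambda<\Lambda$ the difference $\delta:=w_\lambda-w_\Lambda$ obeys $\delta'=-(w_\lambda+w_\Lambda)\delta+(\Lambda-\lambda)$ with $\delta(0)=0$ and $\delta'(0)=\Lambda-\lambda>0$, so $\delta>0$ for small $t>0$; moreover $\delta$ cannot revert to $0$, since at a first such $t_0>0$ the positivity of $\delta$ on $(0,t_0)$ forces $\delta'(t_0)\le0$ while $\delta'(t_0)=\Lambda-\lambda>0$. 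Hence $w_\lambda\ge w_\Lambda$, and combined with $n>0$ and $m-n-1\ge0$ this gives that $G_{\lambda,\kappa}(t)$ decreases in $\lambda$; the case $n=0$ then follows at once from $m-1\ge0$.

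Finally I would concatenate the two monotonicities along the chain displayed at the outset --- whose terms are all defined on $(0,\mathfrak{t}_{\Lambda,K})$ by the domain inclusions above --- to obtain $G_{\Lambda,K}(t)\le G_{\lambda,\kappa}(t)$ on $(0,\mathfrak{t}_{\Lambda,K})$. The hard part is the $\lambda$-monotonicity of the first summand $n\bigl(\log(\mathbf{c}_\lambda-\kappa\mathbf{s}_\lambda)\bigr)'$: a direct differentiation is unpleasant because $\partial_\lambda\mathbf{s}_\lambda$ and $\partial_\lambda\mathbf{c}_\lambda$ both appear, and the tempting rewriting $\mathbf{c}_\lambda-\kappa\mathbf{s}_\lambda=\mathrm{const}\cdot\mathbf{s}_\lambda(\mathfrak{c}_{\lambda,\kappa}-\cdot)$ pits the $\lambda$-dependence of $\mathfrak{c}_{\lambda,\kappa}$ against that of $\mathbf{s}_\lambda$ and is inconclusive on its own; the Riccati comparison is what makes this step clean, which is why I would route the argument through it.
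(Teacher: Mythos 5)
Your proposal is correct, and while the easy $\kappa$-monotonicity coincides with the paper's (the paper just differentiates in $\kappa$, via the Step-1 Lagrange computation of Theorem \ref{Laplacian-comparison-Sigma}, and gets the same $-n(\mathbf{c}_\lambda-\kappa\mathbf{s}_\lambda)^{-2}$), your treatment of the $\lambda$-monotonicity is genuinely different from the paper's. The paper (Appendix \ref{monotG}) substitutes $z=\sqrt{\lambda}$ (resp.\ $w=\sqrt{-\lambda}$), differentiates $G$ explicitly in $z$ (resp.\ $w$), and closes the estimate with trigonometric/hyperbolic identities such as $x^2\csc^2x\ge 1$ and an AM--GM step, so it needs the case split $\lambda>0$, $\lambda<0$ and a limiting argument to pass through $\lambda=0$. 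Your route instead observes that $w_\lambda=(\log(\mathbf{c}_\lambda-\kappa\mathbf{s}_\lambda))'$ solves the Riccati equation $w'+w^2+\lambda=0$ with the $\lambda$-independent datum $w_\lambda(0)=-\kappa$, and a one-line first-crossing comparison for $\delta=w_\lambda-w_\Lambda$ (with $\delta'=-(w_\lambda+w_\Lambda)\delta+(\Lambda-\lambda)$) gives $w_\lambda\ge w_\Lambda$ uniformly in the sign of $\lambda$, with no case analysis and no explicit identities; combined with $n>0$, $m-n-1\ge 0$ and the monotonicity of $\mathbf{c}_\lambda/\mathbf{s}_\lambda$ this yields the claim, and your domain bookkeeping ($\mathfrak{t}_{\Lambda,K}\le\mathfrak{t}_{\Lambda,\kappa}\le\mathfrak{t}_{\lambda,\kappa}$, positivity of $\mathbf{s}_\lambda$ and of $\mathbf{c}_\lambda-\kappa\mathbf{s}_\lambda$ on the relevant intervals) is accurate. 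The only dependence worth flagging is that for the $(m-n-1)\,\mathbf{c}_\lambda/\mathbf{s}_\lambda$ term you invoke the paper's pre-stated but unproved assertion $\partial_\lambda[\mathbf{c}_\lambda/\mathbf{s}_\lambda]<0$; this is legitimate (and the same Riccati comparison would in fact prove it, with a little care at the singular datum at $t=0$). What your approach buys is uniformity in $\lambda$ and a cleaner mechanism; what the paper's buys is a completely self-contained, purely computational verification.
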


Throughout the paper,
$(M,g)$ is an $m(\geq2)$-dimensional complete (possibly compact) Riemannian manifold, and $i:\Sigma\hookrightarrow M$ is an $n$-dimensional {\it closed}  submanifold with $0\leq n\leq m-1$, i.e., $(i,\Sigma)$ is a regular submanifold of $M$ and $i(\Sigma)$ is a closed set of $M$.  The {\it distance function from $\Sigma$} is denoted  by $r(x):=d(\Sigma, x)$.

Let $\mathcal {V}\Sigma$ and  $\mathcal {V}S\Sigma$ be the {\it normal bundle} and the {\it unit normal bundle}  of $\Sigma$ in $M$, respectively. The fibers of $\mathcal {V}\Sigma$ and $\mathcal {V}S\Sigma$ at $x\in \Sigma$ are denoted by $\mathcal {V}_x\Sigma$ and $\mathcal {V}S_x\Sigma$, respectively. 
The {\it second fundamental
	form} $\mathfrak{B}$ of $\Sigma$ in $M$    is a vector-valued bilinear symmetric form $\mathfrak{B}:T_x\Sigma\times T_x\Sigma\rightarrow \mathcal {V}_x\Sigma$ for every $x\in \Sigma$, given by
\begin{equation}\label{secondfundadef}
	\mathfrak{B}(X,Y):=(\nabla_X\overline{Y})^\bot,
\end{equation}
where $\overline{Y}$ is any extension of $Y$ to a tangent vector field on $\Sigma$, $\nabla$ is the
Levi-Civita connection of the ambient space $(M,g)$, and the superscript $\bot$ denotes the projection onto the normal fiber $\mathcal {V}_x\Sigma$. A {\it totally geodesic submanifold} is characterized by the fact that
its second fundamental form vanishes.

Given $\mathbf{n}\in \mathcal {V}S_x\Sigma$, the {\it Weingarten map} $\mathfrak{A}^{\mathbf{n}}:T_x\Sigma\rightarrow T_x\Sigma$ is defined by
\begin{equation}\label{weingradef}
	\mathfrak{A}^{\mathbf{n}}(X):=-(\nabla_X\overline{\mathbf{n}})^\top,
\end{equation}
where $\overline{\mathbf{n}}$ is any extension of $\mathbf{n}$ to a normal vector field on $\Sigma$, and the superscript
$\top$ denotes the  projection onto $T_x\Sigma$.
The {\it mean curvature} is defined by
\[
H:=\frac{1}{n}\tr_{i^*g}\mathfrak{B},
\]
where $i^*g$ is the pull-back metric on $\Sigma.$

For any $\mathbf{n}\in \mathcal {V}S\Sigma$, one has that $\tr\mathfrak{A}^{\mathbf{n}}=n \langle H,\mathbf{n}\rangle$.
The submanifold $\Sigma$ is said to be {\it totally umbilical} if
\begin{equation}\label{defunbm}
	\mathfrak{B}(X,Y)=\langle X, Y\rangle\, H,
\end{equation}
for all vector fields $X,Y$ tangent to $\Sigma$.

Given $x\in M$, let $\exp_x:T_xM\rightarrow M$ be the standard exponential map of $(M,g)$. Thus,
the {\em exponential map of normal bundle} is defined as
\[
\text{Exp}: \mathcal{V}\Sigma\rightarrow M,\quad \Bn\mapsto \exp_{\pi(\Bn)}(\Bn),
\]
where  $\pi: \mathcal{V}\Sigma\rightarrow \Sigma$ is the natural {\it bundle projection}.
Due to Gray \cite{Gray}  and O'Neil \cite{ONeil}, for any $x\in \Sigma$, there is a neighborhood $U\subset \Sigma$ of $x$ and $\e_x>0$ such that $\text{Exp}|_{ \B(U,\e_x)}$ is a diffeomorphism, where
\[
\B(U,\e_x):=\left\{y\in \cup_{z\in U}\mathcal {V}_z\Sigma\,:\,0 \leq |y|<\e_x\right\}.
\]
Moreover, if $\Sigma$ is compact, there is a universal $\e>0$ such that
${\dd}\text{Exp}_{t\Bn}$ is nonsingular for all $t\in [0,\e)$ and $\Bn\in \mathcal{V}S\Sigma$.

Given $\mathbf{n}\in \mathcal {V}S\Sigma$,   let $\gamma_\mathbf{n}$ be the geodesic with $\gamma_\mathbf{n}'(0)=\mathbf{n}$, i.e., $ \gamma_\mathbf{n}(t)=\Exp(t\mathbf{n})$.
Then the {\it distance to the focal cut point}
of $\Sigma$ with respect to $\mathbf{n}$  is defined as
\[
c_{\mathcal {V}}(\mathbf{n}):=\sup\{t>0\,:\,d(\Sigma,\gamma_\mathbf{n}(t))=t\}.
\]
The {\it focal cut radius} of $\Sigma$ is defined by
\begin{equation}\label{cutradidfef}
	\mathfrak{c}_{\mathcal {V}}(\Sigma):=\inf_{\mathbf{n}\in \mathcal {V}S\Sigma}c_{\mathcal {V}}(\mathbf{n}).
\end{equation}
In addition, one may introduce
\begin{align*}
	&\mathcal {V}C\Sigma:=\{ c_{\mathcal {V}}(\mathbf{n})\mathbf{n}\,:\,c_{\mathcal {V}}(\mathbf{n})<+\infty,\mathbf{n}\in \mathcal {V}S\Sigma\},&\mathcal {V}\mathscr{C}\Sigma:=\Exp\mathcal {V}C\Sigma,\\
	&\mathcal {V}D\Sigma:=\left\{t\mathbf{n}\,:\,0\leq   t<c_{\mathcal {V}}(\mathbf{n}),\, \mathbf{n}\in \mathcal {V}S\Sigma\right\}, &\mathcal {V}\mathscr{D}\Sigma:=\Exp\mathcal {V}D\Sigma.
\end{align*}
The following result summarizes the main properties of the latter notions; see Chavel \cite[p.\,105ff, p.\,134]{Cha}:

\begin{lemma}\label{prop-1-3} Let $(M,g)$ be an $m$-dimensional complete Riemannian manifold and let $i:\Sigma\hookrightarrow M$ be an $n$-dimensional closed submanifold. Then the following statements  hold$:$
	\begin{enumerate}[{\rm(i)}]
		\item \label{geodesictmean} $\gamma_\mathbf{n}(t)$, $t\in [0,c_{\mathcal {V}}(\mathbf{n})]$ is a minimal geodesic from $\Sigma$ to $\gamma_{\mathbf{n}}(c_{\mathcal {V}}(\mathbf{n}))$. In particular, for any $t_0\in (0,c_{\mathcal {V}}(\mathbf{n}))$, $\gamma_\mathbf{n}$ is the unique minimal geodesic from $\Sigma$ to $\gamma_{\mathbf{n}}(t_0);$
		
		\smallskip
		
		\item\label{cutlocuszero}  $c_{\mathcal {V}}:\mathcal {V}S\Sigma\rightarrow (0,+\infty]$ is a continuous function and hence, $\mathcal {V}\mathscr{C}\Sigma$ is a set of Lebesgue measure zero$;$
		
		\smallskip
		
		\item  the focal cut domain $\mathcal{V}\mathscr{D}\Sigma$ is the largest starlike domain, i.e.,  $\Exp(t\Bn)\in \mathcal{V}\mathscr{D}\Sigma$  for any $\Bn\in \mathcal{V}S\Sigma$ and $t\in [0,c_\mathcal {V}(\Bn));$
		
		\smallskip
		
		\item\label{LEMM2.1-4}  the map $\Exp:\mathcal {V}D\Sigma \rightarrow\mathcal {V}\mathscr{D}\Sigma$ is a diffeomorphism, i.e., $\mathcal {V}\mathscr{D}\Sigma=M\backslash  \mathcal {V}\mathscr{C}\Sigma$.
	\end{enumerate}
\end{lemma}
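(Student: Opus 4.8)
The plan is to treat Lemma \ref{prop-1-3} as the submanifold analogue of the classical cut-locus theory of a point, following the standard treatment (e.g.\ Chavel \cite{Cha}), with the role of ``conjugate point'' played by ``focal point of $\Sigma$''. Two soft inputs are used repeatedly: the function $x\mapsto d(\Sigma,x)$ is $1$-Lipschitz, hence continuous, because $\Sigma$ is closed; and, by the Hopf--Rinow theorem on the complete manifold $M$, any bounded piece of the closed set $i(\Sigma)$ is compact, so a sequence of foot points lying in a bounded region sub-converges in $i(\Sigma)$ and the associated unit normal vectors sub-converge in $\mathcal{V}S\Sigma$. I would prove \eqref{geodesictmean} first, then \eqref{cutlocuszero}, and finally derive \eqref{LEMM2.1-4} and part (iii) from these.

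For \eqref{geodesictmean}, first establish the ``interval property'': if $d(\Sigma,\gamma_\mathbf{n}(t_0))=t_0$ for some $t_0>0$, then the curve $\gamma_\mathbf{n}|_{[0,t_0]}$ has length $t_0$ and joins $\Sigma$ to $\gamma_\mathbf{n}(t_0)$, hence realizes the distance and is minimizing; every sub-arc is therefore minimizing and $d(\Sigma,\gamma_\mathbf{n}(s))=s$ for all $s\le t_0$. Since $\{t>0:d(\Sigma,\gamma_\mathbf{n}(t))=t\}$ is closed (continuity of $d(\Sigma,\cdot)$), the supremum $c_{\mathcal{V}}(\mathbf{n})$ is attained when finite, which is the first assertion. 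For uniqueness on $(0,c_{\mathcal{V}}(\mathbf{n}))$: a minimizing geodesic from $\Sigma$ to a point meets $\Sigma$ orthogonally (first variation with free endpoint on $\Sigma$), so a second minimizing geodesic to $\gamma_\mathbf{n}(t_0)$ would be $\gamma_\mathbf{m}|_{[0,t_0]}$ for some $\mathbf{m}\in\mathcal{V}S\Sigma$ with $\gamma_\mathbf{m}'(t_0)\ne\gamma_\mathbf{n}'(t_0)$; concatenating $\gamma_\mathbf{m}|_{[0,t_0]}$ with $\gamma_\mathbf{n}|_{[t_0,t_0+\e]}$ yields a broken path from $\Sigma$ to $\gamma_\mathbf{n}(t_0+\e)$ with a genuine corner, which can be strictly shortened, forcing $d(\Sigma,\gamma_\mathbf{n}(t_0+\e))<t_0+\e$ and contradicting the interval property for $\e$ small.

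For \eqref{cutlocuszero}, upper semicontinuity of $c_{\mathcal{V}}$ is immediate: if $\mathbf{n}_k\to\mathbf{n}$ and $c_{\mathcal{V}}(\mathbf{n}_k)>c'$ for infinitely many $k$, then $d(\Sigma,\gamma_{\mathbf{n}_k}(c'))=c'$, so in the limit $d(\Sigma,\gamma_\mathbf{n}(c'))=c'$ and $c'\le c_{\mathcal{V}}(\mathbf{n})$. Lower semicontinuity is the crux; assume $\mathbf{n}_k\to\mathbf{n}$ and $c_{\mathcal{V}}(\mathbf{n}_k)\to c<c_{\mathcal{V}}(\mathbf{n})$ with $c<\infty$, and use the dichotomy at a focal cut point: $\gamma_{\mathbf{n}_k}(c_{\mathcal{V}}(\mathbf{n}_k))$ is either the first focal point of $\Sigma$ along $\gamma_{\mathbf{n}_k}$, or the endpoint of at least two distinct minimizing geodesics from $\Sigma$. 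If the second alternative holds along a subsequence, extract (via the compactness input) a limiting minimizing geodesic from $\Sigma$ to $\gamma_\mathbf{n}(c)$; if it differs from $\gamma_\mathbf{n}|_{[0,c]}$ then $\gamma_\mathbf{n}(c)$ is a cut point, while if it coincides with $\gamma_\mathbf{n}|_{[0,c]}$ then two distinct minimizing geodesics collapse onto a common limit, forcing a nontrivial $\Sigma$-Jacobi field vanishing at time $c$, i.e.\ a focal point; either way $c\ge c_{\mathcal{V}}(\mathbf{n})$. If the first alternative holds along a subsequence, $d\Exp$ is singular at $c_{\mathcal{V}}(\mathbf{n}_k)\mathbf{n}_k$, a closed condition, so $d\Exp$ is singular at $c\mathbf{n}$; since $\gamma_\mathbf{n}$ ceases to minimize the distance to $\Sigma$ beyond its first focal point (second variation/index-form argument), the first focal distance is $\ge c_{\mathcal{V}}(\mathbf{n})$, again giving $c\ge c_{\mathcal{V}}(\mathbf{n})$. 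This contradiction proves continuity of $c_{\mathcal{V}}$. The measure-zero claim then follows: in Fermi coordinates $\mathcal{V}C\Sigma$ is locally the graph of the continuous function $c_{\mathcal{V}}$ over an open subset of the $(m-1)$-dimensional $\mathcal{V}S\Sigma$, hence Lebesgue-null in the $m$-dimensional $\mathcal{V}\Sigma$ by Fubini, and the smooth (locally Lipschitz) map $\Exp$ carries null sets to null sets, so $\mathcal{V}\mathscr{C}\Sigma=\Exp(\mathcal{V}C\Sigma)$ is null in $M$.

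For (iii) and \eqref{LEMM2.1-4}: given $p\in M$, minimize length over paths from $\Sigma$ to $p$ (the competition lies in a compact region by the Hopf--Rinow input), obtaining a minimizing, hence $\Sigma$-orthogonal, geodesic $\gamma_\mathbf{n}|_{[0,\ell]}$ with $\ell=d(\Sigma,p)$; by the interval property $\ell\le c_{\mathcal{V}}(\mathbf{n})$, so $p\in\mathcal{V}\mathscr{D}\Sigma$ when $\ell<c_{\mathcal{V}}(\mathbf{n})$ and $p\in\mathcal{V}\mathscr{C}\Sigma$ when $\ell=c_{\mathcal{V}}(\mathbf{n})$, whence $M=\mathcal{V}\mathscr{D}\Sigma\cup\mathcal{V}\mathscr{C}\Sigma$. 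Disjointness of these two sets and injectivity of $\Exp|_{\mathcal{V}D\Sigma}$ both reduce to \eqref{geodesictmean}: a point of $\Exp(\mathcal{V}D\Sigma)$ is joined to $\Sigma$ by a unique minimizing geodesic whose length equals $d(\Sigma,\cdot)$, so it can be represented neither by a longer radius nor by a focal-cut radius. Nonsingularity of $d\Exp$ on $\mathcal{V}D\Sigma$ is exactly ``no focal point before the cut point'' (already used above), and a bijective local diffeomorphism is a diffeomorphism, giving \eqref{LEMM2.1-4}; moreover $\mathcal{V}\mathscr{D}\Sigma=M\setminus\mathcal{V}\mathscr{C}\Sigma$ is open since $\mathcal{V}D\Sigma$ is open (continuity and positivity of $c_{\mathcal{V}}$, together with the tubular neighborhood theorem near the zero section) and $\Exp$ is an open map there. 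Finally $\mathcal{V}\mathscr{D}\Sigma=\Exp(\mathcal{V}D\Sigma)$ is starlike with respect to $\Sigma$ by construction, and maximal, because any strictly larger starlike open set would contain, together with a focal cut point $\Exp(c_{\mathcal{V}}(\mathbf{n})\mathbf{n})$, the entire radial segment up to it, whereas $\Exp$ is either non-injective or non-immersive at that point. The step I expect to be the main obstacle is the lower semicontinuity of $c_{\mathcal{V}}$ in \eqref{cutlocuszero}, in particular the focal-point branch (closedness of the singular set of $d\Exp$ and the index-form fact that $\gamma_\mathbf{n}$ stops minimizing past the first focal point), and making sure the compactness extractions remain valid when $\Sigma$ is noncompact.
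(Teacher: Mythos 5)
Your argument is correct: it is the standard cut-locus theory for submanifolds (interval property, the focal-point/two-geodesics dichotomy for lower semicontinuity of $c_{\mathcal V}$, and the graph-plus-Lipschitz-image argument for the null set), and the compactness extractions you flag as a worry do go through because closed bounded subsets of the complete manifold $M$, hence of the closed set $i(\Sigma)$, are compact. The paper itself gives no proof of this lemma and simply cites Chavel, where essentially this same argument appears.
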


Owing to Lemma \ref{prop-1-3}, one can introduce the {\it Fermi coordinate system}  by
\begin{equation}\label{Fermimap}
	\F: [0,+\infty)\times \mathcal{V}S\Sigma\rightarrow M,\quad (t,\Bn)\mapsto \text{Exp}(t\Bn).
\end{equation}
In particular, if $\Sigma$ is a point (i.e., of $0$-dimension), then the Fermi coordinate system is exactly the polar coordinate system.
Moreover, we have the following result; see Chavel \cite[p.~133]{Cha}:

\begin{lemma}\label{Fermifirstlemma} If $\F(t,\mathbf{n})$ is well-defined,  then
	\begin{itemize}
		
		
		\item[(i)] $\frac{\partial}{\partial t}:={\dd}{\F}_{(t,\mathbf{n})}\mathbf{n}={\gamma}'_\mathbf{n}(t)$, i.e., the tangent vector of  $\gamma_\mathbf{n}(t)$ at $t;$
		
		\smallskip
		
		\item[(ii)] given $X\in T_{\pi(\mathbf{n})}\Sigma$, $J_X(t):={\dd}{\F}_{(t,\mathbf{n})}X$ is a Jacobi field along $\gamma_\mathbf{n}$, orthogonal to $\gamma_{\mathbf{n}}$, with
		\[
		J_X(0)=X,\quad (\nabla_{\gamma'_\mathbf{n}(0)}J_X)(0)=-\mathfrak{A}^{\mathbf{n}}(X);
		\]

		\item[(iii)] given $Y\in \mathcal {V}_{\pi(\mathbf{n})}\Sigma\cap \mathbf{n}^\bot$, $J_Y(t):={\dd}{\F}_{(t,\mathbf{n})}Y$ is a Jacobi field along $\gamma_\mathbf{n}$,  orthogonal to $\gamma_{\mathbf{n}}$,  with
		\[
		J_Y(0)=0,\quad (\nabla_{\gamma'_\mathbf{n}(0)}J_Y)(0)=Y,
		\]
		where $\mathbf{n}^\bot:=\{Y\in T_{\pi(\mathbf{n})}M:\,g(\mathbf{n},Y)=0 \}$.
		
	\end{itemize}
\end{lemma}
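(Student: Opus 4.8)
The plan is to realise each of the three vector fields as the variation field of a suitable one-parameter family of geodesics issuing from $\Sigma$, and then to read the assertions off from two classical facts: that the variation field of a variation through geodesics is a Jacobi field along the central geodesic, and that the Levi-Civita connection is torsion-free, so that $\na_{\partial_s}\partial_t\alpha=\na_{\partial_t}\partial_s\alpha$ for any smooth map $\alpha(s,t)$. Before starting, I would fix the usual identification $T_{\mathbf{n}}(\mathcal{V}S\Sigma)\cong T_{\pi(\mathbf{n})}\Sigma\oplus\big(\mathcal{V}_{\pi(\mathbf{n})}\Sigma\cap\mathbf{n}^\bot\big)$ coming from the normal connection $\na^\bot$ (horizontal versus vertical subspaces), under which the formulas $J_X=\dd\F_{(t,\mathbf{n})}X$ and $J_Y=\dd\F_{(t,\mathbf{n})}Y$ become meaningful; since $\F(t,\mathbf{n})$ is well-defined, Lemma \ref{prop-1-3} guarantees that $\F$ is smooth near $(t,\mathbf{n})$, so no regularity issue arises. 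Part (i) is then immediate: by construction $\F(\cdot,\mathbf{n})=\Exp(\cdot\,\mathbf{n})=\gamma_\mathbf{n}$, so the chain rule gives $\dd\F_{(t,\mathbf{n})}\mathbf{n}=\gamma_\mathbf{n}'(t)$.

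For part (ii), I would pick a curve $c(s)\subset\Sigma$ with $c(0)=\pi(\mathbf{n})$ and $\dot c(0)=X$, let $\mathbf{n}(s)$ be the $\na^\bot$-parallel transport of $\mathbf{n}$ along $c$ (a \emph{unit normal} field along $c$), and set $\alpha(s,t):=\Exp\big(t\,\mathbf{n}(s)\big)=\F\big(t,\mathbf{n}(s)\big)$. Each $t\mapsto\alpha(s,t)$ is a geodesic, so $J_X(t):=\partial_s\alpha(0,t)=\dd\F_{(t,\mathbf{n})}X$ is a Jacobi field along $\gamma_\mathbf{n}$, with $J_X(0)=\dot c(0)=X$. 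Since $\partial_t\alpha(s,0)=\mathbf{n}(s)$, the symmetry of the connection yields $(\na_{\gamma_\mathbf{n}'(0)}J_X)(0)=\na_{\dot c(0)}\mathbf{n}(s)\big|_{s=0}$; its $\na^\bot$-component vanishes because $\mathbf{n}(s)$ is normal-parallel, while by the definition \eqref{weingradef} of the Weingarten map its tangential component equals $-\mathfrak{A}^{\mathbf{n}}(X)$, whence $(\na_{\gamma_\mathbf{n}'(0)}J_X)(0)=-\mathfrak{A}^{\mathbf{n}}(X)$.

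For part (iii), I would instead choose $\mathbf{n}(s)\subset\mathcal{V}S_{\pi(\mathbf{n})}\Sigma$ with $\mathbf{n}(0)=\mathbf{n}$ and $\dot{\mathbf{n}}(0)=Y$ — admissible precisely because $Y\in\mathbf{n}^\bot$ — and put $\alpha(s,t):=\exp_{\pi(\mathbf{n})}\big(t\,\mathbf{n}(s)\big)=\F\big(t,\mathbf{n}(s)\big)$. The same argument shows that $J_Y=\dd\F_{(t,\mathbf{n})}Y$ is a Jacobi field along $\gamma_\mathbf{n}$; here the base curve $s\mapsto\alpha(s,0)$ is constant, so $J_Y(0)=0$, while $\partial_t\alpha(s,0)=\mathbf{n}(s)$ and the symmetry of the connection give $(\na_{\gamma_\mathbf{n}'(0)}J_Y)(0)=\na_{\partial_s}\mathbf{n}(s)\big|_{s=0}=\dot{\mathbf{n}}(0)=Y$, the covariant derivative along the constant curve reducing to the ordinary derivative in the fixed tangent space $T_{\pi(\mathbf{n})}M$. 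Orthogonality to $\gamma_\mathbf{n}$ in both (ii) and (iii) then follows since $J(0)$ and $(\na_{\gamma_\mathbf{n}'(0)}J)(0)$ are orthogonal to $\mathbf{n}=\gamma_\mathbf{n}'(0)$ — the vectors $X,-\mathfrak{A}^{\mathbf{n}}(X)\in T_{\pi(\mathbf{n})}\Sigma$ are orthogonal to $\mathcal{V}_{\pi(\mathbf{n})}\Sigma\ni\mathbf{n}$, and $0,Y\in\mathbf{n}^\bot$ — so $t\mapsto\langle J(t),\gamma_\mathbf{n}'(t)\rangle$ is affine in $t$ and vanishes to first order at $t=0$, hence identically.

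The one delicate point in this scheme is the tangent-space identification at a point of the unit normal bundle, together with the verification that $\dd\F_{(t,\mathbf{n})}$ applied to the horizontal, respectively vertical, lift really does coincide with the variation field of the family $\alpha(s,t)=\F(t,\mathbf{n}(s))$; once that bookkeeping is in place, the rest is a routine application of the Jacobi equation and the symmetry lemma for covariant derivatives.
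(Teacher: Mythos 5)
Your proof is correct: the variation-through-geodesics construction, with $\mathbf{n}(s)$ taken $\nabla^\bot$-parallel along a curve realizing $X$ in part (ii) and as a curve in the unit normal sphere realizing $Y$ in part (iii), together with the symmetry lemma and the affineness of $t\mapsto\langle J(t),\gamma_\mathbf{n}'(t)\rangle$, yields exactly the stated initial conditions and the orthogonality. The paper gives no proof of this lemma, citing Chavel instead, and your argument is essentially the standard one found there, including the horizontal/vertical identification of $T_{\mathbf{n}}(\mathcal{V}S\Sigma)$ that makes ${\dd}\F_{(t,\mathbf{n})}X$ and ${\dd}\F_{(t,\mathbf{n})}Y$ meaningful.
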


Now we study the construction of such Jacobi fields.
Let $R$ be the {\it curvature tensor} of $(M,g)$, i.e.,
\[
R(U,V)W=\nabla_U\nabla_VW-\nabla_V\nabla_UW-\nabla_{[U,V]}W,
\]
where $U,V,W$ are three smooth vector fields on $M$. Given $\mathbf{n}\in \mathcal {V}S\Sigma$,
denote by $P_{t;\mathbf{n}}$ the {\it parallel transport} along
$\gamma_{\mathbf{n}}$ from $T_{\gamma_\mathbf{n}(0)}M$ to $T_{\gamma_\mathbf{n}(t)}M$  for all $t\geq 0$. Set
$R_{{\gamma}'_\mathbf{n}(t)}:=R\left(\,\cdot\,,{\gamma}'_\mathbf{n}(t)\right){\gamma}'_\mathbf{n}(t)$ and
\[
\mathcal {R}(t,\mathbf{n} ):=P^{-1}_{t;\mathbf{n}}\circ R_{{\gamma}'_\mathbf{n}(t)} \circ P_{t;\mathbf{n}}:
\mathbf{n}^{\bot}\rightarrow \mathbf{n}^{\bot}.\]
Let $\mathcal {A}(t,\mathbf{n})$ be the solution of the matrix (or linear
transformation) ordinary differential equation on $\mathbf{n}^\bot$:
\begin{equation}
	\left \{
	\begin{array}{lll}
		\mathcal {A}{''}(t,\mathbf{n})+\mathcal {R}(t,\mathbf{n})\,\mathcal {A}(t,\mathbf{n})=0,\\
		\\
		\mathcal {A}(0,\mathbf{n})|_{T_{\pi(\mathbf{n})}\Sigma}=\id|_{T_{\pi(\mathbf{n})}\Sigma},\ \mathcal {A}'(0,\mathbf{n})|_{T_{\pi(\mathbf{n})}\Sigma}=-\mathfrak{A}^{\mathbf{n}},\label{defineJacbi}\\
		\\
		\mathcal {A}(0,\mathbf{n})|_{\mathcal {V}_{\pi(\mathbf{n})}\Sigma\cap \mathbf{n}^\bot}=0,\ \mathcal {A}'(0,\mathbf{n})|_{\mathcal {V}_{\pi(\mathbf{n})}\Sigma\cap \mathbf{n}^\bot}=\id|_{\mathcal {V}_{\pi(\mathbf{n})}\Sigma\cap \mathbf{n}^\bot},
	\end{array}
	\right.
\end{equation}
where $\mathcal {A}'=\frac{\dd}{{\dd} t}\mathcal {A}$ and $\mathcal {A}''=\frac{{\dd}^2}{{\dd} t^2}\mathcal {A}$. On account of Lemma \ref{Fermifirstlemma}, we have
\begin{equation}\label{constructionofJacobifile}
	\left \{
	\begin{array}{lll}
		\frac{\partial}{\partial t}:={\gamma}'_{\mathbf{n}}(t)=P_{t;\mathbf{n}}\mathbf{n},\\
		\\
		J_X(t)={\dd}{\F}_{(t,\mathbf{n})}X=P_{t;\mathbf{n}}\mathcal {A}(t,\mathbf{n})X, \quad \forall\,X\in T_{\pi(\mathbf{n})}\Sigma,\\
		\\
		J_Y(t)={\dd}{\F}_{(t,\mathbf{n})}Y=P_{t;\mathbf{n}}\mathcal {A}(t,\mathbf{n})Y,\quad \forall\,Y\in \mathcal {V}_{\pi(\mathbf{n})}\Sigma\cap \mathbf{n}^\bot.
	\end{array}
	\right.
\end{equation}
It should be emphasized that $\det\mathcal {A}(t,\mathbf{n})>0$ for any $t\in (0,c_\mathcal {V}(\mathbf{n}))$.
Moreover,   (\ref{defineJacbi}) yields
\begin{align}
	\underset{t\rightarrow 0^+}{\lim}\frac{\det \mathcal {A}(t,\mathbf{n})}{t^{m-n-1}}=1,\quad \frac{(\det \mathcal {A}(t,\mathbf{n}))' }{\det \mathcal {A}(t,\mathbf{n})}=\frac{(m-n-1)}t- \tr\mathfrak{A}^{\mathbf{n}}+O(t)\ \ {\rm as}\ \ t\to 0.\label{smallsestimatedatA}
\end{align}

Using the Fermi coordinate system $(t,\Bn)$, the {\it Riemannian measure} $\dvol_g$ of $(M,g)$ can be represented as
\begin{align}
	{\dvol}_g(t,\mathbf{n}) =\det \mathcal {A}(t,\mathbf{n}) {\dd}t \cdot {\dvol}_{i^*g}(x) \cdot d\nu_x(\mathbf{n}),\label{dv-g-Def}
\end{align}
where ${\dvol}_{i^*g}$ is the {\it induced Riemannian measure} of $(\Sigma,i^*g)$ and ${\dd}\nu_x$ is the Riemannian measure of $\mathcal {V}S_{x}\Sigma$ with $x=\pi(\mathbf{n})$.
Thus,  for $f\in L^1(M)$, the change of variables formula reads as
\[
\int_M f \,{\dvol}_g=\int_{\Sigma}{\dvol}_{i^*g}(x)\int_{\mathcal {V}S_{x}\Sigma}d\nu_x(\mathbf{n})\int_0^{c_\mathcal {V}(\mathbf{n})}f(\Exp(t\mathbf{n}))\, \det \mathcal {A}(t,\mathbf{n}){\dd}t.\label{integration-f}
\]


Throughout the paper, we always use  $\T_s$ to denote the {\it $s$-tuber neighborhood of $\Sigma$}, i.e.,
\begin{equation}\label{stuber}
	\T_s:=\{x\in M\,:\, r(x)<s\}.
\end{equation}

\subsection{Heintze--Karcher-type Laplacian  estimates}\label{section-Heintze--Karcher}
In this subsection we establish two Heintze--Karcher-type Laplacian estimates for the distance function $r(x):=d(\Sigma,x)$. Given $x\in \mathcal {V}\mathscr{D}\Sigma\backslash\Sigma$, due to Lemma \ref{prop-1-3}/\eqref{LEMM2.1-4} and \eqref{Fermimap}, there is a unique $\mathbf{n}\in \mathcal {V}S\Sigma$ and $t=r(x)\in (0, c_\mathcal {V}(\mathbf{n}))$ such that $x=\F(t,\mathbf{n})$. A direct calculation (cf. Chen et al. \cite[(2.6) \& (2.9)]{CLZ}) yields
\begin{equation}
	\nabla r|_{(t,\mathbf{n})}= {\gamma}'_{\mathbf{n}}(t)=\left.\frac{\partial}{\partial t}\right|_{(t,\mathbf{n})},\quad \Delta r|_{(t,\Bn)}=\Delta t={(\det \mathcal {A}(t,\Bn))'\over \det \mathcal {A}(t,\Bn)}.\label{Laplacian-expression}
\end{equation}


In the sequel, we provide a slightly finer version of the Heintze--Karcher theorem when the curvature is controlled from below; compare with \cite[Section 3.2]{HK}:
\begin{theorem}\label{Laplacian-comparison-Sigma} Let $(M,g)$ be an $m$-dimensional complete Riemannian manifold with $\sec_M\geq\lambda$, and $i:\Sigma\hookrightarrow M$ be an $n$-dimensional closed submanifold. Then, for every $\Bn\in \mathcal{V}S\Sigma$, one has
	\begin{align}
		\det \mathcal {A}(t,\mathbf{n})\leq  \left[ \mathbf{c}_\lambda(t)-\frac{\tr \mathfrak{A}^\mathbf{n}}{n}\mathbf{s}_\lambda(t) \right]^n \mathbf{s}_\lambda(t)^{m-n-1},\quad \forall\, t\in [0,c_{\mathcal {V}}(\mathbf{n})]\label{Lap-deta-controll},\\
		\Delta r|_{(t,\mathbf{n})}\leq G_{\lambda,\frac{\tr \mathfrak{A}^\mathbf{n}}{n}}(t)=-n{  \lambda\mathbf{s}_\lambda(t)+\frac{\tr \mathfrak{A}^\mathbf{n}}{n}\mathbf{c}_\lambda(t)\over \mathbf{c}_\lambda(t)-\frac{\tr \mathfrak{A}^\mathbf{n}}{n}\mathbf{s}_\lambda(t)}+(m-n-1){\mathbf{c}_\lambda(t)\over \mathbf{s}_\lambda(t)},\quad \forall\, t\in (0,c_{\mathcal {V}}(\mathbf{n})).\label{Laplacian-comparison-Sigma-Ineq}
	\end{align}
	Moreover, provided either $n=0$ $($i.e., $\Sigma$ is a point$)$ or $n=m-1$ $($i.e., $\Sigma$ is a hypersurface$),$ then both \eqref{Lap-deta-controll} and \eqref{Laplacian-comparison-Sigma-Ineq}   remain  valid when $\Ric_M\geq (n-1)\lambda$ instead of $\sec_M\geq\lambda$.
\end{theorem}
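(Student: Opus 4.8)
The plan is to reduce both estimates to a comparison between the Jacobi endomorphism $\mathcal{A}(t,\mathbf{n})$ along $\gamma_\mathbf{n}$ and its constant-curvature model, and then to extract \eqref{Lap-deta-controll}--\eqref{Laplacian-comparison-Sigma-Ineq} by two elementary convexity arguments. Fix $\mathbf{n}\in\mathcal{V}S\Sigma$ and write $\mathcal{A}(t):=\mathcal{A}(t,\mathbf{n})$, which by \eqref{defineJacbi} is nonsingular and positive on $(0,c_{\mathcal{V}}(\mathbf{n}))$. Decompose $\mathbf{n}^\bot=T_{\pi(\mathbf{n})}\Sigma\oplus\big(\mathcal{V}_{\pi(\mathbf{n})}\Sigma\cap\mathbf{n}^\bot\big)$, of dimensions $n$ and $m-n-1$, and let $\mathcal{B}(t)$ be the solution of $\mathcal{B}''+\lambda\mathcal{B}=0$ on $\mathbf{n}^\bot$ with the \emph{same} initial data as in \eqref{defineJacbi}. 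Since these initial data are block-diagonal for this splitting, constant curvature keeps them so, and $\mathbf{c}_\lambda,\mathbf{s}_\lambda$ solve $f''+\lambda f=0$, one has explicitly
\[
\mathcal{B}(t)=\big(\mathbf{c}_\lambda(t)\,\id-\mathbf{s}_\lambda(t)\,\mathfrak{A}^{\mathbf{n}}\big)\oplus\mathbf{s}_\lambda(t)\,\id,\qquad \det\mathcal{B}(t)=\det\!\big(\mathbf{c}_\lambda(t)\,\id-\mathbf{s}_\lambda(t)\,\mathfrak{A}^{\mathbf{n}}\big)\,\mathbf{s}_\lambda(t)^{\,m-n-1}.
\]

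Next I would set $U:=\mathcal{A}'\mathcal{A}^{-1}$ and $\bar U:=\mathcal{B}'\mathcal{B}^{-1}$, which are symmetric endomorphisms of $\mathbf{n}^\bot$ on the intervals where $\mathcal{A}$, resp.\ $\mathcal{B}$, is nonsingular, and which satisfy the matrix Riccati equations $U'+U^2+\mathcal{R}(t,\mathbf{n})=0$ and $\bar U'+\bar U^2+\lambda\,\id=0$. The hypothesis $\sec_M\ge\lambda$ forces $\mathcal{R}(t,\mathbf{n})\ge\lambda\,\id$ on $\mathbf{n}^\bot$, while \eqref{smallsestimatedatA}, in its matrix-valued form, shows that $U(t)$ and $\bar U(t)$ carry the \emph{same} singular expansion as $t\to0^+$ (leading term $t^{-1}$ on $\mathcal{V}_{\pi(\mathbf{n})}\Sigma\cap\mathbf{n}^\bot$, next term $-\mathfrak{A}^{\mathbf{n}}\oplus0$), so $U(t)-\bar U(t)\to0$. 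The standard matrix Riccati / index-form comparison (Heintze--Karcher \cite{HK}, Chavel \cite{Cha}) then yields $U(t)\le\bar U(t)$ as symmetric endomorphisms for all $t\in(0,c_{\mathcal{V}}(\mathbf{n}))$; in particular $\mathcal{B}$ cannot become singular before $t=c_{\mathcal{V}}(\mathbf{n})$ (else $\bar U$ would acquire an eigenvalue $\to-\infty$ while $U$ stays finite), so $\mathbf{c}_\lambda(t)\,\id-\mathbf{s}_\lambda(t)\,\mathfrak{A}^{\mathbf{n}}>0$ on that range. Taking traces and invoking \eqref{Laplacian-expression} gives $\Delta r|_{(t,\mathbf{n})}=\tr U(t)\le\tr\bar U(t)$. \textbf{This Riccati comparison across the focal singularity at $t=0$ is the main technical point}; it is exactly where the full sectional bound $\sec_M\ge\lambda$ (rather than a mere Ricci bound) is used once both blocks of $\mathbf{n}^\bot$ are nontrivial.

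It then remains to identify $\tr\bar U$. Diagonalising $\mathfrak{A}^{\mathbf{n}}$ with eigenvalues $a_1,\dots,a_n$ gives
\[
\tr\bar U(t)=\sum_{i=1}^{n}\frac{-\lambda\mathbf{s}_\lambda(t)-a_i\mathbf{c}_\lambda(t)}{\mathbf{c}_\lambda(t)-a_i\mathbf{s}_\lambda(t)}+(m-n-1)\frac{\mathbf{c}_\lambda(t)}{\mathbf{s}_\lambda(t)}.
\]
On $\{a:\mathbf{c}_\lambda(t)-a\mathbf{s}_\lambda(t)>0\}$ the map $a\mapsto\frac{-\lambda\mathbf{s}_\lambda(t)-a\mathbf{c}_\lambda(t)}{\mathbf{c}_\lambda(t)-a\mathbf{s}_\lambda(t)}$ is concave, since its second $a$-derivative equals $-2\mathbf{s}_\lambda(t)\big/\big(\mathbf{c}_\lambda(t)-a\mathbf{s}_\lambda(t)\big)^{3}<0$ (using $\lambda\mathbf{s}_\lambda^2+\mathbf{c}_\lambda^2=1$ from \eqref{relations}); hence Jensen's inequality applied to $\tfrac1n\sum_i$ replaces every $a_i$ by $\tfrac1n\tr\mathfrak{A}^{\mathbf{n}}$ and yields exactly $\tr\bar U(t)\le G_{\lambda,\tr\mathfrak{A}^{\mathbf{n}}/n}(t)$, which together with the previous step proves \eqref{Laplacian-comparison-Sigma-Ineq}. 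For \eqref{Lap-deta-controll}, note that $\tfrac{d}{dt}\log\!\big(\det\mathcal{A}(t)/\det\mathcal{B}(t)\big)=\tr U(t)-\tr\bar U(t)\le0$ while $\det\mathcal{A}(t)/\det\mathcal{B}(t)\to1$ as $t\to0^+$ (both $\sim t^{m-n-1}$ by \eqref{smallsestimatedatA}), so $\det\mathcal{A}(t)\le\det\mathcal{B}(t)$; finally $\det\!\big(\mathbf{c}_\lambda\,\id-\mathbf{s}_\lambda\mathfrak{A}^{\mathbf{n}}\big)=\prod_i\big(\mathbf{c}_\lambda-a_i\mathbf{s}_\lambda\big)\le\big(\mathbf{c}_\lambda-\tfrac1n\tr\mathfrak{A}^{\mathbf{n}}\,\mathbf{s}_\lambda\big)^{n}$ by the arithmetic--geometric mean inequality (the factors being positive). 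The endpoint $t=c_{\mathcal{V}}(\mathbf{n})$ in \eqref{Lap-deta-controll} follows by continuity.

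For the last assertion, when $n=0$ or $n=m-1$ one of the two summands of $\mathbf{n}^\bot$ is trivial, so $\mathcal{A}$ is a ``pure'' Jacobi tensor and the matrix Riccati comparison may be weakened to a scalar one that only sees Ricci curvature: writing $h(t):=\tr U(t)=\Delta r|_{(t,\mathbf{n})}$, the trace of $U'+U^2+\mathcal{R}(t,\mathbf{n})=0$ together with the Cauchy--Schwarz bound $\tr(U^2)\ge h(t)^2/(m-1)$ and the identity $\tr\mathcal{R}(t,\mathbf{n})=\Ric_M(\gamma'_\mathbf{n}(t),\gamma'_\mathbf{n}(t))$ gives, under the stated lower bound on $\Ric_M$, that $h$ is a subsolution of the scalar Riccati equation $h'=-h^2/(m-1)-\tr\mathcal{R}(t,\mathbf{n})$; its solution with the correct initial behaviour (read off from \eqref{smallsestimatedatA}) is $t\mapsto(m-1)\tfrac{\mathbf{c}_\lambda(t)}{\mathbf{s}_\lambda(t)}$ when $n=0$ (which is $G_{\lambda,\kappa}(t)$ irrespective of $\kappa$), resp.\ $t\mapsto -n\,\tfrac{\lambda\mathbf{s}_\lambda(t)+\frac1n\tr\mathfrak{A}^{\mathbf{n}}\mathbf{c}_\lambda(t)}{\mathbf{c}_\lambda(t)-\frac1n\tr\mathfrak{A}^{\mathbf{n}}\mathbf{s}_\lambda(t)}=G_{\lambda,\tr\mathfrak{A}^{\mathbf{n}}/n}(t)$ when $n=m-1$ (there $m-1=n$, so the Cauchy--Schwarz constant is $n$ and one uses that the logarithmic derivative of $t\mapsto\mathbf{c}_\lambda(t)-\tfrac1n\tr\mathfrak{A}^{\mathbf{n}}\mathbf{s}_\lambda(t)$ solves the corresponding scalar Riccati equation). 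Scalar ODE comparison then gives \eqref{Laplacian-comparison-Sigma-Ineq}, and integrating $\log\det\mathcal{A}$ exactly as in the previous paragraph recovers \eqref{Lap-deta-controll}.
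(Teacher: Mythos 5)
Your argument is correct and, at its core, it is the same proof as the paper's, only packaged differently. The trace of your matrix comparison $\mathcal{A}'\mathcal{A}^{-1}\le \mathcal{B}'\mathcal{B}^{-1}$ is precisely the Heintze--Karcher estimate \eqref{Heintze-Karcher-Ineq} that the paper quotes from \cite{HK} and \cite{Cha} (together with $c_{\mathcal {V}}(\mathbf{n})\le\min_i\mathfrak{c}_{\lambda,\kappa_i}$, which you recover through your blow-up argument for $\mathcal{B}$), and your Jensen step for the concave map $a\mapsto\frac{-\lambda\mathbf{s}_\lambda(t)-a\,\mathbf{c}_\lambda(t)}{\mathbf{c}_\lambda(t)-a\,\mathbf{s}_\lambda(t)}$ is exactly the paper's Step 1 inequality \eqref{maximum-La}, proved there by Lagrange multipliers (your $f''(a)=-2\mathbf{s}_\lambda(\mathbf{c}_\lambda-a\mathbf{s}_\lambda)^{-3}<0$ is the same Hessian computation). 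For \eqref{Lap-deta-controll} you compare determinants eigenvalue-wise and then apply AM--GM, whereas the paper integrates the symmetrized logarithmic derivative using \eqref{smallsestimatedatA}; these are equivalent. What your route buys is modest but real: carrying the comparison at the matrix level also gives the positivity of $\mathbf{c}_\lambda(t)\id-\mathbf{s}_\lambda(t)\mathfrak{A}^{\mathbf{n}}$ on $(0,c_{\mathcal{V}}(\mathbf{n}))$, which is needed before Jensen/AM--GM can be applied, and you prove the low/top-codimension Ricci cases by the standard trace-Riccati plus Cauchy--Schwarz argument instead of citing Bishop and Chen--Leung--Zhao \cite{CLZ} as the paper does. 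One caveat: that scalar argument genuinely uses $\Ric_M(\gamma_{\mathbf{n}}'(t),\gamma_{\mathbf{n}}'(t))\ge(m-1)\lambda$ (as do the results the paper cites), so writing ``under the stated lower bound on $\Ric_M$'' glosses over the theorem's printed hypothesis $\Ric_M\ge(n-1)\lambda$, which for $n=0$ reads $-\lambda$ and would not feed into your Riccati inequality; you should state explicitly the bound $(m-1)\lambda$ you actually use (the discrepancy appears to be a normalization slip in the statement itself rather than a defect of your argument).
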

\begin{proof} If $n=0$,  the statement is nothing but the well-known Bishop comparison theorem; see  Chavel \cite[Theorem 3.8]{Cha}. When $n=m-1$, the statement is exactly the result of Chen et al. \cite[Theorem 2.5]{CLZ}; see also \cite[Theorem 3.14]{Cha}. In the sequel, we show the general case, whose proof is divided into two steps.
	
	\textit{Step 1.} We first show that under the condition $\sum_{i=1}^n\kappa_i=n\kappa$ for some $\kappa\in \mathbb{R}$, one has
	\begin{equation}
		\sum_{i=1}^{n}{(\mathbf{c}_\lambda-\kappa_i\mathbf{s}_\lambda)'(t)\over (\mathbf{c}_\lambda-\kappa_i \mathbf{s}_\lambda)(t)}\leq n{(\mathbf{c}_\lambda-\kappa\mathbf{s}_\lambda)'(t)\over (\mathbf{c}_\lambda-\kappa\mathbf{s}_\lambda)(t)},\quad 0<t<\min_i \mathfrak{c}_{\lambda,\kappa_i}, \label{maximum-La}
	\end{equation}
	with equality if and only if $\kappa_1=\cdots=\kappa_n=\kappa$; here, $\mathfrak{c}_{\lambda,\kappa_i}$ is defined by \eqref{cklambdafoot}.

	\smallskip
	
Let $0<t<\min_i \mathfrak{c}_{\lambda,\kappa_i}$  be fixed; we prove \eqref{maximum-La} by using  Lagrange multipliers. Let us consider the Lagrange function
	\[
	\Phi(\kappa_1,\cdots,\kappa_n,\mu):=\sum_{i=1}^{n}{(\mathbf{c}_\lambda-\kappa_i \mathbf{s}_\lambda)'(t)\over (\mathbf{c}_\lambda-\kappa_i \mathbf{s}_\lambda)(t)}+\mu\left(\sum_{i=1}^n\kappa_i-n\kappa\right),
	\]
	where $\mu\in \mathbb{R}$ is the Lagrange multiplier. By using (\ref{relations}),  we obtain for any $1\leq j\leq n$ that
	\begin{eqnarray}
		{\pa \Phi\over \pa\kappa_j}&=&
		-{1\over (\mathbf{c}_\lambda(t)-\kappa_j \mathbf{s}_\lambda(t))^2}+\mu.\nonumber
	\end{eqnarray}
Therefore, the unique  critical point of $\Phi$ is $\kappa_1=\cdots=\kappa_n=\kappa$ and $\mu=(\mathbf{c}_\lambda(t)-\kappa\mathbf{s}_\lambda(t))^{-2}$. Moreover, the Hessian of $\Phi$ at this critical point is
	\[
	\left({\pa^2\Phi\over \pa \kappa_i\pa \kappa_j}\right)(\kappa,\ldots,\kappa,(\mathbf{c}_\lambda(t)-\kappa\mathbf{s}_\lambda(t))^{-2})=\left({-2\mathbf{s}_\lambda(t)\delta_{ij}\over (\mathbf{c}_\lambda(t)-\kappa \mathbf{s}_\lambda(t))^3}\right)_{i,j=1,...,n},
	\]
	which is negative definite. Accordingly, this critical point is the maximum of $\Phi$ under the condition $\sum_{i=1}^n\kappa_i=n\kappa$; hence (\ref{maximum-La}) follows.
	
	\medskip

	\textit{Step 2.} We now complete the proof of Theorem \ref{Laplacian-comparison-Sigma}.
		Given $\mathbf{n}\in \mathcal{V}S\Sigma$,  it follows from the proof the Heintze--Karcher comparison theorem, see \cite[Theorem 7.5]{Cha} or  \cite[Theorem 2.4]{CLZ}, that $c_\mathcal {V}(\mathbf{n})\leq \min_i \mathfrak{c}_{\lambda,\kappa_i} \leq \mathfrak{c}_{\lambda,\frac{\tr \mathfrak{A}^\mathbf{n}}{n}}$ and
	\begin{align}
		{(\det \mathcal {A}(t,\Bn))'\over \det \mathcal {A}(t,\Bn)}&\leq \sum_{i=1}^{n}{(\mathbf{c}_\lambda-\kappa_i \mathbf{s}_\lambda)'(t)\over (\mathbf{c}_\lambda-\kappa_i \mathbf{s}_\lambda)(t)}+(m-n-1){\mathbf{s}_\lambda'(t)\over \mathbf{s}_\lambda(t)},\quad \forall\, t\in (0,c_\mathcal {V}(\Bn)),\label{Heintze-Karcher-Ineq}
	\end{align}
	where $\{\kappa_i\}_{i=1}^n$ are   the eigenvalues   of $\mathfrak{A}^{\mathbf{n}}$ with respect to some orthonormal basis of $T_{\pi(\mathbf{n})}\Sigma$.
	Therefore, \eqref{Lap-deta-controll} and \eqref{Laplacian-comparison-Sigma-Ineq} follow by \eqref{maximum-La},  \eqref{Heintze-Karcher-Ineq}, \eqref{smallsestimatedatA} and \eqref{Laplacian-expression}$_2$; here, \eqref{Laplacian-expression}$_2$ denotes the second relation in \eqref{Laplacian-expression}.
\end{proof}

We now establish a counterpart of Theorem \ref{Laplacian-comparison-Sigma} when the curvature is bounded from above.
\begin{theorem}\label{main-rem-2} Let $(M,g)$ be an $m$-dimensional complete Riemannian manifold with $\sec_M\leq \lambda$, and  $i:\Sigma\hookrightarrow M$ be an $n$-dimensional closed submanifold. Given $\mathbf{n}\in \mathcal {V}S\Sigma$, if the eigenvalues $\{\kappa_i\}_{i=1}^n$ of $\mathfrak{A}^{\mathbf{n}}$ with respect to some orthonormal basis of $T_{\pi(\mathbf{n})}\Sigma$ satisfy $\kappa_i\leq \kappa$ for all $1\leq i\leq n$, then
\begin{align}
\det \mathcal {A}(t,\Bn)\geq  (\mathbf{c}_\lambda(t)-\kappa\mathbf{s}_\lambda(t))^n\,\mathbf{s}_\lambda(t)^{m-n-1}, \quad \forall\, t\in[0,\min\{c_\mathcal {V}(\mathbf{n}),\mathfrak{t}_{\lambda,\kappa}\}],\label{det-comparison-reverse2}\\
\Delta r|_{(t,\mathbf{n})}\geq G_{\lambda,\kappa}(t)=-n{\lambda\mathbf{s}_\lambda(t)+\kappa \mathbf{c}_\lambda(t)\over \mathbf{c}_\lambda(t)-\kappa \mathbf{s}_\lambda(t)}+(m-n-1){\mathbf{c}_\lambda(t)\over \mathbf{s}_\lambda(t)},\quad\forall\, t\in(0,\min\{c_\mathcal {V}(\mathbf{n}),\mathfrak{t}_{\lambda,\kappa}\}).\label{Laplace-comparison-reverse2}
\end{align}
\end{theorem}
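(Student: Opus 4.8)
The idea is to deduce both inequalities from a single matrix Riccati comparison, the mirror image of the one behind Theorem \ref{Laplacian-comparison-Sigma}, now exploiting that $\sec_M\le\lambda$ forces $\mathcal {R}(t,\mathbf{n})\le\lambda\,\id$ on $\mathbf{n}^\bot$. Fix $\mathbf{n}\in\mathcal {V}S\Sigma$, write $T_\mathbf{n}:=\min\{c_{\mathcal {V}}(\mathbf{n}),\mathfrak{t}_{\lambda,\kappa}\}$, and pick an orthonormal basis of $\mathbf{n}^\bot=T_{\pi(\mathbf{n})}\Sigma\oplus(\mathcal {V}_{\pi(\mathbf{n})}\Sigma\cap\mathbf{n}^\bot)$ diagonalizing $\mathfrak{A}^{\mathbf{n}}$ with eigenvalues $\kappa_1,\dots,\kappa_n$. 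By symmetry of $\mathfrak{A}^{\mathbf{n}}$, of $\mathcal {R}(t,\mathbf{n})$ and of the data in \eqref{defineJacbi}, the Wronskian $\mathcal {A}^\top\mathcal {A}'-(\mathcal {A}')^\top\mathcal {A}$ is constant $=0$, so $U(t):=\mathcal {A}'(t,\mathbf{n})\mathcal {A}(t,\mathbf{n})^{-1}$ is a symmetric solution of the matrix Riccati equation $U'+U^2+\mathcal {R}(t,\mathbf{n})=0$ on $(0,c_{\mathcal {V}}(\mathbf{n}))$ (where $\det\mathcal {A}(t,\mathbf{n})>0$), with $\tr U(t)=\Delta r|_{(t,\mathbf{n})}$ by \eqref{Laplacian-expression}. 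As comparison model take the diagonal matrix $B(t):=\mathrm{diag}\bigl(\mathbf{c}_\lambda(t)-\kappa_1\mathbf{s}_\lambda(t),\dots,\mathbf{c}_\lambda(t)-\kappa_n\mathbf{s}_\lambda(t),\mathbf{s}_\lambda(t),\dots,\mathbf{s}_\lambda(t)\bigr)$ with $m-n-1$ trailing copies of $\mathbf{s}_\lambda$: by \eqref{relations} its entries solve $f''+\lambda f=0$ and match, entrywise, the initial data of \eqref{defineJacbi} with $\mathcal {R}$ replaced by $\lambda\,\id$, and by \eqref{footcomprison}--\eqref{relationcandr} one has $\mathfrak{t}_{\lambda,\kappa}\le\min_i\mathfrak{c}_{\lambda,\kappa_i}\le2\mathfrak{r}_\lambda$ (resp.\ $\mathfrak{t}_{\lambda,\kappa}=2\mathfrak{r}_\lambda$ if $n=0$), so $B$ is invertible on $(0,\mathfrak{t}_{\lambda,\kappa})$ and $U_0:=B'B^{-1}$ is a symmetric solution of $U_0'+U_0^2+\lambda\,\id=0$ there, with $\tr U_0(t)=\sum_{i=1}^n\frac{(\mathbf{c}_\lambda-\kappa_i\mathbf{s}_\lambda)'(t)}{(\mathbf{c}_\lambda-\kappa_i\mathbf{s}_\lambda)(t)}+(m-n-1)\frac{\mathbf{c}_\lambda(t)}{\mathbf{s}_\lambda(t)}$.

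For the comparison, put $D:=U-U_0$ and $W:=\tfrac12(U+U_0)$ on $(0,T_\mathbf{n})$; writing $U=W+\tfrac12D$ and $U_0=W-\tfrac12D$ gives $UD+DU_0=WD+DW$, so subtracting the two Riccati equations yields $D'=-(WD+DW)+S$ with $S(t):=\lambda\,\id-\mathcal {R}(t,\mathbf{n})\ge0$, the positivity being exactly $\sec_M\le\lambda$ via $\langle\mathcal {R}(t,\mathbf{n})v,v\rangle=\sec_M(\mathrm{span}\{P_{t;\mathbf{n}}v,\gamma'_{\mathbf{n}}(t)\})|v|^2\le\lambda|v|^2$ for $v\in\mathbf{n}^\bot$. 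Let $\Phi$ solve $\Phi'=-W\Phi$ on $(0,T_\mathbf{n})$ and set $E:=\Phi^{-1}D(\Phi^\top)^{-1}$; a direct product-rule computation collapses everything to $E'=\Phi^{-1}S(\Phi^\top)^{-1}\ge0$. Since the initial conditions in \eqref{defineJacbi} together with the matrix version of \eqref{smallsestimatedatA} force $U$ and $U_0$ to share the same leading part near $t=0$ (namely $-\mathfrak{A}^{\mathbf{n}}$ on $T_{\pi(\mathbf{n})}\Sigma$ and $t^{-1}\id$ on $\mathcal {V}_{\pi(\mathbf{n})}\Sigma\cap\mathbf{n}^\bot$), one gets $D(t)=O(t)$, while $\Phi(t)$ — whose governing field $W$ behaves like $t^{-1}\id$ on $\mathcal {V}_{\pi(\mathbf{n})}\Sigma\cap\mathbf{n}^\bot$ and stays bounded and invertible on $T_{\pi(\mathbf{n})}\Sigma$ — makes $\Phi^{-1}(t)$ decay fast enough there, so that $E(t)\to0$ as $t\to0^+$. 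Hence $E(t)=\int_{0^+}^t\Phi^{-1}S(\Phi^\top)^{-1}\ge0$, and therefore $D(t)=\Phi(t)E(t)\Phi(t)^\top\ge0$, i.e.
\[
U(t)\ge U_0(t)\quad\text{on }(0,T_\mathbf{n}).
\]
I expect this $t\to0^+$ analysis — upgrading the scalar expansion \eqref{smallsestimatedatA} to the matrix asymptotics of $\mathcal {A}(t,\mathbf{n})$ (and of $\Phi$) needed to conclude $E(t)\to0$ — to be the only genuinely delicate point; the rest is the classical Riccati-comparison mechanism, and the hypothesis $\kappa_i\le\kappa$ has not been used yet.

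Taking traces of $U\ge U_0$ gives $\Delta r|_{(t,\mathbf{n})}\ge\tr U_0(t)$ on $(0,T_\mathbf{n})$. Replacing the $\kappa_i$ by $\kappa$ uses only that, for fixed $t$, the map $s\mapsto\frac{(\mathbf{c}_\lambda-s\mathbf{s}_\lambda)'(t)}{(\mathbf{c}_\lambda-s\mathbf{s}_\lambda)(t)}=\frac{-\lambda\mathbf{s}_\lambda(t)-s\,\mathbf{c}_\lambda(t)}{\mathbf{c}_\lambda(t)-s\,\mathbf{s}_\lambda(t)}$ has derivative $-(\mathbf{c}_\lambda(t)-s\,\mathbf{s}_\lambda(t))^{-2}<0$ by \eqref{relations}, hence is decreasing; since $\kappa_i\le\kappa$ this gives $\tr U_0(t)\ge-n\frac{\lambda\mathbf{s}_\lambda(t)+\kappa\,\mathbf{c}_\lambda(t)}{\mathbf{c}_\lambda(t)-\kappa\,\mathbf{s}_\lambda(t)}+(m-n-1)\frac{\mathbf{c}_\lambda(t)}{\mathbf{s}_\lambda(t)}=G_{\lambda,\kappa}(t)$, which is \eqref{Laplace-comparison-reverse2}. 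For \eqref{det-comparison-reverse2}, rewrite $\Delta r|_{(t,\mathbf{n})}\ge\tr U_0(t)$ as $\bigl(\log\det\mathcal {A}(t,\mathbf{n})\bigr)'\ge\bigl(\log[\prod_{i=1}^n(\mathbf{c}_\lambda-\kappa_i\mathbf{s}_\lambda)(t)\cdot\mathbf{s}_\lambda(t)^{m-n-1}]\bigr)'$; by \eqref{smallsestimatedatA} the ratio of the two arguments of $\log$ tends to $1$ as $t\to0^+$, so integrating from $0^+$ gives $\det\mathcal {A}(t,\mathbf{n})\ge\prod_{i=1}^n(\mathbf{c}_\lambda(t)-\kappa_i\mathbf{s}_\lambda(t))\cdot\mathbf{s}_\lambda(t)^{m-n-1}\ge(\mathbf{c}_\lambda(t)-\kappa\mathbf{s}_\lambda(t))^n\mathbf{s}_\lambda(t)^{m-n-1}$ on $(0,T_\mathbf{n})$, the last step using $\kappa_i\le\kappa$ and $\mathbf{s}_\lambda>0$; the endpoints $t=0$ (both sides equal) and $t=T_\mathbf{n}$ (continuity) then yield \eqref{det-comparison-reverse2}. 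When $n=0$ the tangential block disappears and the argument reduces to the Günther--Rauch comparison on $(0,\min\{c_{\mathcal {V}}(\mathbf{n}),2\mathfrak{r}_\lambda\})$.
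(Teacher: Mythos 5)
Your argument is correct, but it follows a genuinely different route from the paper. The paper proves Theorem \ref{main-rem-2} by an index-form comparison: it fixes $t_0$, builds the transverse Jacobi fields $J_\alpha$ with prescribed value at $t_0$, constructs via Lemma \ref{umbilical-lem} a totally umbilical ``comparing submanifold'' $\widetilde\Sigma$ in the space form $\MM_\lambda$ with Weingarten map $\kappa\,\id$, transplants the components of $J_\alpha$ into test fields $\widetilde X_\alpha$ along the model geodesic, and applies the Jacobi criterion (Lemma \ref{INDEX-CRITERIA-modified-version}) twice; the hypothesis $\kappa_i\le\kappa$ enters there through the boundary term $-\langle\mathfrak{A}^{\mathbf n}J_\alpha(0),J_\alpha(0)\rangle$ of the index form, and the curvature bound through the integrand. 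You instead run an Eschenburg--Heintze type matrix Riccati comparison for the shape operator $U=\mathcal{A}'\mathcal{A}^{-1}$ (symmetry via the vanishing Wronskian, $\tr U=\Delta r$ by \eqref{Laplacian-expression}), with $\sec_M\le\lambda$ entering as $\mathcal{R}\le\lambda\,\id$ and $\kappa_i\le\kappa$ only at the final scalar monotonicity step, which is the same computation that underlies Step 1 of Theorem \ref{Laplacian-comparison-Sigma} and Proposition \ref{gestiamtegra}. Your route buys a pointwise matrix inequality $U\ge U_0$, the slightly sharper intermediate bound $\det\mathcal{A}(t,\mathbf n)\ge\prod_{i}(\mathbf{c}_\lambda-\kappa_i\mathbf{s}_\lambda)(t)\,\mathbf{s}_\lambda(t)^{m-n-1}$, and dispenses with the model umbilical submanifold altogether; the paper's route buys a proof that never touches the singular focal initial condition, since all comparisons are made at a fixed interior $t_0$ with matched endpoint data and the expansion \eqref{smallsestimatedatA} is used only to integrate the determinant. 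The one place where your write-up is only a sketch is exactly the point you flag: the claim $E(t)\to 0$ as $t\to 0^+$. This does hold -- the block asymptotics from \eqref{defineJacbi} give $D(t)=O(t)$ on all four blocks (including the off-diagonal ones, which are $O(t)$ because the mixed blocks of $\mathcal{A}$ are $O(t^2)$ and $O(t^3)$), while $\Phi^{-1}$ stays bounded on the tangential block and decays like $O(t)$ on the normal block since $W\sim t^{-1}\id$ there, so every block of $\Phi^{-1}D(\Phi^\top)^{-1}$ is at worst $O(t)$ -- but in a complete proof this asymptotic analysis (or a citation to the Riccati comparison theory with focal initial values) must be carried out explicitly, as a crude norm estimate $\|\Phi^{-1}\|\lesssim t^{-1}$ would not suffice.
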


In order to prove Theorem \ref{main-rem-2}, we need some  auxiliary  notions and results.
Given $\mathbf{n}\in \mathcal {V}S\Sigma$, let $\gamma_\mathbf{n}$ be a geodesic with ${\gamma}'_\mathbf{n}(0)=\mathbf{n}$. A Jacobi field $J$ along $\gamma_{\mathbf{n}}$ is called {\it transverse} if it is point-wisely orthogonal to $\gamma_{\mathbf{n}}$ with
\begin{equation}\label{deftransver}
	J(0)\in T_{\pi(\mathbf{n})}\Sigma,\quad (\nabla_{{\gamma}'_\mathbf{n}}J)(0)+\mathfrak{A}^{\mathbf{n}}J(0)\bot T_{\pi(\mathbf{n})}\Sigma.
\end{equation}
Let $\Upsilon_{\mathbf{n}}$ be the collection of piece-wisely smooth vector fields $X$ along $\gamma_{\mathbf{n}}$, point-wisely orthogonal to $\gamma_\mathbf{n}$ and $X(0)\in T_{\pi(\mathbf{n})}\Sigma$. Given $b>0$, the {\it index form} $I_b(\cdot,\cdot)$ on $\Upsilon_{\mathbf{n}}$ is defined by
\begin{equation}
	I_b(X,Y):=-\langle  {\mathfrak{A}}^{\mathbf{n}}X(0),Y(0)\rangle+\int^b_0 \Big(\la\na_{{\gamma}'_{\mathbf{n}}}X,\na_{{\gamma_{\mathbf{n}}}'}Y\ra-\la R_{\gamma'_{\mathbf{n}}}(X),Y\ra \Big) {\dd}t,\quad \forall\,X,Y\in \Upsilon_{\mathbf{n}}.\label{Index-Form-general-0}
\end{equation}

The standard Jacobi criteria  reads as follows, see Chavel \cite[Section 7.3]{Cha}:
\begin{lemma}[Jacobi Criteria]\label{INDEX-CRITERIA-modified-version} Let $(M,g)$ be an $m$-dimensional complete Riemannian manifold and $i:\Sigma\hookrightarrow M$ be an $n$-dimensional closed submanifold. Given $\mathbf{n}\in \mathcal {V}S\Sigma$,
	let $\gamma_\mathbf{n}:[0,b]\rightarrow M$ be a geodesic satisfying ${\gamma}'_\mathbf{n}(0)=\mathbf{n}$, $b\in (0,c_\mathcal {V}(\mathbf{n}))$ and
	let $J$ be a transverse Jacobi field along $\gamma_{\mathbf{n}}$. Then
	\[
	I_b(X,X)\geq I_b(J,J),\quad \forall\, X\in \Upsilon_{\mathbf{n}} \text{ with } X(b)=J(b),
	\]
	with equality if and only if $X=J$.
\end{lemma}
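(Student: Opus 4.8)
The plan is to reduce the minimization to the positive (semi-)definiteness of the index form on fields that vanish at $b$, which is the standard manifestation of the absence of focal points of $\Sigma$ along $\gamma_{\mathbf n}|_{(0,b]}$ guaranteed by $b<c_{\mathcal V}(\mathbf n)$. First I would set $W:=X-J$. Since $X,J\in\Upsilon_{\mathbf n}$ and $X(0),J(0)\in T_{\pi(\mathbf n)}\Sigma$, the field $W$ again lies in $\Upsilon_{\mathbf n}$, with $W(0)\in T_{\pi(\mathbf n)}\Sigma$ and $W(b)=X(b)-J(b)=0$. By the bilinearity of the index form in \eqref{Index-Form-general-0},
\[
I_b(X,X)=I_b(J,J)+2\,I_b(J,W)+I_b(W,W),
\]
so everything reduces to two claims: (A) the cross term $I_b(J,W)$ vanishes, and (B) $I_b(W,W)\ge 0$, with equality if and only if $W\equiv 0$.

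For (A) I would integrate by parts in the integral defining $I_b(J,W)$, obtaining $\int_0^b\langle\nabla_{\gamma'_{\mathbf n}}J,\nabla_{\gamma'_{\mathbf n}}W\rangle\,dt=\langle\nabla_{\gamma'_{\mathbf n}}J,W\rangle\big|_0^b-\int_0^b\langle\nabla_{\gamma'_{\mathbf n}}\nabla_{\gamma'_{\mathbf n}}J,W\rangle\,dt$; since $J$ is a Jacobi field, $\nabla_{\gamma'_{\mathbf n}}\nabla_{\gamma'_{\mathbf n}}J=-R_{\gamma'_{\mathbf n}}(J)$, so the curvature integral cancels and
\[
I_b(J,W)=-\big\langle \mathfrak A^{\mathbf n}J(0)+(\nabla_{\gamma'_{\mathbf n}}J)(0),\,W(0)\big\rangle+\big\langle(\nabla_{\gamma'_{\mathbf n}}J)(b),W(b)\big\rangle .
\]
The last term vanishes because $W(b)=0$, and the first term vanishes because $J$ is transverse: by \eqref{deftransver} the vector $(\nabla_{\gamma'_{\mathbf n}}J)(0)+\mathfrak A^{\mathbf n}J(0)$ is orthogonal to $T_{\pi(\mathbf n)}\Sigma$, which contains $W(0)$. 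Hence $I_b(J,W)=0$.

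For (B) I would exploit the matrix Jacobi field $\mathcal A(\cdot,\mathbf n)$ of \eqref{defineJacbi}: since $b<c_{\mathcal V}(\mathbf n)$, one has $\det\mathcal A(t,\mathbf n)>0$ on $(0,b]$, so $\mathcal A(t,\mathbf n):\mathbf n^\bot\to\mathbf n^\bot$ is invertible there. Writing $W(t)=P_{t;\mathbf n}\,\mathcal A(t,\mathbf n)\,v(t)$ on $(0,b]$, the asymptotics \eqref{smallsestimatedatA} together with $W(0)\in T_{\pi(\mathbf n)}\Sigma$ (so that the $\mathcal V_{\pi(\mathbf n)}\Sigma\cap\mathbf n^\bot$-component of $W$ vanishes to first order at $0$) show that $v$ extends to a piecewise-$C^1$ curve on $[0,b]$ with $v(b)=0$; then a standard computation — using the Jacobi equation for $\mathcal A$, integrating by parts, and observing that the boundary term at $t=0$ is exactly cancelled by $-\langle\mathfrak A^{\mathbf n}W(0),W(0)\rangle$ because $\mathcal A'(0,\mathbf n)|_{T_{\pi(\mathbf n)}\Sigma}=-\mathfrak A^{\mathbf n}$ in \eqref{defineJacbi} — yields the Morse-type identity $I_b(W,W)=\int_0^b|\mathcal A(t,\mathbf n)v'(t)|^2\,dt\ge 0$. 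Equality forces $v'\equiv 0$, hence $v$ constant, hence (by $v(b)=0$) $v\equiv 0$ and $W\equiv 0$; alternatively this positivity statement may simply be cited from Chavel \cite[Section 7.3]{Cha}. Combining (A) and (B) gives $I_b(X,X)=I_b(J,J)+I_b(W,W)\ge I_b(J,J)$ with equality iff $X=J$. The hard part will be paragraph (B): the careful bookkeeping of the degeneracy of $\mathcal A(0,\mathbf n)$ on the normal-to-$\Sigma$ directions, i.e. verifying the regularity of $v$ at $t=0$ and matching the $t=0$ endpoint contribution against the boundary term $-\langle\mathfrak A^{\mathbf n}W(0),W(0)\rangle$ in $I_b$; everything else is a short integration by parts.
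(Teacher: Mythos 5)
Your proof is correct: the decomposition $X=J+W$ with $W(b)=0$, the vanishing of the cross term via the transversality condition \eqref{deftransver}, and the Morse-type identity $I_b(W,W)=\int_0^b|\mathcal A(t,\mathbf n)v'(t)|^2\,{\dd}t$ (whose only delicate point is the regularity of $v$ at $t=0$ and the cancellation of the boundary term against $-\langle\mathfrak A^{\mathbf n}W(0),W(0)\rangle$, both of which you correctly identify) is exactly the standard focal index lemma. The paper gives no proof of this statement and simply cites Chavel \cite[Section 7.3]{Cha}; your argument is the one contained in that reference.
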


We are also  going to apply the following result by  Dajczer and Tojeiro {\cite[Proposition 1.20]{DT}:
	
	\begin{lemma}\label{umbilical-lem} Let ${\MM}_{\lambda}$ be an $m$-dimensional simply connected complete Riemannian manifold of constant sectional curvature $\lambda$  and let $\widetilde{o}\in {\MM}_{\lambda}$ be a fixed point. Then for every $n(\geq 1)$-dimensional proper vector subspace $\VV\subset T_{\widetilde{o}}{\MM}_{\lambda}$ and every vector $\HH\in  T_{\widetilde{o}}  {\MM}_{\lambda}$ orthogonal to $\VV$, there exists a unique $n$-dimensional complete totally umbilical submanifold $\widetilde{\Sigma}$ through $\widetilde{o}$  satisfying
		\begin{itemize}
			\item $T_{\widetilde{o}}\widetilde{\Sigma}=\VV$ and $\HH$ is  the  mean curvature vector of $\widetilde{\Sigma}$ at $\widetilde{o};$
			
			\item $\widetilde{\Sigma}$ is extrinsic sphere, i.e., its mean curvature is parallel$;$
			
			\item  the sectional curvature $\sec_{\widetilde{\Sigma}}\equiv\lambda+|\HH|^2$ and particularly, ${\widetilde{\Sigma}}$ is totally geodesic if and only if  $\HH=0$. 
		\end{itemize}
	\end{lemma}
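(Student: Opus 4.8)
The plan is to deduce the statement from the classical fundamental (Bonnet-type) theorem of isometric immersions into space forms (cf.\ \cite{DT}), using the fact that a totally umbilical extrinsic sphere is rigidly pinned down by its tangent space and mean curvature vector at a single point. First, observe that if such a $\widetilde{\Sigma}$ exists, then the Gauss equation applied to $\mathfrak{B}(X,Y)=\langle X,Y\rangle\,\HH$ in a space form of curvature $\lambda$ forces $\sec_{\widetilde{\Sigma}}\equiv\lambda+|\HH|^{2}=:c_{0}$; hence, intrinsically, $\widetilde{\Sigma}$ must be the complete simply connected $n$-dimensional space form $N^{n}_{c_{0}}$ (a round sphere, Euclidean space, or hyperbolic space according to the sign of $c_{0}$; when $n=1$ one instead takes the complete one-dimensional model prescribed by the geometry, a case which is in any event covered by the explicit construction mentioned below). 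Conversely, fix a base point $o\in N^{n}_{c_{0}}$, a linear isometry $T_{o}N^{n}_{c_{0}}\to\VV\subset T_{\widetilde{o}}\MM_{\lambda}$, and a linear isometry of a fixed Euclidean space $\mathbb{R}^{m-n}$ onto the orthogonal complement $\VV^{\perp}\subset T_{\widetilde{o}}\MM_{\lambda}$ carrying a fixed vector $H_{0}$ to $\HH$.

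Next I would equip $N^{n}_{c_{0}}$ with the trivial rank-$(m-n)$ Riemannian bundle $E:=N^{n}_{c_{0}}\times\mathbb{R}^{m-n}$ with the flat product connection $\nabla^{\perp}$ (legitimate since $N^{n}_{c_{0}}$ is simply connected for $n\geq2$), with the $\nabla^{\perp}$-parallel section $\xi$ of constant value $H_{0}$, and with the symmetric $E$-valued bilinear form $\alpha(X,Y):=\langle X,Y\rangle\,\xi$. A direct check confirms the compatibility equations for an isometric immersion of $N^{n}_{c_{0}}$ into $\MM_{\lambda}$ with this normal data: the Gauss equation reduces to the requirement that $N^{n}_{c_{0}}$ have constant curvature $\lambda+|\HH|^{2}$, which holds by construction; the Codazzi equation holds because $\nabla^{\perp}\alpha\equiv0$ (as $\xi$ is parallel and $\alpha$ is a pointwise scalar multiple of the metric); and the Ricci equation holds because $\nabla^{\perp}$ is flat and the shape operators $A_{\eta}=\langle\eta,\xi\rangle\,\mathrm{Id}$ all commute. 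The fundamental theorem then yields a global isometric immersion $f\colon N^{n}_{c_{0}}\to\MM_{\lambda}$ realizing $\alpha$ and $\nabla^{\perp}$, unique up to an isometry of $\MM_{\lambda}$; normalizing $f(o)=\widetilde{o}$ and matching the chosen frame at $o$ fixes $f$ uniquely. Set $\widetilde{\Sigma}:=f(N^{n}_{c_{0}})$.

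It then remains to read off the asserted properties. By construction $\widetilde{\Sigma}$ is totally umbilical with $\mathfrak{B}=\langle\cdot,\cdot\rangle\,\vec{H}$, and $\vec{H}=\xi$ is $\nabla^{\perp}$-parallel, so $\widetilde{\Sigma}$ is an extrinsic sphere; $T_{\widetilde{o}}\widetilde{\Sigma}=\VV$ and its mean curvature vector at $\widetilde{o}$ equals $\HH$; the Gauss equation gives $\sec_{\widetilde{\Sigma}}\equiv\lambda+|\HH|^{2}$; and $\widetilde{\Sigma}$ is totally geodesic if and only if $\alpha\equiv0$, i.e.\ $\HH=0$. Completeness is immediate since $N^{n}_{c_{0}}$ is complete. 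For uniqueness, any complete extrinsic sphere through $\widetilde{o}$ with the prescribed tangent space and mean curvature vector is, by the Gauss equation, intrinsically $N^{n}_{c_{0}}$ and carries normal data agreeing with $(\alpha,\nabla^{\perp})$ along a suitable frame, hence coincides with $\widetilde{\Sigma}$ by the uniqueness part of the fundamental theorem. That $f$ is in fact an embedding, together with a concrete description in every case, follows from the classification of complete totally umbilical submanifolds of space forms as intersections of the standard model (in $\mathbb{R}^{m+1}$ or in Minkowski space $\mathbb{R}^{m,1}$) with affine subspaces; this also furnishes a fully explicit alternative construction — an affine $n$-plane through $\widetilde{o}$ when $\HH=0$; a round $n$-sphere of radius $1/|\HH|$ when $\lambda=0$ and $\HH\neq0$; and a small $n$-sphere, a horosphere, or an equidistant submanifold in the remaining cases.

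I expect the main obstacle to be the verification in the second paragraph that umbilicity together with a space-form ambient renders the Gauss--Codazzi--Ricci system consistent precisely with intrinsic curvature $\lambda+|\HH|^{2}$ and a flat normal bundle; once this structural point is in place, the fundamental theorem of submanifolds supplies both existence and uniqueness simultaneously, and all remaining assertions are immediate consequences of the Gauss equation and of the way $f$ was normalized.
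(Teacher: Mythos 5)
The paper does not prove this lemma at all: it is quoted verbatim from Dajczer--Tojeiro \cite[Proposition 1.20]{DT} and used as a black box in the proof of Theorem \ref{main-rem-2}. Your argument is therefore not "a different route from the paper" but an actual proof where the paper supplies only a citation, and it is essentially the standard textbook derivation: the Gauss equation with $\alpha(X,Y)=\langle X,Y\rangle\,\xi$ forces intrinsic curvature $\lambda+|\HH|^2$, Codazzi holds because $\alpha$ is a parallel tensor, Ricci holds because the shape operators are scalar and the normal connection is flat, and the fundamental theorem of submanifolds then yields existence and uniqueness up to ambient isometry, pinned down by the frame at $\widetilde o$. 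All of this is correct, and your acknowledgement that embeddedness of the immersion requires the explicit classification of umbilical submanifolds of space forms is the right caveat.

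Two refinements would make the uniqueness part airtight. First, your uniqueness argument starts from a competitor that is already an extrinsic sphere; to get uniqueness in the class of complete totally umbilical submanifolds (for $n\geq 2$) you should add the one-line Codazzi computation showing that umbilicity in a space form automatically implies $\nabla^\perp H=0$: from $(\nabla^\perp_X\mathfrak B)(Y,Z)=(\nabla^\perp_Y\mathfrak B)(X,Z)$ and $\mathfrak B=\langle\cdot,\cdot\rangle H$ one gets $\langle Y,Z\rangle\nabla^\perp_XH=\langle X,Z\rangle\nabla^\perp_YH$, and choosing $X\perp Y=Z$ kills $\nabla^\perp_XH$. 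Second, for $n=1$ every curve is totally umbilical, so uniqueness genuinely holds only within the class of extrinsic spheres (the second bullet of the statement); it is worth flagging that the three bullets jointly, not the umbilicity alone, characterize $\widetilde\Sigma$ in that case.
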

	\begin{remark}\label{principlecurvauteform1}
		Let $\widetilde{H}$ denote the mean curvature of $\widetilde{\Sigma}$. Since $\widetilde{\Sigma}$ is an extrinsic sphere, we have $ |\widetilde{H}|(\tilde{x})\equiv|\HH|$ for any $\tilde{x}\in \widetilde{\Sigma}$. Moreover, for any $\tilde{\mathbf{n}}\in \mathcal {V}S_{\tilde{x}}\widetilde{\Sigma}$, it follows by \eqref{secondfundadef}--\eqref{defunbm} that the Weingarten map satisfies
		\[
		\widetilde{\mathfrak{A}}^{\tilde{\mathbf{n}}}=\langle \widetilde{H},\tilde{\mathbf{n}}   \rangle\id_{T_{\tilde{x}}\widetilde{\Sigma}}.
		\]
		In particular, given   $\kappa\in \mathbb{R}$ with $|\kappa|=|\widetilde{H}|(\tilde{x})$,  we have $\widetilde{\mathfrak{A}}^{\tilde{\mathbf{n}}}=\kappa\id_{T_{\tilde{x}}\widetilde{\Sigma}}$ by choosing
		\begin{equation}\label{unitvectorchoose}
			\tilde{\mathbf{n}}:=\left \{
			\begin{array}{lll}
				\text{sgn}(\kappa)\frac{\widetilde{H}(\tilde{x})}{|\widetilde{H}(\tilde{x})|},&\text{ if }\kappa\neq0,\\
				\\
				\text{any unit normal vector in $\mathcal {V}S_{\tilde{x}}\widetilde{\Sigma}$},&\text{ if }\kappa=0.
			\end{array}
			\right.
		\end{equation}
	\end{remark}
	
	{\it Proof of Theorem \ref{main-rem-2}.}  Given $\mathbf{n}\in \mathcal {V}S\Sigma$, let $\gamma_\mathbf{n}$ be a geodesic satisfying ${\gamma}'_\mathbf{n}(0)=\mathbf{n}$. By assumption, one has that
	\begin{itemize}
		
		\item $\sec_M({\gamma}'_{\mathbf{n}}(t),\cdot)\leq \lambda$;
		
		\smallskip
		
		\item  the eigenvalues $\{\kappa_i\}_{i=1}^n$ of $\mathfrak{A}^{\mathbf{n}}$ with respect to some orthonormal basis of $T_{\pi(\mathbf{n})}\Sigma$ satisfy $\kappa_i\leq \kappa$ for all $1\leq i\leq n$.
		
	\end{itemize}
	
%
%
%
%

		In the sequel, we   keep the index range of
		\[
		1\leq i,j,k,...\leq n,\quad   n+1\leq A,B,...\leq m-1, \quad 1\leq \alpha,\beta, ...\leq m-1.
		\]
		For convenience, let $\gamma:=\gamma_{\mathbf{n}}$ and  $o:=\pi(\mathbf{n})\in \Sigma$. Let
		$\{\mathbf{n},e_\alpha\}_{\alpha=1}^{m-1}$ be an orthonormal basis for $T_oM$ such that $\{e_i\}_{i=1}^n\subset T_o\Sigma$ are the eigenvectors  with $\mathfrak{A}^{\mathbf{n}}(e_i)=\kappa_i e_i$. Denote by $E_\alpha(t):=P_{t;\mathbf{n}}e_\alpha$ the parallel vector field along $\gamma$ for $1\leq \alpha\leq m-1$. The rest of the proof is divided into three steps.
		
		\smallskip
		
		{\it Step 1}. Given $t_0\in (0,\min\{c_\mathcal {V}(\mathbf{n}),\mathfrak{t}_{\lambda,\kappa}\})$,  we construct a family of transverse Jacobi fields $\{J_\alpha\}$  along $\gamma$.
		For each $1\leq i\leq n$ and $n+1\leq A\leq m-1$, it follows from  \eqref{constructionofJacobifile}, Lemma  \ref{Fermifirstlemma} and the standard theory of Jacobi fields (or ODE) that $Y_i(t)=P_{t;\mathbf{n}}\mathcal {A}(t,\mathbf{n})e_i$ is a unique transverse Jacobi field  along $\gamma$ with
		\begin{equation}\label{Yicontrol}
			Y_i(0)=e_i,\quad \na_{{\gamma}'}Y_i(0)=-\mathfrak{A}^{\mathbf{n}}(e_i)=-\kappa_i e_i,
		\end{equation}
		while $Y_A(t):=P_{t;\mathbf{n}}\mathcal {A}(t,\mathbf{n})e_A$ is the unique transverse Jacobi field  along $\gamma$ with
		\begin{equation}\label{YAcontrol}
			Y_A(0)=0,\quad \nabla_{{\gamma}'}Y_A(0)=e_A.
		\end{equation}
		Since $\{{\gamma}', Y_\alpha\}$ is a frame field along $t\mapsto \gamma(t)$ for $t\in (0,t_0)$, see Chavel \cite[Section 7.3]{Cha}, there exists a non-singular matrix $(f_\beta^\alpha(t_0))$ such that  $E_\alpha(t_0)=f^\beta_\alpha(t_0) Y_{\beta}(t_0)$. Now we can define a family of transverse Jacobi fields
		\begin{equation}\label{Jbetaconstr}
			J_\alpha(t):=f^\beta_\alpha(t_0)Y_\beta(t),\quad 0\leq t\leq t_0.
		\end{equation}
		It follows from Lemma \ref{INDEX-CRITERIA-modified-version}  that $J_\alpha$ is the unique transverse Jacobi field with $J_\alpha(t_0)=E_\alpha(t_0)$.

		\smallskip

		{\it Step 2}. We construct a ``comparing submanifold" in the space form. To do this, let $\MM_\lambda$ be the space form as in Lemma \ref{umbilical-lem}.
		Fixing a point $\tilde{o}\in \MM_\lambda$, it follows by
		Lemma \ref{umbilical-lem} and Remark \ref{principlecurvauteform1} that there is
		an $n$-dimensional totally umbilical submanifold $\widetilde{\Sigma}$ in $\MM_\lambda$ such that $\widetilde{\mathfrak{A}}^{\tilde{\mathbf{n}}}=\kappa\id_{T_{\tilde{o}}\widetilde{\Sigma}}$ for some $\tilde{\mathbf{n}}\in \mathcal {V}_{\tilde{o}}S\widetilde{\Sigma}$.
		Take an orthonormal basis $\{\tilde{\mathbf{n}},\tilde{e}_1,\ldots,\tilde{e}_{m-1}\}$ for  $T_{\tilde{o}}\MM_\lambda$, where $\{\tilde{e}_1,\ldots,\tilde{e}_{n}\}$ is an orthonormal basis for $T_{\tilde{o}}\widetilde{\Sigma}$. Clearly, $\widetilde{\mathfrak{A}}^{\tilde{\mathbf{n}}}(\tilde{e}_i)=\kappa \tilde{e}_i$ for $1\leq i\leq n$.
		Let $\tilde{\gamma}$ be the geodesic in $\MM_\lambda$ with ${\tilde{\gamma}}'(0)=\tilde{\mathbf{n}}$.
		Denote by $\widetilde{E}_\alpha$ the parallel vector field  along $\tilde{\gamma}$ with $\widetilde{E}_\alpha(0)=\tilde{e}_\alpha$ for $1\leq \alpha\leq m-1$.

		Now consider the transverse  Jacobi fields $\tJ_\alpha(t)$, $0\leq t\leq t_0$ along $\tilde{\gamma}$, see \cite[p.~321]{Cha}:
		\begin{equation}
			\begin{cases} \tJ_i(t):={\mathbf{c}_\lambda(t)-\kappa \mathbf{s}_\lambda(t)\over \mathbf{c}_\lambda(t_0)-\kappa \mathbf{s}_\lambda(t_0)}\widetilde{E}_i(t),&  1\leq i\leq n,\\
				\\
				\tJ_A(t):={\mathbf{s}_\lambda(t)\over \mathbf{s}_\lambda(t_0)}\widetilde{E}_A(t),&  n+1\leq A\leq m-1.\end{cases}\label{Jacobi-fields-tM}
		\end{equation}
		Thus, $\tJ_\alpha(t)$'s  are well-defined for $t\in [0,t_0]$ since $t_0<\mathfrak{t}_{\lambda,\kappa}$. In what follows, we will use $\tilde{*}$ to denote the geometric quantity $*$ in the space form $\MM_\lambda$.
		
		\smallskip

		{\it Step 3}. In this step we complete the proof.
			On the one hand,
		in $(M,g)$  we express $J_\alpha(t)=:J^\beta_\alpha(t)E_\beta(t)$. Since $E_\alpha(t_0)=J_\alpha(t_0)=J^\beta_\alpha(t_0)E_\beta(t_0)$, we have $J^\beta_\alpha(t_0)=\delta^\beta_\alpha$. Furthermore, $J^A_\alpha(0)=0$ follows by \eqref{Yicontrol}--\eqref{Jbetaconstr} directly.
		On the other hand,
		construct a family of smooth vector fields in $\MM_\lambda$ along $\tilde{\gamma}$ by
		\begin{equation}\label{Jacobi-fields-tM-trans}
			\widetilde{X}_\alpha(t):=J^\beta_\alpha(t) \widetilde{E}_\beta(t).
		\end{equation}
		It is easy to see that
		\[
		\widetilde{X}_\alpha(0)= J^i_\alpha(0) \tilde{e}_i\in  T_{\tilde{o}}\widetilde{\Sigma},\quad     \widetilde{X}_\alpha(t_0)=\widetilde{E}_\alpha(t_0)=\tJ_\alpha(t_0).
		\]
		Thus Lemma \ref{INDEX-CRITERIA-modified-version} together with the equation of Jacobi field yields
		\begin{align}
			\tilde{I}_{t_0}(\widetilde{X}_\alpha,\widetilde{X}_\alpha)&\geq \tilde{I}_{t_0}(\tJ_\alpha,\tJ_\alpha)=-\langle \widetilde{\mathfrak{A}}^{\tilde{\mathbf{n}}}(\tJ_\alpha(0)), \tJ_\alpha(0)  \rangle+\langle \tJ_\alpha,\widetilde{\nabla}_{{\tilde{\gamma}'}}\tJ_\alpha  \rangle|^{t_0}_0\notag\\
			&=\langle \tJ_\alpha(t_0),\widetilde{\nabla}_{{\tilde{\gamma}'}}\tJ_\alpha (t_0) \rangle-\langle \tJ_\alpha(0),\widetilde{\nabla}_{{\tilde{\gamma}'}}\tJ_\alpha (0)+\widetilde{\mathfrak{A}}^{\tilde{\mathbf{n}}}(\tJ_\alpha(0)) \rangle=\langle \tJ_\alpha(t_0),\widetilde{\nabla}_{{\tilde{\gamma}'}}\tJ_\alpha (t_0) \rangle,\label{lowerhiicontroll}
		\end{align}
		where the last equality follows by the definition of transverse Jacobi field, see \eqref{deftransver}.
		On the other hand, by recalling that $\kappa_i\leq \kappa$ and $J_\alpha$'s are transverse Jacobi fields, we obtain
		\begin{align*}
			\langle \widetilde{X}_\alpha(0),\widetilde{\nabla}_{{\tilde{\gamma}'}}\widetilde{X}_\alpha (0)+\widetilde{\mathfrak{A}}^{\tilde{\mathbf{n}}}(\widetilde{X}_\alpha(0)) \rangle&=\sum_{i=1}^nJ^i_\alpha(0)(({J}^i_\alpha)'(0)+\kappa J^i_\alpha(0))\geq \sum_{i=1}^nJ^i_\alpha(0)(({J}^i_\alpha)'(0)+\kappa_i J^i_\alpha(0))\\
			&=\langle J_\alpha(0),{\nabla}_{{{\gamma}'}}J_\alpha (0)+{\mathfrak{A}}^{{\mathbf{n}}}(J_\alpha(0)) \rangle=0.
		\end{align*}
		
		A similar calculation combined with the construction of $\widetilde{X}_\alpha$ and $\sec_M({\gamma}'_{\mathbf{n}}(t),\cdot)\leq \lambda$ furnishes
		\begin{align*}
			\tilde{I}_{t_0}(\widetilde{X}_\alpha,\widetilde{X}_\alpha)=&\langle \widetilde{X}_{\alpha}(t_0),\widetilde{\nabla}_{{\tilde{\gamma}'}}\widetilde{X}_{\alpha} (t_0) \rangle-\langle \widetilde{X}_{\alpha}(0),\widetilde{\nabla}_{{\tilde{\gamma}'}}\widetilde{X}_{\alpha} (0)+\widetilde{\mathfrak{A}}^{\tilde{\mathbf{n}}}(\widetilde{X}_{\alpha}(0)) \rangle\\
&-\int^{t_0}_0  \left(\langle \widetilde{\nabla}_{{\tilde{\gamma}'}}\widetilde{\nabla}_{{\tilde{\gamma}'}}\widetilde{X}_{\alpha} , \widetilde{X}_\alpha \rangle +\lambda |{\tilde{\gamma}'}\wedge \widetilde{X}_{\alpha}|^2\right) {\dd}t\\
			\leq & \langle {J_{\alpha}}(t_0),{\nabla}_{{{\gamma}'}}{J_{\alpha}} (t_0) \rangle-\int^{t_0}_0  \left(\langle {\nabla}_{{{\gamma}'}}{\nabla}_{{{\gamma}'}}{J_{\alpha}} , J_\alpha \rangle + \langle R_{\gamma'}(J_\alpha),J_\alpha \rangle\right) {\dd}t= \langle {J_{\alpha}}(t_0),{\nabla}_{{{\gamma}'}}{J_{\alpha}} (t_0) \rangle,
		\end{align*}
		which together with \eqref{lowerhiicontroll} and a standard calculation, see Chavel \cite[p.~325]{Cha}, yields
		\begin{equation}\label{controlldetlap}
			\left.\frac12{(\det \mathcal {A}(t,\Bn))'\over \det \mathcal {A}(t,\Bn)}\right|_{t=t_0}=\sum_{\alpha=1}^{m-1}\langle {J_{\alpha}}(t_0),{\nabla}_{{{\gamma}'}}{J_{\alpha}} (t_0) \rangle\geq \sum_{\alpha=1}^{m-1}\langle \tJ_\alpha(t_0),\widetilde{\nabla}_{{\tilde{\gamma}'}}\tJ_\alpha (t_0) \rangle= \left.\frac12{(\det \widetilde{\mathcal {A}}(t,\tilde{\Bn}))'\over \det \widetilde{\mathcal {A}}(t,\tilde{\Bn})}\right|_{t=t_0},
		\end{equation}
		where $\det \widetilde{\mathcal {A}}(t,\tilde{\Bn}):=(\mathbf{c}_\lambda(t)-\kappa\mathbf{s}_\lambda(t))^n\,\mathbf{s}_\lambda(t)^{m-n-1}$. Therefore, \eqref{Laplace-comparison-reverse2}} follows by \eqref{Laplacian-expression} and \eqref{controlldetlap}, while  \eqref{smallsestimatedatA} combined with \eqref{controlldetlap} implies   \eqref{det-comparison-reverse2}.
\hfill $\square$\\


We conclude this subsection by recalling the totally geodesic submanifolds in space forms, see Cannon et al. \cite{CFKP},  Dajczer and Tojeiro \cite{DT} and  O'Neil\cite{ONeil}.

\begin{example}\label{space-form-phi-psi1} Let $\mathbb{M}_\lambda$ be an $m(\geq 2)$-dimensional simply connected complete Riemannian manifold with constant sectional curvature $\lambda\in \{-1,0,1\}$ and let  $i:\Sigma\hookrightarrow \MM_\lambda$ be an $n$-dimensional {totally geodesic} submanifold with $1\leq n\leq m-1$.
Thus,
\begin{itemize}

\item[(i)] if $\lambda=1$, then $\mathbb{M}_\lambda=\mathbb{S}^m$  and $\Sigma$ is a great $n$-sphere $\mathbb{S}^n$; in particular when $n=m-1$,
\[
c_{\mathcal{V}}(\Bn)=\mathfrak{c}_{\lambda,0}=\mathfrak{r}_{\lambda}=\pi/2,\  \forall\,\mathbf{n}\in \mathcal {V}S\Sigma,\qquad \mathcal{V}\mathscr{C}\Sigma=\{\text{antipodal points w.r.t. }\Sigma\};
\]

\smallskip

\item[(ii)] if $\lambda=0$, then $\mathbb{M}_\lambda=\mathbb{R}^m$ and $\Sigma$ is an affine $n$-plane. Thus,
\begin{equation}\label{spaceformcnisinfite}
 c_{\mathcal{V}}(\Bn)=\mathfrak{c}_{\lambda,0}=\mathfrak{r}_{\lambda}=\infty,\  \forall\,\mathbf{n}\in \mathcal {V}S\Sigma,\qquad \mathcal{V}\mathscr{C}\Sigma=\emptyset;
\end{equation}

\smallskip

\item[(iii)] if $\lambda=-1$, then $\mathbb{M}_\lambda=\mathbb{H}^m$ is the hyperbolic space (modeled on $\mathbb{R}^m_+$) and $\Sigma$ is
\begin{itemize}
\item[$\bullet$] either an upper half affine $n$-subspace vertical to the hyperplane $x^m=0$;

\item[$\bullet$] or a half-$n$-sphere with an arbitrary radius that intersect orthogonally the hyperplane $x^m=0$;
\end{itemize}
in particular, \eqref{spaceformcnisinfite} remains valid in both cases.


\end{itemize}

In all these cases, due to Chavel \cite[p.~321]{Cha}, for any $t\in (0,\mathfrak{t}_{\lambda,0})=(0,\mathfrak{r}_{\lambda})$ and $\mathbf{n}\in \mathcal {V}S\Sigma$, one has
\begin{align}
\det \mathcal {A}(t,\Bn)=\mathbf{c}_\lambda(t)^n\mathbf{s}_\lambda(t)^{m-n-1},  \quad \Delta r|_{(t,\Bn)}=G_{\lambda,0}(t)=-\lambda n{ \mathbf{s}_\lambda(t)\over  \mathbf{c}_\lambda(t)}+(m-n-1){\mathbf{c}_\lambda(t)\over \mathbf{s}_\lambda(t)},\label{narspaceform}
 \end{align}
 which coincide with the conclusions of Theorems \ref{Laplacian-comparison-Sigma} $\&$ \ref{main-rem-2} in their limit cases.
\end{example}

\subsection{Weak divergence}\label{section-weak}
In our approach, weak divergence plays an important role in establishing Hardy inequalities; first we recall its definition, see   D'Ambrosio \cite{DA} and  D'Ambrosio and Dipierro \cite{DD}:

\begin{definition} Given an open subset $U\subset M$, let $L^1_{\lo}(TU)$ and $L^1_{\lo}(U)$ denote the locally integrable vector fields and functions on $U$ (w.r.t. $\vol_g$), respectively. That is, for any compact set $\mathscr{K}\subset U$,
\[
\int_\mathscr{K} |X|\dvol_g <\infty, \ \forall\, X\in L^1_{\lo}(TU); \quad \int_\mathscr{K} |f|\dvol_g <\infty, \ \forall\, f\in L^1_{\lo}(U).
\]
Given a vector field
$X\in L^1_{\lo}(TU)$ and  a non-negative function $f_X\in L^1_{\lo}(U)$, we say   $f_X\leq \di X$ {\it in the weak sense} if
\[
 \int_{U} u f_X {\dvol}_g\leq  -\int_{U} \langle \nabla u, X \rangle {\dvol}_g, \quad \forall \,u\in C^\infty_0(U) \text{ with }u\geq 0.
 \]
\end{definition}

Proceeding as in  D'Ambrosio \cite[Theorem 2.5]{DA} or D'Ambrosio and  Dipierro \cite[Lemma 2.10]{DD}, we have the following result.
\begin{lemma}\label{divlemf}Let $U$ be an open set of $M$.
Given $p>1$, if a vector field $X\in L^1_{\lo}(T{U})$    and a non-negative function $f_X\in L^1_{\lo}({U})$ satisfy $|X|^p/f_X^{p-1}\in L^1_{\lo}({U})$ and $f_X\leq \di X$ in the weak sense, then
\begin{align*}
p^p \ds\ds\int_{U}|\nabla u|^p\frac{|X|^p}{f_X^{p-1}} {\dvol}_g    \geq \ds\ds\int_{U} |u|^p f_X{\dvol}_g,\quad \forall\,u\in C^\infty_0({U}).
\end{align*}
\end{lemma}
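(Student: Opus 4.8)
The plan is to mimic the classical D'Ambrosio-type argument: use the weak divergence inequality $f_X\le\di X$ with a carefully chosen test function, combine with Young's inequality, and absorb the resulting cross term. First I would fix $u\in C^\infty_0(U)$ and note that by a standard density/truncation argument it suffices to prove the estimate for $u$ of the form $u=|v|^{p/(p-1)}\cdots$, or more directly, to plug the non-negative test function $\varphi=|u|^p$ into the defining inequality. Since $|u|^p$ may fail to be $C^\infty$, I would first record that the weak divergence inequality extends to non-negative Lipschitz compactly supported test functions (approximate $|u|^p$ by mollification or by $ (u^2+\e^2)^{p/2}-\e^p$ and pass to the limit using $X\in L^1_\lo$ and dominated convergence); this is the kind of routine regularization done in D'Ambrosio \cite{DA}. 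Applying the inequality with $\varphi=|u|^p$ gives
\begin{equation*}
\int_U |u|^p f_X\,\dvol_g \le -\int_U \langle \nabla(|u|^p),X\rangle\,\dvol_g = -p\int_U |u|^{p-2}u\,\langle \nabla u, X\rangle\,\dvol_g.
\end{equation*}

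Next I would estimate the right-hand side pointwise by Cauchy--Schwarz and then Young's inequality with exponents $p$ and $p/(p-1)$. Writing $|u|^{p-2}u\,\langle\nabla u,X\rangle$ and splitting the weight as
\begin{equation*}
\big| p\,|u|^{p-1}\,|\nabla u|\,|X| \big| = p\cdot\Big(|\nabla u|\,\frac{|X|}{f_X^{(p-1)/p}}\Big)\cdot\Big(|u|^{p-1} f_X^{(p-1)/p}\Big),
\end{equation*}
Young's inequality $ab\le \frac{a^p}{p}+\frac{(p-1)b^{p/(p-1)}}{p}$ (on the set where $f_X>0$; on the set $\{f_X=0\}$ the contribution to $\int|u|^pf_X$ vanishes and one checks $|X|=0$ a.e.\ there from $|X|^p/f_X^{p-1}\in L^1_\lo$, or simply restricts integration) gives, for any $\e>0$ after rescaling,
\begin{equation*}
p\,|u|^{p-1}|\nabla u|\,|X| \le \e\,|\nabla u|^p\frac{|X|^p}{f_X^{p-1}} + (p-1)\e^{-1/(p-1)}|u|^p f_X.
\end{equation*}
Integrating and choosing $\e$ so that the second term has coefficient strictly less than $1$ — concretely $\e^{-1/(p-1)}(p-1)<1$, i.e.\ $\e>(p-1)^{p-1}$ — lets me absorb $\int|u|^pf_X$ into the left side. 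Optimizing the constant (take $\e\to (p-1)^{p-1}$, or equivalently use the sharp split in Young's inequality) yields exactly the factor $p^p$ in front of $\int_U|\nabla u|^p|X|^p/f_X^{p-1}\,\dvol_g$.

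The integrability hypotheses are exactly what makes every step legitimate: $f_X\in L^1_\lo$ and $u\in C^\infty_0$ ensure $\int_U|u|^pf_X<\infty$; $X\in L^1_\lo(TU)$ ensures the weak divergence pairing is finite for the Lipschitz test function $|u|^p$; and $|X|^p/f_X^{p-1}\in L^1_\lo$ ensures the right-hand side of the claimed inequality is finite, so the absorption argument is not vacuous. The main obstacle I anticipate is the regularization issue, namely justifying that $|u|^p$ (which is only $C^1$ when $p\ge 2$ and merely Hölder near zeros of $u$ when $1<p<2$) is an admissible test function in the weak divergence inequality. I would handle this by testing instead with $\varphi_\e = (u^2+\e^2)^{(p-1)/2}\,|u|$ or $\varphi_\e=\big((u^2+\e^2)^{p/2}-\e^p\big)$ — smooth, non-negative, compactly supported — computing $\nabla\varphi_\e$ explicitly, and passing $\e\to 0^+$: the gradient terms converge pointwise and are dominated by an $L^1$ function (using $|X|\in L^1_\lo$ on $\supp u$ and the uniform bound $|\nabla\varphi_\e|\le C\,|u|^{p-1}|\nabla u|$), so dominated convergence applies on both sides. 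With that technical point settled, the Young-inequality computation is entirely routine, following D'Ambrosio \cite[Theorem 2.5]{DA} and D'Ambrosio--Dipierro \cite[Lemma 2.10]{DD}.
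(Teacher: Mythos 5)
Your overall strategy is exactly the one the paper intends (it does not reprove the lemma but defers to D'Ambrosio \cite[Theorem 2.5]{DA} and D'Ambrosio--Dipierro \cite[Lemma 2.10]{DD}): test the weak divergence inequality with a smooth, nonnegative, compactly supported regularization of $|u|^p$ such as $(u^2+\e^2)^{p/2}-\e^p$, pass to the limit by dominated convergence, and then control the cross term $p\int|u|^{p-1}|\nabla u||X|$ by splitting the weight with $f_X^{(p-1)/p}$ and absorbing $\int|u|^pf_X$ (which is finite because $u\in C^\infty_0(U)$ and $f_X\in L^1_{\lo}$). The regularization step and the treatment of $\{f_X=0\}$ (where $|X|=0$ a.e.\ by the hypothesis $|X|^p/f_X^{p-1}\in L^1_{\lo}$) are handled correctly.

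However, the final optimization as you state it is wrong. From your parametrized Young inequality
\begin{equation*}
p\,|u|^{p-1}|\nabla u|\,|X| \le \e\,|\nabla u|^p\frac{|X|^p}{f_X^{p-1}} + (p-1)\e^{-1/(p-1)}|u|^p f_X,
\end{equation*}
absorption gives the constant $\e\big/\bigl(1-(p-1)\e^{-1/(p-1)}\bigr)$, and as $\e\downarrow(p-1)^{p-1}$ the denominator tends to $0$, so this constant blows up rather than tending to $p^p$; the threshold of absorbability is not the minimizer. Minimizing over admissible $\e$ gives the interior critical point $\e=p^{p-1}$, for which $1-(p-1)\e^{-1/(p-1)}=1-(p-1)/p=1/p$ and the constant is exactly $p^{p-1}\cdot p=p^p$. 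Equivalently (and this is how the cited proofs run), skip Young altogether: apply H\"older with exponents $p/(p-1)$ and $p$ to get
\begin{equation*}
\int_U|u|^pf_X\dvol_g\le p\Big(\int_U|u|^pf_X\dvol_g\Big)^{1-1/p}\Big(\int_U|\nabla u|^p\frac{|X|^p}{f_X^{p-1}}\dvol_g\Big)^{1/p},
\end{equation*}
and divide by the finite quantity $\bigl(\int_U|u|^pf_X\dvol_g\bigr)^{1-1/p}$ (the inequality being trivial if it vanishes). With that correction your argument is complete and coincides with the intended proof.
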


In order to adapt to our setting,  we extend the weak divergence to Lipschitz functions, see Proposition \ref{generadiverge}.
%
For convenience, we use $\Lip(V)$ (resp., $\Lip_{\lo}(V))$ to denote the collection of Lipschitz (resp., locally Lipschitz) functions on $V$. Moreover, let $\Lip_0(V):=C_0(V)\cap \Lip(V)$, i.e., the collection of Lipschitz functions on $V$ with compact support. Given a Lipschitz function $f$,
the minimal $C>0$ satisfying the  inequality
\[
|f(x)-f(y)|\leq C d(x,y),\quad \forall\,x,y\in \text{ the domain of $f$},
\]
is called the {\it dilatation} of $f$, denoted by $\dil(f)$, see  Burago,  Burago and Ivanov \cite{DYS}.

\begin{lemma}\label{localtogloaLip} Let $U\subset M$ be an open set. Then $C^\infty_0(U)\subset \Lip_0(U)$ and $hu\in \Lip_0(U)$ for every   $h\in \Lip_{\lo}(U)$ and $u\in C^\infty_0(U)$.
\end{lemma}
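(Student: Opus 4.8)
The plan is to prove the two assertions of Lemma~\ref{localtogloaLip} separately, both by elementary arguments relying only on the fact that a smooth compactly supported function on a complete Riemannian manifold is Lipschitz on any compact set.

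First I would address the inclusion $C^\infty_0(U)\subset \Lip_0(U)$. Let $u\in C^\infty_0(U)$ with $K:=\supp u$ compact. Since $u\in C^1$ and $K$ is compact, $|\nabla u|$ is bounded on $K$, say $|\nabla u|\leq L$ on $K$. For any $x,y\in M$, if both lie outside $K$ then $u(x)=u(y)=0$ and the Lipschitz estimate is trivial; otherwise, joining $x$ to $y$ by a minimizing geodesic $\gamma$ (which exists by completeness of $(M,g)$) and using that $u\circ\gamma$ is $C^1$ with $|(u\circ\gamma)'|\leq L$ wherever $\gamma$ meets $K$ and vanishes elsewhere, the mean value theorem along $\gamma$ gives $|u(x)-u(y)|\leq L\,d(x,y)$. (One should be slightly careful: $\gamma$ may leave and re-enter $K$; but since $|(u\circ\gamma)'|\leq L$ at every parameter where $(u\circ\gamma)'\neq 0$, integrating $|(u\circ\gamma)'|$ over $[0,d(x,y)]$ still yields $|u(x)-u(y)|\leq L\,d(x,y)$.) Hence $\dil(u)\leq L<\infty$ and, since $u$ already has compact support in $U$, we get $u\in\Lip_0(U)$.

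Next I would prove $hu\in\Lip_0(U)$ for $h\in\Lip_{\lo}(U)$ and $u\in C^\infty_0(U)$. Set $K:=\supp u\subset U$, a compact subset of the open set $U$. Choose an open set $W$ with $K\subset W\subset\overline W\subset U$ and $\overline W$ compact; then $h$ is Lipschitz on $\overline W$ (a compact subset of $U$ on which a locally Lipschitz function is globally Lipschitz, by a standard covering/chaining argument), say with dilatation $\dil(h|_{\overline W})=:L_h$, and $h$ is bounded on $\overline W$, say $|h|\leq M_h$ there. Also $u$ is Lipschitz with constant $L_u:=\dil(u)$ by the first part, and $|u|\leq M_u$. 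The product $hu$ is supported in $K$, hence has compact support in $U$, and it is continuous, so $hu\in C_0(U)$. For the Lipschitz bound: if $x,y\in\overline W$ then $|h(x)u(x)-h(y)u(y)|\leq |h(x)|\,|u(x)-u(y)|+|u(y)|\,|h(x)-h(y)|\leq (M_h L_u+M_u L_h)\,d(x,y)$; if at least one of $x,y$ lies outside $K$ — say $x\notin K$, so $u(x)=0$ hence $(hu)(x)=0$ — then $|hu(x)-hu(y)|=|hu(y)|$, and this requires estimating $d(x,y)$ from below by the distance from $y$ to the complement of a neighborhood of $K$. The cleanest way to handle the mixed case is to note that since $K\subset W$ with $W$ open, there is $\delta>0$ with $d(K,M\setminus W)\geq\delta$; if $x\notin K$ and $d(x,y)<\delta$ then $y\in W\subset\overline W$ and we may join them by a minimizing geodesic staying in $\overline W$ (shrinking $\delta$ if needed so that the $\delta$-ball around $K$ is relatively compact in $W$), reducing to the first case; if $d(x,y)\geq\delta$ then $|hu(x)-hu(y)|\leq 2M_hM_u\leq (2M_hM_u/\delta)\,d(x,y)$. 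Taking the maximum of the resulting constants shows $hu\in\Lip_0(U)$.

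The main obstacle, such as it is, lies in the mixed case of the second assertion: one must control $|u(y)|$ when $y$ is close to the boundary of $\supp u$ but $x$ is outside, where naively there is no a priori lower bound on $d(x,y)$ in terms of $|u(y)|$. The device of separating into the regime $d(x,y)<\delta$ (where both points lie in a fixed relatively compact neighborhood of $\supp u$ inside $U$ and one argues via a minimizing geodesic and the product Lipschitz estimate) and the regime $d(x,y)\geq\delta$ (where the bound is trivial from $L^\infty$ bounds) resolves this cleanly, and uses completeness of $(M,g)$ only to guarantee the existence of the connecting minimizing geodesic. Everything else is routine.
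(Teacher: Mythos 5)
Your proof is correct; the paper omits the argument altogether (stating it ``directly follows from the definitions''), and what you write is precisely the routine verification intended: global boundedness of $|\nabla u|$ gives the first inclusion, and the product estimate on a relatively compact neighborhood of $\supp u$ plus the trivial bound when the points are $\delta$-separated gives the second. The only step worth tightening is the claim that $h\in\Lip_{\lo}(U)$ is Lipschitz on the compact set $\overline W$ with respect to the ambient distance: rather than a covering/chaining argument, this is cleanest by contradiction (if it failed, take $x_n,y_n\in\overline W$ with $|h(x_n)-h(y_n)|>n\,d(x_n,y_n)$; boundedness of $h$ on $\overline W$ forces $d(x_n,y_n)\to 0$, a convergent subsequence gives a common limit point, and the local Lipschitz bound near that point yields a contradiction), after which the minimizing-geodesic detour in your mixed case is unnecessary, since both points already lie in $\overline W$ and the Case A estimate applies directly.
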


The proof of Lemma \ref{localtogloaLip} directly follows from the definitions, thus we omit it. By Rademacher's theorem and local coordinate systems, we know that every Lipschitz function is differentiable $\vol_g$-a.e. Moreover, we have the following approximation result:

\begin{lemma}\label{lipsconverppax} Let $U\subset M$ be an open set and let $w:U\rightarrow \mathbb{R}$ be a Lipschitz function with compact support. Then  there exists a sequence of  Lipschitz functions $w_n\in C^\infty_0(U)\cap \Lip_0(U)$  such that
\[
\supp w_n\subset \mathscr{K},\quad |w_n|\leq C,\quad |\nabla w_n|\leq   \dil(w_n)\leq C,\quad w_n\rightrightarrows w,\quad \|w_n-w \|_{W^{1,1}(U)}\rightarrow 0,
\]
where $\mathscr{K}$ is a compact set in $U$ and $C>0$ is  a constant. In particular, if $w$ is nonnegative, so are $w_n$'s.
\end{lemma}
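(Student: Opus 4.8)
The plan is to localize via a finite coordinate atlas and a subordinate partition of unity, apply the classical Euclidean mollification to each piece, and then transfer all estimates between the chart-Euclidean and the Riemannian structures by exploiting that everything happens on a single fixed compact set.

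First I would set up the geometry. Put $\mathscr{K}_0:=\supp w\subset U$, which is compact. Since $U$ is open, fix an open set $\Omega$ and a compact set $\mathscr{K}$ with $\mathscr{K}_0\subset\Omega\subset\mathscr{K}\subset U$. Cover $\mathscr{K}_0$ by finitely many coordinate charts $(U_1,\varphi_1),\dots,(U_N,\varphi_N)$ with $\overline{U_j}\subset\Omega$ for each $j$, and choose smooth functions $\chi_1,\dots,\chi_N$ with $0\le\chi_j\le1$, $\supp\chi_j\subset U_j$, and $\sum_{j=1}^N\chi_j\equiv1$ on a neighborhood of $\mathscr{K}_0$. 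Then $w=\sum_{j=1}^N\chi_j w$ on $M$: on a neighborhood of $\mathscr{K}_0$ this is clear, while off $\mathscr{K}_0$ both sides vanish. By Lemma \ref{localtogloaLip} each $w_j:=\chi_j w$ lies in $\Lip_0(U)$ with $\supp w_j\subset\supp\chi_j\subset U_j$ and $\supp w_j\subset\mathscr{K}_0$.

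Next I would mollify in charts. For each $j$ let $\tilde w_j:=w_j\circ\varphi_j^{-1}$, a compactly supported Lipschitz function on the open set $\varphi_j(U_j)\subset\mathbb{R}^m$, and let $\tilde w_j^\varepsilon:=\tilde w_j*\rho_\varepsilon$ be its mollification by a standard nonnegative kernel $\rho_\varepsilon$. Since $\varphi_j(\supp w_j)$ is a compact subset of the open set $\varphi_j(U_j)$, there is $\varepsilon_j>0$ so that for $0<\varepsilon<\varepsilon_j$ the support of $\tilde w_j^\varepsilon$ remains in $\varphi_j(U_j)$, whence $\supp(\tilde w_j^\varepsilon\circ\varphi_j)\subset U_j\subset\Omega\subset\mathscr{K}$; thus $w_j^\varepsilon:=\tilde w_j^\varepsilon\circ\varphi_j\in C^\infty_0(U)$ with $\supp w_j^\varepsilon\subset\mathscr{K}$. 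Choosing $\varepsilon_n\downarrow0$ with $\varepsilon_n<\min_j\varepsilon_j$ and setting $w_n:=\sum_{j=1}^N w_j^{\varepsilon_n}$, we get $w_n\in C^\infty_0(U)\cap\Lip_0(U)$ (by Lemma \ref{localtogloaLip}), $\supp w_n\subset\mathscr{K}$, and $w_n\ge0$ whenever $w\ge0$ (nonnegative kernel and nonnegative $\chi_j$'s). The classical properties of mollifiers give $\tilde w_j^\varepsilon\rightrightarrows\tilde w_j$ and $\tilde w_j^\varepsilon\to\tilde w_j$ in $W^{1,1}$ as $\varepsilon\to0^+$; since there are only finitely many fixed charts and cut-offs, and on the compact set $\mathscr{K}$ the chartwise $W^{1,1}$-norms are comparable to the Riemannian one, this yields $w_n\rightrightarrows w$ on $M$ and $\|w_n-w\|_{W^{1,1}(U)}\to0$.

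Finally I would establish the uniform bounds. For the sup bound, $|\tilde w_j^\varepsilon|\le\|\tilde w_j\|_\infty=\|\chi_j w\|_\infty\le\|w\|_\infty$, so $|w_n|\le N\|w\|_\infty$. For the gradient, recall that for a $C^1$ function $f$ on $M$ one has $|\nabla f(x)|=\limsup_{y\to x}|f(y)-f(x)|/d(x,y)$, and the right-hand side is at most $\dil(f)$; hence $|\nabla w_n|\le\dil(w_n)$, and it suffices to bound $\dil(w_n)$. Mollification does not increase the Euclidean Lipschitz seminorm, so $\dil_{\mathrm{eucl}}(\tilde w_j^\varepsilon)\le\dil_{\mathrm{eucl}}(\tilde w_j)=:L_j$ uniformly in $\varepsilon<\varepsilon_j$; and on the compact set $\overline{U_j}\subset\Omega$ the Riemannian distance and the Euclidean distance in chart $\varphi_j$ are uniformly comparable (a standard consequence of the smoothness of $g$ and compactness), so $\dil(w_j^\varepsilon)\le c_j L_j$ with $c_j$ independent of $\varepsilon$. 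Since $\dil(\cdot)$ is subadditive, $\dil(w_n)\le\sum_{j=1}^N c_j L_j$, and taking $C$ to be the maximum of $N\|w\|_\infty$ and $\sum_{j=1}^N c_j L_j$ gives $|w_n|\le C$ and $|\nabla w_n|\le\dil(w_n)\le C$. The only point requiring genuine (if routine) care is this uniform comparison between the chart-Euclidean quantities and their Riemannian counterparts on $\mathscr{K}$, which is exactly what allows the classical Euclidean mollification estimates to be imported wholesale; all the remaining steps are standard.
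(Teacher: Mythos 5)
Your proposal is correct and follows exactly the route the paper indicates: the authors omit the proof of this lemma, stating only that it is a standard approximation argument combining convolutions with mollifiers and a smooth partition of unity, which is precisely what you carry out (chart-by-chart mollification, support control, and transfer of the uniform and $W^{1,1}$ estimates via compactness). The routine point you flag — uniform comparability of chart-Euclidean and Riemannian quantities on the fixed compact set — is indeed the only detail needing care, and your treatment of it is adequate.
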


The proof of Lemma \ref{lipsconverppax} is based on standard approximation arguments, combining convolutions of suitable mollifiers with smooth partition of unity; therefore, we omit its proof.

The following proposition plays a crucial role in the proof of our main general result, see Theorem \ref{Hardy-thm-general}.

\begin{proposition}\label{generadiverge} Let $f_X\in L^1_{\loc}(U)$ and $X\in L^1_{\loc}(TU)$ satisfying $f_X\leq  \dii X$ in the weak sense. Then
 for every nonnegative
 $h\in \Lip_{\lo}(U)$
we have
\begin{equation}\label{lipdivergence}
\int_{U} (h u)\, f_X \dvol_g\leq -\int_{U} \langle \nabla ( h u), X   \rangle\dvol_g,\quad \forall\,u\in C^\infty_0(U) \text{ with }u\geq 0.
\end{equation}
\end{proposition}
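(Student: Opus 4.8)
The plan is to reduce the statement to the already-established weak-divergence inequality (valid only for test functions $u\in C^\infty_0(U)$) by an approximation argument. First I would fix a nonnegative $h\in\Lip_{\lo}(U)$ and a nonnegative $u\in C^\infty_0(U)$, and set $w:=hu$. By Lemma \ref{localtogloaLip}, $w\in\Lip_0(U)$ and $w\geq 0$; in particular $w$ is a nonnegative Lipschitz function with compact support, so Lemma \ref{lipsconverppax} applies and produces a sequence $w_n\in C^\infty_0(U)\cap\Lip_0(U)$ with $w_n\geq 0$, $\supp w_n\subset\mathscr{K}$ for a fixed compact $\mathscr{K}\subset U$, uniform bounds $|w_n|\leq C$ and $|\nabla w_n|\leq C$, together with $w_n\rightrightarrows w$ and $\|w_n-w\|_{W^{1,1}(U)}\to 0$.

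Next I would apply the hypothesis $f_X\leq\dii X$ in the weak sense to each $w_n$ (which is a legitimate test function since $w_n\in C^\infty_0(U)$ and $w_n\geq 0$), obtaining
\[
\int_U w_n\, f_X\,\dvol_g\leq -\int_U\langle\nabla w_n, X\rangle\,\dvol_g,\qquad\forall\, n.
\]
Then I pass to the limit on both sides. For the left-hand side: since $w_n\rightrightarrows w$ with all $w_n$ supported in the fixed compact set $\mathscr{K}$, and $f_X\in L^1_{\lo}(U)$ so that $f_X\in L^1(\mathscr{K})$, dominated convergence (with dominating function $C|f_X|\mathbbm{1}_{\mathscr{K}}$) gives $\int_U w_n f_X\,\dvol_g\to\int_U w\, f_X\,\dvol_g$. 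For the right-hand side: since $\|w_n-w\|_{W^{1,1}(U)}\to 0$ we have $\nabla w_n\to\nabla w$ in $L^1(\mathscr{K})$, and $X\in L^1_{\lo}(U)$; however $L^1$ convergence of $\nabla w_n$ paired against an $L^1$ field $X$ is not by itself enough to pass to the limit in the pairing. This is the point that needs care, and it is where the uniform bound $|\nabla w_n|\leq C$ does the work: $\langle\nabla w_n, X\rangle\to\langle\nabla w, X\rangle$ $\vol_g$-a.e.\ along a subsequence (by the $L^1$ convergence), while $|\langle\nabla w_n, X\rangle|\leq C|X|\mathbbm{1}_{\mathscr{K}}\in L^1(U)$, so dominated convergence yields $\int_U\langle\nabla w_n,X\rangle\,\dvol_g\to\int_U\langle\nabla w,X\rangle\,\dvol_g$ along that subsequence. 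Since the original inequality holds for every $n$, passing to the limit along this subsequence gives
\[
\int_U w\, f_X\,\dvol_g\leq -\int_U\langle\nabla w, X\rangle\,\dvol_g,
\]
which is exactly \eqref{lipdivergence} upon recalling $w=hu$. (I should also check that $\nabla(hu)$ is defined $\vol_g$-a.e.\ and equals the a.e.\ limit of $\nabla w_n$, which follows from Rademacher's theorem applied to $w$ together with $\|w_n-w\|_{W^{1,1}}\to 0$.)

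The main obstacle is precisely the passage to the limit in the term $\int_U\langle\nabla w_n, X\rangle\,\dvol_g$: mere $W^{1,1}$-convergence would not suffice to test against an arbitrary $L^1_{\lo}$ vector field, so the argument genuinely relies on the uniform Lipschitz (hence uniform gradient) bound furnished by Lemma \ref{lipsconverppax}, combined with passing to an a.e.-convergent subsequence and dominated convergence. Everything else—membership of $w=hu$ in the right class, uniform compact support, and the $L^1$-integrability of the dominating functions—is routine bookkeeping using the preceding lemmas.
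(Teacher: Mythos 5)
Your proposal is correct and follows essentially the same route as the paper's proof: reduce to $w=hu\in\Lip_0(U)$ via Lemma \ref{localtogloaLip}, approximate by the smooth sequence of Lemma \ref{lipsconverppax}, apply the weak-divergence hypothesis to each $w_n$, and pass to the limit by extracting an a.e.-convergent subsequence and using dominated convergence with the dominating functions $C|f_X|\chi_{\mathscr K}$ and $C|X|\chi_{\mathscr K}$. Nothing essential differs.
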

\begin{proof}  Lemma \ref{localtogloaLip} implies that $w=hu\in \Lip_0(U);$ thus, it suffices to show
	\begin{equation}\label{stronglipdiver}
		\int_{U} w f_X {\dvol}_g\leq  -\int_{U} \langle\nabla w, X \rangle {\dvol}_g, \quad \forall \,w\in \Lip_0(U) \text{ with }w\geq 0.
	\end{equation}
If $w\in \Lip_0(U)$ is a nonnegative function,  Lemma
\ref{lipsconverppax} yields a sequence of  functions $w_n\in C^\infty_0(U)\cap \Lip_0(U)$ satisfying
\begin{equation}\label{boundedcontroll}
\supp w_n\subset \mathscr{K},\quad 0\leq w_n\leq C,\quad |\nabla w_n|\leq \dil(w_n)\leq C,\quad w_n\rightrightarrows w,\quad \|w_n-w\|_{W^{1,1}(U)}\rightarrow 0,
\end{equation}
 for some  compact $\mathscr{K}\subset U$ and constant $C\geq 0$.
Thus,  by passing to a subsequence, we may assume that $w_n\rightarrow w$ and $\nabla w_n \rightarrow  \nabla w$ point-wisely for $\vol_g$-a.e. Since $f_X\leq \dii X$ in the weak sense, we have
 \begin{equation}\label{weakldivevncon}
 \int_{U} w_n f_X \dvol_g\leq -\int_{U} \langle \nabla w_n, X   \rangle\dvol_g.
 \end{equation}
Note that \eqref{boundedcontroll} and the assumption imply that
\[
|w_n f_X|\leq \chi_{\mathscr{K}} C|f_X|\in L^1(U), \quad |\langle \nabla w_n, X   \rangle|\leq  \chi_{\mathscr{K}} C|X|\in L^1(U),
\]
where $\chi_{\mathscr{K}}$ stands for the characteristic function of $\mathscr{K}$.
Therefore  \eqref{stronglipdiver} follows by \eqref{weakldivevncon} and the dominated convergence theorem.
\end{proof}

\section{A general Hardy inequality}\label{basichardy}

In this section, we will   establish a   Hardy inequality involving a pair of parametric functions.

\subsection{Abstract construction}\label{section-construction}
Let   $M$ be an $m$-dimensional complete Riemannian manifold, $i: \Sigma\hookrightarrow M$ be an $n$-dimensional closed submanifold, and $r$ be the distance function from $\Sigma$. Inspired by the seminal paper of Barbatis, Filippas and Tertikas \cite{BFT} -- where the ambient space is the Euclidean space $\mathbb R^m$  --  throughout the present paper,  $\Omega\subset M$ is always a non-empty domain (i.e., a connected open set)  satisfying
\begin{equation}
 \text{(a)\, $\Sigma=\pa\Omega$ if $n=m-1$; \qquad (b)\, $\Sigma\cap \Omega\neq \emptyset$ if $0\leq n\leq m-2$.}\label{natural-domain}
 \end{equation}
For convenience, set $\Omega_\Sigma:=\Omega\setminus\Sigma$.
Now
 we introduce the following structural assumption.

\begin{assumption}\label{Basciasumptionfunction} 
Given $p>1$, let $(\phi,\psi)$   be a pair of two nonnegative $C^1$-functions defined on  some subinterval of $(0, +\infty)$  such that
\begin{enumerate}[{\rm(a)}]
 \item\label{smoothofphi} $\phi(r),\psi(r)$ are well-defined   in $\Omega_\Sigma$;

\smallskip

\item\label{smoothofpsi}  $\psi$ is monotone and  $\psi(r)$ is locally Lipschitz on $\Omega_\Sigma$;

\smallskip

\item\label{nansigcontroll} {$\ds\dii(\sgn(\psi'(r))\,\phi(r)^{p-1}\na r)\geq 0$} in the weak sense in $\Omega_\Sigma$;

\smallskip

\item\label{integrable} $\phi(r)^{p-1}, \phi(r)^{p-1}\psi(r),\phi(r)^{p-1}|\psi'(r)|,\frac{\phi(r)^{p-1}|\psi'(r)|}{ |\nabla(\log \psi(r))|^{p}}\in L^1_{\loc}(\Omega_\Sigma)$;

\smallskip

\item\label{negligbileset} $\{x\in \Omega_\Sigma\,:\, \phi(r(x))\,\psi(r(x))\,\psi'(r(x))=0\}$ is a $\vol_g$-negligible set.



\end{enumerate}
\end{assumption}

\begin{remark}\label{assup1explain} 

Assumption \eqref{smoothofphi} implies that $\phi'(r),\psi'(r)$ are well-defined   in $\Omega_\Sigma$. We notice that there is a close relation between $\dii(\phi(r)^{p-1}\na r)$ and $p$-Laplacian. In fact, let $\Phi$ be the primitive function of $\phi$ (i.e., $\Phi'=\phi$), which is usually locally defined. Then $\dii(\phi(r)^{p-1}\na r)$ is exactly the $p$-Laplacian $\Delta_p\Phi(r):=\dii(|\na \Phi(r)|^{p-2}\na \Phi(r))$.
Since $\psi$ is monotone, then
   \eqref{nansigcontroll} implies that $\Delta_p\Phi(r)\geq 0$ or $\Delta_p\Phi(r)\leq 0$ in the weak sense.
In particular,  a direct calculation yields
\begin{equation}\label{plaplacephi}
\dii(\sgn(\psi'(r))\,\phi(r)^{p-1}\na r)=\sgn(\psi'(r))\,\phi(r)^{p-1}\left(  \Delta r+(p-1)\frac{\phi'(r)}{\phi(r)}  \right) \quad \text{ on }\mathcal {V}\mathscr{D}\Sigma\backslash\Sigma.
\end{equation}
Since $M\backslash \mathcal {V}\mathscr{D}\Sigma=\Sigma\cup \mathcal {V}\mathscr{C}\Sigma$ is a $\vol_g$-negligible set, one may use the Laplacian comparison (Theorems \ref{Laplacian-comparison-Sigma}--\ref{main-rem-2}) to determine the sign of $\dii(\sgn(\psi'(r))\,\phi(r)^{p-1}\na r)$ directly.

We notice that the structural assumptions \eqref{smoothofphi}--\eqref{integrable} are enough to establish Hardy inequalities, see e.g. \eqref{Hardy-Inequ-general} below;    the role of \eqref{negligbileset}  is to make  \eqref{Hardy-Inequ-general} nontrivial. Moreover,  \eqref{negligbileset} is useful to investigate the non-existence of extremal functions in Hardy-type inequalities, see Section \ref{nonexsorigin}.
\end{remark}

Infinitely many pairs of $(\phi,\psi)$ can be produced on space forms; the following example provides such constructions.

\begin{example}\label{space-form-phi-psi2} Let $\mathbb{M}_\lambda$ be  the space form and $\Sigma\subset M$ be the totally geodesic submanifold as in Example \ref{space-form-phi-psi1}.
Choose a domain $\Omega\subset \MM_\lambda$ such that
\begin{itemize}
\item    $\Omega=\mathbb{S}^m_+$ (i.e., the upper hemisphere) and  $\Sigma=\pa\Omega$ (i.e., $n=m-1$) whenever $\lambda=1$;

\item  $\Omega\cap \Sigma\neq\emptyset$ whenever $\lambda\in \{-1,0\}$.

\end{itemize}
In view of \eqref{r-0-Def}, given $p>1$ and $\beta< -(m-n)$, the pair  $(\phi ,\psi )$ is defined as
\begin{align}
 \phi(t):=\mathbf{c}_\Lambda(t)^{-{n\over p-1}}\mathbf{s}_\Lambda(t)^{-{m-n-1\over p-1}},\quad   \psi(t):=\mathbf{s}_\Lambda(t)^{\beta+m-n},\quad \forall\,t\in (0,\mathfrak{r}_{\Lambda}),\label{phi-psi-M-lambda}
 \end{align}
 where $\Lambda\leq \lambda$ and particularly, $\Lambda<\lambda$ if $\lambda >0$.
 Then Assumption \ref{Basciasumptionfunction} is satisfied. 

Indeed, since $\mathfrak{r}_{\lambda}=c_{\mathcal{V}}(\Bn)$ for every $\mathbf{n}\in \mathcal {V}S\Sigma$, see Example \ref{space-form-phi-psi1}, we see that $\phi(r),\psi(r),\psi'(r)$ are well-defined and nonzero in $\Omega_\Sigma$. Therefore, Assumption \ref{Basciasumptionfunction}/\eqref{smoothofphi}\eqref{smoothofpsi}\eqref{integrable}\eqref{negligbileset} are true.
Finally, \eqref{plaplacephi} together with \eqref{narspaceform} and Proposition \ref{gestiamtegra}  yield
 \begin{equation}\label{divdireccalution}
\dii (\sgn(\psi'(r))\phi(r)^{p-1}\na r)=-\phi(r)^{p-1}\left(  \Delta r+(p-1)\frac{\phi'(r)}{\phi(r)}  \right)=-\phi(r)^{p-1}\left(  G_{\lambda,0}(r)-G_{\Lambda,0}(r) \right)\geq 0
 \end{equation}
in $\mathcal {V}\mathscr{D}\Sigma\backslash \Sigma$. Thus Assumption \ref{Basciasumptionfunction}/\eqref{nansigcontroll} is satisfied as well.
\end{example}
\begin{remark}\label{basiclambda=Lambdacase}
Note that if $\Lambda=\lambda\leq 0$, then \eqref{phi-psi-M-lambda} satisfies Assumption \ref{Basciasumptionfunction} for all $\beta\neq -(m-n)$.
\end{remark}

Under Assumption \ref{Basciasumptionfunction}, we have the following generic Hardy inequality.

\begin{theorem}\label{Hardy-thm-general} Let $(M,g)$ be an $m$-dimensional complete Riemannian manifold, $i:\Sigma \hookrightarrow M$ be an $n$-dimensional closed submanifold  and $\Omega$ be a non-empty domain in $M$ satisfying \eqref{natural-domain}.  Let $r(x):=d(\Sigma,x)$ be the distance function from $\Sigma$.  Given $p>1$, if the pair $(\phi,\psi)$ satisfies Assumption \ref{Basciasumptionfunction},  then
\begin{equation}
\int_{\Omega_\Sigma}|\na u|^p\frac{\phi(r)^{p-1}|\psi'(r)|}{ |\nabla(\log \psi(r))|^{p}}\dvol_g\geq {p^{-p}}\int_{\Omega_\Sigma} |u|^p\phi(r)^{p-1}|\psi'(r)|\dvol_g\label{Hardy-Inequ-general}
\end{equation}
 for all $u\in C^\infty_0(\Omega_\Sigma)$.
\end{theorem}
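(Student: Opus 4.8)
The plan is to deduce Theorem \ref{Hardy-thm-general} from the weak-divergence machinery of \S\ref{section-weak}, specifically Lemma \ref{divlemf} together with Proposition \ref{generadiverge}. First I would choose the vector field
\[
X:=\sgn(\psi'(r))\,\phi(r)^{p-1}\psi(r)\,\nabla r
\]
on $U:=\Omega_\Sigma$, and the candidate divergence lower bound
\[
f_X:=\phi(r)^{p-1}|\psi'(r)|.
\]
The heuristic is that, on the regular set $\mathcal{V}\mathscr{D}\Sigma\setminus\Sigma$, one has $\nabla r=\partial/\partial t$ with $|\nabla r|=1$, so using \eqref{plaplacephi} and the product rule,
\[
\dii X=\dii\bigl(\sgn(\psi'(r))\phi(r)^{p-1}\nabla r\bigr)\psi(r)+\sgn(\psi'(r))\phi(r)^{p-1}\psi'(r)\,|\nabla r|^2
=\dii\bigl(\sgn(\psi'(r))\phi(r)^{p-1}\nabla r\bigr)\psi(r)+\phi(r)^{p-1}|\psi'(r)|,
\]
so that $f_X\le\dii X$ would follow from Assumption \ref{Basciasumptionfunction}/\eqref{nansigcontroll} provided $\psi\ge 0$ (which holds by hypothesis) and provided the formal product rule is legitimate in the weak sense. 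With this choice, $|X|^p/f_X^{p-1}=\phi(r)^{p-1}\psi(r)^p|\psi'(r)|^{-(p-1)}$, and a short computation shows this equals $\phi(r)^{p-1}|\psi'(r)|\,|\nabla(\log\psi(r))|^{-p}$ since $|\nabla(\log\psi(r))|=|\psi'(r)|/\psi(r)$ on the regular set; hence Lemma \ref{divlemf} would deliver exactly \eqref{Hardy-Inequ-general}.

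The main work, therefore, is to rigorously establish $f_X\le\dii X$ \emph{in the weak sense} on $\Omega_\Sigma$, i.e.\ to justify the product rule above at the level of distributions. Here is where I expect the real obstacle: the factor $\psi(r)$ is only locally Lipschitz (Assumption \ref{Basciasumptionfunction}/\eqref{smoothofpsi}), not $C^1$, so one cannot simply differentiate the product classically and integrate by parts against test functions. This is precisely what Proposition \ref{generadiverge} is built to overcome. The argument should go: fix $u\in C_0^\infty(\Omega_\Sigma)$ with $u\ge 0$; since $\psi$ is monotone, $\psi(r)$ is locally Lipschitz and nonnegative, so $h:=\psi(r)$ is an admissible multiplier in Proposition \ref{generadiverge} applied to the pair $\bigl(\phi(r)^{p-1}|\psi'(r)|,\ \sgn(\psi'(r))\phi(r)^{p-1}\nabla r\bigr)$ — which satisfies the weak-divergence hypothesis by \eqref{nansigcontroll}. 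This yields
\[
\int_{\Omega_\Sigma}\psi(r)\,u\,\phi(r)^{p-1}|\psi'(r)|\,\dvol_g
\le -\int_{\Omega_\Sigma}\bigl\langle\nabla(\psi(r)u),\ \sgn(\psi'(r))\phi(r)^{p-1}\nabla r\bigr\rangle\,\dvol_g.
\]
Wait — this is not quite the inequality I want; I actually want $f_X\le\dii X$ for $X$ already containing the $\psi(r)$ factor. The cleaner route is to prove $f_X\le\dii X$ directly by testing: for $u\in C_0^\infty(\Omega_\Sigma)$, $u\ge 0$, I must show $\int u f_X\le-\int\langle\nabla u,X\rangle$. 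Expanding $\langle\nabla u,X\rangle=\sgn(\psi'(r))\phi(r)^{p-1}\psi(r)\langle\nabla u,\nabla r\rangle$ and rewriting $\psi(r)\nabla u=\nabla(\psi(r)u)-u\psi'(r)\nabla r$ (valid a.e.\ by Rademacher, since $\psi(r)u$ is Lipschitz with compact support, using $|\nabla r|=1$ a.e.), one gets $\langle\nabla u,X\rangle=\sgn(\psi'(r))\phi(r)^{p-1}\langle\nabla(\psi(r)u),\nabla r\rangle-u\phi(r)^{p-1}|\psi'(r)|$; integrating and applying Proposition \ref{generadiverge} with $h=\psi(r)$ to bound the first term gives exactly $\int u f_X\le-\int\langle\nabla u,X\rangle$.

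Finally I would assemble: with $X,f_X$ as above, the three structural integrability conditions in Assumption \ref{Basciasumptionfunction}/\eqref{integrable} say precisely that $X\in L^1_{\loc}(T\Omega_\Sigma)$ (from $\phi(r)^{p-1}\psi(r)\in L^1_{\loc}$, noting $|X|=\phi(r)^{p-1}\psi(r)$ a.e.\ since $|\nabla r|=1$), that $f_X\in L^1_{\loc}(\Omega_\Sigma)$, and that $|X|^p/f_X^{p-1}\in L^1_{\loc}(\Omega_\Sigma)$ — the last being the stated membership $\phi(r)^{p-1}|\psi'(r)|\,|\nabla(\log\psi(r))|^{-p}\in L^1_{\loc}(\Omega_\Sigma)$. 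Condition \eqref{negligbileset} guarantees $f_X>0$ a.e.\ so the quotient is well-defined a.e.\ (on the negligible set where $\psi'(r)=0$ one sets the integrand to $0$, consistent with both sides). Then Lemma \ref{divlemf} applied on $U=\Omega_\Sigma$ gives
\[
p^p\int_{\Omega_\Sigma}|\nabla u|^p\,\frac{|X|^p}{f_X^{p-1}}\,\dvol_g\ \ge\ \int_{\Omega_\Sigma}|u|^p f_X\,\dvol_g,\qquad\forall\,u\in C_0^\infty(\Omega_\Sigma),
\]
and substituting the identities $|X|^p/f_X^{p-1}=\phi(r)^{p-1}|\psi'(r)|/|\nabla(\log\psi(r))|^p$ and $f_X=\phi(r)^{p-1}|\psi'(r)|$ yields \eqref{Hardy-Inequ-general}. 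The only subtlety I would flag in the write-up is ensuring all the a.e.\ identities involving $\nabla r$ (namely $|\nabla r|=1$ and $\nabla(\psi(r)u)=\psi'(r)u\nabla r+\psi(r)\nabla u$) are invoked only on $\mathcal{V}\mathscr{D}\Sigma\setminus\Sigma$, whose complement in $\Omega_\Sigma$ is $\vol_g$-negligible by Lemma \ref{prop-1-3}/\eqref{cutlocuszero}.
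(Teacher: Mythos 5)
Your final argument is correct and is essentially the paper's own proof: you take the same pair $f_X=\phi(r)^{p-1}|\psi'(r)|$, $X=\sgn(\psi'(r))\phi(r)^{p-1}\psi(r)\nabla r$, establish $f_X\leq \dii X$ in the weak sense by applying Proposition \ref{generadiverge} with $h=\psi(r)$ and $f_X=0$ (i.e.\ Assumption \ref{Basciasumptionfunction}/\eqref{nansigcontroll}) together with the a.e.\ chain rule and $|\nabla r|=1$, and then conclude via Lemma \ref{divlemf}. The only blemish is the discarded first attempt, where you asserted that the pair $\bigl(\phi(r)^{p-1}|\psi'(r)|,\ \sgn(\psi'(r))\phi(r)^{p-1}\nabla r\bigr)$ satisfies the weak-divergence hypothesis ``by \eqref{nansigcontroll}'' --- that condition only gives $0\leq \dii$, not the stated lower bound --- but since you abandon that route, the proof you actually give is sound.
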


\begin{proof} 

Without loss of generality, we may consider any function  $u\in C^\infty_0(\Omega_\Sigma)$ with $u\geq 0$. Then, by Assumption \ref{Basciasumptionfunction}/\eqref{smoothofpsi}\eqref{nansigcontroll}\eqref{integrable}  and \eqref{lipdivergence}  (choosing $U=\Omega_\Sigma$, $f_X=0$, $X=\sgn(\psi'(r))\phi(r)^{p-1}\na r$ and $h=\psi(r)$), we have
\[
0\leq -\int_{\Omega_\Sigma} \langle \nabla(\psi(r)u), \sgn(\psi'(r))\phi(r)^{p-1}\nabla r  \rangle\dvol_g,
\]
which combined with $|\psi'(r)|=\sgn(\psi'(r))\psi'(r)$ and the eikonal equation $|\nabla r|=1$ a.e. on $\Omega_\Sigma$ yields
\begin{align*}
\int_{\Omega_\Sigma} u\phi(r)^{p-1}|\psi'(r)| \dvol_g\leq -\int_{\Omega_\Sigma} \la \na u,\sgn(\psi'(r))\phi(r)^{p-1}\psi(r)\na r\ra \dvol_g.
\end{align*}
Hence,  $\phi(r)^{p-1}|\psi'(r)|\leq {\rm div }(\sgn(\psi'(r))\phi(r)^{p-1}\psi(r)\na r)$ in the weak sense.
Now by considering
\[
U=\Omega_\Sigma, \quad f_X=\phi(r)^{p-1}|\psi'(r)|, \quad X=\sgn(\psi'(r))\phi(r)^{p-1}\psi(r)\na r,
\]
inequality \eqref{Hardy-Inequ-general} follows at once by Lemma \ref{divlemf} and Assumption \ref{Basciasumptionfunction}/\eqref{integrable}.
\end{proof}

\subsection{Sharpness}\label{sharpnessfirst}

Let $U$ be an open set of $\Omega$, $u\in C^\infty_0(U)\setminus \{0\}$ and for a pair $(\phi,\psi)$ satisfying Assumption \ref{Basciasumptionfunction}, define
\begin{equation}\label{functinoalshapr}
	\mathcal {J}_{p,U}(u):={\ds\int_{U}|\na u|^p \frac{\phi(r)^{p-1} |\psi'(r)|}{|\nabla(\log \psi(r))|^{p}}\dvol_g\over \ds\int_{U}|u|^p \phi(r)^{p-1}|\psi'(r)|\dvol_g }.
\end{equation}
Due to Theorem \ref{Hardy-thm-general},  it is meaningful to investigate the  sharpness of  (\ref{Hardy-Inequ-general}), i.e., to study the possibility of
\begin{equation}\label{sharpfunctionexp}
	\inf_{u\in C^\infty_0(\Omega_\Sigma)\setminus \{0\}}\mathcal {J}_{p,\Omega_\Sigma}(u)=p^{-p}.
\end{equation}
In order to do this, we recall  the $s$-tuber neighborhood
$\T_{s}$  defined by \eqref{stuber}. Thus,
\begin{equation}\label{domaincontts}
	{\T_{s}}\cap \, \Omega=\{x\in \Omega\,:\, r(x)<s\},\quad \Omega\,\backslash {\T_{s}}=\{x\in \Omega\,:\, r(x)\geq s\}.
\end{equation}
Inspired by D'Ambrosio \cite{DA}, we have the following result.

\begin{lemma}\label{Ambrosio-lem}
	Let $(M,g)$, $\Sigma$, $\Omega$ and $(\phi,\psi)$  be as in Theorem \ref{Hardy-thm-general}. Assume that
	\begin{enumerate}[{\rm(i)}]
		\item there exists some $s_0>0$ such that  $\T_{s_0}\cap \, \Omega$ and $\Omega\,\backslash {\T_{s_0}}$ have piece-wise regular boundaries$;$
		
		\smallskip
		
		\item there exists $\e_1>0$ such that  for any $\varepsilon\in (0,\varepsilon_1)$,
		\begin{equation}
			\psi(r)^{1+\varepsilon}\phi(r)^{p-1}|\psi'(r)|\in L^1({\T_{s_0}}\cap\,\Omega),\quad \psi(r)^{-(1+\varepsilon)}\phi(r)^{p-1}|\psi'(r)|\in L^1(\Omega\backslash{\T_{s_0}}).\label{Ambrosio-condition-1}
		\end{equation}
		
	\end{enumerate}
	
	\noindent Then for each $\varepsilon\in (0,\varepsilon_1)$, by setting $c(\varepsilon):=(1+\varepsilon)/p$, the function
	\begin{align}
		\nu_\e(x):=\begin{cases}
			\Big({\psi(r(x))\over \psi(s_0)}\Big)^{c(\e)},& \text{ if }x\in \T_{s_0}\cap \,\Omega,\\
			\\
			\Big({\psi(r(x))\over \psi(s_0)}\Big)^{-c(\e/2)},& \text{ if }x\in \Omega\,\backslash \T_{s_0},
		\end{cases} \label{v-Def-1}
	\end{align}
	satisfies the following inequality
	\begin{equation}
		\int_{\Omega_\Sigma}|\na \nu_\e|^p \frac{\phi(r)^{p-1}|\psi'(r)|}{ |\nabla(\log \psi(r))|^{p}} \dvol_g<c(\e)^p\int_{\Omega_\Sigma} |\nu_\e|^p\phi(r)^{p-1}|\psi'(r)|\dvol_g.\label{inverse-ineq}
	\end{equation}
\end{lemma}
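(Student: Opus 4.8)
The plan is to verify inequality \eqref{inverse-ineq} by a direct computation exploiting the fact that $\nu_\e$ is built out of powers of $\psi(r)$, so that its gradient is essentially controlled by $\psi'(r)\nabla r$ up to the exponent $c(\cdot)$. First I would compute $\nabla \nu_\e$ piecewise: on $\T_{s_0}\cap\Omega$ one has $\nabla\nu_\e = c(\e)\,\psi(s_0)^{-c(\e)}\psi(r)^{c(\e)-1}\psi'(r)\nabla r$, and similarly on $\Omega\setminus\T_{s_0}$ with exponent $-c(\e/2)$ and an extra sign; along $\T_{s_0}\cap\Omega$, $\nu_\e$ is continuous across $r=s_0$ (both pieces equal $1$ there), so $\nu_\e\in W^{1,1}_{\loc}$ and no interface term is lost. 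Using $|\nabla r|=1$ a.e.\ and $|\nabla(\log\psi(r))|=|\psi'(r)|/\psi(r)$, the integrand on the left of \eqref{inverse-ineq} becomes, on $\T_{s_0}\cap\Omega$,
\[
|\nabla\nu_\e|^p\frac{\phi(r)^{p-1}|\psi'(r)|}{|\nabla(\log\psi(r))|^p}
= c(\e)^p\,\psi(s_0)^{-pc(\e)}\,\psi(r)^{pc(\e)}\,\phi(r)^{p-1}|\psi'(r)|
= c(\e)^p\,|\nu_\e|^p\,\phi(r)^{p-1}|\psi'(r)|,
\]
so on the inner tube the two sides of \eqref{inverse-ineq} have \emph{equal} integrands with matching constant $c(\e)^p$. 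The analogous computation on $\Omega\setminus\T_{s_0}$ gives integrand $c(\e/2)^p|\nu_\e|^p\phi(r)^{p-1}|\psi'(r)|$, and since $c(\e/2)<c(\e)$ this outer contribution is \emph{strictly} smaller than $c(\e)^p$ times the corresponding mass. Adding the two regions yields the strict inequality \eqref{inverse-ineq}.

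For this argument to be legitimate I must check three finiteness/regularity points. The assumption \eqref{Ambrosio-condition-1} is exactly what guarantees that all four integrals appearing (the left and right sides of \eqref{inverse-ineq}, each split over $\T_{s_0}\cap\Omega$ and $\Omega\setminus\T_{s_0}$) are finite: on the inner tube the integrand is a constant multiple of $\psi(r)^{1+\e}\phi(r)^{p-1}|\psi'(r)|\in L^1$, and on the complement it is a constant multiple of $\psi(r)^{-(1+\e/2)}\phi(r)^{p-1}|\psi'(r)|\le \psi(r)^{-(1+\e/2)}\phi(r)^{p-1}|\psi'(r)|$, which lies in $L^1(\Omega\setminus\T_{s_0})$ because $\e/2<\e$ and $\psi$ is monotone (so powers with smaller exponent are dominated near the relevant regime; one uses $s_0$ as the cut to absorb bounded factors). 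I would also invoke hypothesis (i) so that $\T_{s_0}\cap\Omega$ and $\Omega\setminus\T_{s_0}$ are genuine Lipschitz domains, ensuring the piecewise-defined $\nu_\e$ is an admissible competitor (its weak gradient is the classical one a.e.\ and there is no singular part along the interface $\{r=s_0\}$, which has measure zero by the coarea formula since $r$ is Lipschitz). Finally, Assumption \ref{Basciasumptionfunction}/\eqref{negligbileset} ensures $\phi(r)\psi(r)\psi'(r)\ne0$ a.e., so none of the integrals degenerate to zero and the strict inequality is meaningful.

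The one genuinely delicate point — and the step I expect to be the main obstacle — is justifying that the outer integral $\int_{\Omega\setminus\T_{s_0}}|\nu_\e|^p\phi(r)^{p-1}|\psi'(r)|\dvol_g$ is \emph{strictly positive} (not merely finite), since only then does the strictly-smaller constant $c(\e/2)^p<c(\e)^p$ on that region translate into a strict inequality for the sum. This is where \eqref{negligbileset} and the structure of $\Omega$ (connected, meeting or bounding $\Sigma$) are used: $\Omega\setminus\T_{s_0}$ is nonempty and open (for $s_0$ small relative to $\Omega$, which is why the hypothesis asks only for \emph{some} $s_0$, and one may shrink it), and on it $\phi(r)\psi'(r)\ne0$ on a full-measure subset, so the integrand is positive on a set of positive measure. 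A symmetric remark handles the inner integral. Once positivity of both pieces is in hand, combining the inner equality with the outer strict inequality completes the proof; I would close by remarking that the construction is the curved-space analogue of D'Ambrosio's test functions \cite{DA}, with $\psi(r)$ playing the role of the Euclidean weight.
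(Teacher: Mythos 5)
Your proposal is correct and follows essentially the same route as the paper: a piecewise computation of $\nabla \nu_\e$ showing the left-hand integrand equals $c(\e)^p|\nu_\e|^p\phi(r)^{p-1}|\psi'(r)|$ on $\T_{s_0}\cap\Omega$ and $c(\e/2)^p|\nu_\e|^p\phi(r)^{p-1}|\psi'(r)|$ on $\Omega\setminus\T_{s_0}$, with strictness coming from $c(\e/2)<c(\e)$ and the positive mass of the outer region (a point you make more explicitly than the paper does). One small repair: your justification of the finiteness of the outer integral via ``$\e/2<\e$ and monotonicity of $\psi$'' is unnecessary and does not work when $\psi$ is increasing and unbounded; the correct and immediate argument is that \eqref{Ambrosio-condition-1} holds for every exponent in $(0,\e_1)$, so applying it with $\e/2$ in place of $\e$ gives $\psi(r)^{-(1+\e/2)}\phi(r)^{p-1}|\psi'(r)|\in L^1(\Omega\setminus\T_{s_0})$ directly.
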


\begin{proof} Owing to \eqref{domaincontts}, \eqref{Ambrosio-condition-1} and Assumption \ref{Basciasumptionfunction}/\eqref{negligbileset},  a direct calculation yields
	\begin{align}
		&c(\e)^p\int_{\Omega_\Sigma} |\nu_\e|^p\phi(r)^{p-1}|\psi'(r)|\dvol_g\nonumber\\
		=&|\psi(s_0)|^{-c(\e)p}c(\e)^p\int_{r<s_0} |\psi(r)^{c(\e)}|^p\phi(r)^{p-1}|\psi'(r)|\dvol_g\nonumber\\
		&+|\psi(s_0)|^{c(\e/2)p}c(\e)^p\int_{r\geq s_0} |\psi(r)^{-c(\e/2)}|^p\phi(r)^{p-1}|\psi'(r)|\dvol_g\nonumber\\
		=&\int_{r<s_0} |\na \nu_\e|^p\frac{\phi(r)^{p-1}|\psi'(r)|}{ |\nabla(\log \psi(r))|^{p}} \dvol_g+{c(\e)^p\over c(\e/2)^p}\int_{r\geq s_0} |\na \nu_\e|^p \frac{\phi(r)^{p-1}|\psi'(r)|}{ |\nabla(\log \psi(r))|^{p}} \dvol_g,\nonumber
	\end{align}
	which combined with $c(\e)/c(\e/2)>1$ yields  \eqref{inverse-ineq}.
\end{proof}

The same argument also yields a companion version of Lemma \ref{Ambrosio-lem}.
\begin{lemma}\label{Ambrosio-lem2}
	Let $(M,g)$, $\Sigma$, $\Omega$ and $(\phi,\psi)$  be as in Theorem \ref{Hardy-thm-general}. Assume that
	\begin{enumerate}[{\rm(i)}]
		\item there exists some $s_0>0$ such that  $\T_{s_0}\cap \, \Omega$ and $\Omega\,\backslash {\T_{s_0}}$ have piecewise regular boundaries$;$
		
		\smallskip
		
		\item there exists $\e_1>0$ such that  for any $\varepsilon\in (0,\varepsilon_1)$,
		\begin{equation}
			\psi(r)^{-(1+\varepsilon)}\phi(r)^{p-1}|\psi'(r)|\in L^1(\T_{s_0}\cap\,\Omega),\quad     \psi(r)^{1+\varepsilon}\phi(r)^{p-1}|\psi'(r)|\in L^1(\Omega\backslash{\T_{s_0}}).                        \label{Ambrosio-condition-2}
		\end{equation}
		
	\end{enumerate}
	
	\noindent Then for each $\varepsilon\in (0,\varepsilon_1)$, by setting $c(\varepsilon):=(1+\varepsilon)/p$, the following function satisfies \eqref{inverse-ineq}
	\begin{align}
		\nu_\e(x):=\begin{cases}
			\Big({\psi(r(x))\over \psi(s_0)}\Big)^{-c(\e/2)},& \text{ if }x\in {\T_{s_0}}\cap \,\Omega,\\
			\\
			\Big({\psi(r(x))\over \psi(s_0)}\Big)^{c(\e)},& \text{ if }x\in \Omega\setminus \T_{s_0}.
		\end{cases}  \label{v-Def-2}
	\end{align}
\end{lemma}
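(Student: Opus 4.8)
The plan is to transcribe the proof of Lemma \ref{Ambrosio-lem} essentially verbatim, simply interchanging the roles of the tube $\T_{s_0}\cap\Omega$ and its complement $\Omega\setminus\T_{s_0}$, since \eqref{v-Def-2} is obtained from \eqref{v-Def-1} precisely by this swap (the blow-up exponent $c(\e)$ is now placed away from $\Sigma$ and the decay exponent $-c(\e/2)$ near $\Sigma$). First I would record the elementary differentiation identities for $\nu_\e$. On $\T_{s_0}\cap\Omega$, writing $\nu_\e=(\psi(r)/\psi(s_0))^{-c(\e/2)}$ and using $\na(\log\psi(r))=\frac{\psi'(r)}{\psi(r)}\na r$ together with the eikonal identity $|\na r|=1$ $\vol_g$-a.e., one gets $\na\nu_\e=-c(\e/2)\,\nu_\e\,\na(\log\psi(r))$, hence $|\na\nu_\e|=c(\e/2)\,|\nu_\e|\,|\na(\log\psi(r))|$ $\vol_g$-a.e.\ there; likewise $|\na\nu_\e|=c(\e)\,|\nu_\e|\,|\na(\log\psi(r))|$ $\vol_g$-a.e.\ on $\Omega\setminus\T_{s_0}$. (Here $s_0$ is tacitly chosen so that $\psi(s_0)>0$; by Assumption \ref{Basciasumptionfunction}/\eqref{negligbileset}, $\psi(r)>0$ $\vol_g$-a.e., so $\nu_\e$ is well-defined, equals $1$ on $\{r=s_0\}$, and is locally Lipschitz on $\Omega_\Sigma$.)

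Next I would check that the relevant integrals are finite. Since $c(\e/2)p=1+\e/2$ and $c(\e)p=1+\e$,
\[
|\nu_\e|^p\phi(r)^{p-1}|\psi'(r)|=\begin{cases}|\psi(s_0)|^{1+\e/2}\,\psi(r)^{-(1+\e/2)}\phi(r)^{p-1}|\psi'(r)|,& r<s_0,\\ |\psi(s_0)|^{-(1+\e)}\,\psi(r)^{1+\e}\phi(r)^{p-1}|\psi'(r)|,& r\geq s_0,\end{cases}
\]
so the $r<s_0$ branch lies in $L^1(\T_{s_0}\cap\Omega)$ by \eqref{Ambrosio-condition-2} applied with $\e/2\in(0,\e_1)$ and the $r\geq s_0$ branch lies in $L^1(\Omega\setminus\T_{s_0})$ by \eqref{Ambrosio-condition-2} applied with $\e$; by the gradient identities, the corresponding weighted Dirichlet densities differ from these by the bounded factors $c(\e/2)^p$ and $c(\e)^p$, so they are integrable as well.

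The main step is then the same algebraic manipulation as in Lemma \ref{Ambrosio-lem}: split $c(\e)^p\int_{\Omega_\Sigma}|\nu_\e|^p\phi(r)^{p-1}|\psi'(r)|\dvol_g$ over $\{r<s_0\}$ and $\{r\geq s_0\}$ and use the gradient identities to rewrite it as
\[
\frac{c(\e)^p}{c(\e/2)^p}\int_{r<s_0}|\na\nu_\e|^p\frac{\phi(r)^{p-1}|\psi'(r)|}{|\na(\log\psi(r))|^p}\dvol_g+\int_{r\geq s_0}|\na\nu_\e|^p\frac{\phi(r)^{p-1}|\psi'(r)|}{|\na(\log\psi(r))|^p}\dvol_g.
\]
Because $c(\e)/c(\e/2)=(1+\e)/(1+\e/2)>1$ and the first integral is strictly positive (again using Assumption \ref{Basciasumptionfunction}/\eqref{negligbileset} together with the fact that $\T_{s_0}\cap\Omega$ has positive $\vol_g$-measure), dropping the factor $c(\e)^p/c(\e/2)^p$ strictly decreases the expression, which is exactly \eqref{inverse-ineq}.

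I do not expect any genuine obstacle here. The only point to keep straight is that the swap in \eqref{v-Def-2} forces the swapped integrability hypothesis \eqref{Ambrosio-condition-2} (decay weight integrable near $\Sigma$, growth weight integrable away from it), and that the strict inequality continues to issue solely from the comparison $c(\e)>c(\e/2)$, exactly as in the proof of Lemma \ref{Ambrosio-lem}.
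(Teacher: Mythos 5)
Your proposal is correct and is exactly the argument the paper intends: the paper gives no separate proof of Lemma \ref{Ambrosio-lem2}, stating only that "the same argument" as Lemma \ref{Ambrosio-lem} applies, and your transcription with the roles of $\T_{s_0}\cap\Omega$ and $\Omega\setminus\T_{s_0}$ (and correspondingly of the two integrability conditions in \eqref{Ambrosio-condition-2}) swapped is precisely that argument. Your added remarks — applying \eqref{Ambrosio-condition-2} with $\e/2$ on the tube and with $\e$ outside, and noting that strictness comes from $c(\e)/c(\e/2)>1$ together with positivity of the Dirichlet integral over $\T_{s_0}\cap\Omega$ (via Assumption \ref{Basciasumptionfunction}/\eqref{negligbileset}) — only make explicit what the paper leaves implicit.
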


Now we discuss Lemmas \ref{Ambrosio-lem} $\&$ \ref{Ambrosio-lem2} in the space form $\MM_\lambda$.

\begin{example}\label{space-form-phi-psi3} Let $\MM_\lambda$, $\Sigma$, $\Omega$ and $(\phi ,\psi )$ be as in Example \ref{space-form-phi-psi2}.
	It follows by Example  \ref{space-form-phi-psi1} that   $\T_{s_0}\cap \, \Omega$ and $\Omega\,\backslash {\T_{s_0}}$ have  regular boundaries for every  $s_0\in (0,\mathfrak{r}_{\lambda})$. We claim that
	\begin{equation}\label{conditionspaceform1}
		\text{vol}_{i^*g}(\Sigma\cap \overline{\Omega})<+\infty,  \ \beta<-(m-n)  \quad \Longrightarrow \quad   \text{\eqref{Ambrosio-condition-2}}.
	\end{equation}

	We first discuss the case $\lambda=1$. Since $\Lambda<\lambda=1$, Theorem \ref{Laplacian-comparison-Sigma} combined with \eqref{narspaceform}$_1$ yields
	\[
	 \det \mathcal {A}(t,\Bn)=\mathbf{c}_\lambda(t)^n\mathbf{s}_\lambda(t)^{m-n-1}\leq \mathbf{c}_\Lambda(t)^n\mathbf{s}_\Lambda(t)^{m-n-1}, \quad t\in [0, \mathfrak{r}_\lambda].
	\]
	Thus, for any $\varepsilon \in (0,1)$, a direct calculation together with \eqref{domaincontts}, \eqref{phi-psi-M-lambda}, \eqref{dv-g-Def} and \eqref{narspaceform}   provides
	\begin{eqnarray*}
		\int_{{\T_{s_0}}\cap\Omega}\psi(r)^{-(1+\varepsilon)}\phi(r)^{p-1}|\psi'(r)|\dvol_g&\leq& C |\alpha| \int^{s_0}_0\mathbf{s}_\Lambda(t)^{-(m-n)-\alpha\varepsilon}\mathbf{c}_\Lambda(t)^{-n+1}\mathbf{c}_\lambda(t)^n\mathbf{s}_\lambda(t)^{m-n-1}{\dd}t\\
		&\leq&  C |\alpha| \int^{s_0}_0\mathbf{s}_\Lambda(t)^{-1-\alpha\varepsilon}\mathbf{c}_\Lambda(t) {\dd}t= \frac{C}{\varepsilon}\mathbf{s}_\Lambda(s_0)^{-\alpha\varepsilon}<+\infty,
	\end{eqnarray*}
	where $\alpha:=\beta+m-n< 0$ and $C:=\vol(\mathbb{S}^{m-n-1})\text{vol}_{i^*g}(\Sigma\cap \overline{\Omega})$.
	A similar computation  yields
	\begin{align*}
		\int_{\Omega\backslash{\T_{s_0}}}  \psi(r)^{1+\varepsilon}\phi(r)^{p-1}|\psi'(r)|\dvol_g\leq C |\alpha| \int^{\mathfrak{r}_{\lambda}}_{s_0} \mathbf{s}_\Lambda(t)^{\alpha(2+\varepsilon)-1}\mathbf{c}_\Lambda(t){\dd}t=\frac{-C}{2+\varepsilon}\mathbf{s}_\Lambda(t)^{\alpha(2+\varepsilon)}\Big|^{\mathfrak{r}_{\lambda}}_{s_0}<+\infty.
	\end{align*}
	Therefore, \eqref{conditionspaceform1}  holds.
	
	For the case $\lambda\in \{0,-1\}$, the arguments are similar and even easier than before; thus, we omit the details.
	In particular, in view of Remark \ref{basiclambda=Lambdacase}, if $\Lambda=\lambda\leq 0$, then \eqref{conditionspaceform1} can be strengthened as
	\begin{equation}\label{conditionspaceform12}
		\text{vol}_{i^*g}(\Sigma\cap \overline{\Omega})<+\infty, \ \beta>-(m-n) \text{ (resp., $\beta<-(m-n)$)}   \quad \Longrightarrow \quad \eqref{Ambrosio-condition-1} \text{ (resp., \eqref{Ambrosio-condition-2})}.
	\end{equation}
\end{example}

In order to establish the sharpness result, we introduce the following space.
\begin{definition}\label{D1pspace}
	Given an open set $U\subset M$, the {\it weighted Sobolev space} $D^{1,p}(U,\phi,\psi)$ is defined as the completion of $C^\infty_0(U)$ under the following norm
	\begin{equation*}
		\|u\|_{D}:=\left(\int_{U}|u|^p \phi(r)^{p-1}|\psi'(r)|\dvol_g+\int_{U}|\na u|^p \frac{\phi(r)^{p-1} |\psi'(r)|}{|\nabla(\log \psi(r))|^{p}}\dvol_g\right)^{1/ p}.\label{D-norm}
	\end{equation*}
\end{definition}

\begin{proposition}\label{sharpness-Hardy-0} Let $(M,g)$, $\Sigma$, $\Omega$, $(\phi,\psi)$ and $\nu_\e$  be as    in either Lemma \ref{Ambrosio-lem} or  Lemma \ref{Ambrosio-lem2}. If $\nu_\e\in D^{1,p}(\Omega_\Sigma,\phi,\psi)$ for any small $\varepsilon>0$, then
	\eqref{Hardy-Inequ-general} is sharp $($i.e., \eqref{sharpfunctionexp} holds$).$
\end{proposition}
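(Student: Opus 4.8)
The plan is to show that $\mathcal{J}_{p,\Omega_\Sigma}(\nu_\e)\to p^{-p}$ as $\varepsilon\to 0^+$, which combined with Theorem \ref{Hardy-thm-general} yields \eqref{sharpfunctionexp}. The hypothesis $\nu_\e\in D^{1,p}(\Omega_\Sigma,\phi,\psi)$ guarantees that $\nu_\e$ is an admissible competitor for the infimum defining the sharp constant: by density of $C^\infty_0(\Omega_\Sigma)$ in $D^{1,p}(\Omega_\Sigma,\phi,\psi)$ and the continuity of both the numerator and denominator of $\mathcal{J}_{p,\Omega_\Sigma}$ with respect to the $\|\cdot\|_D$-norm (the numerator is controlled by $\|\cdot\|_D^p$ and the denominator is the other summand in $\|\cdot\|_D^p$), one has $\inf_{u\in C^\infty_0(\Omega_\Sigma)\setminus\{0\}}\mathcal{J}_{p,\Omega_\Sigma}(u)\le \mathcal{J}_{p,\Omega_\Sigma}(\nu_\e)$ for every small $\varepsilon>0$, provided the denominator $\int_{\Omega_\Sigma}|\nu_\e|^p\phi(r)^{p-1}|\psi'(r)|\dvol_g$ is finite and nonzero. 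Finiteness of this integral is exactly what assumption (ii) of Lemma \ref{Ambrosio-lem} (or Lemma \ref{Ambrosio-lem2}) provides via \eqref{Ambrosio-condition-1} (or \eqref{Ambrosio-condition-2}), since $\nu_\e$ is built from powers $\psi(r)^{\pm c(\e)}$ and $\psi(r)^{\pm c(\e/2)}$ of $\psi(r)$; positivity is clear because $\nu_\e>0$ on $\Omega_\Sigma$ and the weight is positive off the $\vol_g$-negligible set in Assumption \ref{Basciasumptionfunction}/\eqref{negligbileset}.

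Next I would combine the lower bound $p^{-p}\le \mathcal{J}_{p,\Omega_\Sigma}(\nu_\e)$ coming from Theorem \ref{Hardy-thm-general} with the upper bound furnished by Lemma \ref{Ambrosio-lem}/\ref{Ambrosio-lem2}. Indeed, inequality \eqref{inverse-ineq} reads precisely
\[
\int_{\Omega_\Sigma}|\na\nu_\e|^p\frac{\phi(r)^{p-1}|\psi'(r)|}{|\nabla(\log\psi(r))|^p}\dvol_g < c(\e)^p\int_{\Omega_\Sigma}|\nu_\e|^p\phi(r)^{p-1}|\psi'(r)|\dvol_g,
\]
so that $\mathcal{J}_{p,\Omega_\Sigma}(\nu_\e)<c(\e)^p=\left(\frac{1+\varepsilon}{p}\right)^p$. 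Therefore
\[
p^{-p}\le \inf_{u\in C^\infty_0(\Omega_\Sigma)\setminus\{0\}}\mathcal{J}_{p,\Omega_\Sigma}(u)\le \mathcal{J}_{p,\Omega_\Sigma}(\nu_\e)<\left(\frac{1+\varepsilon}{p}\right)^p,
\]
for all sufficiently small $\varepsilon>0$. Letting $\varepsilon\to 0^+$ squeezes the infimum to $p^{-p}$, which is \eqref{sharpfunctionexp}.

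The only genuinely delicate point is justifying that $\nu_\e$ actually plays the role of an admissible test function in the variational characterization of the sharp constant; this is precisely why the hypothesis $\nu_\e\in D^{1,p}(\Omega_\Sigma,\phi,\psi)$ is imposed. Granting this, one picks a sequence $u_k\in C^\infty_0(\Omega_\Sigma)$ with $u_k\to\nu_\e$ in $\|\cdot\|_D$; then both integrals in $\mathcal{J}_{p,\Omega_\Sigma}(u_k)$ converge to the corresponding integrals for $\nu_\e$ (the denominators are eventually bounded away from $0$ since the denominator for $\nu_\e$ is positive), so $\mathcal{J}_{p,\Omega_\Sigma}(u_k)\to\mathcal{J}_{p,\Omega_\Sigma}(\nu_\e)$, giving the required inequality $\inf_u\mathcal{J}_{p,\Omega_\Sigma}(u)\le\mathcal{J}_{p,\Omega_\Sigma}(\nu_\e)$. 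All remaining ingredients — the regularity of the boundaries of $\T_{s_0}\cap\Omega$ and $\Omega\setminus\T_{s_0}$, the integrability conditions \eqref{Ambrosio-condition-1}/\eqref{Ambrosio-condition-2}, and the validity of \eqref{inverse-ineq} — are supplied verbatim by Lemmas \ref{Ambrosio-lem} and \ref{Ambrosio-lem2}, so the argument is short once the admissibility is in place.
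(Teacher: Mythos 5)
Your proposal is correct and follows essentially the same route as the paper: approximate $\nu_\e$ by a sequence in $C^\infty_0(\Omega_\Sigma)$ using the definition of $D^{1,p}(\Omega_\Sigma,\phi,\psi)$, pass to the limit in $\mathcal{J}_{p,\Omega_\Sigma}$, sandwich the infimum between $p^{-p}$ (Theorem \ref{Hardy-thm-general}) and $c(\e)^p$ (inequality \eqref{inverse-ineq}), and let $\e\to 0^+$. The extra remarks on finiteness and positivity of the denominator are consistent with what the lemmas already guarantee, so no gap remains.
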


\begin{proof}
	
	Since $\nu_\e\in D^{1,p}(\Omega_\Sigma,\phi,\psi)$, there exists a sequence $u_i\in C^\infty_0(\Omega_\Sigma)\setminus \{0\}$ such that
	\begin{align}   \lim_{i\rightarrow\infty}\int_{\Omega_\Sigma}|u_i|^p \phi(r)^{p-1}|\psi'(r)|\dvol_g&= \int_{\Omega_\Sigma}|\nu_\e|^p \phi(r)^{p-1}|\psi'(r)|\dvol_g,\nonumber\\
		\lim_{i\rightarrow\infty}\int_{\Omega_\Sigma}|\na u_i|^p \frac{\phi(r)^{p-1} |\psi'(r)|}{|\nabla(\log \psi(r))|^{p}}\dvol_g&=\int_{\Omega_\Sigma}|\na \nu_\e|^p \frac{\phi(r)^{p-1} |\psi'(r)|}{|\nabla(\log \psi(r))|^{p}}\dvol_g,\nonumber
	\end{align}
	which together with  \eqref{Hardy-Inequ-general} and \eqref{inverse-ineq} yields
	\begin{align}
		p^{-p}\leq& \inf_{u\in C^\infty_0(\Omega_\Sigma)\backslash\{0\}} {\ds\int_{\Omega_\Sigma}|\na u|^p \frac{\phi(r)^{p-1} |\psi'(r)|}{|\nabla(\log \psi(r))|^{p}}\dvol_g\over \ds\int_{\Omega_\Sigma}|u|^p \phi(r)^{p-1}|\psi'(r)|\dvol_g }\leq   \lim_{i\rightarrow\infty}{\ds\int_{\Omega_\Sigma}|\na u_i|^p \frac{\phi(r)^{p-1} |\psi'(r)|}{|\nabla(\log \psi(r))|^{p}}\dvol_g\over \ds\int_{\Omega_\Sigma}|u_i|^p \phi(r)^{p-1}|\psi'(r)|\dvol_g}\nonumber\\
		=&{\ds\int_{\Omega_\Sigma}|\na \nu_\e|^p \frac{\phi(r)^{p-1} |\psi'(r)|}{|\nabla(\log \psi(r))|^{p}}\dvol_g\over \ds\int_{\Omega_\Sigma}|\nu_\e|^p \phi(r)^{p-1}|\psi'(r)|\dvol_g}<c(\e)^p.\nonumber
	\end{align}
	Letting $\e\rightarrow0^+$, the sharpness of \eqref{Hardy-Inequ-general} follows due to the fact that $c(\e)^p\rightarrow p^{-p}$ as $\e\to 0$.
\end{proof}

The above proposition points out the importance of $\nu_\e\in D^{1,p}(\Omega_\Sigma,\phi,\psi)$.  Note that the domain of  $\nu_\e$ is $\Omega$ and then we have the following result.
\begin{corollary}\label{finsharpcorollary}
	Let $(M,g)$, $\Sigma$,  $\Omega$, $(\phi,\psi)$ and $\nu_\e$  be as    in either Lemma \ref{Ambrosio-lem} or  Lemma \ref{Ambrosio-lem2}.
	Suppose that  $\nu_\e$  can be extended to $\overline{\Omega}$ and satisfies the following two conditions
	\begin{itemize}
		
		\item $\nu_\e$ vanishes on $\Sigma\cap \Omega$ and $\partial\Omega$;
		
		\smallskip
		
		\item if $\Omega$ is unbounded, then $\nu_\e(x)\rightarrow 0$ as $r(x)\rightarrow +\infty$.
		
	\end{itemize}
	Then $\nu_\e\in D^{1,p}(\Omega_\Sigma,\phi,\psi)$ and hence, \eqref{Hardy-Inequ-general} is sharp.
\end{corollary}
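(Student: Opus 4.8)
The plan is to show that, for every sufficiently small $\e>0$, the test function $\nu_\e$ produced by Lemma \ref{Ambrosio-lem} (resp.\ Lemma \ref{Ambrosio-lem2}) belongs to the weighted Sobolev space $D^{1,p}(\Omega_\Sigma,\phi,\psi)$ of Definition \ref{D1pspace}; once this is done, the sharpness of \eqref{Hardy-Inequ-general} follows at once from Proposition \ref{sharpness-Hardy-0}. The first thing to record is that $\|\nu_\e\|_{D}<\infty$. Indeed, \eqref{inverse-ineq} bounds the gradient part of $\|\nu_\e\|_{D}^p$ by $c(\e)^p\int_{\Omega_\Sigma}|\nu_\e|^p\phi(r)^{p-1}|\psi'(r)|\dvol_g$, and, in view of the explicit form \eqref{v-Def-1} (resp.\ \eqref{v-Def-2}) together with the identity $c(\e)p=1+\e$, this last integral decomposes over $\T_{s_0}\cap\Omega$ and $\Omega\setminus\T_{s_0}$ into exactly the two integrals appearing in \eqref{Ambrosio-condition-1} (resp.\ \eqref{Ambrosio-condition-2}), hence is finite; in particular $\nu_\e\geq0$ lies in the ambient weighted $L^p$-spaces defining $\|\cdot\|_{D}$.

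Next I truncate. For $\delta>0$ put $\nu_\e^\delta:=(\nu_\e-\delta)_+$, where I still write $\nu_\e$ for its continuous extension to $\overline\Omega$. The crucial observation is that the superlevel set $K_\delta:=\{x\in\overline\Omega:\nu_\e(x)\geq\delta\}$ is a compact subset of $\Omega_\Sigma$: it is closed and, when $\Omega$ is unbounded, bounded (since $\nu_\e(x)\to0$ as $r(x)\to\infty$), hence compact by completeness of $M$; it misses $\partial\Omega$ because $\nu_\e$ vanishes there, so $K_\delta\subset\Omega$; and, lying inside $\Omega$, it also misses $\Sigma$ because $\nu_\e$ vanishes on $\Sigma\cap\Omega$. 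Consequently $\supp\nu_\e^\delta\subset K_\delta$ is compact in $\Omega_\Sigma$, and since $\nu_\e\in\Lip_{\loc}(\Omega_\Sigma)$ --- this follows from Assumption \ref{Basciasumptionfunction}/\eqref{smoothofpsi} and the explicit form of $\nu_\e$ --- we obtain $\nu_\e^\delta\in\Lip_0(\Omega_\Sigma)$. This step, where all three vanishing hypotheses imposed on $\nu_\e$ are used simultaneously, is the heart of the argument and the place where care is needed; everything afterwards is routine functional analysis.

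It then suffices to verify (i) $\nu_\e^\delta\to\nu_\e$ in the norm $\|\cdot\|_{D}$ as $\delta\to0^+$, and (ii) $\nu_\e^\delta\in D^{1,p}(\Omega_\Sigma,\phi,\psi)$ for each fixed $\delta>0$: a diagonal choice $u^{(k)}\in C^\infty_0(\Omega_\Sigma)$ with $\|u^{(k)}-\nu_\e^{\delta_k}\|_{D}<1/k$ and $\delta_k\downarrow0$ then satisfies $\|u^{(k)}-\nu_\e\|_{D}\leq 1/k+\|\nu_\e^{\delta_k}-\nu_\e\|_{D}\to0$, so $\nu_\e\in D^{1,p}(\Omega_\Sigma,\phi,\psi)$. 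For (i): $0\leq\nu_\e^\delta\leq\nu_\e$, $\nu_\e^\delta\to\nu_\e$ pointwise $\vol_g$-a.e., and $\nabla\nu_\e^\delta=\chi_{\{\nu_\e>\delta\}}\nabla\nu_\e$ a.e., so the integrands of $\|\nu_\e^\delta-\nu_\e\|_{D}^p$ are dominated by $|\nu_\e|^p\phi(r)^{p-1}|\psi'(r)|$ and $|\nabla\nu_\e|^p\phi(r)^{p-1}|\psi'(r)|/|\nabla(\log\psi(r))|^p$, both in $L^1(\Omega_\Sigma)$ because $\|\nu_\e\|_{D}<\infty$; dominated convergence yields $\|\nu_\e^\delta-\nu_\e\|_{D}\to0$. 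For (ii): applying Lemma \ref{lipsconverppax} to $\nu_\e^\delta\in\Lip_0(\Omega_\Sigma)$ produces $w_j\in C^\infty_0(\Omega_\Sigma)$, all supported in one fixed compact set $\mathscr{K}\subset\Omega_\Sigma$, with $|w_j|\leq C$, $|\nabla w_j|\leq C$ and, along a subsequence, $w_j\to\nu_\e^\delta$, $\nabla w_j\to\nabla\nu_\e^\delta$ $\vol_g$-a.e.; since $\phi(r)^{p-1}|\psi'(r)|$ and $\phi(r)^{p-1}|\psi'(r)|/|\nabla(\log\psi(r))|^p$ lie in $L^1(\mathscr{K})$ by Assumption \ref{Basciasumptionfunction}/\eqref{integrable}, multiplying by the uniformly bounded $|w_j|^p$, $|\nabla w_j|^p$ and invoking dominated convergence once more gives $\|w_j-\nu_\e^\delta\|_{D}\to0$, i.e.\ $\nu_\e^\delta\in D^{1,p}(\Omega_\Sigma,\phi,\psi)$. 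Combining (i)--(ii) gives $\nu_\e\in D^{1,p}(\Omega_\Sigma,\phi,\psi)$, and Proposition \ref{sharpness-Hardy-0} finishes the proof.
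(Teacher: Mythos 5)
Your argument is correct and follows essentially the same route as the paper's proof: truncate $\nu_\e$ by $\max\{\nu_\e-\delta,0\}$, use the vanishing hypotheses to get a Lipschitz function with compact support in $\Omega_\Sigma$, approximate it via Lemma \ref{lipsconverppax} with dominated convergence (Assumption \ref{Basciasumptionfunction}/\eqref{integrable}), pass $\delta\to0^+$ by dominated convergence using the $L^1$-bounds coming from Lemmas \ref{Ambrosio-lem}--\ref{Ambrosio-lem2}, and conclude by Proposition \ref{sharpness-Hardy-0}. The only cosmetic differences are your diagonal extraction in place of the paper's two-step limit and the (harmless) remark that the exterior integral actually uses \eqref{Ambrosio-condition-1}/\eqref{Ambrosio-condition-2} with exponent $\e/2$ rather than $\e$, which is covered since those conditions hold for all small parameters.
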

\begin{proof}
	Due to Proposition \ref{sharpness-Hardy-0}, it suffices to show $\nu_\e\in D^{1,p}(\Omega_\Sigma,\phi,\psi)$. On the one hand, in view of  Lemmas \ref{Ambrosio-lem} $\&$ \ref{Ambrosio-lem2},  the function $\nu_\varepsilon$   satisfies \eqref{inverse-ineq}  and
	\begin{equation}
		|\nu_\e|^p\phi(r)^{p-1}|\psi'(r)|,\ |\na \nu_\e|^p \frac{\phi(r)^{p-1} |\psi'(r)|}{|\nabla(\log \psi(r))|^{p}} \in L^1(\Omega_\Sigma).\label{L-1-bound}
	\end{equation}
		On the other hand,
	set $\nu_{\e,\iota}:=\max\{\nu_\e-\iota,0\}$ for small $\iota>0$.   By the properties of $\nu_\e$, it is not hard to check that $\nu_{\e,\iota}$ is a Lipschitz function with compact support in $\Omega_\Sigma$. By using Lemma \ref{lipsconverppax}, there exists a sequence of smooth Lipschitz functions
	$w_n\in C^\infty_0(\Omega_\Sigma)\cap \Lip_0(\Sigma)$  such that
	\begin{equation}\label{fnvelconverge}
		\supp w_n\subset \mathscr{K}_\iota,\quad |w_n|\leq C_\iota,\quad |\nabla w_n|\leq     \dil(w_n)\leq C_\iota,\quad w_n\rightrightarrows \nu_{\e,\iota},\quad \|w_n-\nu_{\e,\iota} \|_{W^{1,1}(\Omega_\Sigma)}\rightarrow 0,
	\end{equation}
	where $\mathscr{K}_\iota$ is a compact set in $\Omega_\Sigma$ and $C_\iota>0$ is  a constant. Due to $\nu_{\e,\iota}\in \Lip_0(\Omega_\Sigma)$,  by choosing eventually larger $C_\iota$ and $\mathscr{K}_\iota$, and by passing to a subsequence, we may assume that
	\begin{equation}\label{nucondtion}
		\supp \nu_{\e,\iota}\subset \mathscr{K}_\iota\quad |\nu_{\e,\iota}|\leq C_\iota, \quad |\nabla \nu_{\e,\iota}|\leq  \dil(\nu_{\e,\iota})\leq C_\iota, \quad \nabla w_n \rightarrow \nabla \nu_{\e,\iota} \text{ $\vol_g$-a.e. pointwise}.
	\end{equation}
	Thus, a direct calculation yields
	\begin{align*}
		\left\| w_n- \nu_{\e,\iota}  \right\|_D^p= \int_{\mathscr{K}_\iota} \left| w_n- \nu_{\e,\iota}\right|^{p}  \phi(r)^{p-1}|\psi'(r)|  \dvol_g +\int_{\mathscr{K}_\iota} \left|\nabla w_n-\nabla \nu_{\e,\iota}\right|^{p} \frac{\phi(r)^{p-1} |\psi'(r)|}{|\nabla(\log \psi(r))|^{p}} \dvol_g,
	\end{align*}
	which together with \eqref{fnvelconverge}, \eqref{nucondtion}, Assumption \ref{Basciasumptionfunction}/\eqref{integrable} and Lebesgue's dominated convergence theorem furnishes $\left\| w_n- \nu_{\e,\iota}  \right\|_D\rightarrow 0$ as $n\rightarrow \infty$, i.e., $\nu_{\e,\iota}\in D^{1,p}(\Omega_\Sigma,\phi,\psi)$.
	In a similar way, we have
	\begin{align}
		\|\nu_{\e,\iota}-\nu_\e\|_D^p=&\int_{\Omega_\Sigma}|\nu_{\e,\iota}-\nu_\e|^p \phi(r)^{p-1}|\psi'(r)|\dvol_g+\int_{\Omega_\Sigma}|\na \nu_{\e,\iota}-\na\nu_\e|^p \frac{\phi(r)^{p-1} |\psi'(r)|}{|\nabla(\log \psi(r))|^{p}}\dvol_g\nonumber\\
		=&\int_{\Omega_\Sigma}\chi_{\{0\leq \nu_\e\leq \iota\}}|\nu_\e|^p\phi(r)^{p-1}|\psi'(r)|\dvol_g+\int_{\Omega_\Sigma}\chi_{\{\nu_\e>\iota\}}|\iota|^p\phi(r)^{p-1}|\psi'(r)|\dvol_g\nonumber\\
		&+\int_{\Omega_\Sigma}\chi_{\{0\leq \nu_\e\leq \iota\}}|\na \nu_\e|^p \frac{\phi(r)^{p-1} |\psi'(r)|}{|\nabla(\log \psi(r))|^{p}}\dvol_g.\label{difference-between-two-vs}
	\end{align}
	Since $\chi_{\{\nu_\e>\iota\}}|\iota|^p\phi(r)^{p-1}|\psi'(r)|\leq |\nu_\e|^p\phi(r)^{p-1}|\psi'(r)|$,  Lebesgue's dominated convergence theorem combined with \eqref{L-1-bound} and \eqref{difference-between-two-vs} furnishes $\|\nu_{\e,\iota}-\nu_\e\|_D\rightarrow0$ as $\iota\rightarrow0^+$,
	which means $\nu_\e\in D^{1,p}(\Omega_\Sigma,\phi,\psi)$.
\end{proof}

By recalling \eqref{cutradidfef}, we have the following consequence.

\begin{corollary}\label{geomertricmeaningofsharpness2}
	Let $(M,g)$ be an $m$-dimensional complete Riemannian manifold, $i:\Sigma \hookrightarrow M$ be an $n$-dimensional closed submanifold with $\mathfrak{c}_\mathcal {V}(\Sigma)>0$, $\Omega$ be a domain such that either $\Omega=M$ or $\partial\Omega=\Sigma$ and let $(\phi,\psi)$   satisfy Assumption \ref{Basciasumptionfunction}.
	
	\smallskip
	
	{\noindent $\bullet$}  If $\Omega$ is bounded,  suppose that one of the following   conditions hold$:$
	\begin{enumerate}[{ \rm(i)}]
		
		\item \label{phie1233412} $\psi(0^+)=0$, $\psi'(t)>0$  and \eqref{Ambrosio-condition-1}$;$
		
		\item \label{phie1233432} $\psi(0^+)=+\infty$, $\psi'(t)<0$ and \eqref{Ambrosio-condition-2}.
		
	\end{enumerate}

	{\noindent $\bullet$} If $\Omega$ is unbounded,  assume that one of the following conditions hold$:$
	\begin{enumerate}[{ \rm(i')}]
		
		\item \label{phie123341} $\psi(0^+)=0$, $\psi'(t)>0$, $\lim_{t\rightarrow +\infty}\psi(t)=+\infty$ and \eqref{Ambrosio-condition-1}$;$
		
		\item \label{phie123343} $\psi(0^+)=+\infty$, $\psi'(t)<0$,  $\lim_{t\rightarrow +\infty}\psi(t)=0$ and \eqref{Ambrosio-condition-2}.
		
	\end{enumerate}

	\noindent Then \eqref{Hardy-Inequ-general} is sharp.
\end{corollary}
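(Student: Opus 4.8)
The plan is to deduce the statement directly from Corollary~\ref{finsharpcorollary}: in each of the four regimes I would produce, for every sufficiently small $\e>0$, the function $\nu_\e$ furnished by Lemma~\ref{Ambrosio-lem} (in the two cases where $\psi$ is increasing with $\psi(0^+)=0$) or by Lemma~\ref{Ambrosio-lem2} (in the two cases where $\psi$ is decreasing with $\psi(0^+)=+\infty$), and then verify the three bullet hypotheses of Corollary~\ref{finsharpcorollary}: that $\nu_\e$ extends to $\overline{\Omega}$, that it vanishes on $\Sigma\cap\Omega$ and on $\partial\Omega$, and -- when $\Omega$ is unbounded -- that $\nu_\e(x)\to 0$ as $r(x)\to+\infty$. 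Once this is done, Corollary~\ref{finsharpcorollary} gives $\nu_\e\in D^{1,p}(\Omega_\Sigma,\phi,\psi)$ for each such $\e$, and the sharpness of~\eqref{Hardy-Inequ-general} follows.

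First I would fix a suitable threshold $s_0$. Since $\mathfrak{c}_{\mathcal{V}}(\Sigma)>0$, for every $t<\mathfrak{c}_{\mathcal{V}}(\Sigma)$ the tube $\T_t$ is contained in $\mathcal{V}\mathscr{D}\Sigma$, so by Lemma~\ref{prop-1-3}/\eqref{LEMM2.1-4} and~\eqref{Laplacian-expression} the distance function $r$ is smooth on $\T_{\mathfrak{c}_{\mathcal{V}}(\Sigma)}\setminus\Sigma$ with $|\nabla r|\equiv 1$ there; in particular $\{r=s_0\}$ is a regular hypersurface for every $s_0\in(0,\mathfrak{c}_{\mathcal{V}}(\Sigma))$. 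Because $\Omega$ is a nonempty open set while $\Sigma$ has empty interior, $\sup_\Omega r>0$, and using also Assumption~\ref{Basciasumptionfunction}/\eqref{negligbileset} I may pick $s_0\in(0,\mathfrak{c}_{\mathcal{V}}(\Sigma))$ with $s_0<\sup_\Omega r$ and $\psi(s_0)\,\psi'(s_0)\neq 0$. For such $s_0$, both $\T_{s_0}\cap\Omega$ and $\Omega\setminus\T_{s_0}$ have piecewise regular boundaries -- portions of the regular hypersurface $\{r=s_0\}$ together with the closed submanifold $\Sigma=\partial\Omega$, or only $\{r=s_0\}$ when $\Omega=M$ -- so hypothesis~(i) of Lemma~\ref{Ambrosio-lem} and of Lemma~\ref{Ambrosio-lem2} is met; hypothesis~(ii) of the relevant lemma is precisely~\eqref{Ambrosio-condition-1}, resp.~\eqref{Ambrosio-condition-2}, which is part of the hypotheses. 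Applying the appropriate lemma yields the explicit function $\nu_\e$ of~\eqref{v-Def-1} or~\eqref{v-Def-2}, which already satisfies the reverse inequality~\eqref{inverse-ineq} and the integrability~\eqref{L-1-bound}.

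Next I would check the qualitative properties of $\nu_\e$. Both branches of $\nu_\e$ equal $1$ on $\{r=s_0\}$, so $\nu_\e$ is continuous on $\Omega_\Sigma$. In the cases governed by Lemma~\ref{Ambrosio-lem} ($\psi$ increasing, $\psi(0^+)=0$), the near-$\Sigma$ branch $(\psi(r)/\psi(s_0))^{c(\e)}$ tends to $0$ as $r\to 0^+$, while the far branch $(\psi(r)/\psi(s_0))^{-c(\e/2)}$ takes values in $(0,1]$; symmetrically, in the cases governed by Lemma~\ref{Ambrosio-lem2} ($\psi$ decreasing, $\psi(0^+)=+\infty$), the near-$\Sigma$ branch $(\psi(r)/\psi(s_0))^{-c(\e/2)}$ tends to $0$ and the far branch $(\psi(r)/\psi(s_0))^{c(\e)}$ lies in $(0,1]$. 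Hence $\nu_\e$ is bounded and continuous on $\Omega_\Sigma$ and extends continuously to $\overline{\Omega}$ by the value $0$ along $\Sigma$; since $\partial\Omega\subseteq\Sigma$ (or $\partial\Omega=\emptyset$), $\nu_\e$ vanishes on $\Sigma\cap\Omega$ and on $\partial\Omega$. Finally, when $\Omega$ is unbounded I would use the extra assumption: $\lim_{t\to+\infty}\psi(t)=+\infty$ (the case of~\eqref{v-Def-1}) forces $(\psi(r)/\psi(s_0))^{-c(\e/2)}\to 0$ as $r\to+\infty$, and $\lim_{t\to+\infty}\psi(t)=0$ (the case of~\eqref{v-Def-2}) forces $(\psi(r)/\psi(s_0))^{c(\e)}\to 0$. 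Thus all hypotheses of Corollary~\ref{finsharpcorollary} hold, completing the argument.

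I expect the only genuinely delicate point to be the boundary-regularity claim for $\T_{s_0}\cap\Omega$ and $\Omega\setminus\T_{s_0}$, which is exactly where the assumption $\mathfrak{c}_{\mathcal{V}}(\Sigma)>0$ is used -- it guarantees smoothness of $r$, hence regularity of the level set $\{r=s_0\}$, in a neighbourhood of $\Sigma$. The remaining work -- continuity of $\nu_\e$ across $\{r=s_0\}$, its vanishing on $\Sigma$, boundedness on the far part of $\Omega$, and decay at infinity in the unbounded case -- amounts to a routine split into the four sign regimes dictated by the signs of $\psi'$ and $\psi(0^+)$ (and, in the unbounded case, $\lim_{t\to\infty}\psi(t)$), and contains no real difficulty.
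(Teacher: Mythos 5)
Your proposal is correct and follows essentially the same route as the paper: choose $s_0\in(0,\mathfrak{c}_{\mathcal V}(\Sigma))$ so that $\T_{s_0}\cap\Omega$ and $\Omega\setminus\T_{s_0}$ are nonempty with piecewise regular boundaries, take $\nu_\e$ from Lemma \ref{Ambrosio-lem} (cases (i)/(i')) or Lemma \ref{Ambrosio-lem2} (cases (ii)/(ii')), and verify the extension/vanishing/decay hypotheses of Corollary \ref{finsharpcorollary} using the sign and limit assumptions on $\psi$. The only difference is that you spell out details (regularity of the level set $\{r=s_0\}$, continuity across it, boundedness of the far branch) that the paper leaves implicit.
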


\begin{proof}
	Since $\mathfrak{c}_\mathcal {V}(\Sigma)>0$, choose $s_0\in (0,\mathfrak{c}_\mathcal {V}(\Sigma))$ small enough so that $\T_{s_0}\cap~\Omega$ an $\Omega\setminus \T_{s_0}$ are non-empty with piece-wise regular boundaries.
	
	Suppose that  $\Omega$ is bounded.   If Condition \eqref{phie1233412} (resp., Condition \eqref{phie1233432})  holds, we define $\nu_\e$ by \eqref{v-Def-1} (resp., by \eqref{v-Def-2}). Since either $\Omega=M$ or $\partial\Omega=\Sigma$ holds,  $\nu_\e$  can be extended to $\overline{\Omega}$ and
	vanishes on $\Sigma\cup \partial\Omega$. Hence,  Corollary \ref{finsharpcorollary} implies the sharpness of \eqref{Hardy-Inequ-general}.
	The proof  of  unbounded $\Omega$ is similar and hence, we omit it.
\end{proof}

In the next two examples we apply Corollary \ref{geomertricmeaningofsharpness2} in space forms $\MM_\lambda$.
\begin{example}[$\lambda=0$]\label{space-form-phi-psi4}
	Let $\MM_0=\mathbb{R}^m$ and $\Sigma$ be an affine $n$-plane. In particular, $\mathfrak{c}_{\mathcal {V}}(\Sigma)=+\infty$.
	On the one hand,   if $\text{vol}_{i^*g}(\Sigma\cap \overline{\Omega})<+\infty$ with either $\Omega=\mathbb{R}^m$ or $\partial\Omega=\Sigma$, we have only one choice (see Example \ref{space-form-phi-psi3}):
	\[
	\Sigma \text{ is a point, say the origin $\mathbf{0}$,  i.e., $n=0$, and $\Omega_\Sigma=\mathbb{R}^m\backslash\{\mathbf{0}\}$}.
	\]
	Now
	let $(\phi,\psi)$ be as in Example \ref{space-form-phi-psi2} with $\Lambda=\lambda=0$ (see also Remark \ref{basiclambda=Lambdacase}). In the case when $n=0$, owing to \eqref{conditionspaceform12}, we have that  $\beta>-m$ (resp., $\beta<-m$), which implies Condition (\ref{phie123341}') (resp., Condition (\ref{phie123343}')) in Corollary \ref{geomertricmeaningofsharpness2}.

	In particular, Corollary \ref{geomertricmeaningofsharpness2} implies the classical sharp Hardy inequality: for any $p>1$ and $\beta\neq-m$,
	\begin{equation}\label{punthardyonRm}
		\int_{\mathbb{R}^m} |\nabla u|^p |x|^{p+\beta}{\dd}x\geq \left|\frac{\beta+m}{p}\right|^p\int_{\mathbb{R}^m}|u|^p |x|^\beta{\dd}x,\quad \forall\,u\in C^\infty_0(\mathbb{R}^m\backslash\{\mathbf{0}\}),
	\end{equation}
	where ${\dd}x$ denotes  the Lebesgue measure and   $|x|$ denotes the norm of $x$.
\end{example}

\begin{example}[$\lambda=1$]\label{smbalsigsharp} Let   $\MM_1=\mathbb{S}^m$ ($m\geq 2$) and $\Sigma$ be a great $\mathbb{S}^{m-1}$ (i.e., $n=m-1$). Thus, $\mathfrak{c}_{\mathcal {V}}(\Sigma)=\pi/2>0$. Let $\Omega=\mathbb{S}^m_+$ be a hemisphere with $\partial \Omega=\Sigma$ and let $(\phi,\psi)$ be defined by \eqref{phi-psi-M-lambda} with $\Lambda=0$, i.e.,
	\[
	\phi(t):=t^{-\frac{m-n-1}{p-1}}=1,\quad \psi(t):=t^{\beta+m-n}=t^{\beta+1},\quad\beta<-1.
	\]
	Due to \eqref{conditionspaceform1},  Condition \eqref{phie1233432} in Corollary \ref{geomertricmeaningofsharpness2} is satisfied. Therefore, Corollary \ref{geomertricmeaningofsharpness2} together with  Example \ref{space-form-phi-psi2} furnishes a sharp Hardy inequality: for any $p>1$ and $\beta<-1$,
	\begin{equation}
		\int_{\mathbb{S}^m_+}|\nabla u|^p r^{p+\beta}\dvol_g\geq \left|  \frac{\beta+1}{p} \right|^p \int_{\mathbb{S}^m_+}|u|^p r^\beta \dvol_g,\quad \forall\,u\in C^\infty_0(\mathbb{S}^m_+).\label{hardyineqshshop}
	\end{equation}
\end{example}

\subsection{Non-existence of extremal functions}\label{nonexsorigin}   In this subsection, we investigate the existence of extremal functions of \eqref{Hardy-Inequ-general}  in $C^\infty_0(\Omega_\Sigma)$.
To do this, we recall the inequality of Lindqvist \cite{Lindqvist}: for every $X,Y\in \mathbb R^m$ one has
\begin{equation}
 |X+Y|^p\geq |X|^p+p|X|^{p-2}\la X,Y\ra+
 \begin{cases} C_p{|Y|^2\over (|X|+|Y|)^{2-p}},& \text{ if }p\in(1,2),\\
 \\
 C_p|Y|^p,& \text{ if } p\geq2,
 \end{cases}\label{main-inequality-vectors}
 \end{equation}
where $C_p$ is a positive constant only depending on $p$. We have the following estimate.

\begin{proposition}\label{prop-bv-improve}  Let $(M,g)$, $\Sigma$, $\Omega$ and $(\phi,\psi)$  be as in Theorem \ref{Hardy-thm-general}. Then for any $u\in C^\infty_0({\Omega}_\Sigma)\setminus\{0\}$,   one has \begin{align}
 \int_{\Omega_\Sigma} |\na u|^p {\dd}\mu\geq&\,p^{-p}\int_{\Omega_\Sigma} |u|^p\phi(r)^{p-1}|\psi'(r)|\dvol_g\notag\\
 &+\begin{cases} C_p\displaystyle\int_{\Omega_\Sigma}{\psi(r)^{-1}|\na h|^2\over (p^{-1}|\na(\log \psi(r))||h|+|\na h|)^{2-p}}{\dd}\mu,& \text{ if } p\in(1,2),\\
\\
C_p\displaystyle\int_{\Omega_\Sigma} |\na h|^p\psi(r)^{-1}{\dd}\mu,& \text{ if } p\geq2,
\end{cases}\label{io0-9-thm}
\end{align}
where ${\dd}\mu:=\phi(r)^{p-1}|\psi'(r)||\nabla(\log \psi(r))|^{-p}\dvol_g$ and $h(x):=\psi(r(x))^{1/p}u(x)$. In particular, each term in the right-hand side of \eqref{io0-9-thm} is finite and nonnegative.

\end{proposition}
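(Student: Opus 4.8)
The plan is to run a ground-state substitution against the virtual extremal $\psi(r)^{1/p}$ and then feed the result into Lindqvist's inequality \eqref{main-inequality-vectors}, in the spirit of Barbatis--Filippas--Tertikas. On the co-negligible subset of $\Omega_\Sigma$ where $r$ is differentiable (so $|\na r|=1$, the eikonal equation) and $\psi(r)>0$ (co-negligible by Assumption \ref{Basciasumptionfunction}/\eqref{negligbileset}), the chain and product rules for the $C^1$ map $t\mapsto t^{1/p}$ composed with the locally Lipschitz function $r$ (Assumption \ref{Basciasumptionfunction}/\eqref{smoothofpsi}) yield, with $h=\psi(r)^{1/p}u$ as in the statement,
\[
\na u=\psi(r)^{-1/p}(X+Y),\qquad X:=-\tfrac1p\,\frac{\psi'(r)}{\psi(r)}\,h\,\na r,\qquad Y:=\na h .
\]
Since $|\nabla(\log\psi(r))|=|\psi'(r)|/\psi(r)$ a.e., one has $|X|=p^{-1}|\nabla(\log\psi(r))|\,|h|$; and, setting ${\dd}\tilde{\mu}:=\psi(r)^{-1}{\dd}\mu=\phi(r)^{p-1}\psi(r)^{p-1}|\psi'(r)|^{1-p}\dvol_g$, a routine bookkeeping (using $X+Y=\psi(r)^{1/p}\na u$) turns the left-hand side of \eqref{io0-9-thm} into $\int_{\Omega_\Sigma}|X+Y|^p\,{\dd}\tilde{\mu}$.

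Next, applying \eqref{main-inequality-vectors} to $|X+Y|^p$ pointwise and integrating against ${\dd}\tilde{\mu}$ splits $\int_{\Omega_\Sigma}|\na u|^p{\dd}\mu$ into three contributions. Using $|h|^p=\psi(r)|u|^p$, the ``main'' contribution is exactly $\int_{\Omega_\Sigma}|X|^p{\dd}\tilde{\mu}=p^{-p}\int_{\Omega_\Sigma}|u|^p\phi(r)^{p-1}|\psi'(r)|\dvol_g$. The remainder contribution of \eqref{main-inequality-vectors}, after multiplication by ${\dd}\tilde{\mu}$ and rewriting via $|X|=p^{-1}|\nabla(\log\psi(r))||h|$, $|Y|=|\na h|$ and ${\dd}\tilde{\mu}=\psi(r)^{-1}{\dd}\mu$, is precisely the case-split integral in the right-hand side of \eqref{io0-9-thm}, and it is nonnegative since the remainder in \eqref{main-inequality-vectors} is. Thus it remains to show that the cross contribution $p\int_{\Omega_\Sigma}|X|^{p-2}\la X,Y\ra\,{\dd}\tilde{\mu}$ is nonnegative.

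This is the crux. From $|X|^{p-2}X=-p^{1-p}\sgn(\psi'(r))\,|\psi'(r)|^{p-1}\psi(r)^{1-p}\,|h|^{p-2}h\,\na r$ and the a.e.\ identity $|h|^{p-2}h\,\la\na r,\na h\ra=\tfrac1p\la\na r,\na(|h|^p)\ra$ (with the convention that $\na(|h|^p)=p|h|^{p-2}h\,\na h$ vanishes where $h=0$), all weights cancel and
\[
p\int_{\Omega_\Sigma}|X|^{p-2}\la X,Y\ra\,{\dd}\tilde{\mu}=-p^{1-p}\int_{\Omega_\Sigma}\la\,\sgn(\psi'(r))\,\phi(r)^{p-1}\na r,\ \na(|h|^p)\,\ra\dvol_g .
\]
Now $|h|^p=\psi(r)\,|u|^p$ is a nonnegative element of $\Lip_0(\Omega_\Sigma)$ --- the product of the locally Lipschitz function $\psi(r)$ and the compactly supported $C^1$ function $|u|^p$, by the argument of Lemma \ref{localtogloaLip}. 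Since $\dii(\sgn(\psi'(r))\phi(r)^{p-1}\na r)\ge 0$ in the weak sense on $\Omega_\Sigma$ by Assumption \ref{Basciasumptionfunction}/\eqref{nansigcontroll}, the Lipschitz form of the weak divergence --- namely \eqref{stronglipdiver} from the proof of Proposition \ref{generadiverge}, applied with $w=|h|^p$, $f_X\equiv 0$ and $X=\sgn(\psi'(r))\phi(r)^{p-1}\na r$ --- gives $\int_{\Omega_\Sigma}\la\sgn(\psi'(r))\phi(r)^{p-1}\na r,\na(|h|^p)\ra\dvol_g\le 0$, whence the cross contribution is $\ge 0$ and \eqref{io0-9-thm} follows.

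Finally, for the ``in particular'' statement: the left-hand side of \eqref{io0-9-thm} is finite because $|\na u|$ is bounded with compact support in $\Omega_\Sigma$ and $\phi(r)^{p-1}|\psi'(r)||\nabla(\log\psi(r))|^{-p}\in L^1_{\lo}(\Omega_\Sigma)$ (Assumption \ref{Basciasumptionfunction}/\eqref{integrable}); the main term is finite because $\phi(r)^{p-1}|\psi'(r)|\in L^1_{\lo}(\Omega_\Sigma)$; and the cross-term integrand is dominated on $\supp u$ by a constant multiple of $\phi(r)^{p-1}|\psi'(r)|+\phi(r)^{p-1}\psi(r)\in L^1_{\lo}(\Omega_\Sigma)$, which legitimizes splitting the integrated Lindqvist inequality. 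Hence the nonnegative remainder integrand is dominated by an integrable function, so the remainder term in \eqref{io0-9-thm} is finite as well, and both right-hand terms are manifestly nonnegative. The main obstacle is the crux paragraph: the a.e.\ chain-rule rearrangement of the cross term into a divergence pairing, together with the verification that the weak divergence in Assumption \ref{Basciasumptionfunction}/\eqref{nansigcontroll} may be tested against the merely Lipschitz function $|h|^p$ rather than a smooth one; everything else is weight bookkeeping.
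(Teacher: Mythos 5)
Your proof is correct and takes essentially the same route as the paper's: the same ground-state substitution $h=\psi(r)^{1/p}u$, the same Lindqvist decomposition of $\na u$ (you merely move the factor $\psi(r)^{-1/p}$ into the measure ${\dd}\tilde\mu$, which is equivalent by the joint $p$-homogeneity of \eqref{main-inequality-vectors}), and the same treatment of the cross term by testing the weak divergence of Assumption \ref{Basciasumptionfunction}/\eqref{nansigcontroll} against the compactly supported Lipschitz function $\psi(r)|u|^p$ via \eqref{stronglipdiver}. Your explicit appeal to the Lipschitz form \eqref{stronglipdiver} (rather than reducing to $u\geq 0$ and citing \eqref{lipdivergence}) and the domination argument justifying the splitting and finiteness are minor refinements of the paper's own argument, not a different proof.
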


\begin{proof} Without loss of generality, we may consider only nonnegative test functions $u\in C^\infty_0(\Omega_\Sigma)\setminus\{0\}$ in \eqref{io0-9-thm}; otherwise, we consider standard approximation for $|u|$ by nonnegative elements of $C^\infty_0(\Omega_\Sigma).$ Let $\Psi(r):=\psi(r)^{-1}$ and $u(x)=:\Psi(r(x))^{1/ p}h(x)\geq 0$. Since Assumption \ref{Basciasumptionfunction}/\eqref{negligbileset} implies that $\{\psi(r)=0\}$ is a $\vol_g$-negligible set, we have  $\supp u=\supp h$.

 We are going to apply inequality \eqref{main-inequality-vectors} for the  vector fields
\[
X:=p^{-1}\Psi(r)^{1/p-1}\Psi'(r)h\na r,\quad Y:=\Psi(r)^{1/p}\na h,
\]
which are well-defined for $\vol_g$-a.e. in $\Omega_\Sigma$; after an elementary computation we obtain
\begin{align}
\int_{\Omega_\Sigma}|\na u|^p{\dd}\mu\geq&\,\frac{1}{p^p}\int_{\Omega_\Sigma} |u|^p\phi(r)^{p-1}|\psi'(r)|\dvol_g-\frac{p}{p^p}\int_{\Omega_\Sigma}\sgn(\psi'(r))\la \na r,\na |h|^p\ra \phi(r)^{p-1}\dvol_g\nonumber\\
&+\begin{cases} C_p\displaystyle\int_{\Omega_\Sigma}{\psi(r)^{-1}|\na h|^2\over (p^{-1}|\na(\log\Psi(r))||h|+|\na h|)^{2-p}}{\dd}\mu, & \text{ if } p\in(1,2),\\
\\
C_p \displaystyle\int_{\Omega_\Sigma}|\na h|^p\psi(r)^{-1}{\dd}\mu, & \text{ if } p\geq2.\end{cases}\label{io0-9}
\end{align}
 It remains to show the nonnegativity of the   second term in the RHS of  (\ref{io0-9}). Since $p>1$, and both $u\in C^\infty_0(\Omega_\Sigma)$ and $h$ are nonnegative,    Assumption \ref{Basciasumptionfunction}/\eqref{smoothofpsi}\eqref{nansigcontroll} combined with  \eqref{lipdivergence} furnish
\[
0\leq -\int_{\Omega_\Sigma} \langle \nabla (u^p \psi(r)), \sgn(\psi'(r))\,\phi(r)^{p-1}\na r   \rangle\dvol_g=-\int_{\Omega_\Sigma}\sgn(\psi'(r))\la \na r,\na h^p\ra \phi(r)^{p-1}\dvol_g,
\]
which together with \eqref{io0-9} concludes the proof of \eqref{io0-9-thm}.
\end{proof}

The following result implies the nonexistence of extremal functions in \eqref{Hardy-Inequ-general}; we recall the functional $\mathcal {J}_{p,U}$ from \eqref{functinoalshapr}.

\begin{theorem}\label{achieve-thm} Let $(M,g)$, $\Sigma$, $\Omega$ and $(\phi,\psi)$  be as in Theorem \ref{Hardy-thm-general}. Thus,  for every $p>1$, there holds
\[
\mathcal {J}_{p,\Omega_\Sigma}(u)>p^{-p},\quad \forall\,u\in C^\infty_0(\Omega_\Sigma)\backslash\{0\}.
\]
\end{theorem}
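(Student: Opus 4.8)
The plan is to argue by contradiction, exploiting the refined Hardy inequality of Proposition~\ref{prop-bv-improve}. Dropping its nonnegative correction term already re-establishes $\mathcal{J}_{p,\Omega_\Sigma}(u)\ge p^{-p}$ (cf.\ Theorem~\ref{Hardy-thm-general}), so it suffices to exclude equality. Suppose then that some $u\in C^\infty_0(\Omega_\Sigma)\setminus\{0\}$ realizes the constant, i.e.\ $\mathcal{J}_{p,\Omega_\Sigma}(u)=p^{-p}$; recalling \eqref{functinoalshapr} and writing ${\dd}\mu:=\phi(r)^{p-1}|\psi'(r)||\nabla(\log \psi(r))|^{-p}\dvol_g$, this reads
\[
\int_{\Omega_\Sigma}|\na u|^p\,{\dd}\mu=p^{-p}\int_{\Omega_\Sigma}|u|^p\phi(r)^{p-1}|\psi'(r)|\dvol_g .
\]
Putting $h(x):=\psi(r(x))^{1/p}u(x)$ as in Proposition~\ref{prop-bv-improve} and comparing with \eqref{io0-9-thm}, whose correction term is finite and nonnegative, that term must vanish: $\int_{\Omega_\Sigma}|\na h|^p\psi(r)^{-1}\,{\dd}\mu=0$ if $p\ge 2$, and $\int_{\Omega_\Sigma}\psi(r)^{-1}|\na h|^2\,(p^{-1}|\nabla(\log\psi(r))|\,|h|+|\na h|)^{p-2}\,{\dd}\mu=0$ if $1<p<2$.

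From this I would deduce that $\na h=0$ holds $\vol_g$-a.e.\ on $\Omega_\Sigma$. Assumption~\ref{Basciasumptionfunction}/\eqref{negligbileset}, together with the monotonicity and nonnegativity of $\psi$, forces $\phi(r),\psi(r),\psi'(r)$ to be nonzero a.e.\ on $\Omega_\Sigma$: a monotone nonnegative $\psi$ vanishing at an interior point of its domain would vanish on a whole half-interval, so $\psi(r)$ would vanish on a sub-level set $\T_s\cap\Omega$ of positive $\vol_g$-measure — which is nonempty because of \eqref{natural-domain} — contradicting \eqref{negligbileset}. Hence the density of ${\dd}\mu$ and the weight $\psi(r)^{-1}$ are strictly positive and finite a.e. For $p\ge 2$ the vanishing integral then yields $|\na h|=0$ a.e.\ at once. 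For $1<p<2$, observe that $h$ vanishes off the compact set $\supp u\subset\Omega_\Sigma$, while $\psi(r)^{1/p}$ is locally Lipschitz on $\Omega_\Sigma$ (using $\psi>0$ on the compact range of $r$ over $\supp u$), so $h=\psi(r)^{1/p}u\in\Lip_0(\Omega_\Sigma)$ by Lemma~\ref{localtogloaLip}; consequently $p^{-1}|\nabla(\log\psi(r))|\,|h|+|\na h|$ is bounded on $\supp h$, the integrand dominates a positive multiple of $|\na h|^2$ there, and again $\na h=0$ a.e.

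The conclusion then follows by connectedness. Condition \eqref{natural-domain} makes $\Omega_\Sigma$ connected: if $n=m-1$, then $\Sigma=\partial\Omega$ is disjoint from $\Omega$, so $\Omega_\Sigma=\Omega$; if $0\le n\le m-2$, then $\Sigma\cap\Omega$ is a closed submanifold of $\Omega$ of codimension $m-n\ge 2$, and removing it leaves the connected manifold $\Omega$ connected. Since $h$ is locally Lipschitz on the connected open set $\Omega_\Sigma$ with $\na h=0$ a.e., it is constant; and since $h\equiv 0$ on the nonempty open set $\Omega_\Sigma\setminus\supp u$, that constant is $0$, whence $h\equiv 0$ and, because $\psi(r)^{1/p}>0$ a.e., $u\equiv 0$ — contradicting $u\ne 0$. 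Therefore equality in \eqref{Hardy-Inequ-general} cannot occur, i.e.\ $\mathcal{J}_{p,\Omega_\Sigma}(u)>p^{-p}$ for every $u\in C^\infty_0(\Omega_\Sigma)\setminus\{0\}$.

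The step I expect to be the main obstacle is the a.e.\ nondegeneracy: one must argue carefully, using Assumption~\ref{Basciasumptionfunction}/\eqref{negligbileset} and the monotone structure of $\psi$, that the correction integral cannot vanish through a cancellation between a null numerator and an infinite denominator on a positive-measure set — only through $\na h=0$ a.e. Once this is in place, the passage from $\na h=0$ a.e.\ to ``$h$ constant, hence $u\equiv 0$'' is routine, relying only on the connectedness of $\Omega_\Sigma$ provided by \eqref{natural-domain} and the Lipschitz regularity of $h$ from Lemma~\ref{localtogloaLip}.
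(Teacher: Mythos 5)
Your proposal is correct, and its core coincides with the paper's proof: assume $\mathcal {J}_{p,\Omega_\Sigma}(u)=p^{-p}$, feed this into Proposition \ref{prop-bv-improve}, conclude that the finite nonnegative remainder in \eqref{io0-9-thm} must vanish, and hence that $\nabla h=0$ holds $\vol_g$-a.e.\ for $h=\psi(r)^{1/p}u$. Where you genuinely differ is the endgame. The paper never shows that $\Omega_\Sigma$ is connected: it fixes a connected component $U$ on which $h\equiv C_U$, uses connectedness of $\Omega$ to get $\partial U\cap\Sigma\neq\emptyset$ and hence $C_U=0$ (since $u$ vanishes near $\Sigma$), and then uses Assumption \ref{Basciasumptionfunction}/\eqref{negligbileset} to produce a point of $U$ with $\psi(r)\neq 0$ and $u\neq 0$, giving $C_U\neq 0$, a contradiction. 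You instead prove that $\Omega_\Sigma$ itself is connected ($\Sigma=\partial\Omega$ is disjoint from $\Omega$ when $n=m-1$; removing the closed codimension-$\geq 2$ set $\Sigma\cap\Omega$ preserves connectedness when $n\leq m-2$) and kill the single constant via the vanishing of $h$ off the compact support. Both finishes are valid; yours buys one global constant at the cost of importing a standard topological fact that the paper's componentwise argument avoids.

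Two steps deserve tightening. First, your nondegeneracy argument (``$\psi$ vanishing at an interior point vanishes on a half-interval, hence $\psi(r)$ vanishes on a sub-level set of positive measure'') only covers increasing $\psi$; for decreasing $\psi$ the zero set corresponds to a super-level set $\{r\geq t_0\}\cap\Omega_\Sigma$, which Assumption \ref{Basciasumptionfunction}/\eqref{negligbileset} only forces to be $\vol_g$-negligible, not empty. Hence ``$\psi>0$ on the compact range of $r$ over $\supp u$'', and with it your boundedness of the denominator for $1<p<2$ and the local Lipschitzness of $h$ on all of $\Omega_\Sigma$, are not fully justified. Neither is actually needed: a.e.\ positivity and finiteness of the weights, which \eqref{negligbileset} does give, already force the nonnegative integrand to vanish pointwise a.e., so $\nabla h=0$ a.e.\ for every $p>1$; and for the constancy step one can work on the open set $\{\psi(r)>0\}$, where $h$ is genuinely locally Lipschitz and therefore locally constant, observing that $h\equiv 0$ on $\{\psi(r)=0\}$ by its very definition, so continuity forces the value $0$ on every component of $\{\psi(r)>0\}$ whose $\Omega_\Sigma$-boundary meets that negligible set, while a clopen component must be all of $\Omega_\Sigma$, and then your support argument applies. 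Some such bookkeeping is needed because ``continuous with $\nabla h=0$ a.e.'' alone does not imply constancy (Cantor function); the paper's ``elementary argument'' glosses over the same point.
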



\begin{proof}  In view of \eqref{Hardy-Inequ-general}, suppose by contradiction that for some $u\in C^\infty_0(\Omega_\Sigma)\setminus \{0\}$ we have $\mathcal {J}_{p,\Omega_\Sigma}(u)=p^{-p}$. 
According  to Proposition \ref{prop-bv-improve}, since every item of the RHS of (\ref{io0-9-thm}) is nonnegative, we necessarily have
\[
0=\begin{cases} \displaystyle\int_{\Omega_\Sigma}{\psi(r)^{-1}|\na h|^2\over (p^{-1}|(\log\psi(r))'||h|+|\na h|)^{2-p}}{\dd}\mu,& \text{ if } p\in(1,2),\\
\\
\displaystyle\int_{\Omega_\Sigma} |\na h|^p\psi(r)^{-1}{\dd}\mu,& \text{ if } p\geq2.
\end{cases}
\]
Due to Assumption \ref{Basciasumptionfunction}, the measure $\mu$ shares the same negligible sets with $\vol_g$ in $\Omega_\Sigma$. Thus,  $\na h=\na(\psi(r(x))^{1/p}u(x))=0$ for $\vol_g$-a.e.  $x\in \Omega_\Sigma$. Since $x\mapsto \psi(r(x))^{1/p}u(x)$ is continuous in $\Omega_\Sigma$, an elementary argument  yields that $h$ is constant on each connected component of $\Omega_\Sigma$, which is an open set in $M$.

Since $u\neq0$, let $x_0\in \Omega_\Sigma$ be such that  $u(x_0)\neq0$ and $U$ denote the connected component of $\Omega_\Sigma$ containing $x_0$.   Let $C_U$ be the constant satisfying
$$ h(x)=\psi(r(x))^{1/p}u(x)=C_U,\quad  \forall x\in U.$$
First, since $\Omega$ is connected (a domain), we necessarily have $\partial U\cap \Sigma\neq\emptyset$, which combined with the fact that $u=0$ near $\Sigma$ implies $C_U=0$. Second, since  $U$ is an open set containing $x_0$ and $\{\psi(r)=0 \}$ is a negligible set, see Assumption \ref{Basciasumptionfunction}/\eqref{negligbileset}, there exists a point $x\in  U$ close to $x_0$ such that $\psi(r(x))\neq0$ and $u(x)\neq0$; in particular, $C_U=\psi(r(x))^{1/p}u(x)\neq0$, a contradiction.
 \end{proof}

\section{Extensions to larger function spaces}\label{bigspaces}

The aim of this section is to extend the Hardy inequality \eqref{Hardy-Inequ-general} to some larger spaces  than $C^\infty_0(\Omega_\Sigma)$.

\subsection{Extension to the space  $C^\infty_0(\Omega)$}\label{section-4-1}

In this subsection, we are going to extend Hardy inequality \eqref{Hardy-Inequ-general} to functions $u\in C^\infty_0(\Omega)$. In order to do this, we introduce the following assumption, which is a slightly stronger than Assumption \ref{Basciasumptionfunction}.

\begin{assumption}\label{Basciasumptionfunction3} 
 Given $p>1$, let $(\phi,\psi)$   be a pair of two nonnegative $C^1$-functions defined on  some subinterval of $(0, +\infty)$  such that
\begin{enumerate}[{\rm(a)}]
 \item\label{smoothofphi3} $\phi(r)$ is well-defined on $\Omega_\Sigma$ while $\psi(r)$ can be continuously extended to $\Omega$;

\smallskip

\item\label{smoothofpsi3} $\psi$ is monotone  and  $\psi(r)$ is locally Lipschitz on $\Omega_\Sigma$;

\smallskip

\item\label{derivativeintegr} either $\lim_{t\rightarrow 0^+}\psi'(t)<+\infty$ or $\lim_{t\rightarrow 0^+}\psi'(t)=+\infty$ with $\psi'>0$ and $\phi(r)^{p-1}\psi(r)^p\in L^1_{\loc}(\Omega)$;

\smallskip

\item\label{nansigcontroll3} {$\ds\dii(\sgn(\psi'(r))\,\phi(r)^{p-1}\na r)\geq 0$} in the weak sense in $\Omega$;

\smallskip

\item\label{integrable3} $\phi(r)^{p-1}, \phi(r)^{p-1}\psi(r),\phi(r)^{p-1}|\psi'(r)|,\frac{\phi(r)^{p-1}|\psi'(r)|}{ |\nabla(\log \psi(r))|^{p}}\in L^1_{\loc}(\Omega)$;

\smallskip

\item\label{negligbileset3} $\{x\in \Omega_\Sigma\,:\, \phi(r(x))\,\psi(r(x))\,\psi'(r(x))=0\}$ is a $\vol_g$-negligible set.

\end{enumerate}
\end{assumption}

\begin{remark}\label{addtiondervaliporemark}
In view of \eqref{derivativeintegr}, if $\lim_{t\rightarrow 0^+}\psi'(t)<+\infty$, then it follows by Lagrange's mean value theorem and \eqref{smoothofpsi3} that  $\psi(r)$ is locally Lipschitz on the whole domain $\Omega$.

\end{remark}


\begin{example}\label{space-form-phi-psi5}
Given $\lambda\in \{0,-1\}$, let $\mathbb{M}_\lambda$ be  the space form and $\Sigma$ be the totally geodesic submanifold as in Example \ref{space-form-phi-psi1}. For $p>1$ and
 $\beta>-(m-n)$, the pair  $(\phi ,\psi )$ is defined   by
 \begin{align}
 \phi(t):=\mathbf{c}_\lambda(t)^{-{n\over p-1}}\mathbf{s}_\lambda(t)^{-{m-n-1\over p-1}},\quad   \psi(t):=\mathbf{s}_\lambda(t)^{\beta+m-n},\quad \forall\,t\in (0,+\infty).
 \end{align}
Thus, for any $\Omega \subset \mathbb{M}_\lambda$ with $\Omega \cap \Sigma\neq\emptyset$,  Assumption \ref{Basciasumptionfunction3} is satisfied.

Indeed, Conditions \eqref{smoothofphi3},\eqref{smoothofpsi3},\eqref{negligbileset3} trivially hold. Moreover, since $\Sigma$ is a $\vol_g$-negligible set, Remark \ref{basiclambda=Lambdacase} implies Condition \eqref{nansigcontroll3} (also see \eqref{divdireccalution}). In addition, due to \eqref{narspaceform}$_1$, we have $\phi(t)^{p-1}=(\det \mathcal {A}(t,\mathbf{n}))^{-1}$ and hence, if $\alpha:=\beta+m-n>0$, then
 \begin{align*}
&\phi(t)^{p-1}\psi(t)^s\det \mathcal {A}(t,\mathbf{n})=\mathbf{s}_\lambda(t)^{s\alpha },\quad
 \phi(t)^{p-1}|\psi'(t)|\det \mathcal {A}(t,\mathbf{n})=\alpha\mathbf{s}_\lambda(t)^{\alpha-1}\mathbf{c}_\lambda(t),\\
 &\frac{\phi(t)^{p-1}|\psi'(t)|}{ |  (\log \psi(t))'|^{p}}\det \mathcal {A}(t,\mathbf{n})=\alpha^{1-p}\mathbf{c}_\lambda(t)^{1-p}\mathbf{s}_\lambda(t)^{\alpha+p-1},
 \end{align*}
which together with \eqref{dv-g-Def} provides Conditions \eqref{derivativeintegr}\eqref{integrable3}.

\end{example}

Now we have the following inequality.

\begin{theorem}\label{Hardy-thm-2}  Let $(M,g)$ be an $m$-dimensional complete Riemannian manifold, $i:\Sigma \hookrightarrow M$ be an $n$-dimensional closed submanifold and $\Omega$ be a {\rm noncompact} domain in $M$ satisfying \eqref{natural-domain}.
  Let $r(x):=d(\Sigma,x)$ be the distance function  from $\Sigma$.  For any pair $(\phi,\psi)$ satisfying Assumption \ref{Basciasumptionfunction3}, there holds
\begin{equation}
 \int_{\Omega}|\na u|^p \frac{\phi(r)^{p-1}|\psi'(r)|}{ |\nabla(\log \psi(r))|^{p}}\dvol_g\geq p^{-p}\int_{\Omega} |u|^p\phi(r)^{p-1}|\psi'(r)|\dvol_g,\quad \forall\,u\in C^\infty_0(\Omega).\label{Hardy-Inequ-sec-upper-bound}
 \end{equation}
 \end{theorem}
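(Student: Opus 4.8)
The plan is to follow the scheme of the proof of Theorem~\ref{Hardy-thm-general}, the new point being that the underlying weak divergence identity has to be upgraded from $\Omega_\Sigma$ to all of $\Omega$. Concretely, I would first prove that, under Assumption~\ref{Basciasumptionfunction3},
\begin{equation*}
	\phi(r)^{p-1}|\psi'(r)|\leq \dii\bigl(\sgn(\psi'(r))\,\phi(r)^{p-1}\psi(r)\,\na r\bigr)\qquad\text{in the weak sense on }\Omega,
\end{equation*}
and then invoke Lemma~\ref{divlemf} with $U=\Omega$, $f_X=\phi(r)^{p-1}|\psi'(r)|$ and $X=\sgn(\psi'(r))\phi(r)^{p-1}\psi(r)\na r$. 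All integrability hypotheses of Lemma~\ref{divlemf} are supplied by Assumption~\ref{Basciasumptionfunction3}/\eqref{integrable3}: indeed $|X|=\phi(r)^{p-1}\psi(r)\in L^1_{\loc}(\Omega)$, $f_X=\phi(r)^{p-1}|\psi'(r)|\in L^1_{\loc}(\Omega)$, and $|X|^p/f_X^{p-1}=\phi(r)^{p-1}|\psi'(r)|/|\na(\log\psi(r))|^p\in L^1_{\loc}(\Omega)$; the conclusion of Lemma~\ref{divlemf} is then precisely \eqref{Hardy-Inequ-sec-upper-bound} for every $u\in C^\infty_0(\Omega)$. No reduction to nonnegative $u$ is needed, since Lemma~\ref{divlemf} already handles $|u|$.

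To produce the displayed weak divergence inequality, fix a nonnegative $v\in C^\infty_0(\Omega)$. Suppose first that $\lim_{t\to 0^+}\psi'(t)<+\infty$. Then, by Remark~\ref{addtiondervaliporemark}, $h:=\psi(r)$ is nonnegative and locally Lipschitz on the whole of $\Omega$, so I apply Proposition~\ref{generadiverge} with $U=\Omega$, $f_X=0$ and $X=\sgn(\psi'(r))\phi(r)^{p-1}\na r$: this is admissible because $\dii X\geq 0$ in the weak sense on $\Omega$ by Assumption~\ref{Basciasumptionfunction3}/\eqref{nansigcontroll3} and $|X|=\phi(r)^{p-1}\in L^1_{\loc}(\Omega)$ by \eqref{integrable3}. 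Using $\na(\psi(r)v)=\psi'(r)v\,\na r+\psi(r)\na v$ a.e.\ together with the eikonal equation $|\na r|=1$ a.e.\ on $\Omega$ (valid since $\Sigma\cup\mathcal{V}\mathscr{C}\Sigma$ is $\vol_g$-negligible), the inequality $0\leq-\int_\Omega\langle\na(\psi(r)v),X\rangle\dvol_g$ rearranges exactly into $\int_\Omega v\,\phi(r)^{p-1}|\psi'(r)|\dvol_g\leq -\int_\Omega\langle\na v,\ \sgn(\psi'(r))\phi(r)^{p-1}\psi(r)\na r\rangle\dvol_g$, which is the claim.

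The only delicate case is $\lim_{t\to 0^+}\psi'(t)=+\infty$ with $\psi'>0$ (so $\sgn(\psi'(r))\equiv 1$), where $\psi(r)$ fails to be Lipschitz near $\Sigma$ and Proposition~\ref{generadiverge} cannot be used with $h=\psi(r)$. Here I would replace $\psi$ by the truncations $\psi_\delta(t):=\psi(\max\{t,\delta\})$, $\delta>0$: each $\psi_\delta$ is nonnegative and nondecreasing, and $\psi_\delta(r)$ is locally Lipschitz on all of $\Omega$ (it equals the constant $\psi(\delta)$ on the tube $\T_\delta$, coincides with $\psi(r)$ on $\{r>\delta\}$ where $\psi'$ is locally bounded, and is Lipschitz across $\{r=\delta\}$). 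Running the previous paragraph with $h=\psi_\delta(r)$ yields
\begin{equation*}
	\int_\Omega v\,\phi(r)^{p-1}\psi'(r)\,\chi_{\{r>\delta\}}\dvol_g\leq -\int_\Omega\phi(r)^{p-1}\psi(\max\{r,\delta\})\,\langle\na v,\na r\rangle\dvol_g.
\end{equation*}
Letting $\delta\to 0^+$, the left-hand side tends to $\int_\Omega v\,\phi(r)^{p-1}\psi'(r)\dvol_g$ by monotone convergence (this is where $\psi'>0$ is used), while the right-hand side tends to $-\int_\Omega\phi(r)^{p-1}\psi(r)\langle\na v,\na r\rangle\dvol_g$ by dominated convergence, a dominating function being $\|\na v\|_\infty\,\phi(r)^{p-1}\psi(\max\{r,\delta_0\})\,\chi_{\supp v}$ for any fixed $\delta_0>0$, which lies in $L^1(\Omega)$ because both $\phi(r)^{p-1}\psi(r)$ and $\phi(r)^{p-1}$ are in $L^1_{\loc}(\Omega)$ by \eqref{integrable3}. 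This establishes the weak divergence inequality in this case too; the extra requirement $\phi(r)^{p-1}\psi(r)^p\in L^1_{\loc}(\Omega)$ in Assumption~\ref{Basciasumptionfunction3}/\eqref{derivativeintegr} is what secures the local integrability of $|X|^p/f_X^{p-1}$ across $\Sigma$, since $|\psi'(r)|^{1-p}\leq 1$ near $\Sigma$.

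The main obstacle is precisely this last step: controlling the passage to the limit $\delta\to 0^+$ when $\psi'$ blows up at $\Sigma$, i.e.\ verifying that the truncated potentials $\psi_\delta(r)$ are genuinely admissible in Proposition~\ref{generadiverge} and that the truncation destroys neither side of the inequality in the limit. (The noncompactness of $\Omega$ is consistent with Assumption~\ref{Basciasumptionfunction3}/\eqref{nansigcontroll3}: on a compact $M=\Omega$ the condition $\dii(\sgn(\psi'(r))\phi(r)^{p-1}\na r)\geq 0$ would make $\Phi(r)$ $p$-subharmonic on a closed manifold, hence constant, forcing $r$ to be constant, which is impossible; so nothing is lost by excluding that case.)
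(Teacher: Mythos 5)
Your proof is correct, and in the regular case $\psi'(0^+)<+\infty$ it is the same as the paper's argument: $\psi(r)$ is locally Lipschitz on all of $\Omega$, so the proof of Theorem \ref{Hardy-thm-general} (Proposition \ref{generadiverge} with $h=\psi(r)$, then Lemma \ref{divlemf}) runs verbatim with $U=\Omega$. The genuine difference is in the singular case $\psi'(0^+)=+\infty$. The paper regularizes globally, setting $\psi_\varepsilon(t):=\int_0^t\big(1/\psi'(s)+\varepsilon\big)^{-1}{\dd}s+\psi(0)$, applies the regular case to the pair $(\phi,\psi_\varepsilon)$ to get the full Hardy inequality for each $\varepsilon$, and then lets $\varepsilon\to 0^+$ in both sides of that inequality; the dominated-convergence bound on the left-hand weight $\phi(r)^{p-1}\psi_\varepsilon(r)^p|\psi_\varepsilon'(r)|^{1-p}$ is precisely where the extra hypothesis $\phi(r)^{p-1}\psi(r)^p\in L^1_{\loc}(\Omega)$ of Assumption \ref{Basciasumptionfunction3}/\eqref{derivativeintegr} is used. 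You instead freeze $\psi$ on the $\delta$-tube, pass to the limit $\delta\to0^+$ only at the level of the weak divergence inequality (monotone convergence on the left, dominated convergence on the right with a majorant coming from \eqref{integrable3}), and then invoke Lemma \ref{divlemf} once with the original pair; this makes the limiting step lighter, and in fact your argument never needs $\phi(r)^{p-1}\psi(r)^p\in L^1_{\loc}(\Omega)$ at all, since $|X|^p/f_X^{p-1}=\phi(r)^{p-1}|\psi'(r)|\,|\na(\log\psi(r))|^{-p}$ is already covered by \eqref{integrable3} — so your closing attribution of that integrability to \eqref{derivativeintegr} is a (harmless) misstatement. Two small points worth recording if you write this up: your $\psi_\delta$ is only Lipschitz rather than $C^1$, which is fine because Proposition \ref{generadiverge} only asks for $h\in\Lip_{\lo}(\Omega)$, and the a.e.\ chain rule for $\psi_\delta(r)$ uses that $\{r=\delta\}$ is $\vol_g$-negligible, which follows from the coarea formula since $|\na r|=1$ a.e.
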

\begin{proof} In view of Assumption \ref{Basciasumptionfunction3}/\eqref{derivativeintegr},
if $\lim_{t\rightarrow 0^+}\psi'(t)<+\infty$, Remark \ref{addtiondervaliporemark} implies that  $\psi(r)$ is locally Lipschitz on $\Omega$. Thus, \eqref{Hardy-Inequ-sec-upper-bound} follows by the same proof of  Theorem \ref{Hardy-thm-general}.

In the sequel, we consider the case when $\lim_{t\rightarrow 0^+}\psi'(t)=+\infty$ with $\psi'>0$ and $\phi(r)^{p-1}\psi(r)^p\in L^1_{\loc}(\Omega)$.
For every small $\varepsilon>0$ we define the function
\[
\psi_\varepsilon(t):=\int^{t}_0\left(  \frac{1}{\psi'(s)}+\varepsilon \right)^{-1}{\dd}s   +   \psi(0).
\]
It is easy to see that $\psi'_\varepsilon\leq \psi'$ and $\psi_\varepsilon\leq \psi$. In particular, $\psi'_\varepsilon(0^+)$ exists and hence, $\psi_\varepsilon(r)$ is a locally Lipschitz function on $\Omega$. Accordingly, the first step implies that the inequality \eqref{Hardy-Inequ-sec-upper-bound} remains valid for the pair $(\phi,\psi_\varepsilon)$, i.e.,
\begin{equation}
 \int_{\Omega}|\na u|^p \frac{\phi(r)^{p-1}|\psi_\varepsilon'(r)|}{ |\nabla(\log \psi_\varepsilon(r))|^{p}}\dvol_g\geq p^{-p}\int_{\Omega} |u|^p\phi(r)^{p-1}|\psi_\varepsilon'(r)|\dvol_g,\quad \forall\,u\in C^\infty_0(\Omega).\label{Hardy-Inequ-sec-upper-bound00}
 \end{equation}
Due to the fact that $|u|^p\phi(r)^{p-1}|\psi_\varepsilon'(r)|\leq |u|^p\phi(r)^{p-1}|\psi'(r)|\in L^1(\Omega)$, the dominated convergence theorem implies
\begin{equation}\label{conlimi1}
\lim_{\varepsilon\rightarrow 0^+}\int_{\Omega} |u|^p\phi(r)^{p-1}|\psi_\varepsilon'(r)|\dvol_g=\int_{\Omega} |u|^p\phi(r)^{p-1}|\psi'(r)|\dvol_g.
\end{equation}
On the other hand, by Assumption \ref{Basciasumptionfunction3}/\eqref{integrable3} and   $\phi(r)^{p-1}\psi(r)^p\in L^1_{\loc}(\Omega)$, we obtain
\begin{eqnarray*}
\frac{\phi(r)^{p-1}|\psi_\varepsilon'(r)|}{ |\nabla(\log \psi_\varepsilon(r))|^{p}}&=&\frac{\phi(r)^{p-1}|\psi_\varepsilon(r) |^p}{|\psi'_\varepsilon(r)|^{p-1}}\leq \frac{\phi(r)^{p-1}\psi(r)^p}{|\psi'_\varepsilon(r)|^{p-1}}=\phi(r)^{p-1}\psi(r)^p\left| \frac{1}{\psi'(r)}+\varepsilon \right|^{p-1}\\
&\leq& \frac{\phi(r)^{p-1}\psi(r)^p}{|\psi'(r)|^{p-1}} \left( 1+|\psi'(r)|  \right)^{p-1}\leq C(p)\left( \frac{\phi(r)^{p-1}\psi(r)^p}{|\psi'(r)|^{p-1}}+\phi(r)^{p-1}\psi(r)^p\right) \in L^1_{\lo}(\Omega),
\end{eqnarray*}
which combined with the dominated convergence theorem  yields
\begin{equation}\label{conlimi2}
\lim_{\varepsilon\rightarrow 0^+}\int_{\Omega}|\na u|^p \frac{\phi(r)^{p-1}|\psi_\varepsilon'(r)|}{ |\nabla(\log \psi_\varepsilon(r))|^{p}}\dvol_g=\int_{\Omega}|\na u|^p \frac{\phi(r)^{p-1}|\psi'(r)|}{ |\nabla(\log \psi(r))|^{p}}\dvol_g.
\end{equation}
Now \eqref{Hardy-Inequ-sec-upper-bound} directly follows by \eqref{Hardy-Inequ-sec-upper-bound00}--\eqref{conlimi2}.
\end{proof}

 \begin{remark}The non-compactness of $\Omega$   guarantees that ``$C^\infty_0(\Omega)$ does not contain constant functions". Note that \eqref{Hardy-Inequ-sec-upper-bound} fails for constant functions and hence, this condition is necessary here.
It is noticeable that the non-compactness of $\Omega$ {\it does not} mean that the closure $\overline{\Omega}$ is noncompact; for instance, consider a connected  proper open subset in $\mathbb{S}^m$.

We also emphasize that since $\Sigma \cap \partial\Omega_\Sigma\neq \emptyset$, nonzero constant functions cannot belong to $C^\infty_0(\Omega_\Sigma)$. Hence, such an assumption is unnecessary in both   Theorems \ref{Hardy-thm-general} and Theorem \ref{generalhardclosed} (see below).
\end{remark}

In the sequel we investigate the sharpness of (\ref{Hardy-Inequ-sec-upper-bound}).
Note that if $\Sigma=\partial \Omega$, then $\Omega_\Sigma=\Omega$, in which case the sharpness has been discussed (see Section~\ref{sharpnessfirst}). In the sequel, we focus on the case $\Omega\cap \Sigma\neq\emptyset$.

\begin{theorem}\label{sharp-u-general} Let $(M,g)$, $\Sigma$, $\Omega$, $(\phi,\psi)$  be as in Theorem \ref{Hardy-thm-2}.
Then \eqref{Hardy-Inequ-sec-upper-bound} is sharp if the following conditions hold$:$
\begin{enumerate}[{\rm (i)}]

\item\label{sigmcompactness} $\Sigma$ is compact and  $\Sigma\subset\Omega;$

\smallskip

\item\label{nonintegrable} there exists $\delta_0>0$   such that $\phi(r)^{p-1}|\nabla(\log \psi(r))|\notin L^1(\T_\delta)$ for any $\delta\in (0,\delta_0)$.
\end{enumerate}
\end{theorem}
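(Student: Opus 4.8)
The plan is to combine the lower bound $\mathcal{J}_{p,\Omega}(u)\ge p^{-p}$ for all $u\in C^\infty_0(\Omega)\setminus\{0\}$ — which is exactly the content of Theorem~\ref{Hardy-thm-2}, bearing in mind that by Assumption~\ref{Basciasumptionfunction3}/\eqref{negligbileset3} the weight $\phi(r)^{p-1}|\psi'(r)|$ is positive $\vol_g$-a.e., so $\mathcal{J}_{p,\Omega}$ is well defined and positive — with the construction of an explicit minimizing sequence concentrated near $\Sigma$. First I would record two facts forced by the hypotheses. Since $\Sigma$ is compact with $\Sigma\subset\Omega$, the unit normal bundle $\mathcal {V}S\Sigma$ is compact, so $\mathfrak{c}_{\mathcal {V}}(\Sigma)=\min_{\mathbf{n}\in\mathcal {V}S\Sigma}c_{\mathcal {V}}(\mathbf{n})>0$, and for every small $s>0$ the closed tube $\overline{\T_s}$ is a compact subset of $\Omega$ on which $r$ is smooth with $|\nabla r|=1$ off the $\vol_g$-negligible set $\Sigma$. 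Moreover $\psi(0^+)=0$: otherwise $\psi(r)$ would be bounded below by a positive constant near $\Sigma$, whence $\phi(r)^{p-1}|\nabla(\log\psi(r))|=\phi(r)^{p-1}|\psi'(r)|/\psi(r)$ would be dominated on some $\T_\delta$ by $C\,\phi(r)^{p-1}|\psi'(r)|\in L^1_{\loc}(\Omega)$, contradicting \eqref{nonintegrable}; hence $\psi$ is increasing with $\psi>0$ on $(0,\cdot)$. Fix the radius $\delta_0$ of \eqref{nonintegrable}, choose $\rho\in(0,\min\{\delta_0,\mathfrak{c}_{\mathcal {V}}(\Sigma)\})$ with $\overline{\T_\rho}\subset\Omega$, and a cutoff $\chi\in C^\infty([0,\infty))$ with $\chi\equiv1$ on $[0,\rho/2]$, $\chi\equiv0$ on $[\rho,\infty)$, $0\le\chi\le1$.

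For small $a>0$ I would then take, writing $F(s):=\int_{\T_s}\phi(r)^{p-1}|\psi'(r)|\,\dvol_g$ (so that $F(0^+)=0$), choosing $b_a\in(0,a)$ so small that $\psi(b_a)F(b_a)\le\psi(a)^3$, and setting $\tau_a:=\psi(a)^{-2/p}\psi(b_a)^{1/p}$, the function
\[
u_a(x):=\chi(r(x))\cdot\begin{cases}\tau_a,& r(x)\le b_a,\\ \psi(a)^{-2/p}\,\psi(r(x))^{1/p},& b_a< r(x)\le a,\\ \psi(r(x))^{-1/p},& a< r(x).\end{cases}
\]
By construction $u_a$ is continuous, vanishes outside $\T_\rho$, is constant equal to $\tau_a$ near $\Sigma$, and is piecewise smooth with corners at $r=b_a,a$; hence $u_a\in\Lip_0(\Omega)$. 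Its two weighted integrals $\int_\Omega|u_a|^p\phi(r)^{p-1}|\psi'(r)|\,\dvol_g$ and $\int_\Omega|\nabla u_a|^p\frac{\phi(r)^{p-1}|\psi'(r)|}{|\nabla(\log\psi(r))|^p}\,\dvol_g$ are finite (the first positive), since their integrands are bounded by the $L^1_{\loc}(\Omega)$-weights of Assumption~\ref{Basciasumptionfunction3}/\eqref{integrable3} times functions bounded on the compact set $\overline{\T_\rho}$. By Lemma~\ref{lipsconverppax} there is a sequence $w_n\in C^\infty_0(\Omega)$ with $w_n\to u_a$ uniformly, $\nabla w_n\to\nabla u_a$ $\vol_g$-a.e.\ (after a subsequence), and uniform bounds $|w_n|,|\nabla w_n|\le C$ with $\supp w_n$ in a fixed compact subset of $\Omega$; so dominated convergence gives $\mathcal{J}_{p,\Omega}(w_n)\to\mathcal{J}_{p,\Omega}(u_a)$ and therefore $\inf_{C^\infty_0(\Omega)\setminus\{0\}}\mathcal{J}_{p,\Omega}\le\mathcal{J}_{p,\Omega}(u_a)$.

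The computational heart is the identity: on any set where $u_a$ equals a constant multiple of $\psi(r)^{\pm1/p}$ and $\chi(r)\equiv1$, one has $\vol_g$-a.e.
\[
|\nabla u_a|^p\,\frac{\phi(r)^{p-1}|\psi'(r)|}{|\nabla(\log\psi(r))|^{p}}=p^{-p}\,|u_a|^p\,\phi(r)^{p-1}|\psi'(r)|,
\]
a direct consequence of $|\nabla r|=1$ and $|\nabla(\log\psi(r))|=|\psi'(r)|/\psi(r)$. Denote by $Q_a$ the contribution of $\{b_a<r<a\}\cup\{a<r<\rho/2\}$ to the denominator $\int_\Omega|u_a|^p\phi(r)^{p-1}|\psi'(r)|\,\dvol_g$; then the contribution of this set to the numerator is exactly $p^{-p}Q_a$. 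The flat core $\{r\le b_a\}$ contributes $0$ to the numerator (there $\nabla u_a=0$) and at most $\tau_a^pF(b_a)=\psi(a)^{-2}\psi(b_a)F(b_a)\le\psi(a)\to0$ to the denominator; on the fixed compact collar $\{\rho/2\le r\le\rho\}$, where $u_a$ does not depend on $a$, the contributions $E_N,E_D$ to numerator and denominator are finite constants (again by Assumption~\ref{Basciasumptionfunction3}/\eqref{integrable3}). Thus
\[
\mathcal{J}_{p,\Omega}(u_a)=\frac{p^{-p}Q_a+E_N}{Q_a+o(1)+E_D}\qquad\text{as }a\to0^+.
\]
Finally, $Q_a$ dominates the contribution of $\{a<r<\rho/2\}$, namely $\int_{\{a<r<\rho/2\}}\phi(r)^{p-1}|\nabla(\log\psi(r))|\,\dvol_g$, which by monotone convergence (and $\vol_g(\Sigma)=0$) increases as $a\downarrow0$ to $\int_{\T_{\rho/2}}\phi(r)^{p-1}|\nabla(\log\psi(r))|\,\dvol_g=+\infty$ by \eqref{nonintegrable} (as $\rho/2<\delta_0$). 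Hence $Q_a\to+\infty$, the bounded terms $E_N,E_D,o(1)$ become negligible, and $\mathcal{J}_{p,\Omega}(u_a)\to p^{-p}$. Together with Theorem~\ref{Hardy-thm-2} this gives $\inf_{C^\infty_0(\Omega)\setminus\{0\}}\mathcal{J}_{p,\Omega}=p^{-p}$, i.e.\ the sharpness of \eqref{Hardy-Inequ-sec-upper-bound}.

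I expect the delicate point to be precisely the behaviour of $u_a$ near $\Sigma$. A single flat truncation $\min\{\psi(r)^{-1/p},\psi(a)^{-1/p}\}$ would leave a denominator term $\psi(a)^{-1}F(a)$ whose ratio to the blowing-up bulk $Q_a$ is not controlled under the stated hypotheses; inserting the layer on which $u_a$ is a multiple of $\psi(r)^{1/p}$ forces that transition region to carry the same critical quotient $p^{-p}$ as the bulk, so that it cancels harmlessly in $\mathcal{J}_{p,\Omega}(u_a)$, and the one genuinely flat piece is then confined to the tiny set $\{r\le b_a\}$, where $b_a$ may be shrunk (using $\psi(b)F(b)\to0$) so that its error $\psi(a)^{-2}\psi(b_a)F(b_a)$ is negligible. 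Verifying that the $\psi^{\pm1/p}$-pieces and the collar have finite weighted energies — so that $u_a$ is an admissible competitor with $\mathcal{J}_{p,\Omega}(u_a)<\infty$ — is where Assumption~\ref{Basciasumptionfunction3}/\eqref{integrable3} and \eqref{negligbileset3} enter repeatedly.
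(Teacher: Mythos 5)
Your proof is correct and follows essentially the same route as the paper: a test function equal to a truncation of $\psi(r)^{-1/p}$ near $\Sigma$, localized by a fixed cutoff supported in a small tube around $\Sigma$, so that hypothesis (ii) makes the common bulk integral $\int\phi(r)^{p-1}|\nabla(\log\psi(r))|\dvol_g$ blow up and drives the quotient to $p^{-p}$, while the collar where the cutoff varies contributes only bounded terms; your explicit smoothing via Lemma \ref{lipsconverppax} plays the role of the paper's appeal to the completion $D^{1,p}(\Omega,\phi,\psi)$. The one point where you diverge — the interpolating layer $\psi(a)^{-2/p}\psi(r)^{1/p}$ on $\{b_a<r\le a\}$ and the doubly shrinking flat core — is unnecessary: with the plain truncation $v_\e=\min\{\psi(r)^{-1/p},\psi(\e)^{-1/p}\}$ (the paper's choice), the flat core contributes nothing to the numerator and a \emph{nonnegative} amount to the denominator, so one gets $\mathcal{J}_{p,\Omega}(\eta v_\e)\le p^{-p}+E_N/A_\e$ with $A_\e\to+\infty$, and no control of $\psi(\e)^{-1}F(\e)$ against the bulk is needed; the matching lower bound is already supplied by Theorem \ref{Hardy-thm-2}. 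So your "delicate point" is a non-issue, but the extra layer does no harm and your argument is complete.
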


\begin{proof} The compactness of  $\Sigma$ implies $\mathfrak{c}_{\mathcal {V}}(\Sigma)>0$.
Since $\Sigma\subset\Omega$, choose $\iota\in (0,4^{-1}\min\{\delta_0, \mathfrak{c}_{\mathcal {V}}(\Sigma)\})$ such that the closure of $4\iota$-tuber neighborhood $\overline{\T_{4\iota}}$  is compact and contained in $\Omega$.

Let $\eta\in C^\infty_0(M)$ be a cut-off function with $0\leq \eta\leq 1$ and
\begin{equation}\label{cutoffepsilionfunction}
\eta(x)=1\ \text{ if }x\in \T_{\iota};\qquad \eta(x)=0\ \text{ if }x\notin \T_{2   \iota   }.
\end{equation}
For any $\e\in (0,\iota)$, we define the function
\begin{equation}\label{boundaryfunctionpsi}
v_\e(x):=
\begin{cases}
\psi(\e)^{-1/p},& \text{ if } x\in\T_{\e};\\
\\
 \psi(r(x))^{-1/p},& \text{ if } x\in \Omega\backslash {\T_{\e}}.
\end{cases}
\end{equation}
 Note that $\eta  v_\e$ is  compactly supported in $\overline{\T_{2\iota}}$ and particularly,
\[
\supp |\na (\eta v_\e)|^p\subset \overline{{\T}_{2\iota}},\quad  \sup_{x\in {\T}_{2\iota}} |\na (\eta v_\e)|^p(x)<\infty,
\]
which combined with Assumption \ref{Basciasumptionfunction3}/\eqref{integrable3}, \eqref{cutoffepsilionfunction} and \eqref{boundaryfunctionpsi} yields
\begin{equation}\label{seondierisfinitevsna}
\int_{{\T}_{2\iota} \setminus {\T}_{\iota}}|\na (\eta v_\e)|^p \frac{\phi(r)^{p-1}|\psi'(r)|}{ |\nabla(\log \psi(r))|^{p}}\dvol_g \text{ is finite and independent of the choice of $\e$}.
\end{equation}
On the other hand,   \eqref{nonintegrable}    means
\begin{align}
\lim_{\e\rightarrow 0^+}\int_{\T_{\iota} \setminus {\T}_{\e} }\phi(r)^{p-1}|\nabla(\log \psi(r))|\dvol_g=+\infty.\label{infinitecondit}
\end{align}
From above, we have
\begin{align}
&\int_{\Omega}|\na (\eta v_\e)|^p \frac{\phi(r)^{p-1}|\psi'(r)|}{ |\nabla(\log \psi(r))|^{p}}\dvol_g\notag\\
=& \int_{\T_{\iota}}|\na v_\e|^p \frac{\phi(r)^{p-1}|\psi'(r)|}{ |\nabla(\log \psi(r))|^{p}}\dvol_g+\int_{{\T}_{2\iota} \setminus \T_{\iota}}|\na (\eta v_\e)|^p \frac{\phi(r)^{p-1}|\psi'(r)|}{ |\nabla(\log \psi(r))|^{p}}\dvol_g\nonumber\\
=&p^{-p}\int_{\T_{\iota}\setminus {\T}_{\e} }\phi(r)^{p-1}|\nabla(\log \psi(r))|\dvol_g+\int_{{\T}_{2\iota} \backslash \T_{\iota}}|\na (\eta v_\e)|^p \frac{\phi(r)^{p-1}|\psi'(r)|}{ |\nabla(\log \psi(r))|^{p}}\dvol_g,\label{fracl1}
\end{align}
and
\begin{align}
&\int_{\Omega} |\eta v_\e|^p\phi(r)^{p-1}|\psi'(r)|\dvol_g\geq \int_{\T_{\iota}} |v_\e|^p\phi(r)^{p-1}|\psi'(r)|\dvol_g
\geq\int_{\T_{\iota}\setminus {\T}_{\e}}\phi(r)^{p-1}|\nabla(\log \psi(r))|\dvol_g.\label{fracl2}
\end{align}
Since $\eta v_\e\in D^{1,p}(\Omega,\phi,\psi)$, see Definition \ref{D1pspace},
by  \eqref{functinoalshapr}, \eqref{Hardy-Inequ-sec-upper-bound}, \eqref{fracl1}, \eqref{fracl2}, \eqref{seondierisfinitevsna} and \eqref{infinitecondit} we obtain
\begin{align}  p^{-p}\leq&\inf_{u\in C^\infty_0(\Omega)\setminus\{0\}}\mathcal {J}_{p,\Omega}(u)\leq \mathcal {J}_{p,\Omega}(\eta v_\e)
\leq p^{-p}+{\int_{{\T}_{2\iota} \setminus {\T}_{\iota}}|\na (\eta v_\e)|^p \frac{\phi(r)^{p-1}|\psi'(r)|}{ |\nabla(\log \psi(r))|^{p}}\dvol_g\over \int_{{\T}_{\iota}\setminus {\T}_{\e}}\phi(r)^{p-1}|\nabla(\log \psi(r))|\dvol_g}\rightarrow p^{-p},\nonumber
\end{align}
as  $\varepsilon\to 0^+$, which concludes the proof.
\end{proof}

\begin{remark}\label{conditioncheck} A sufficient condition for \eqref{nonintegrable} in Theorem \ref{sharp-u-general} is that there exists $\delta_0>0$ such that
	\begin{equation}\label{infiniteintegral1}
		\lim_{\e\rightarrow0^+}\int_\e^{ \delta}\phi(t)^{p-1}|(\log \psi(t))'|\, t^{m-n-1} {\dd}t=+\infty,\quad \forall\delta\in (0,\delta_0).
	\end{equation}
		In fact, by \eqref{smallsestimatedatA}, we may assume  that $\iota>0$ satisfies   (see the beginning of the proof of Theorem \ref{sharp-u-general}):
	\[
	2^{-1}t^{m-n-1}\leq \det \mathcal {A}(t,\mathbf{n})\leq 2 t^{m-n-1}, \text{ for }(t,\mathbf{n})\in[0,4\iota]\times \mathcal {V}S\Sigma.
	\]
	Thus, \eqref{dv-g-Def} combined with \eqref{infiniteintegral1} yields
	\begin{align*}
		&\int_{\T_{\iota} \setminus {\T}_{\e} }\phi(r)^{p-1}|\nabla(\log \psi(r))|\dvol_g\nonumber\\
		\geq&~ \frac12{\vol(\mathbb{S}^{m-n-1})} \, {\vol}_{i^*g}(\Sigma)  \int_\varepsilon^{\iota}\phi(t)^{p-1}|(\log \psi(t))'|t^{m-n-1}{\dd}t\rightarrow +\infty, \ \text{ as }\e\rightarrow 0^+,
	\end{align*}
	which implies $\phi(r)^{p-1}|\nabla(\log \psi(r))|\notin L^1(\T_{\iota})$ and hence, \eqref{nonintegrable} follows.
	Moreover, due to the fact that
	\[
	\lim_{t\rightarrow 0^+}\frac{\mathbf{c}_\lambda(t)^n\mathbf{s}_\lambda(t)^{m-n-1}}{t^{m-n-1}}=1,
	\]
	it is easy to see that
	\eqref{infiniteintegral1} is equivalent to that for one (and hence, for every) $\lambda\in \mathbb{R}$, there exists some $\delta_\lambda>0$ such that
	\begin{equation}\label{infiniteintegral}
		\lim_{\e\rightarrow0^+}\int_\e^{\delta}\phi(t)^{p-1}|(\log \psi(t))'| \mathbf{c}_\lambda(t)^n\mathbf{s}_\lambda(t)^{m-n-1} {\dd}t=+\infty, \quad \forall\,\delta\in (0,\delta_\lambda).
	\end{equation}
	
\end{remark}

Finally, we also check the attainability of the best constant in  \eqref{Hardy-Inequ-sec-upper-bound} by any function $C^\infty_0(\Omega)\backslash\{0\}$.

\begin{theorem}\label{nonexiglobal} Let $(M,g)$, $\Sigma$, $\Omega$ and $(\phi,\psi)$  be as in Theorem \ref{Hardy-thm-2}.  Then
	for every $p>1$, there is no extremal functions for \eqref{Hardy-Inequ-sec-upper-bound}, i.e.,
	\begin{equation}\label{basicomeglarge2}
		\mathcal {J}_{p,\Omega}(u)>p^{-p},\quad \forall\,u\in C^\infty_0(\Omega)\backslash\{0\},
	\end{equation}
	where $\mathcal {J}_{p,\Omega}$ is defined by \eqref{functinoalshapr}.
\end{theorem}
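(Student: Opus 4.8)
The plan is to follow the proof of Theorem~\ref{achieve-thm} almost verbatim, with $\Omega_\Sigma$ replaced by $\Omega$; the two genuinely new ingredients are a Barbatis--Filippas--Tertikas-type improvement of \eqref{Hardy-Inequ-sec-upper-bound} valid on all of $C^\infty_0(\Omega)$ (the exact counterpart of Proposition~\ref{prop-bv-improve}) and a topological remark that replaces the use of ``$u$ vanishes near $\Sigma$''. As in the discussion preceding Theorem~\ref{sharp-u-general} we may assume $\Omega\cap\Sigma\neq\emptyset$: if $\Sigma=\partial\Omega$ then $\Omega_\Sigma=\Omega$ and \eqref{basicomeglarge2} is precisely Theorem~\ref{achieve-thm}. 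By \eqref{natural-domain}, $\Omega\cap\Sigma\neq\emptyset$ forces $0\leq n\leq m-2$, so $\Sigma\cap\Omega$ is a closed subset of $\Omega$ which is an embedded submanifold of codimension at least $2$; consequently $\Omega_\Sigma=\Omega\setminus\Sigma$ is connected. By the usual approximation of $|u|$ by nonnegative elements of $C^\infty_0(\Omega)$, it suffices to treat nonnegative $u\in C^\infty_0(\Omega)\setminus\{0\}$.

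The first step is to establish, for every nonnegative $u\in C^\infty_0(\Omega)$, the estimate of Proposition~\ref{prop-bv-improve} with $\Omega_\Sigma$ replaced by $\Omega$ in every integral, where $h(x):=\psi(r(x))^{1/p}u(x)$. If $\lim_{t\to0^+}\psi'(t)<+\infty$, then $\psi(r)$ is locally Lipschitz on the whole of $\Omega$ by Remark~\ref{addtiondervaliporemark}, and Assumption~\ref{Basciasumptionfunction3}/\eqref{nansigcontroll3}\eqref{integrable3} permit one to invoke \eqref{lipdivergence} with $U=\Omega$; the proof of Proposition~\ref{prop-bv-improve} then goes through word for word. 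If instead $\lim_{t\to0^+}\psi'(t)=+\infty$ (so that $\psi'>0$ and $\phi(r)^{p-1}\psi(r)^p\in L^1_{\loc}(\Omega)$), I use the regularization $\psi_\varepsilon$ introduced in the proof of Theorem~\ref{Hardy-thm-2}: each pair $(\phi,\psi_\varepsilon)$ satisfies $\lim_{t\to0^+}\psi_\varepsilon'(t)<+\infty$, so the improved estimate holds for $(\phi,\psi_\varepsilon)$ with $h_\varepsilon:=\psi_\varepsilon(r)^{1/p}u$. Letting $\varepsilon\to0^+$, the two main integrals converge to the corresponding quantities for $(\phi,\psi)$ by the dominated-convergence bounds already set up in the proof of Theorem~\ref{Hardy-thm-2} (see \eqref{conlimi1}--\eqref{conlimi2}); hence the nonnegative remainder term converges as well, and Fatou's lemma --- applied to the remainder, using $\psi_\varepsilon(r)\to\psi(r)$, $\psi_\varepsilon'(r)\to\psi'(r)$ and $\nabla h_\varepsilon\to\nabla h$ for $\vol_g$-a.e.\ $x\in\Omega_\Sigma$ --- yields the improved estimate for $(\phi,\psi)$, with remainder $C_p\int_{\Omega_\Sigma}|\nabla h|^p\phi(r)^{p-1}|\psi'(r)|^{1-p}\psi(r)^{p-1}\dvol_g$ when $p\geq2$ (and the obvious analogue when $p\in(1,2)$).

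Now suppose, for contradiction, that $\mathcal{J}_{p,\Omega}(u)=p^{-p}$ for some nonnegative $u\in C^\infty_0(\Omega)\setminus\{0\}$. Then in the improved estimate of the previous step every term on the right-hand side is nonnegative, so the remainder vanishes; since by Assumption~\ref{Basciasumptionfunction3}/\eqref{negligbileset3} its weight is positive for $\vol_g$-a.e.\ $x\in\Omega_\Sigma$, this forces $\nabla h=0$ for $\vol_g$-a.e.\ $x\in\Omega_\Sigma$. As in the proof of Theorem~\ref{achieve-thm}, $h$ is continuous on $\Omega_\Sigma$ and an elementary argument shows that it is constant on the connected set $\Omega_\Sigma$, say $h\equiv C$. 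At this point the non-compactness of $\Omega$ enters: since $\supp u$ is compact and $\Omega$ is non-compact, $\Omega\setminus\supp u$ is a non-empty open set, and because $\Sigma\cap\Omega$ has empty interior it meets $\Omega_\Sigma$; at any point of $(\Omega\setminus\supp u)\cap\Omega_\Sigma$ we have $h=\psi(r)^{1/p}u=0$, whence $C=0$. Consequently $u=C\,\psi(r)^{-1/p}=0$ at every $x\in\Omega_\Sigma$ with $\psi(r(x))\neq0$, i.e.\ for $\vol_g$-a.e.\ $x\in\Omega_\Sigma$ by \eqref{negligbileset3}, so $u\equiv0$ on $\Omega_\Sigma$ by continuity and hence $u\equiv0$ on $\Omega$ --- contradicting $u\neq0$. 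This proves \eqref{basicomeglarge2}.

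The step I expect to be the main obstacle is the limiting argument in the case $\lim_{t\to0^+}\psi'(t)=+\infty$: one must make sure that the nonnegative remainder in the improved estimate for $(\phi,\psi_\varepsilon)$ passes to the limit in the correct direction (so that Fatou's lemma, not dominated convergence, is the appropriate tool) and that its limit still bounds $|\nabla h|$ below through a weight which is positive $\vol_g$-a.e.; everything else is a routine transcription of the arguments already used for Theorems~\ref{achieve-thm} and~\ref{Hardy-thm-2}.
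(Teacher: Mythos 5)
Your proposal is correct and follows essentially the same route as the paper: generalize Proposition \ref{prop-bv-improve} to $u\in C^\infty_0(\Omega)$, deduce that $\psi(r)^{1/p}u$ is constant, and use the non-compactness of $\Omega$ to force that constant to vanish, contradicting $u\neq 0$ via Assumption \ref{Basciasumptionfunction3}/\eqref{negligbileset3}. The only (cosmetic) differences are that the paper merely asserts the extension of Proposition \ref{prop-bv-improve} where you carry out the $\psi_\varepsilon$-regularization and Fatou argument in detail, and that it concludes constancy from the continuity of $\psi(r)^{1/p}u$ on the connected domain $\Omega$ itself rather than from the connectedness of $\Omega_\Sigma$.
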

\begin{proof}By Assumption \ref{Basciasumptionfunction3}, one can easily generalize Proposition \ref{prop-bv-improve} to $u\in C^\infty_0(\Omega)$. Due to Theorem \ref{Hardy-thm-2}, suppose by contradiction that $\mathcal {J}_{p,\Omega}(u)=p^{-p}$ for some $u\in C^\infty_0(\Omega)\backslash\{0\}$.
	Similarly to the proof of Theorem \ref{achieve-thm}, we have $\na(\psi(r(x))^{1/p}u(x))=0$ for $\vol_g$-a.e.  $x\in \Omega$. Since $\psi(r(x))^{1/p}u(x)$ is continuous in the connected domain $\Omega$, there is a constant $C$ such that $\psi(r(x))^{1/p}u(x)=C$ for every $x\in \Omega$.

	On the one hand, since $\Omega$ is noncompact, we have $\supp u\subsetneq \Omega$. Hence, there exists some point $x_0\in \Omega$ such that $u(x_0)=0$, which implies $C=0$.
	On the other hand,
	since $u\neq0$, there exists some point $x_1\in \Omega$ such that $u(x_1)\neq0$. Since $\{\psi(r)=0\}$ is a  $\vol_g$-negligible set,  the continuity of $u$ yields  some point $x_2\in \Omega$ such that $u(x_2)\neq0$ and $\psi(r(x_2))> 0$, which  means $C\neq0$, a contraction.
\end{proof}

\begin{example}
Let $\MM_\lambda, \Sigma$ and  $(\phi,\psi)$ be as in Example  \ref{space-form-phi-psi4}, i.e., $\MM_\lambda=\mathbb{R}^m$, $\Sigma=\{o\}$, $n=0$ and
\[
\phi(t)=t^{-\frac{m-n-1}{p-1}}=t^{-\frac{m-1}{p-1}},\quad \psi(t)=t^{\beta+m-n}=t^{\beta+m}.
\]
According to Example \ref{space-form-phi-psi5}, Assumption \ref{Basciasumptionfunction3} is satisfied when $p>1$ and $\beta>-m$. Moreover, for every $\delta>0$,
\[
\lim_{\e\rightarrow0^+}\int_\e^{ \delta}\phi(t)^{p-1}|(\log \psi(t))'|\, t^{m-1} {\dd}t=\lim_{\e\rightarrow0^+}\int_\e^{ \delta}\frac{(\beta+m)}{t}{\dd}t=+\infty.
\]
Hence, Theorems \ref{Hardy-thm-2}--\ref{sharp-u-general} and Remark \ref{conditioncheck} furnish the classical Hardy inequality (compare with \eqref{punthardyonRm}): for any $p>1$ and $\beta>-m$,
\begin{equation}\label{hardyonRm}
\int_{\mathbb{R}^m} |\nabla u|^p |x|^{p+\beta}{\dd}x\geq \left(\frac{\beta+m}{p}\right)^p\int_{\mathbb{R}^m}|u|^p |x|^\beta{\dd}x,\quad \forall\,u\in C^\infty_0(\mathbb{R}^m).
\end{equation}
In particular, \eqref{hardyonRm} is sharp and has no extremal functions in $C^\infty_0(\mathbb{R}^m)\backslash\{0\}$, see Theorem \ref{nonexiglobal}.
\end{example}

\subsection{Extension to the space  $C^\infty_0(\Omega,\Sigma)$}\label{Cinftyomegasigma}
Although expected, in some cases it is impossible to  extend \eqref{Hardy-Inequ-general}  to   $C^\infty_0(\Omega)$. Such phenomenon occurs for instance when $\Omega=M$ is compact; indeed, in this case the constant functions belong to $C^\infty_0(\Omega)$, and \eqref{Hardy-Inequ-sec-upper-bound} provides a contradiction. A possible way to surpass this problem is to extend \eqref{Hardy-Inequ-general} from $C^\infty_0(\Omega_\Sigma)$ to the space
\begin{equation}\label{comgsigspace}
C_0^\infty(\Omega,\Sigma):=\{u\in C_0^\infty(\Omega)\,:\,u(\Sigma)=0 \}.
\end{equation}
In this subsection, we discuss the validity of
\eqref{Hardy-Inequ-general} on $C_0^\infty(\Omega,\Sigma)$ under the following assumption.

\begin{assumption}\label{Basciasumptionfunction2}
Given $p > 1$, let $(\phi,\psi)$   be a pair of two nonnegative  $C^2$-functions defined on
some subinterval of $(0, +\infty)$  such that
\begin{enumerate}[{\rm(a)}]
 \item\label{smoothofphi2} $\phi(r),\psi(r)$ are well-defined   in $M\backslash\Sigma$;

\smallskip

\item\label{smoothofpsi2} $\psi$ is monotone and $\psi(r)$ is locally Lipschitz on $\Omega_\Sigma$;

\smallskip

\item\label{nansigcontroll2} {$\ds\dii(\sgn(\psi'(r))\,\phi(r)^{p-1}\na r)\geq 0$} in the weak sense in $\Omega_\Sigma$;

\smallskip

\item\label{integrable2} $\phi(r)^{p-1}, \phi(r)^{p-1}\psi(r),\phi(r)^{p-1}|\psi'(r)|\in L^1_{\loc}(\Omega_\Sigma)$ and $\frac{\phi(r)^{p-1}|\psi'(r)|}{ |\nabla(\log \psi(r))|^{p}}\in L^1_{\loc}(M)$;

\smallskip

\item\label{negligbileset2} $\{x\in {\Omega_\Sigma}\,:\, \phi(r(x))\,\psi(r(x))\,\psi'(r(x))=0\}$ is a $\vol_g$-negligible set;

\smallskip

\item\label{localboundedofnu} $\frac{\partial}{\partial r}\log\left(\frac{\phi(r)^{p-1}|\psi'(r)|}{ |\nabla(\log \psi(r))|^{p}}\right)$ is locally bounded (i.e., bounded on every compact set) in $M\backslash\Sigma$.

\end{enumerate}


\end{assumption}

\begin{remark}\label{assp2reason}  Obviously, \eqref{smoothofphi2}--\eqref{integrable2} are enough to obtain the Hardy inequality \eqref{Hardy-Inequ-general}; in particular,  Assumption \ref{Basciasumptionfunction2} is stronger than Assumption \ref{Basciasumptionfunction}.
More precisely, \eqref{smoothofphi2} means that $\phi'(r),\psi'(r), \psi''(r)$ are well-defined on $M\backslash\Sigma$, which is necessary to define $\frac{\partial}{\partial r}\left(\frac{\phi(r)^{p-1}|\psi'(r)|}{ |\nabla(\log \psi(r))|^{p}}\right)$. Furthermore,
the local boundedness of $\frac{\partial}{\partial r}\left(\frac{\phi(r)^{p-1}|\psi'(r)|}{ |\nabla(\log \psi(r))|^{p}}\right)$, i.e., \eqref{localboundedofnu}, is used to establish a divergence lemma \eqref{newdivelemm} and Corollary \ref{maxminlenew}, which plays an important role in a capacity   theory; for technical details, see  Appendix \ref{Soblevspace}. 
\end{remark}


We now introduce a new Sobolev space.
\begin{definition}\label{Sobolevspaces}
Let $(M,g)$ be a complete Riemannian manifold,   $i:\Sigma\hookrightarrow M$ be a closed submanifold    and let  $(\phi,\psi)$ be a pair of functions satisfying Assumption \ref{Basciasumptionfunction2}.
Given $p>1$ and  an open set  $U\subset M$, the {\it weighted Sobolev space} ${W^{1,p}_0}(U, \phi,\psi)$ (resp., ${W^{1,p}}(U, \phi,\psi)$) is defined as the completion of  $C^{\infty}_{0}(U)$ (resp., $C^{\infty}_{p}(U):=\{ u\in C^\infty(U)\,:\, \|u\|_{p}<\infty\}$) under the norm
\begin{equation}
\|u\|_{p}:=\left(\int_{U}|u|^p \frac{\phi(r)^{p-1}|\psi'(r)|}{ |\nabla(\log \psi(r))|^{p}}\dvol_g+\int_{U}|\na u|^p \frac{\phi(r)^{p-1} |\psi'(r)|}{|\nabla(\log \psi(r))|^{p}}\dvol_g\right)^{1/ p}.\label{w-norm}
\end{equation}

\end{definition}
In Appendix \ref{Soblevspace} we provide further properties of the spaces  $W^{1,p}_0(U, \phi,\psi)$ and $W^{1,p}(U, \phi,\psi)$. In view of \eqref{comgsigspace}, we have the following lemma.

\begin{lemma}\label{continuouslemma}
 Let $(M,g)$, $\Sigma$ and $(\phi,\psi)$  be as in Definition \ref{Sobolevspaces}. If
 $\Omega\subset M$ is a non-empty $($possibly compact$)$ domain  satisfying \eqref{natural-domain}, then $C^\infty_0(\Omega,\Sigma)\subset W^{1,p}_0(\Omega_\Sigma,\phi,\psi)$, that is,
\begin{equation*}
  u\in C^\infty_0(\Omega,\Sigma)\quad  \Longrightarrow\quad  u|_{\Omega_\Sigma}\in  W^{1,p}_0(\Omega_\Sigma,\phi,\psi).\label{functionsspacessubset}
\end{equation*}
\end{lemma}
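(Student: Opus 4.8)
The plan is to show that an arbitrary $u\in C^\infty_0(\Omega,\Sigma)$, restricted to $\Omega_\Sigma$, can be approximated in the norm $\|\cdot\|_p$ of \eqref{w-norm} by functions of $C^\infty_0(\Omega_\Sigma)$. The natural device is to multiply $u$ by a family of cut-off functions that vanish in a shrinking tubular neighborhood $\T_\delta$ of $\Sigma$ and equal $1$ outside $\T_{2\delta}$, built from the distance function $r$: set $\eta_\delta(x):=\chi(r(x)/\delta)$ where $\chi$ is a fixed smooth function with $\chi\equiv 0$ on $[0,1]$, $\chi\equiv 1$ on $[2,\infty)$, $0\le\chi\le 1$. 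Since $r$ is smooth away from $\Sigma\cup\mathcal{V}\mathscr{C}\Sigma$ (a $\vol_g$-negligible set) and Lipschitz everywhere, $\eta_\delta u$ is Lipschitz with compact support in $\Omega_\Sigma$; by Lemma~\ref{lipsconverppax} it lies in the closure of $C^\infty_0(\Omega_\Sigma)$ under $\|\cdot\|_p$ (using the local integrability of the weight from Assumption \ref{Basciasumptionfunction2}/\eqref{integrable2}), so it suffices to prove $\|\eta_\delta u-u\|_p\to 0$ as $\delta\to 0^+$.

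First I would handle the zeroth-order term. We have $\eta_\delta u-u$ supported in $\overline{\T_{2\delta}}\cap\Omega$, bounded by $\|u\|_\infty$, and $\frac{\phi(r)^{p-1}|\psi'(r)|}{|\nabla(\log\psi(r))|^p}\in L^1_{\loc}(M)$ by Assumption \ref{Basciasumptionfunction2}/\eqref{integrable2}; since $\bigcap_{\delta>0}\overline{\T_{2\delta}}=\Sigma$ has measure zero, dominated convergence gives $\int_{\Omega_\Sigma}|\eta_\delta u-u|^p\frac{\phi(r)^{p-1}|\psi'(r)|}{|\nabla(\log\psi(r))|^p}\dvol_g\to 0$. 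For the gradient term, write $\nabla(\eta_\delta u)-\nabla u=(\eta_\delta-1)\nabla u+u\,\nabla\eta_\delta$. The first piece is controlled exactly as the zeroth-order term (it is bounded and supported in $\overline{\T_{2\delta}}$), so it tends to $0$. The genuinely delicate piece is $u\,\nabla\eta_\delta$, where $|\nabla\eta_\delta|\le \frac{C}{\delta}\chi'(r/\delta)$ is supported in $\T_{2\delta}\setminus\T_\delta$ and blows up like $\delta^{-1}$. The contribution to be estimated is
\begin{equation*}
\int_{\T_{2\delta}\setminus\T_\delta}|u|^p|\nabla\eta_\delta|^p\,\frac{\phi(r)^{p-1}|\psi'(r)|}{|\nabla(\log\psi(r))|^p}\dvol_g
\le \frac{C}{\delta^p}\,\|u\|_\infty^p\int_{\T_{2\delta}\setminus\T_\delta}\frac{\phi(r)^{p-1}|\psi'(r)|}{|\nabla(\log\psi(r))|^p}\dvol_g,
\end{equation*}
and I expect this to be the main obstacle: a priori $\delta^{-p}$ times the weight integral over an annulus of width $\delta$ need not go to zero. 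The key is that $u\in C^\infty_0(\Omega,\Sigma)$ vanishes on $\Sigma$, so near $\Sigma$ one has $|u(x)|\le (\dil u)\,r(x)\le C\delta$ on $\T_{2\delta}\setminus\T_\delta$; this cancels the $\delta^{-p}$, leaving $C\|u\|_{C^1}^p\int_{\T_{2\delta}\setminus\T_\delta}\frac{\phi(r)^{p-1}|\psi'(r)|}{|\nabla(\log\psi(r))|^p}\dvol_g$, which tends to $0$ as $\delta\to 0^+$ because the weight is in $L^1_{\loc}(M)$ and the annuli shrink to the null set $\Sigma$. Putting the three estimates together, $\|\eta_\delta u-u\|_p\to 0$, whence $u|_{\Omega_\Sigma}\in W^{1,p}_0(\Omega_\Sigma,\phi,\psi)$, which is the claim.

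One technical point to dispatch carefully along the way: to make sense of $\nabla\eta_\delta$ and of the pointwise inequality $|u(x)|\le(\dil u)\,r(x)$ I use that $r$ is $1$-Lipschitz (eikonal equation, $|\nabla r|=1$ a.e.) and differentiable $\vol_g$-a.e., together with $u(\pi(x))=0$ for the foot point $\pi(x)\in\Sigma$; alternatively, since $u$ is smooth and $u|_\Sigma=0$, a first-order Taylor estimate in Fermi coordinates gives $|u(x)|\le C\,r(x)$ on a tubular neighborhood. Also, to invoke Lemma~\ref{lipsconverppax} I note $\eta_\delta u\in\Lip_0(\Omega_\Sigma)$ with the relevant weight locally integrable on $\Omega_\Sigma$, so the smooth approximants converge in $W^{1,1}$ and, being uniformly bounded with uniformly bounded gradients on a fixed compact set, also in $\|\cdot\|_p$ by dominated convergence — exactly the mechanism already used in the proof of Proposition~\ref{generadiverge} and Corollary~\ref{finsharpcorollary}. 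No use of Assumption \ref{Basciasumptionfunction2}/\eqref{localboundedofnu} is needed here; that hypothesis is reserved for the capacity arguments in the appendix.
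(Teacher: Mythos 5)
Your argument is correct, but it is genuinely different from the paper's. The paper disposes of the lemma in three lines by routing through the weighted capacity theory of Appendix \ref{Soblevspace}: the zero extension of $u$ lies in $W^{1,p}(M,\phi,\psi)$, it is continuous (hence $p$-quasicontinuous) and vanishes identically on $M\setminus\Omega_\Sigma$, so Theorem \ref{maintheoreminaapendix} together with Theorem \ref{propoertyfocap}/\eqref{Cap1} gives $u|_{\Omega_\Sigma}\in W^{1,p}_0(\Omega_\Sigma,\phi,\psi)$. You instead give a direct approximation: cut off with $\eta_\delta=\chi(r/\delta)$, kill the $\delta^{-p}$ blow-up of $|\nabla\eta_\delta|^p$ by the linear decay $|u|\leq C\,r$ (available because the zero extension of $u$ is globally Lipschitz and vanishes on all of $\Sigma$), and conclude by dominated convergence using only $\frac{\phi(r)^{p-1}|\psi'(r)|}{|\nabla(\log\psi(r))|^{p}}\in L^1_{\loc}(M)$ from Assumption \ref{Basciasumptionfunction2}/\eqref{integrable2}, plus the Lipschitz-to-smooth approximation of Lemma \ref{lipsconverppax} (the mechanism of Corollary \ref{finsharpcorollary}, i.e.\ Lemma \ref{lipshccompact}). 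Your route is more elementary and self-contained -- it bypasses quasicontinuity, capacities and, as you note, the hypothesis \eqref{localboundedofnu} -- but it leans on the smoothness/Lipschitz character of $u$ and on its vanishing on $\Sigma$ through the pointwise bound $|u|\leq Cr$; the paper's capacity argument is less hands-on but more robust, since Theorem \ref{maintheoreminaapendix} applies to arbitrary Sobolev functions vanishing quasieverywhere on $M\setminus\Omega_\Sigma$, and the appendix machinery is developed anyway for other purposes. Two cosmetic points: the sets $\T_{2\delta}\setminus\T_\delta$ shrink to the empty set rather than to $\Sigma$ (the dominated convergence conclusion is unaffected, since the indicators tend to zero pointwise and are dominated on the fixed compact $\supp u$), and it is worth stating explicitly, as the paper does, that the case $\Sigma=\partial\Omega$ is trivial because then $\Omega_\Sigma=\Omega$ and $\eta_\delta u=u$ for small $\delta$; your construction handles it automatically.
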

\begin{proof}
Since the proof is trivial if $\Sigma= \partial\Omega$, it  suffice to prove the statement when
$\Sigma\cap \Omega\neq\emptyset$. Given $u\in C^\infty_0(\Omega,\Sigma)$,  the zero extension furnishes $u\in W^{1,p}(M,\phi,\psi)$.  Since $u$ is continuous in $M$ and $u=0$ in $M\backslash\Omega_\Sigma$, Theorem \ref{maintheoreminaapendix} together with Theorem \ref{propoertyfocap}/\eqref{Cap1} yields $u|_{\Omega_\Sigma}\in   W^{1,p}_0(\Omega_\Sigma,\phi,\psi)$.
\end{proof}


Now Lemma \ref{continuouslemma} together with Theorem \ref{Hardy-thm-general} yields the following result.
\begin{theorem}\label{generalhardclosed}
Let  $(M,g)$ be an $m$-dimensional complete Riemannian manifold, $i: \Sigma\hookrightarrow M$ be an $n$-dimensional closed submanifold and let $\Omega\subset M$ be a non-empty $($possibly compact$)$ domain  satisfying \eqref{natural-domain}. Given $p>1$, if  $(\phi,\psi)$ satisfies Assumption \ref{Basciasumptionfunction2}, then
\begin{equation}
\int_{\Omega}|\na u|^p\frac{\phi(r)^{p-1}|\psi'(r)|}{ |\nabla(\log \psi(r))|^{p}}\dvol_g\geq p^{-p}\int_{\Omega} |u|^p\phi(r)^{p-1}|\psi'(r)|\dvol_g,\ \ \forall u\in C^\infty_0(\Omega,\Sigma). \label{Hardy-Inequ-general2}
\end{equation}
\end{theorem}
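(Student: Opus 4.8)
The plan is to derive Theorem~\ref{generalhardclosed} directly from the already-established Hardy inequality on $C^\infty_0(\Omega_\Sigma)$, namely Theorem~\ref{Hardy-thm-general}, by a density argument that passes through the weighted Sobolev space $W^{1,p}_0(\Omega_\Sigma,\phi,\psi)$ of Definition~\ref{Sobolevspaces}. The crucial bridge is Lemma~\ref{continuouslemma}, which asserts that any $u\in C^\infty_0(\Omega,\Sigma)$ restricts to an element of $W^{1,p}_0(\Omega_\Sigma,\phi,\psi)$; this is precisely the point where the stronger Assumption~\ref{Basciasumptionfunction2} (and the capacity theory of the appendix) enters, since $u$ is allowed to be nonzero arbitrarily close to $\Sigma$ and only vanishes \emph{on} $\Sigma$.

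First I would observe that Assumption~\ref{Basciasumptionfunction2}/\eqref{smoothofphi2}--\eqref{integrable2} subsume Assumption~\ref{Basciasumptionfunction}, so Theorem~\ref{Hardy-thm-general} applies and gives \eqref{Hardy-Inequ-general} for all $v\in C^\infty_0(\Omega_\Sigma)$; equivalently, writing
\[
\mathcal Q(v):=\int_{\Omega_\Sigma}|\na v|^p\frac{\phi(r)^{p-1}|\psi'(r)|}{|\nabla(\log\psi(r))|^{p}}\dvol_g-p^{-p}\int_{\Omega_\Sigma}|v|^p\phi(r)^{p-1}|\psi'(r)|\dvol_g\ge 0,
\]
the functional $\mathcal Q$ is nonnegative on $C^\infty_0(\Omega_\Sigma)$. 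Next I would note that both integrals defining $\mathcal Q$ are continuous with respect to the norm $\|\cdot\|_p$ of \eqref{w-norm}: the gradient term is literally the $\|\cdot\|_p$-gradient seminorm raised to the $p$-th power, while for the zeroth-order term one uses that on the space $W^{1,p}_0(\Omega_\Sigma,\phi,\psi)$ the inequality \eqref{Hardy-Inequ-general} itself, valid on the dense subspace $C^\infty_0(\Omega_\Sigma)$, shows $\int|v|^p\phi(r)^{p-1}|\psi'(r)|\dvol_g$ is controlled by $p^p\|v\|_p^p$ and hence extends continuously. Therefore $\mathcal Q$ extends to a nonnegative continuous functional on all of $W^{1,p}_0(\Omega_\Sigma,\phi,\psi)$, i.e. \eqref{Hardy-Inequ-general} holds for every $v\in W^{1,p}_0(\Omega_\Sigma,\phi,\psi)$.

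Finally I would take $u\in C^\infty_0(\Omega,\Sigma)$, invoke Lemma~\ref{continuouslemma} to conclude $u|_{\Omega_\Sigma}\in W^{1,p}_0(\Omega_\Sigma,\phi,\psi)$, apply the extended inequality to $v=u|_{\Omega_\Sigma}$, and observe that since $\Sigma$ is $\vol_g$-negligible the integrals over $\Omega_\Sigma$ coincide with those over $\Omega$, yielding \eqref{Hardy-Inequ-general2}. The main obstacle is really Lemma~\ref{continuouslemma}, whose proof rests on Assumption~\ref{Basciasumptionfunction2}/\eqref{localboundedofnu} and the capacity results of Appendix~\ref{Soblevspace} (Theorems~\ref{maintheoreminaapendix} and \ref{propoertyfocap}): one must show that a continuous function vanishing on the closed set $\Sigma$, which has zero $(\phi,\psi)$-capacity under these hypotheses, can be approximated in $\|\cdot\|_p$ by functions compactly supported away from $\Sigma$. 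Once that technical input is granted, the remainder of the argument is the routine density/continuity passage sketched above.
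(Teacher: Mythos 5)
Your proposal is correct and follows essentially the same route as the paper: the paper likewise extends \eqref{Hardy-Inequ-general} from $C^\infty_0(\Omega_\Sigma)$ to $W^{1,p}_0(\Omega_\Sigma,\phi,\psi)$ by a density argument and then invokes Lemma \ref{continuouslemma} to conclude for $u\in C^\infty_0(\Omega,\Sigma)$. The only (minor) difference is the limit passage for the right-hand side: since the Hardy weight $\phi(r)^{p-1}|\psi'(r)|$ is not the weight appearing in the norm \eqref{w-norm}, the paper replaces your ``extends continuously'' claim by pointwise $\vol_g$-a.e.\ convergence of a subsequence (via Lemma \ref{nullmeaure}) combined with Fatou's lemma, which gives the needed lower semicontinuity of that term and makes the identification with the concrete integral of the limit function precise.
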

\begin{proof}
Given $u\in W^{1,p}_0(\Omega_\Sigma, \phi,\psi)$, there is a sequence $u_i\in C^\infty_0(\Omega_\Sigma)$ such that $u_i$ converges to $u$ under $\|\cdot\|_{p}$.  By passing to a subsequence and using Lemma  \ref{nullmeaure}, we may assume that $u_i$ converges to $u$ point-wisely\ $\vol_g$-a.e. in $\Omega$.   Since $\Sigma$ is a $\vol_g$-negligible set, the inequality \eqref{Hardy-Inequ-general} together with  Fatou's lemma  yields
\begin{align*}
&\int_{\Omega}|\na u|^p\frac{\phi(r)^{p-1}|\psi'(r)|}{ |\nabla(\log \psi(r))|^{p}}\dvol_g=\underset{i\rightarrow \infty}{\lim\inf}\int_{\Omega}|\na u_i|^p\frac{\phi(r)^{p-1}|\psi'(r)|}{ |\nabla(\log \psi(r))|^{p}}\dvol_g\\
\geq& p^{-p} \underset{i\rightarrow \infty}{\lim\inf}\int_{\Omega}  |u_i|^p\phi(r)^{p-1}|\psi'(r)|\dvol_g\geq p^{-p} \int_{\Omega}  \underset{i\rightarrow \infty}{\lim\inf}|u_i|^p\phi(r)^{p-1}|\psi'(r)|\dvol_g\\
=&p^{-p}\int_{\Omega} |u|^p\phi(r)^{p-1}|\psi'(r)|\dvol_g,
\end{align*}
which combined with Lemma \ref{continuouslemma} yields
 \eqref{Hardy-Inequ-general2}.
\end{proof}

\begin{remark}\label{sharpnonextronclosed}
Since $C^\infty_0(\Omega_\Sigma)\subset C^\infty_0(\Omega,\Sigma)$, the sharpness results in Section \ref{sharpnessfirst} remain valid for \eqref{Hardy-Inequ-general2}. In particular, \eqref{Hardy-Inequ-general2} is sharp whenever \eqref{Hardy-Inequ-general} (under Assumption \ref{Basciasumptionfunction}) is sharp.
 Moreover, there is   no extremal functions for \eqref{Hardy-Inequ-general2} if
  one of the following conditions holds:
\begin{enumerate}[{\rm (i)}]

\item\label{limteexideoin2} $\Omega$ is unbounded;

\item\label{limteexideoin1}  $\lim_{t\rightarrow 0^+}\psi(t)<+\infty$;

\item\label{limteexideoin3}    for any open set $V\subset \Omega_\Sigma$ such that $\overline{V}\cap \Sigma$ is not a $\vol_{i^*g}$-negligible set, there holds
\[
\int_V\phi(r)^{p-1}|\nabla(\log \psi(r))|\dvol_g=+\infty.
\]
\end{enumerate}
We only deal with \eqref{limteexideoin3}, as the proofs of \eqref{limteexideoin2} and \eqref{limteexideoin1} are based on similar arguments as in Theorems \ref{nonexiglobal} and \ref{achieve-thm}, respectively. If there is some $u\in C^\infty_0(\Omega,\Sigma)\setminus \{0\}$ such that  $\mathcal {J}_{p,\Omega}(u)=p^{-p}$,
a similar argument as in Theorem \ref{achieve-thm} yields that  $x\mapsto \psi(r(x))^{1/p}u(x)$ is   constant on each connected component of $\Omega_\Sigma$; fix such a connected component $U$ and constant $C_U>0$ so that $\psi(r(x))^{1/p}u(x)=C_U$ in $U$.
%
%
%
Provided that \eqref{limteexideoin3} holds,
choose a bounded open set $V\subset \Omega_\Sigma$ such that $\overline{V}\cap \Sigma$ is not a $\vol_{i^*g}$-negligible set. For convenience, we may assume that $V$ is contained in the closure of $U$; otherwise,  consider $V\cap U$.
Since $K:=\overline{V}$  is a compact set,
   Assumption \ref{Basciasumptionfunction2}/\eqref{integrable2} yields that
$
|\na u|^p \frac{\phi(r)^{p-1} |\psi'(r)|}{|\nabla(\log \psi(r))|^{p}}\in L^1(K).$
On the other hand, since $\psi(r(x))^{1/p}u(x)=C_U$ in $U$, we have by \eqref{limteexideoin3} that
\[
\int_{K}|\na u|^p \frac{\phi(r)^{p-1} |\psi'(r)|}{|\nabla(\log \psi(r))|^{p}}\dvol_g=\frac{C_U^p}{p^p}\int_V  \phi(r)^{p-1}|\nabla \log \psi(r)|\dvol_g=+\infty,
\]
 a contradiction.

\end{remark}

Theorem \ref{generalhardclosed} is efficient to deal with  compact domains; in the sequel, we present an application.
\begin{example}
Let $\MM_\lambda=\mathbb{S}^m=\Omega$ and let $\Sigma=\mathbb{S}^{m-1}$, the great sphere in $\mathbb{S}^m$. Choose $(\phi,\psi)$ as
\[
\phi(t):=t^{-\frac{m-n-1}{p-1}}=1,\quad \psi(t):=t^{\beta+m-n}=t^{\beta+1},\quad -(p+1) <\beta<-1.
\]
Assumption \ref{Basciasumptionfunction2}/\eqref{smoothofphi2}\eqref{smoothofpsi2}\eqref{negligbileset2}\eqref{localboundedofnu}  obviously hold and \eqref{nansigcontroll2} has been proven in Example \ref{space-form-phi-psi2}.
Moreover,
\eqref{dv-g-Def} and \eqref{narspaceform}$_1$ imply $\frac{\phi(r)^{p-1}|\psi'(r)|}{ |\nabla(\log \psi(r))|^{p}}\in L^1(\mathbb{S}^m)$. Thus, Assumption \ref{Basciasumptionfunction2} is satisfied.

Theorem \ref{generalhardclosed} together with Remark \ref{sharpnonextronclosed}, Corollary \ref{geomertricmeaningofsharpness2} and \eqref{conditionspaceform1} yields the following sharp Hardy inequality: for any $p>1$ and $-1>\beta>-(p+1)$,
 \begin{equation*}
\int_{\mathbb{S}^m}|\nabla u|^p r^{\beta+p}\dvol_g\geq \left|  \frac{\beta+1}{p} \right|^p \int_{\mathbb{S}^m}|u|^p r^\beta \dvol_g,\quad \forall\,u\in C^\infty_0(\mathbb{S}^m,\mathbb{S}^{m-1}),
\end{equation*}
which  has no extremal functions in $C^\infty_0(\mathbb{S}^m,\mathbb{S}^{m-1})\backslash\{0\}$ due to   Condition \eqref{limteexideoin3} in Remark \ref{sharpnonextronclosed}.
\end{example}

\section{Application I: Sharp Hardy inequalities on manifolds with upper curvature bounds}\label{applicII}

As we pointed out, classical Hardy inequalities have been extended to Cartan--Hadamard manifolds, see Carron \cite{Ca}, Krist\'aly \cite{Kristaly},  Yang et al. \cite{YSK}. In this section, we consider a more general case when the curvature is  bounded from above; more precisely, by recalling \eqref{defunbm}, we consider the following setting:

\begin{definition}\label{Curvature-Upper} Let $(M,g)$ be an $m$-dimensional Riemannian manifold, $i:\Sigma\hookrightarrow M$ be an $n$-dimensional closed {\it totally umbilical} submanifold and $\Omega$ be a non-empty domain in $M$ satisfying \eqref{natural-domain}. Given $\lambda,\kappa\in \mathbb{R}$, the triple
 $(M,\Sigma,\Omega)$  satisfies the {\it  upper curvature bound condition $(\lambda,\kappa)$}  if one of the following  conditions holds:
\begin{enumerate}[{\rm (a)}]
\item\label{0diem2}    ${\sec}_M\leq \lambda$ and $n=0$;

\smallskip

\item\label{m-1diem2}   ${\sec}_M\leq \lambda$, $n=m-1$ and $\tr \mathfrak{A}^\mathbf{n}\leq n\kappa$ for any inward normal vector $\mathbf{n}$  w.r.t. $\Omega$;

\smallskip

\item\label{other-dime2}  $\sec_M\leq \lambda$, $1\leq n\leq m-2$ and  $\tr \mathfrak{A}^\mathbf{n}\leq n\kappa$ for any $\mathbf{n}\in \mathcal {V}S\Sigma$.

\end{enumerate}
\end{definition}
\begin{remark}\label{explprincipcuvrature1}

Since $\Sigma$ is a totally umbilical submanifold,  for a given $\mathbf{n}\in \mathcal {V}S\Sigma$  the eigenvalues $\{\kappa_i\}_{i=1}^n$ of $\mathfrak{A}^{\mathbf{n}}$ with respect to every orthonormal basis of $T_{\pi(\mathbf{n})}\Sigma$ are equal, i.e.,
$\kappa_1=\cdots=\kappa_n$.
Thus, $\tr \mathfrak{A}^\mathbf{n}\leq n\kappa$ implies each $\kappa_i\leq \kappa$. Moreover,
\begin{itemize}

\item \eqref{0diem2} means that $\Sigma$ is a point and hence, the Weingarten map always vanishes, in which case  $\kappa$ can be chosen as any real number (because it is meaningless);

\smallskip

\item \eqref{other-dime2} implies $\kappa\geq 0$ because $-n\kappa\leq -\tr \mathfrak{A}^\mathbf{n}=\tr \mathfrak{A}^\mathbf{-n}\leq n\kappa$. In particular, if $\kappa=0$ then $0=\tr \mathfrak{A}^\mathbf{n}=n\kappa_i$ for each $\kappa_i$, i.e., $\kappa_i=0$ and therefore, $\Sigma$ is actually totally geodesic.

\end{itemize}
According to the last statement -- although in the sequel we will not point out explicitly the sign of $\kappa$ -- we should keep in mind that $\kappa\geq 0$ whenever $1\leq n\leq m-2$.
\end{remark}

\subsection{Hardy inequalities involving double-curvature  weights
}\label{section-5-1}

In the sequel we use $(\lambda,\kappa)\leq (\Lambda,K)$ to denote $\lambda\leq \Lambda$ and $\kappa\leq K$, respectively.

\begin{theorem}\label{distance-weight-hardy-genera255} Let $(M,g)$ be an $m$-dimensional complete Riemannian manifold, $i:\Sigma \hookrightarrow M$ be an $n$-dimensional closed totally umbilical submanifold and $\Omega$ be a noncompact domain in $M$ satisfying \eqref{natural-domain}.  Denote $r(x):=d(\Sigma,x)$ by the distance function from $\Sigma$.
Suppose that $(M,\Sigma,\Omega)$ satisfies the upper curvature bound condition $(\lambda,\kappa)$.
Then for  any $p>1$, $\beta>-(m-n)$ and any pair $(\Lambda,K)$ satisfying one of the following conditions
\begin{enumerate}[{\quad \rm (i) }]

\item \label{LKUPPER2} $(\lambda,\kappa)\leq (\Lambda,K)\leq (0,0);$

\smallskip

\item \label{LKUPPER} $(\lambda,\kappa)\leq (\Lambda,K)$, $\Lambda\leq 0$, $K\geq  0$  and $K^2\leq -\Lambda$,

\end{enumerate}
there holds
\begin{equation}\label{Hardy-Ineq-curvature-condition52}
\int_{\Omega}|\na u|^p\frac{\mathbf{c}_\Lambda(r)^{1-p}\mathbf{s}_\Lambda(r)^{p+\beta}}{(\mathbf{c}_\Lambda(r)-K \mathbf{s}_\Lambda(r))^n}\dvol_g\geq \left({\beta+m-n\over p}\right)^p\int_{\Omega} |u|^p\frac{\mathbf{c}_\Lambda(r)\mathbf{s}_\Lambda(r)^\beta}{(\mathbf{c}_\Lambda(r)-K \mathbf{s}_\Lambda(r))^n} \dvol_g,
\end{equation}
for any $u\in C^\infty_0(\Omega)$.
Moreover, \eqref{Hardy-Ineq-curvature-condition52} is sharp if either $\Sigma$ is compact with $\Sigma\subset \Omega$ or $\Omega$ is  bounded with $\partial\Omega=\Sigma,$ and there is no extremal in either case.
\end{theorem}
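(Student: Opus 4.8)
The plan is to obtain \eqref{Hardy-Ineq-curvature-condition52} as a direct consequence of Theorem \ref{Hardy-thm-2}, applied to the pair $(\phi,\psi)$ from \eqref{newphipsi-2-0},
\[
\phi(t)=(\mathbf{c}_\Lambda(t)-K\mathbf{s}_\Lambda(t))^{-\frac{n}{p-1}}\mathbf{s}_\Lambda(t)^{-\frac{m-n-1}{p-1}},\qquad \psi(t)=\mathbf{s}_\Lambda(t)^{\beta+m-n};
\]
thus the bulk of the work is to verify Assumption \ref{Basciasumptionfunction3}. The first -- and genuinely load-bearing -- step is to observe that under either hypothesis (i) or (ii) one has $\mathfrak{r}_\Lambda=+\infty$ (since $\Lambda\leq 0$) and $\mathfrak{t}_{\lambda,\kappa}=\mathfrak{t}_{\Lambda,K}=+\infty$, together with $\mathbf{c}_\Lambda(t)-K\mathbf{s}_\Lambda(t)>0$ for all $t>0$. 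This follows by inspecting \eqref{estimatecklambda} and \eqref{tlkdefin}: in case (i) the signs $\kappa\leq K\leq 0$, $\lambda\leq\Lambda\leq 0$ place all relevant pairs in the ``otherwise'' branch; in case (ii), $K\geq 0$ and $K^2\leq-\Lambda\leq-\lambda$ force $\kappa^2\leq -\lambda$, again excluding the only finite branch. Consequently $\phi,\psi$ are positive $C^\infty$ functions on $(0,+\infty)$ with $\psi$ strictly increasing (so $\psi'>0$) and $\psi(0^+)=0$ because $\beta+m-n>0$; setting $\psi(0)=0$ makes $\psi(r)$ continuous on $\Omega$, which gives Assumption \ref{Basciasumptionfunction3}(a),(b), while (f) is immediate since $\phi(r)\psi(r)\psi'(r)>0$ throughout $\Omega_\Sigma$.

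Next I would handle the sign condition \ref{Basciasumptionfunction3}(d), which is the technical core. Since $\psi'>0$ we have $\sgn(\psi'(r))=1$, and a short computation with \eqref{relations} gives $(p-1)\phi'/\phi=-G_{\Lambda,K}$ (with the convention $(\mathbf{c}_\Lambda-K\mathbf{s}_\Lambda)^{0}=1$ when $n=0$); hence, by \eqref{plaplacephi}, on $\mathcal{V}\mathscr{D}\Sigma\backslash\Sigma$ one has $\dii(\phi(r)^{p-1}\na r)=\phi(r)^{p-1}\big(\Delta r-G_{\Lambda,K}(r)\big)$. Because $\Sigma$ is totally umbilical, Remark \ref{explprincipcuvrature1} shows the eigenvalues of $\mathfrak{A}^{\mathbf n}$ are all $\leq\kappa$, so Theorem \ref{main-rem-2} -- applicable on the whole interval $(0,c_{\mathcal{V}}(\mathbf n))$ since $\mathfrak{t}_{\lambda,\kappa}=+\infty$ -- gives $\Delta r\geq G_{\lambda,\kappa}(r)$, and Proposition \ref{gestiamtegra} gives $G_{\lambda,\kappa}(r)\geq G_{\Lambda,K}(r)$ as $(\lambda,\kappa)\leq(\Lambda,K)$. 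Thus $\dii(\phi(r)^{p-1}\na r)\geq 0$ pointwise on $\mathcal{V}\mathscr{D}\Sigma\backslash\Sigma$, i.e.\ $\vol_g$-a.e.\ on $\Omega$; the passage to the weak sense is the standard argument (cf.\ \cite{DA,CLZ}): integrate $\langle\nabla u,\phi(r)^{p-1}\na r\rangle$ over $\Omega\backslash\T_\varepsilon$ for $u\in C^\infty_0(\Omega)$, $u\geq 0$, note the boundary term on $\{r=\varepsilon\}$ equals $-\phi(\varepsilon)^{p-1}\int_{\{r=\varepsilon\}}u\,dA\leq 0$, and let $\varepsilon\to0^+$. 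The remaining conditions \ref{Basciasumptionfunction3}(c),(e) are local integrability statements: near $\Sigma$ one uses \eqref{dv-g-Def} with $\det\mathcal{A}(t,\mathbf n)=t^{m-n-1}(1+o(1))$ from \eqref{smallsestimatedatA} and $\mathbf{s}_\Lambda(t)\sim t$, $\mathbf{c}_\Lambda(t)\sim 1$, which reduce each of the five integrands $\phi(r)^{p-1},\phi(r)^{p-1}\psi(r),\phi(r)^{p-1}|\psi'(r)|,\phi(r)^{p-1}|\psi'(r)|/|\nabla(\log\psi(r))|^p,\phi(r)^{p-1}\psi(r)^p$ to powers $t^\gamma$ with $\gamma\in\{0,\alpha,\alpha-1,p-1+\alpha,\alpha\}$, $\alpha:=\beta+m-n>0$, all $>-1$; away from $\Sigma$ everything is bounded on compact sets since $\mathbf{s}_\Lambda$ and $\mathbf{c}_\Lambda-K\mathbf{s}_\Lambda$ stay bounded away from $0$ there.

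With Assumption \ref{Basciasumptionfunction3} verified and $\Omega$ noncompact (as hypothesized), Theorem \ref{Hardy-thm-2} yields \eqref{Hardy-Inequ-sec-upper-bound} for this pair. Then, using $|\nabla(\log\psi(r))|=|\psi'(r)|/\psi(r)$ $\vol_g$-a.e.\ (from $|\nabla r|=1$), an elementary computation gives $\phi(r)^{p-1}|\psi'(r)|=\alpha\,\mathbf{c}_\Lambda(r)\mathbf{s}_\Lambda(r)^{\beta}(\mathbf{c}_\Lambda(r)-K\mathbf{s}_\Lambda(r))^{-n}$ and $\phi(r)^{p-1}|\psi'(r)|/|\nabla(\log\psi(r))|^p=\alpha^{1-p}\mathbf{c}_\Lambda(r)^{1-p}\mathbf{s}_\Lambda(r)^{p+\beta}(\mathbf{c}_\Lambda(r)-K\mathbf{s}_\Lambda(r))^{-n}$; multiplying \eqref{Hardy-Inequ-sec-upper-bound} by $\alpha^{p-1}$ and noting $\alpha^p/p^p=((\beta+m-n)/p)^p$ produces exactly \eqref{Hardy-Ineq-curvature-condition52}.

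For the last assertions I would split into the two geometric regimes forced by \eqref{natural-domain}. If $\Sigma$ is compact with $\Sigma\subset\Omega$ (so $0\leq n\leq m-2$), sharpness follows from Theorem \ref{sharp-u-general}: its condition (i) is the hypothesis, and its condition (ii) is given by Remark \ref{conditioncheck} since $\phi(t)^{p-1}|(\log\psi(t))'|\,t^{m-n-1}\sim\alpha\,t^{-1}$ as $t\to0^+$, whose integral over $(\varepsilon,\delta)$ diverges; non-existence of extremals is then Theorem \ref{nonexiglobal}. If $\Omega$ is bounded with $\partial\Omega=\Sigma$ (so $n=m-1$ and $\Omega_\Sigma=\Omega$), then $\overline\Omega$ is compact, hence $\mathfrak{c}_{\mathcal{V}}(\Sigma)>0$, and Corollary \ref{geomertricmeaningofsharpness2}(i) applies once \eqref{Ambrosio-condition-1} is checked exactly as in the local-integrability step above; non-existence of extremals is Theorem \ref{achieve-thm} applied on $\Omega_\Sigma=\Omega$. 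The one place where real care is required is the bookkeeping $\mathfrak{t}_{\lambda,\kappa}=\mathfrak{t}_{\Lambda,K}=+\infty$ of the first paragraph: it is precisely what makes $\phi,\psi$ live on all of $(0,+\infty)$ and lets Theorem \ref{main-rem-2} and Proposition \ref{gestiamtegra} be invoked over the entire range of $r$, and it is exactly the role of the curvature-sign restrictions (i)--(ii).
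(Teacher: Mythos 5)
Your proposal is correct and follows essentially the same route as the paper's own proof: the same pair $(\phi,\psi)$ from \eqref{newphipsi-2-0}, the same observation that conditions (i)--(ii) force $\mathfrak{r}_\Lambda=\mathfrak{t}_{\Lambda,K}=+\infty$, verification of Assumption \ref{Basciasumptionfunction3} via Theorem \ref{main-rem-2} together with Proposition \ref{gestiamtegra} and the asymptotics \eqref{smallsestimatedatA}, then Theorem \ref{Hardy-thm-2}, with sharpness split into the two cases exactly as in the paper (Theorem \ref{sharp-u-general}/Remark \ref{conditioncheck} when $\Sigma\subset\Omega$, and \eqref{Ambrosio-condition-1} with Corollary \ref{geomertricmeaningofsharpness2} when $\partial\Omega=\Sigma$) and non-existence of extremals from Theorem \ref{nonexiglobal}. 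The small deviations (invoking Theorem \ref{achieve-thm} instead of Theorem \ref{nonexiglobal} in the boundary case, and the exponent $p\alpha$ rather than $\alpha$ for $\phi^{p-1}\psi^p$ near $\Sigma$) are immaterial.
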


\begin{proof}
Let  $(\phi,\psi)$ be the pair of functions defined as
\begin{equation}\label{newphipsi-2}
\phi(t):=(\mathbf{c}_\Lambda(t)-K\mathbf{s}_\Lambda(t))^{-\frac{n}{p-1}}\mathbf{s}_\Lambda(t)^{-\frac{m-n-1}{p-1}}, \quad \psi(t):=\mathbf{s}_\Lambda(t)^{\beta+m-n}, \quad \forall\,t\in (0,\min\{\mathfrak{r}_{\Lambda},\mathfrak{t}_{\Lambda,K} \}).
\end{equation}
It follows from \eqref{estimatecklambda}, \eqref{footcomprison} and \eqref{tlkdefin} that
$ \mathfrak{r}_{\Lambda}=+\infty= \mathfrak{t}_{\Lambda,K}$.

We claim that $(\phi,\psi)$ satisfies Assumption \ref{Basciasumptionfunction3}.
Indeed, Assumption \ref{Basciasumptionfunction3}/\eqref{smoothofphi3}\eqref{smoothofpsi3}\eqref{negligbileset3} are obviously true.
Besides,  \eqref{Laplace-comparison-reverse2} together with Proposition \ref{gestiamtegra} yields
 \begin{align*}
\Delta r\geq G_{\lambda,\kappa}(r)\geq G_{\Lambda,K}(r)=-(p-1)\frac{\phi'(r)}{\phi (r)},\quad \forall\,x\in  \mathcal {V}\mathscr{D}\Sigma\backslash\Sigma,
\end{align*}
which combined with \eqref{plaplacephi} and $\psi'>0$ furnishes Assumption \ref{Basciasumptionfunction3}/\eqref{nansigcontroll3}.
Moreover,  if $\alpha:=\beta+m-n>0$, a direct calculation yields
\begin{align*}
\phi(t)^{p-1}&\psi(t)^s\sim t^{-(m-n-1)+s\alpha},\quad \phi(t)^{p-1}|\psi'(t)|=\alpha(\mathbf{c}_\Lambda(t)-K\mathbf{s}_\Lambda(t))^{-n}\mathbf{s}_\Lambda(t)^\beta\mathbf{c}_\Lambda(t)\sim \alpha t^\beta,\\
&\frac{\phi(t)^{p-1}|\psi'(t)|}{|(\log \psi(t))'|^p}=\alpha^{1-p}(\mathbf{c}_\Lambda(t)-K\mathbf{s}_\Lambda(t))^{-n}\mathbf{s}_\Lambda(t)^{p+\beta}\mathbf{c}_\Lambda(t)^{1-p}\sim \alpha^{1-p}t^{p+\beta},
 \end{align*}
 as $t\to 0^+$. Thus, \eqref{smallsestimatedatA}$_1$ combined with $\mathfrak{r}_{\Lambda}=\mathfrak{t}_{\Lambda,K}=+\infty$ furnishes Assumption \ref{Basciasumptionfunction3}/\eqref{derivativeintegr}\eqref{integrable3}. Therefore, the claim is true and \eqref{Hardy-Ineq-curvature-condition52} follows by Theorem  \ref{Hardy-thm-2} directly.
 We now discuss the sharpness.

 {\color{blue}(a)} Suppose that $\Sigma$ is compact with  $\Sigma\subset \Omega$.
 Thus, owing to Theorem \ref{sharp-u-general} and Remark \ref{conditioncheck}, it suffices to check \eqref{infiniteintegral}. By choosing a small $\delta_\Lambda\in (0,c_{\mathcal{V}}(\Sigma))$ with
 \[
 \min_{t\in [0,\delta_\Lambda]}\left(\frac{\mathbf{c}_\Lambda(t)}{\mathbf{c}_\Lambda(t)-K \mathbf{s}_\Lambda(t)}\right)^n\geq \frac12,
 \]
 we obtain that for any  $\delta\in (0,\delta_\Lambda)$,
 \begin{align*}\label{infiniteintegral-2}
&\lim_{\e\rightarrow0^+}\int_\e^{\delta}\phi(t)^{p-1}|(\log \psi(t))'| \mathbf{c}_\Lambda(t)^n\mathbf{s}_\Lambda(t)^{m-n-1} {\dd}t\\
=&\alpha\lim_{\e\rightarrow0^+}\int_\e^{\delta}(\mathbf{c}_\Lambda(t)-K\mathbf{s}_\Lambda(t))^{-n}\mathbf{s}_\Lambda(t)^{-1}\mathbf{c}_\Lambda(t)^{n+1} {\dd}t\nonumber\\
\geq&\frac{\alpha}2\lim_{\e\rightarrow0^+}\int_\e^{\delta}\mathbf{s}_\Lambda(t)^{-1}\mathbf{c}_\Lambda(t) {\dd}t=+\infty,
\end{align*}
i.e., \eqref{infiniteintegral} holds.

 {\color{blue}(b)} Suppose that $\Omega$ is a bounded domain with $\partial\Omega=\Sigma$, in which case $n=m-1$, see \eqref{natural-domain}. Let $D:=\sup_{x\in \Omega}r(x)<+\infty$. The completeness of $M$ implies the compactness of $\Sigma$ and hence, $c_\mathcal {V}(\Sigma)>0$. Owing to \eqref{smallsestimatedatA}$_1$, there exists $s_0\in (0,\min\{D,c_\mathcal {V}(\Sigma)\})$ such that
\[
 \det \mathcal {A}(t,\mathbf{n})\leq 2 \mathbf{s}_\Lambda(t)^{m-n-1}=2, \text{ for }(t,\mathbf{n})\in[0,s_0]\times \mathcal {V}S\Sigma,
\]
which together with \eqref{dv-g-Def} yields for every $\varepsilon>0$ that
\begin{align}
&\int_{{\T_{s_0}}\cap\,\Omega}\psi(r)^{1+\varepsilon}\phi(r)^{p-1}|\psi'(r)|\dvol_g\leq 2\alpha \vol_{i^*g}(\Sigma) \int^{s_0}_0 (\mathbf{c}_\Lambda(t)-K\mathbf{s}_\Lambda(t))^{-n} \mathbf{s}_\Lambda(t)^{(2+\varepsilon)\alpha-1}\mathbf{c}_\Lambda(t){\dd}t\notag\\
\leq & 2\alpha \vol_{i^*g}(\Sigma)\max_{t\in [0,s_0]} (\mathbf{c}_\Lambda(t)-K\mathbf{s}_\Lambda(t))^{-n}\int^{s_0}_0 \mathbf{s}_\Lambda(t)^{(2+\varepsilon)\alpha-1}\mathbf{c}_\Lambda(t){\dd}t<+\infty.\label{psi1+eisfinite}
\end{align}
On the other hand, we have
\[
\max_{t\in [s_0,D] }\psi(t)^{-(1+\varepsilon)}\phi(t)^{p-1}|\psi'(t)|\leq  \alpha \mathbf{c}_\Lambda(D) \mathbf{s}_\Lambda(s_0)^{-\alpha\varepsilon-(m-n) }\max_{t\in [s_0,D]} (\mathbf{c}_\Lambda(t)-K\mathbf{s}_\Lambda(t))^{-n}   <+\infty,
\]
which together with the boundedness of $\Omega$ implies $\psi(r)^{-(1+\varepsilon)}\phi(r)^{p-1}|\psi'(r)|\in L^1(\Omega\backslash{\T_{s_0}})$. Therefore, we have the properties from  \eqref{Ambrosio-condition-1}, and the sharpness follows by Corollary \ref{geomertricmeaningofsharpness2}.

 The non-existence of extremal function is a direct consequence of Theorem \ref{nonexiglobal}.
\end{proof}

\begin{remark}
 Here we explain Condition \eqref{LKUPPER} from Theorem \ref{distance-weight-hardy-genera255}.
Since $\Omega$ can be unbounded, we need $\mathfrak{r}_{\Lambda}=\mathfrak{c}_{\Lambda,K}=+\infty$, in order to guarantee the well-definiteness of $\phi(r),\psi(r)$ on $\Omega_\Sigma$ constructed in \eqref{newphipsi-2}. Note that by \eqref{r-0-Def} and \eqref{footcomprison} we have  $\mathfrak{r}_\Lambda\leq \mathfrak{r}_\lambda$ and  $\mathfrak{c}_{\Lambda,K}\leq \mathfrak{c}_{\lambda,\kappa}$ whenever $(\lambda,\kappa)\leq (\Lambda, K)$, which means $\mathfrak{r}_{\lambda}=\mathfrak{c}_{\lambda,\kappa}=+\infty$.
Due to Condition \eqref{LKUPPER2} and Remark \ref{explprincipcuvrature1}, it is natural to assume $\kappa \geq 0$, which together with \eqref{estimatecklambda} implies $\lambda\leq 0$ and $\kappa^2\leq -\lambda$. Since $(\lambda,\kappa)\leq (\Lambda, K)$, the same reason yields Condition \eqref{LKUPPER}.
\end{remark}

\begin{example}\label{hypspexampe}
Let $\MM_\lambda=\mathbb{H}^m$ (i.e., $\lambda=-1$) and let $\Sigma$ be a single point (i.e., $n=0$).
Let $\Omega=\mathbb{H}^m$, which is noncompact. Obviously, $(M,\Sigma,\Omega)$ satisfies the upper curvature bound condition $(\lambda,\kappa)=(-1,0)$.

Given $p>1$ and $\beta>-m$, by choosing $K=0$ and $\Lambda\in \{0,-1\}$, Theorem \ref{distance-weight-hardy-genera255} provides
\begin{align*}
\int_{\mathbb{H}^m}|\nabla u|^p r^{p+\beta}\dvol_g&\geq \left(  \frac{\beta+m}{p} \right)^p \int_{\mathbb{H}^m}|u|^p r^\beta \dvol_g,\\
\int_{\mathbb{H}^m}|\na u|^p\cosh(r)^{1-p}\sinh(r)^{p+\beta}\dvol_g&\geq \Big({\beta+m\over p}\Big)^p\int_{\mathbb{H}^m} |u|^p\cosh(r)\sinh(r)^\beta \dvol_g,
\end{align*}
for any $u\in C^\infty_0(\mathbb{H}^m)$.
In particular, both of them are sharp and there are no extremal functions in $C^\infty_0(\mathbb{H}^m)$.
\end{example}

\begin{example}Given $p>1$ and $\beta>-1$, let $M:=\mathbb{R}^m$ and $\Sigma :=\mathbb{S}^{m-1}$, i.e., $n=m-1$. Let $\Omega:=\mathbb{R}^m\backslash\overline{\mathbb{B}^m}$. In this case, $\sec_M=0$ and $\tr \mathfrak{A}^\mathbf{n}=-(m-1)$ for any inward normal vector $\mathbf{n}$  w.r.t. $\Omega$.
By choosing
$\Lambda=\lambda=0$ and $K=\kappa=-1$, Theorem \ref{distance-weight-hardy-genera255} implies the following Hardy inequality
\[
\int_{\mathbb{R}^m\backslash\overline{\mathbb{B}^m}}|\nabla u|^p(x) \frac{|x|^{p+\beta}}{(1+|x|)^{m-1}}{\dd}x\geq \left( \frac{\beta+1}{p} \right)^p\int_{\mathbb{R}^m\backslash\overline{\mathbb{B}^m}}|u|^p(x) \frac{|x|^{\beta}}{(1+|x|)^{m-1}}{\dd}x,\quad \forall\,u\in C^\infty_0(\mathbb{R}^m\backslash\overline{\mathbb{B}^m}).
\]
\end{example}

\begin{remark}\label{barbatis-remark}
	Theorem \ref{distance-weight-hardy-genera255} is valid for all kinds of noncompact domains; however, if we restrict our attention to domains with
	\begin{equation}\label{r finite}
			\sup_{x\in \Omega} r(x)<+\infty,
	\end{equation}
	 an improved Hardy inequality can be established in the same flavor as in Theorem \ref{distance-weight-hardy-genera255}. Indeed, inspired by  Barbatis et al. \cite[Theorem A]{BFT} (see also Chen et al. \cite{CLZ}), if $(M,g)$,   $i:\Sigma \hookrightarrow M$ and  $\Omega$ are the same objects as in Theorem \ref{distance-weight-hardy-genera255} verifying \eqref{r finite}, then for any  $p>1$ and $\beta>-(m-n)$,
	there exists a constant $\mathcal{T}:=\mathcal{T}(p,\beta,m-n)>1$ such that for any $D>0$ with $\mathbf{s}_\lambda(D)\geq \mathcal {T}\, \mathbf{s}_\lambda( \sup_{x\in \Omega}r(x) ) $,
	\begin{align}\int_{\Omega}|\na u|^p\frac{\mathbf{c}_\lambda(r)^{1-p}\mathbf{s}_\lambda(r)^{p+\beta}}{(\mathbf{c}_\lambda(r)-\kappa\mathbf{s}_\lambda(r))^n }\dvol_g \geq& |\delta|^p\int_{\Omega} |u|^p\frac{\mathbf{c}_\lambda(r)\mathbf{s}_\lambda(r)^\beta}{(\mathbf{c}_\lambda(r)-\kappa\mathbf{s}_\lambda(r))^n} \dvol_g\nonumber\\
		&+{p-1\over 2p}|\delta|^{p-2}\int_{\Omega} |u|^p\frac{\mathbf{c}_\lambda(r)\mathbf{s}_\lambda(r)^\beta}{(\mathbf{c}_\lambda(r)-\kappa\mathbf{s}_\lambda(r))^n} \log^{-2}\left({\mathbf{s}_\lambda(D)\over \mathbf{s}_\lambda(r)}\right) \dvol_g,  \label{newsharpss344}
	\end{align}
	for any $u\in C^\infty_0(\Omega)$, where $\delta:=(m-n+\beta)/p$. Furthermore,
	\eqref{newsharpss344} is sharp in the following sense
	\begin{align*}
		|\delta|^p&=\inf_{u\in C^\infty_0(\Omega)\backslash\{0\}} \frac{\int_{\Omega}|\na u|^p\frac{\mathbf{c}_\lambda(r)^{1-p}\mathbf{s}_\lambda(r)^{p+\beta}}{(\mathbf{c}_\lambda(r)-\kappa\mathbf{s}_\lambda(r))^n }\dvol_g}{\int_{\Omega} |u|^p\frac{\mathbf{c}_\lambda(r)\mathbf{s}_\lambda(r)^\beta}{(\mathbf{c}_\lambda(r)-\kappa\mathbf{s}_\lambda(r))^n} \dvol_g}, \\
		{p-1\over 2p}|\delta|^{p-2}&=\inf_{u\in C^\infty_0(\Omega)\backslash\{0\}}\frac{\int_{\Omega}|\na u|^p\frac{\mathbf{c}_\lambda(r)^{1-p}\mathbf{s}_\lambda(r)^{p+\beta}}{(\mathbf{c}_\lambda(r)-\kappa\mathbf{s}_\lambda(r))^n }\dvol_g-|\delta|^p\int_{\Omega} |u|^p\frac{\mathbf{c}_\lambda(r)\mathbf{s}_\lambda(r)^\beta}{(\mathbf{c}_\lambda(r)-\kappa\mathbf{s}_\lambda(r))^n} \dvol_g}{\int_{\Omega} |u|^p\frac{\mathbf{c}_\lambda(r)\mathbf{s}_\lambda(r)^\beta}{(\mathbf{c}_\lambda(r)-\kappa\mathbf{s}_\lambda(r))^n} \log^{-2}\Big({\mathbf{s}_\lambda(D)\over \mathbf{s}_\lambda(r)}\Big) \dvol_g}.
	\end{align*}
The proof of \eqref{newsharpss344} and its sharpness are quite technical, following the Euclidean arguments from \cite{BFT} with suitable adaptation to our curved setting as in Theorem \ref{distance-weight-hardy-genera255}; we leave the details to the interested reader.
\end{remark}


\subsection{Hardy inequalities involving logarithmic double-curvature weights}\label{section-log-1} 

In this subsection, we will establish  a Hardy inequality whose weight involves logarithmic terms.
First, we need the following proposition.
\begin{proposition}\label{basicloganalysis}
Given $\Lambda, s_1,s_2\in \mathbb{R}$, $D\in (0,\mathfrak{r}_\Lambda]\cap \mathbb{R}$, $l_1\in(0,D)$ and $l_2\in (l_1,D]$, let
\begin{align*}
H_1(s_1,s_2):=\int^{l_1}_0 \left[\log\left({\mathbf{s}_\Lambda(D)\over \mathbf{s}_\Lambda(\tau)}\right)\right]^{s_1} \mathbf{s}_\Lambda(\tau)^{s_2}\mathbf{c}_\Lambda(\tau){\dd}\tau,\quad H_2(s_1,s_2):=\int^{l_2}_{l_1} \left[\log\left({\mathbf{s}_\Lambda(D)\over \mathbf{s}_\Lambda(\tau)}\right)\right]^{s_1} \mathbf{s}_\Lambda(\tau)^{s_2}\mathbf{c}_\Lambda(\tau){\dd}\tau.
\end{align*}
Then we have
\begin{itemize}
\item $H_1$ is well-defined if either $s_1\in \mathbb{R}$, $s_2>-1$ or $s_1<-1$, $s_2=-1;$

\smallskip

\item $H_2$ is well-defined if either $s_1,s_2\in \mathbb{R}$, $l_2<D$ or $s_1>-1$, $s_2\in \mathbb{R}$, $l_2=D$.

\end{itemize}
\end{proposition}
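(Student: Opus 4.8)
The plan is to analyze the convergence of $H_1$ and $H_2$ purely through the near-singular behaviour of the integrands, since on any interval bounded away from the endpoints $0$ and $D$ the integrands are continuous (hence integrable) by the properties of $\mathbf{s}_\Lambda$ and $\mathbf{c}_\Lambda$ recorded in \eqref{relations} and the positivity of $\mathbf{s}_\Lambda,\mathbf{c}_\Lambda$ on $(0,\mathfrak{r}_\Lambda)$. So the whole statement reduces to two local questions: integrability of $H_1$'s integrand as $\tau\to 0^+$, and integrability of $H_2$'s integrand as $\tau\to D^-$ (the latter only matters when $l_2=D$ and $D<\mathfrak{r}_\Lambda$ is not forced; if $l_2<D$ the integrand is bounded and there is nothing to prove).

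For $H_1$, I would first note the asymptotics $\mathbf{s}_\Lambda(\tau)\sim \tau$ and $\mathbf{c}_\Lambda(\tau)\to 1$ as $\tau\to 0^+$, together with $\log(\mathbf{s}_\Lambda(D)/\mathbf{s}_\Lambda(\tau))\sim -\log\tau = \log(1/\tau)\to+\infty$. Hence near $0$ the integrand behaves like $(\log(1/\tau))^{s_1}\,\tau^{s_2}$. The claim then follows from the elementary one-variable fact: $\int_0^{l_1}(\log(1/\tau))^{s_1}\tau^{s_2}\,{\dd}\tau$ converges iff ($s_2>-1$, any $s_1$) or ($s_2=-1$ and $s_1<-1$), and diverges otherwise. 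To make this rigorous I would squeeze the integrand between constant multiples of $(\log(1/\tau))^{s_1}\tau^{s_2}$ on a small interval $(0,\delta)$ (using the stated asymptotic equivalences, which give two-sided bounds for $\tau$ small), and then invoke the elementary fact; the substitution $u=\log(1/\tau)$ turns $\int_0^{\delta}(\log(1/\tau))^{s_1}\tau^{s_2}{\dd}\tau$ into $\int_{\log(1/\delta)}^{\infty}u^{s_1}e^{-(s_2+1)u}{\dd}u$, which converges precisely when $s_2+1>0$, or when $s_2+1=0$ and $s_1<-1$ (where it becomes $\int^\infty u^{s_1}{\dd}u$).

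For $H_2$ with $l_2=D$, the only possible singularity is at $\tau=D$, and it is present only when $D=\mathfrak{r}_\Lambda$ with $\Lambda>0$, so that $\mathbf{s}_\Lambda(\tau)$ stays bounded away from $0$ while $\mathbf{c}_\Lambda(\tau)\to 0$; in that regime $\log(\mathbf{s}_\Lambda(D)/\mathbf{s}_\Lambda(\tau))\to 0^+$. Writing $\tau = D-\sigma$, one has $\mathbf{s}_\Lambda(D)-\mathbf{s}_\Lambda(\tau)\sim c\,\sigma^2$ and hence $\log(\mathbf{s}_\Lambda(D)/\mathbf{s}_\Lambda(\tau)) = -\log(1-(\mathbf{s}_\Lambda(D)-\mathbf{s}_\Lambda(\tau))/\mathbf{s}_\Lambda(D))\sim c'\sigma^2$, while $\mathbf{c}_\Lambda(\tau)\sim c''\sigma$ and $\mathbf{s}_\Lambda(\tau)^{s_2}$ stays bounded; so the integrand behaves like $\sigma^{2s_1+1}$ near $\sigma=0$, which is integrable iff $2s_1+1>-1$, i.e. $s_1>-1$. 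When $D<\mathfrak{r}_\Lambda$ (so $D<\mathfrak{r}_\Lambda$ or $\Lambda\le 0$), $\mathbf{c}_\Lambda(D)>0$ and $\log(\mathbf{s}_\Lambda(D)/\mathbf{s}_\Lambda(\tau))$ stays bounded away from $0$ and $\infty$ on $[l_1,D]$, so the integrand is continuous and bounded and $H_2$ is finite with no restriction on $s_1,s_2$; this recovers the ``$l_2<D$'' and the ``$l_2=D$'' (with the understanding that $l_2=D<\mathfrak{r}_\Lambda$ causes no trouble, while $l_2=D=\mathfrak{r}_\Lambda$ needs $s_1>-1$) branches of the claim. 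I do not anticipate any serious obstacle; the only mild care needed is the endpoint expansion $\log(\mathbf{s}_\Lambda(D)/\mathbf{s}_\Lambda(\tau))\sim c'(D-\tau)^2$ near $\tau=D=\mathfrak{r}_\Lambda$, which follows by Taylor-expanding $\mathbf{s}_\Lambda$ at its maximum using $\mathbf{s}_\Lambda'=\mathbf{c}_\Lambda$ and $\mathbf{c}_\Lambda(\mathfrak{r}_\Lambda)=0$ from \eqref{relations}.
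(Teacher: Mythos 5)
Your analysis of $H_1$ and of the case $l_2=D=\mathfrak{r}_\Lambda$ (with $\Lambda>0$) is correct, but the treatment of $H_2$ when $l_2=D<\mathfrak{r}_\Lambda$ contains a genuine error. You assert that in this case $\log\big(\mathbf{s}_\Lambda(D)/\mathbf{s}_\Lambda(\tau)\big)$ ``stays bounded away from $0$'' on $[l_1,D]$, so that $H_2$ is finite with no restriction on $s_1,s_2$. This is false: the logarithm vanishes at $\tau=D$ no matter how $D$ compares with $\mathfrak{r}_\Lambda$. Since $\mathbf{s}_\Lambda'(D)=\mathbf{c}_\Lambda(D)>0$ when $D<\mathfrak{r}_\Lambda$, one has $\log\big(\mathbf{s}_\Lambda(D)/\mathbf{s}_\Lambda(\tau)\big)\sim \frac{\mathbf{c}_\Lambda(D)}{\mathbf{s}_\Lambda(D)}(D-\tau)$ as $\tau\to D^-$, while the remaining factors tend to positive constants, so the integrand behaves like $(D-\tau)^{s_1}$; for instance with $\Lambda=0$, $D=l_2=1$, $s_1=-2$, $s_2=0$ the integral $\int_{l_1}^{1}[\log(1/\tau)]^{-2}\,\mathrm{d}\tau$ diverges. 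Thus the hypothesis $s_1>-1$ is genuinely needed (and suffices, since $(D-\tau)^{s_1}$ is then integrable) in the branch $l_2=D$ even when $\mathbf{c}_\Lambda(D)>0$; your argument proves a stronger statement that is false, and the endpoint analysis at $\tau=D$ in this regime is simply missing. This subcase matters in the paper: it is exactly the one invoked in the proofs of the logarithmic Hardy inequalities with $\Lambda\le 0$, where the exponent on the logarithm is checked to exceed $-1$ precisely because of this endpoint singularity.

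For comparison, the paper's proof substitutes $\sigma=\mathbf{s}_\Lambda(\tau)$, which (by $\mathbf{s}_\Lambda'=\mathbf{c}_\Lambda$) absorbs the factor $\mathbf{c}_\Lambda(\tau)\,\mathrm{d}\tau$ exactly and reduces both integrals to the model integrals $\int [\log(\mathbf{s}_\Lambda(D)/\sigma)]^{s_1}\sigma^{s_2}\,\mathrm{d}\sigma$ near $\sigma=0$ and $\sigma=\mathbf{s}_\Lambda(D)$, whose convergence is quoted from Chen--Leung--Zhao. This has the advantage that the degeneration $\mathbf{c}_\Lambda\to 0$ at $D=\mathfrak{r}_\Lambda$ disappears into the Jacobian, so the two endpoint regimes you distinguish are treated uniformly, and the condition $s_1>-1$ at $\sigma=\mathbf{s}_\Lambda(D)$ appears transparently from $\log(\mathbf{s}_\Lambda(D)/\sigma)\sim (\mathbf{s}_\Lambda(D)-\sigma)/\mathbf{s}_\Lambda(D)$. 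Your asymptotic route is salvageable, but you must redo the $\tau\to D^-$ analysis for $\mathbf{c}_\Lambda(D)>0$ along the lines above instead of claiming boundedness.
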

 \begin{proof} Since $\mathbf{s}_\Lambda$ is strictly increasing on $(0,\mathfrak{r}_\Lambda]\cap \mathbb{R}$, by changing variables $\sigma:=\mathbf{s}_\Lambda(\tau)$  we obtain
 \[
 H_1(s_1,s_2)=\int^{\mathbf{s}_\Lambda(l_1)}_{0}\left[\log\left({\mathbf{s}_\Lambda(D)\over \sigma}\right)\right]^{s_1} \sigma^{s_2}{\dd}\sigma,\quad H_2(s_1,s_2)=\int^{\mathbf{s}_\Lambda(l_2)}_{\mathbf{s}_\Lambda(l_1)}\left[\log\left({\mathbf{s}_\Lambda(D)\over \sigma}\right)\right]^{s_1} \sigma^{s_2}{\dd}\sigma,
 \]
which together with   Chen et al. \cite[Lemma 3.12]{CLZ} concludes the proof.
 \end{proof}

\begin{remark}
Note that $\mathfrak{r}_\Lambda<+\infty$ if $\Lambda>0$, in which case $D$ can be chosen as $\mathfrak{r}_\Lambda$. However, $\mathfrak{r}_\Lambda=+\infty$ if $\Lambda\leq 0$, in which case $D$ cannot be $\mathfrak{r}_\Lambda$. Therefore, we use $D\in (0,\mathfrak{r}_\Lambda]\cap \mathbb{R}$ to handle both cases.
\end{remark}

Given $\Lambda,K,s_1,s_2\in\mathbb{R}$ and $D\in (0,\min\{\mathfrak{r}_\Lambda,\mathfrak{t}_{\Lambda,K}\}]\cap \mathbb{R}$, let
\begin{equation}
\phi(t):=(\mathbf{c}_\Lambda(t)-K\mathbf{s}_\Lambda(t))^{-{n\over p-1}}\mathbf{s}_\Lambda(t)^{-{m-n-1\over p-1}},\quad \psi(t):=\psi(t,s_1,s_2):=\int^t_0 \left[\log\left({\mathbf{s}_\Lambda(D)\over \mathbf{s}_\Lambda(\tau)}\right)\right]^{s_1} \mathbf{s}_\Lambda(\tau)^{s_2}\mathbf{c}_\Lambda(\tau){\dd}\tau.\label{def-psi-log-global}
\end{equation}

Basic properties of the function $\psi$ are collected in the following lemma.


\begin{lemma}\label{computation-lem-global}
	The function $\psi$  in \eqref{def-psi-log-global} is well-defined and strictly increasing in $[0,D)$ if $s_1<-1$ and $s_2\geq -1$.
	In addition, if  $s>0$ is such that $ s_1+1+s\leq0$,
then $\psi$ defined by \eqref{def-psi-log-global} satisfies
\begin{equation}
\psi(t)\leq s^{-1}\left[\log\left({\mathbf{s}_\Lambda(D)\over \mathbf{s}_\Lambda(t)}\right)\right]^{s_1+1} \mathbf{s}_\Lambda(t)^{s_2+1}=s^{-1}\left[\log\left({\mathbf{s}_\Lambda(D)\over \mathbf{s}_\Lambda(t)}\right)\right]{\mathbf{s}_\Lambda(t)\over \mathbf{c}_\Lambda(t)}\psi'(t),\ \ t\in[0,D), \label{inequality-lemm-global}
\end{equation}
with equality if $s=-(s_1+1)$ and $s_2=-1$.
\end{lemma}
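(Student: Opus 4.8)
The plan is to establish the two assertions of the lemma separately. \emph{Well-definedness and strict monotonicity.} For $t\in[0,D)$ the integral in \eqref{def-psi-log-global} is exactly $H_1(s_1,s_2)$ of Proposition \ref{basicloganalysis} with $l_1=t$, and the hypotheses $s_1<-1$, $s_2\ge-1$ fall under one of its two admissible cases (the subcase $s_2>-1$ under ``$s_1\in\mathbb R$, $s_2>-1$'', the subcase $s_2=-1$ under ``$s_1<-1$, $s_2=-1$''); hence $\psi(t)$ is finite for every $t\in[0,D)$. For the strict monotonicity I would simply note that the integrand in \eqref{def-psi-log-global} is strictly positive on $(0,D)$: indeed $\mathbf{s}_\Lambda$ is strictly increasing on $[0,\mathfrak{r}_\Lambda]$ with $\mathbf{s}_\Lambda(0)=0$, so $0<\mathbf{s}_\Lambda(\tau)<\mathbf{s}_\Lambda(D)$ for $0<\tau<D\le\mathfrak{r}_\Lambda$, which makes $\mathbf{s}_\Lambda(\tau)^{s_2}>0$ and $\log(\mathbf{s}_\Lambda(D)/\mathbf{s}_\Lambda(\tau))>0$, while $\mathbf{c}_\Lambda>0$ on $[0,\mathfrak{r}_\Lambda)$ by \eqref{s-lambda-Def}. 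Consequently $\psi(t_2)-\psi(t_1)>0$ for $0\le t_1<t_2<D$.

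\emph{The inequality \eqref{inequality-lemm-global}.} The idea is to view both sides as functions of $b:=\mathbf{s}_\Lambda(t)$ and to compare them through a first-order differential inequality. Putting $a:=\mathbf{s}_\Lambda(D)$ and substituting $\sigma=\mathbf{s}_\Lambda(\tau)$ --- legitimate since, by the first step, $\mathbf{s}_\Lambda$ is a $C^1$-diffeomorphism of $[0,D)$ onto $[0,a)$ with $\mathbf{s}_\Lambda'=\mathbf{c}_\Lambda$, see \eqref{relations} --- one obtains $\psi(t)=\Psi(b)$ with $\Psi(b):=\int_0^b[\log(a/\sigma)]^{s_1}\sigma^{s_2}\,{\dd}\sigma$, so $\Psi'(b)=[\log(a/b)]^{s_1}b^{s_2}$. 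Writing $F(b):=s^{-1}[\log(a/b)]^{s_1+1}b^{s_2+1}$ for the proposed right-hand side, a direct computation gives
\[
F'(b)-\Psi'(b)=\bigl[\log(a/b)\bigr]^{s_1}b^{s_2}\left(\frac{-(s_1+1+s)}{s}+\frac{(s_2+1)\log(a/b)}{s}\right),\qquad b\in(0,a),
\]
and the parenthesis is $\ge0$ precisely because $s>0$, $s_1+1+s\le0$, $s_2+1\ge0$ and $\log(a/b)>0$ on $(0,a)$. Since $s_1+1<0$ forces $F(b)\to0$ as $b\to0^+$ while $\Psi(0)=0$, integrating $F'-\Psi'\ge0$ from $0$ to $b$ yields $\Psi(b)\le F(b)$ on $[0,a)$; setting $b=\mathbf{s}_\Lambda(t)$ gives the first inequality in \eqref{inequality-lemm-global}, and the second (algebraic) identity follows by substituting $\psi'(t)=[\log(\mathbf{s}_\Lambda(D)/\mathbf{s}_\Lambda(t))]^{s_1}\mathbf{s}_\Lambda(t)^{s_2}\mathbf{c}_\Lambda(t)$. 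In the borderline case $s=-(s_1+1)$, $s_2=-1$ both summands in the parenthesis vanish, so $F'\equiv\Psi'$ and equality holds throughout $[0,D)$.

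\emph{Main obstacle.} There is no genuine difficulty here --- the computations are elementary. The points that require a little care are: verifying $F(0^+)=0$ in the borderline case $s_2=-1$, where the factor $b^{s_2+1}\equiv1$ does not vanish and it is the logarithm raised to the negative power $s_1+1$ that forces the limit; checking that the change of variables $\sigma=\mathbf{s}_\Lambda(\tau)$ is justified on all of $[0,D)$, which is the reason for recording the monotonicity and positivity of $\mathbf{s}_\Lambda,\mathbf{c}_\Lambda$ beforehand; and tracking that each of the sign hypotheses $s>0$, $s_1+1+s\le0$, $s_2\ge-1$ enters exactly at the sign analysis of $F'-\Psi'$.
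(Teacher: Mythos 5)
Your proposal is correct and follows essentially the same route as the paper: the paper sets $F(t):=\psi(t)-s^{-1}\bigl[\log(\mathbf{s}_\Lambda(D)/\mathbf{s}_\Lambda(t))\bigr]^{s_1+1}\mathbf{s}_\Lambda(t)^{s_2+1}$, notes $F(0^+)=0$, and shows $F'\leq 0$ using exactly the sign conditions $s>0$, $s_1+1+s\leq 0$, $s_2+1\geq 0$, which is your comparison of $\Psi$ and $F$ up to the cosmetic substitution $b=\mathbf{s}_\Lambda(t)$. Your extra care with well-definedness via Proposition \ref{basicloganalysis}, the strict monotonicity, and the boundary limit in the borderline case $s_2=-1$ only makes explicit what the paper leaves implicit.
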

\begin{proof} Set
\[
F(t):=\psi(t)-s^{-1}\left[\log\left({\mathbf{s}_\Lambda(D)\over \mathbf{s}_\Lambda(t)}\right)\right]^{s_1+1} \mathbf{s}_\Lambda(t)^{s_2+1},\quad \forall\, t\in(0,D).
\]
Clearly, $F(0):=\lim_{t\rightarrow 0^+}F(t)=0$.
Since  $\mathbf{s}_\Lambda(t)\leq \mathbf{s}_\Lambda(D)$ for $t\in [0,D)$, we have
\[
F'(t)=s^{-1}\left[\log\left({\mathbf{s}_\Lambda(D)\over \mathbf{s}_\Lambda(t)}\right)\right]^{s_1} \mathbf{s}_\Lambda(t)^{s_2}\mathbf{c}_\Lambda(t)\left[s_1+1+s-(s_2+1)\log\left({\mathbf{s}_\Lambda(D)\over \mathbf{s}_\Lambda(t)}\right)\right]\leq0,
\]
which implies $F(t)\leq F(0)=0$ for any $t\in [0,D)$, i.e., (\ref{inequality-lemm-global}) follows. In particular, if  $s=-(s_1+1)$ and $s_2=-1$, then $F'(t)=0$ and hence, $F(t)=F(0)=0$.
\end{proof}

\begin{theorem}\label{log-weight-thm-global} Let $(M,g)$ be an $m$-dimensional complete Riemannian manifold, $i:\Sigma \hookrightarrow M$ be an $n$-dimensional closed totally umbilical submanifold and $\Omega$ be a noncompact domain  satisfying \eqref{natural-domain}.
Suppose that
\begin{itemize}
\item $(M,\Sigma,\Omega)$ satisfies the upper curvature bound condition $(\lambda,\kappa);$

\item there is a finite constant $D\in   (0,\min\{\mathfrak{t}_{\lambda,\kappa},\mathfrak{r}_\lambda\}]\cap \mathbb{R}$  such that $\Omega\subset \T_D$.
\end{itemize}
 Given  $\alpha,\beta\in \mathbb{R}$ and $s>0$ satisfying  $\alpha\geq p-(m-n)$ and $\beta+1+s\leq 0$,
then for any $(\Lambda,K)\geq (\lambda,\kappa)$ with $\min\{\mathfrak{r}_\Lambda,\mathfrak{t}_{\Lambda,K}\}\geq  D$,  we have
\begin{align}
& \int_\Omega|\na u|^p\left[\log\left({\mathbf{s}_\Lambda(D)\over \mathbf{s}_\Lambda(r)}\right)\right]^{p+\beta}\frac{\mathbf{c}_\Lambda(r)^{1-p}\mathbf{s}_\Lambda(r)^{\alpha}}{(\mathbf{c}_\Lambda(r)-K\mathbf{s}_\Lambda(r))^{n}} \dvol_g\nonumber\\
\geq& \left({s\over p}\right)^p\int_\Omega |u|^p\left[\log\left({\mathbf{s}_\Lambda(D)\over \mathbf{s}_\Lambda(r)}\right)\right]^\beta\frac{\mathbf{c}_\Lambda(r)\mathbf{s}_\Lambda(r)^{\alpha-p}}{(\mathbf{c}_\Lambda(r)-K\mathbf{s}_\Lambda(r))^{n}} \dvol_g,\quad \forall\, u\in C^\infty_0(\Omega).\label{hardy-log-weight-global}
\end{align}
Moreover,  if either $\Sigma\subset\Omega$ or $\Sigma=\partial\Omega$ with
$D<\min\{\mathfrak{r}_\Lambda,\mathfrak{t}_{\Lambda,K}\}$, and
\begin{equation}
	\text{$\Sigma$ is compact},\quad \alpha=p-(m-n),\quad s=|\beta+1|,\label{sharplogcondi}
\end{equation}
then \eqref{hardy-log-weight-global} is sharp.
%
%
%
\end{theorem}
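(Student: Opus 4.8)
The plan is to obtain \eqref{hardy-log-weight-global} as an instance of the general Hardy inequality on $C^\infty_0(\Omega)$, namely Theorem \ref{Hardy-thm-2}, applied to the pair $(\phi,\psi)$ from \eqref{def-psi-log-global} with parameters $s_1:=\beta$ and $s_2:=\alpha-p+m-n-1$. The hypotheses $\beta+1+s\le0$ (with $s>0$) and $\alpha\ge p-(m-n)$ translate into $s_1<-1$ and $s_2\ge-1$, so Lemma \ref{computation-lem-global} guarantees that $\psi$ is well-defined, strictly increasing on $[0,D)$ with $\psi(0)=0$, and satisfies $\psi(t)\le s^{-1}\log(\mathbf{s}_\Lambda(D)/\mathbf{s}_\Lambda(t))\,\mathbf{s}_\Lambda(t)\,\mathbf{c}_\Lambda(t)^{-1}\,\psi'(t)$ on $[0,D)$. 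A direct computation identifies $\phi(r)^{p-1}\psi'(r)$ exactly with the right-hand weight of \eqref{hardy-log-weight-global}; and since $|\nabla(\log\psi(r))|=|\psi'(r)|/\psi(r)$ by the eikonal equation, the bound from Lemma \ref{computation-lem-global} shows that $\phi(r)^{p-1}\psi'(r)|\nabla(\log\psi(r))|^{-p}$ is at most $s^{-p}$ times the left-hand weight of \eqref{hardy-log-weight-global}. Consequently, once Theorem \ref{Hardy-thm-2} applies, \eqref{hardy-log-weight-global} follows by inserting these two relations into its conclusion.

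It then remains to verify Assumption \ref{Basciasumptionfunction3} for $(\phi,\psi)$. Items (a), (b), (f) are immediate on the interval $(0,D)$, which contains $r(\Omega_\Sigma)$ since $\Omega\subset\T_D$ and $\min\{\mathfrak{r}_\Lambda,\mathfrak{t}_{\Lambda,K}\}\ge D$. For (d), the argument is exactly the one used in the proof of Theorem \ref{distance-weight-hardy-genera255}: by \eqref{relations} and \eqref{Gfunctiondefined} one has $(p-1)\phi'(r)/\phi(r)=-G_{\Lambda,K}(r)$, so \eqref{plaplacephi} gives $\dii(\phi(r)^{p-1}\na r)=\phi(r)^{p-1}(\Delta r-G_{\Lambda,K}(r))$ on $\mathcal{V}\mathscr{D}\Sigma\setminus\Sigma$; the upper curvature comparison Theorem \ref{main-rem-2} (applicable because $\Sigma$ is totally umbilical, cf.\ Remark \ref{explprincipcuvrature1}) yields $\Delta r\ge G_{\lambda,\kappa}(r)$, and Proposition \ref{gestiamtegra} yields $G_{\lambda,\kappa}(r)\ge G_{\Lambda,K}(r)$ since $(\lambda,\kappa)\le(\Lambda,K)$, so the divergence is $\ge0$ ($\psi'>0$, so the sign factor is $+1$). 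For (c) and (e) I would insert $\dvol_g\sim t^{m-n-1}\,\dd t$ near $\Sigma$ from \eqref{dv-g-Def}--\eqref{smallsestimatedatA} and, using the Lemma \ref{computation-lem-global} estimate for $\psi$, check that each of $\phi(r)^{p-1}$, $\phi(r)^{p-1}\psi(r)$, $\phi(r)^{p-1}\psi'(r)$ and $\phi(r)^{p-1}\psi'(r)|\nabla(\log\psi(r))|^{-p}$ integrates near $t=0$ like $t^{a}(\log(1/t))^{b}\,\dd t$ with $a>-1$ when $\alpha>p-(m-n)$, and with $a=-1$, $b=\beta<-1$ in the borderline case $\alpha=p-(m-n)$; all of these are covered by Proposition \ref{basicloganalysis}. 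When $\lim_{t\to0^+}\psi'(t)=+\infty$ (the case $s_2<0$), (c) additionally requires $\phi(r)^{p-1}\psi(r)^p\in L^1_{\loc}(\Omega)$, which is exactly one of the estimates just made. With Assumption \ref{Basciasumptionfunction3} in force, Theorem \ref{Hardy-thm-2} yields the general Hardy inequality for $(\phi,\psi)$, whence \eqref{hardy-log-weight-global} as above.

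For the sharpness statement under \eqref{sharplogcondi}, the key point is that $\alpha=p-(m-n)$ forces $s_2=-1$ and $s=|\beta+1|$ forces $s=-(s_1+1)$, which is precisely the equality case in Lemma \ref{computation-lem-global}; hence the left-hand weight of \eqref{hardy-log-weight-global} equals $s^{-p}\phi(r)^{p-1}\psi'(r)|\nabla(\log\psi(r))|^{-p}$ identically, so \eqref{hardy-log-weight-global} coincides with $s^{p}$ times the general inequality \eqref{Hardy-Inequ-sec-upper-bound} for this $(\phi,\psi)$, and it suffices to show the latter is sharp. If $\Sigma$ is compact with $\Sigma\subset\Omega$, I would apply Theorem \ref{sharp-u-general} via Remark \ref{conditioncheck}: since $s_2=-1$ one has $\psi'(t)/\psi(t)=s\,\mathbf{c}_\Lambda(t)\mathbf{s}_\Lambda(t)^{-1}\log(\mathbf{s}_\Lambda(D)/\mathbf{s}_\Lambda(t))^{-1}$, so the integrand in \eqref{infiniteintegral} behaves near $0$ like $C/(t\log(1/t))$, whose integral over $(\varepsilon,\delta)$ diverges as $\varepsilon\to0^+$, giving $\phi(r)^{p-1}|\nabla(\log\psi(r))|\notin L^1(\T_\delta)$. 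If instead $\Sigma=\partial\Omega$ with $D<\min\{\mathfrak{r}_\Lambda,\mathfrak{t}_{\Lambda,K}\}$, then (the compactness of $\Sigma$ ensuring $\mathfrak{c}_\mathcal{V}(\Sigma)>0$ and $\Omega$ bounded) I would invoke part (i) of Corollary \ref{geomertricmeaningofsharpness2}: $\psi(0^+)=0$, $\psi'>0$, and \eqref{Ambrosio-condition-1} holds since near $r=0$, $\psi(r)\sim s^{-1}(\log(1/r))^{\beta+1}\to0$ with $\psi(r)^{1+\varepsilon}\phi(r)^{p-1}\psi'(r)\,\dvol_g$ of order $(\log(1/r))^{2\beta+1+\varepsilon(\beta+1)}r^{-1}\,\dd r$, integrable for small $\varepsilon$ precisely because $\beta<-1$, while near $r=D$ (where $D<\mathfrak{r}_\Lambda,\mathfrak{t}_{\Lambda,K}$ keeps $\phi$ and $\dvol_g$ bounded) $\psi(r)\to+\infty$ like $(D-r)^{\beta+1}$ and $\psi(r)^{-(1+\varepsilon)}\phi(r)^{p-1}\psi'(r)$ is of order $(D-r)^{-1+\varepsilon|\beta+1|}$, again integrable. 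Non-existence of extremals, whenever asserted, follows from Theorem \ref{nonexiglobal} (or the argument of Theorem \ref{achieve-thm}). The main obstacle I anticipate is keeping precise control of the boundary behaviour at $r=D$ in the hypersurface sharpness case, where $\psi$ blows up; everything else is a routine adaptation of the scheme already carried out for Theorem \ref{distance-weight-hardy-genera255}.
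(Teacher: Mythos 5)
Your proposal is correct and follows essentially the same route as the paper: the same pair $(\phi,\psi)$ from \eqref{def-psi-log-global} with $s_1=\beta$, $s_2=\alpha-p+m-n-1$, verification of Assumption \ref{Basciasumptionfunction3} via Theorem \ref{main-rem-2}, Proposition \ref{gestiamtegra}, Lemma \ref{computation-lem-global} and Proposition \ref{basicloganalysis}, the conclusion via Theorem \ref{Hardy-thm-2} together with the weight identities/estimates, and the same two-case sharpness argument (Theorem \ref{sharp-u-general} with Remark \ref{conditioncheck} when $\Sigma\subset\Omega$; \eqref{Ambrosio-condition-1} plus Corollary \ref{geomertricmeaningofsharpness2} when $\Sigma=\partial\Omega$), with nonexistence of extremals from Theorem \ref{nonexiglobal}. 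The only differences are cosmetic (asymptotic computations near $r=0$ and $r=D$ in place of the paper's direct boundedness bounds), so no gaps to report.
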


\begin{proof}
Let $\phi,\psi$ be the functions defined on  $ (0,D)$  by \eqref{def-psi-log-global} with
\begin{equation}\label{paramesss}
s_1:=\beta<-1,\quad s_2:=\alpha-p+(m-n)-1\geq -1;
\end{equation}
due to  Lemma \ref{computation-lem-global}, both of them are well-defined.  Now we claim that Assumption \ref{Basciasumptionfunction3} is satisfied.
Obviously, Assumption \ref{Basciasumptionfunction3}/\eqref{smoothofphi3}\eqref{smoothofpsi3}\eqref{negligbileset3} trivially hold.  Moreover,  \eqref{Laplace-comparison-reverse2} together with Proposition \ref{gestiamtegra} yields
\[
\Delta r\geq G_{\lambda,\kappa}(r)\geq G_{\Lambda,K}(r)=-(p-1)\frac{\phi'(r)}{\phi(r)},\quad \forall\,x\in  \mathcal {V}\mathscr{D}\Sigma \cap \Omega_\Sigma,
\]
which combined with \eqref{plaplacephi} implies  Assumption \ref{Basciasumptionfunction3}/\eqref{nansigcontroll3}.

Given a compact set $\mathscr{K}\subset \Omega$ with $\sup_{x\in \mathscr{K}}r(x)=:R<D$, we have $\sup_{t\in [0,R]}\psi(t)<+\infty$ and $\phi(r)^{p-1}\in L^1_{}(\mathscr{K})$, which implies
$\phi(r)^{p-1}\psi(r)^s \in L^1(\mathscr{K})$ for any $s\geq 0$. Thus, Assumption \ref{Basciasumptionfunction3}/\eqref{derivativeintegr} follows. It remains to check Assumption \ref{Basciasumptionfunction3}/\eqref{integrable3}. The assumption together with \eqref{paramesss}  implies $s_1+1+s=\beta+1+s\leq 0$,
which combined with
Lemma \ref{computation-lem-global} furnishes \eqref{inequality-lemm-global}. Therefore,
we obtain
\begin{align}
\frac{\phi(r)^{p-1} |\psi'(r)|}{|\nabla(\log \psi(r))|^p}=\,&(\mathbf{c}_\Lambda(r)-K\mathbf{s}_\Lambda(r))^{-n}\mathbf{s}_\Lambda(r)^{-(m-n-1)}\frac{\psi(r)^p}{|\psi'(r)|^{p-1}}\notag\\
&\leq
s^{-p}\left[\log\left({\mathbf{s}_\Lambda(D)\over \mathbf{s}_\Lambda(r)}\right)\right]^{p+\beta}\frac{\mathbf{c}_\Lambda(r)^{1-p}\mathbf{s}_\Lambda(r)^{\alpha}}{(\mathbf{c}_\Lambda(r)-K\mathbf{s}_\Lambda(r))^{n}}, \label{pp-0p-001}\\
 \phi(r)^{p-1}|\psi'(r)|=&\left[\log\left({\mathbf{s}_\Lambda(D)\over \mathbf{s}_\Lambda(r)}\right)\right]^\beta\frac{\mathbf{c}_\Lambda(r)\mathbf{s}_\Lambda(r)^{\alpha-p}}{(\mathbf{c}_\Lambda(r)-K\mathbf{s}_\Lambda(r))^{n}}.\label{pp-0p-011}
\end{align}
Thus, Assumption  \ref{Basciasumptionfunction3}/\eqref{integrable3} is verified whenever
\begin{equation}\label{biggerlogesitamate}
\left[\log\left({\mathbf{s}_\Lambda(D)\over \mathbf{s}_\Lambda(r)}\right)\right]^{p+\beta}\frac{\mathbf{c}_\Lambda(r)^{1-p}\mathbf{s}_\Lambda(r)^{\alpha}}{(\mathbf{c}_\Lambda(r)-K\mathbf{s}_\Lambda(r))^{n}},\ \left[\log\left({\mathbf{s}_\Lambda(D)\over \mathbf{s}_\Lambda(r)}\right)\right]^\beta\frac{\mathbf{c}_\Lambda(r)\mathbf{s}_\Lambda(r)^{\alpha-p}}{(\mathbf{c}_\Lambda(r)-K\mathbf{s}_\Lambda(r))^{n}}\in L^1(\mathscr{K}).
\end{equation}
In order to check this, we notice that
the assumption on $D$ yields a finite constant $C>0$ such that on $\mathscr{K}$ we have
\begin{equation}\label{basicvvr}
{\max\left\{\frac{1}{(\mathbf{c}_\Lambda(r)-K\mathbf{s}_\Lambda(r))^{n}}, \frac{\mathbf{c}_\Lambda(r)^{ -p}}{(\mathbf{c}_\Lambda(r)-K\mathbf{s}_\Lambda(r))^{n}}\right\} \leq \frac{C}{\vol(\mathbb{S}^{m-n-1})}.}
\end{equation}
Moreover,
due to \eqref{smallsestimatedatA} and \eqref{Fermimap}, there exists a small $\epsilon>0$  such that
\begin{align*}
\det \mathcal {A}(t,\mathbf{n})\leq 2\, \mathbf{s}_\Lambda(t)^{m-n-1}, \text{ for }(t,\mathbf{n})\in([0,\epsilon]\times \mathcal {V}S\Sigma)\cap \F^{-1}(\mathscr{K}),
\end{align*}
which together with \eqref{dv-g-Def},  \eqref{basicvvr} and Proposition \ref{basicloganalysis} yield
\begin{align*}
& \int_{\T_\epsilon\cap \mathscr{K}}\left[\log\left({\mathbf{s}_\Lambda(D)\over \mathbf{s}_\Lambda(r)}\right)\right]^{p+\beta}\frac{\mathbf{c}_\Lambda(r)^{1-p}\mathbf{s}_\Lambda(r)^{\alpha}}{(\mathbf{c}_\Lambda(r)-K\mathbf{s}_\Lambda(r))^{n}}\dvol_g\leq 2 C \int^\epsilon_0\left[\log\left({\mathbf{s}_\Lambda(D)\over \mathbf{s}_\Lambda(r)}\right)\right]^{p+\beta}\mathbf{s}_\Lambda(r)^{s_2+p}\mathbf{c}_\Lambda(r){\dd}r<+\infty,\\
&\int_{\T_\epsilon\cap \mathscr{K}}\left[\log\left({\mathbf{s}_\Lambda(D)\over \mathbf{s}_\Lambda(r)}\right)\right]^\beta\frac{\mathbf{c}_\Lambda(r)\mathbf{s}_\Lambda(r)^{\alpha-p}}{(\mathbf{c}_\Lambda(r)-K\mathbf{s}_\Lambda(r))^{n}}\dvol_g\leq 2C \int^\epsilon_0\left[\log\left({\mathbf{s}_\Lambda(D)\over \mathbf{s}_\Lambda(r)}\right)\right]^{ \beta}\mathbf{s}_\Lambda(r)^{s_2}\mathbf{c}_\Lambda(r){\dd}r<+\infty.
\end{align*}
On the other hand, by choosing an enough large $C>0$, the continuity of $\det \mathcal {A}(t,\mathbf{n})$ implies
\[
\det \mathcal {A}(t,\mathbf{n})\leq C<+\infty, \quad \text{ for }(t,\mathbf{n})\in([\epsilon,R]\times \mathcal {V}S\Sigma)\cap \F^{-1}(\mathscr{K}).
\]
 Thus, Proposition \ref{basicloganalysis} again furnishes
\begin{align*}
&\int_{\mathscr{K}\backslash \T_\epsilon}\left[\log\left({\mathbf{s}_\Lambda(D)\over \mathbf{s}_\Lambda(r)}\right)\right]^{p+\beta}\frac{\mathbf{c}_\Lambda(r)^{1-p}\mathbf{s}_\Lambda(r)^{\alpha}}{(\mathbf{c}_\Lambda(r)-K\mathbf{s}_\Lambda(r))^{n}}\dvol_g
\leq  C^2 \int^R_\epsilon\left[\log\left({\mathbf{s}_\Lambda(D)\over \mathbf{s}_\Lambda(r)}\right)\right]^{p+\beta}\mathbf{s}_\Lambda(r)^{\alpha}\mathbf{c}_\Lambda(r){\dd}r<+\infty,\\
&\int_{\mathscr{K}\backslash \T_\epsilon}\left[\log\left({\mathbf{s}_\Lambda(D)\over \mathbf{s}_\Lambda(r)}\right)\right]^\beta\frac{\mathbf{c}_\Lambda(r)\mathbf{s}_\Lambda(r)^{\alpha-p}}{(\mathbf{c}_\Lambda(r)-K\mathbf{s}_\Lambda(r))^{n}}\dvol_g
\leq  C^2 \int_\epsilon^R\left[\log\left({\mathbf{s}_\Lambda(D)\over \mathbf{s}_\Lambda(r)}\right)\right]^{ \beta}\mathbf{s}_\Lambda(r)^{\alpha-p}\mathbf{c}_\Lambda(r){\dd}r<+\infty.
\end{align*}
From these estimates, \eqref{biggerlogesitamate} follows and hence,
Assumption \ref{Basciasumptionfunction3}  holds.
Therefore, \eqref{hardy-log-weight-global} directly follows by \eqref{pp-0p-001}, \eqref{pp-0p-011}   and Theorem \ref{Hardy-thm-2}.

We now focus on the  sharpness of \eqref{hardy-log-weight-global}.
If $\Sigma\subset\Omega$, then \eqref{paramesss}  combined with \eqref{sharplogcondi}   implies $s=-(s_1+1)$ and $s_2=-1$. Thus, it follows  that both \eqref{inequality-lemm-global} and  \eqref{pp-0p-001} are equalities.
In this way, due to Theorem \ref{sharp-u-general} and Remark \ref{conditioncheck}, in order to show the sharpness of \eqref{hardy-log-weight-global}, it suffices to prove \eqref{infiniteintegral}. In fact, the compactness of
 $\Sigma$ implies $c_{\mathcal{V}}(\Sigma)>0$. Let  $\delta_\Lambda\in (0,\min\{D,c_{\mathcal{V}}(\Sigma)\})$ be small enough such that
 \[
\min_{t\in [0,\delta_\lambda]}  \left(\frac{\mathbf{c}_\Lambda(t)}{\mathbf{c}_\Lambda(t)-K\mathbf{s}_\Lambda(t)}\right)^n{\geq \frac12}.
 \]
Thus, for any  $\delta\in (0,\delta_\Lambda)$,
  a direct calculation together with the equality form of \eqref{inequality-lemm-global} yields
\begin{align*}
&\lim_{\e\rightarrow0^+}\int_\e^{\delta}\phi(t)^{p-1}|(\log \psi(t))'| \mathbf{c}_\Lambda(t)^n\mathbf{s}_\Lambda(t)^{m-n-1} {\dd}t=\lim_{\e\rightarrow0^+}\int_\e^{\delta} \left(\frac{\mathbf{c}_\Lambda(t)}{\mathbf{c}_\Lambda(t)-K\mathbf{s}_\Lambda(t)}\right)^n\frac{\psi'(t)}{\psi(t)} {\dd}t\nonumber\\
&{\geq \frac12}s\lim_{\e\rightarrow0^+}\int_\e^{\delta}\left[\log\Big({\mathbf{s}_\Lambda(D)\over \mathbf{s}_\Lambda(t)}\Big)\right]^{-1}\mathbf{s}_\Lambda(t)^{-1}\mathbf{c}_\Lambda(t) {\dd}t=+\infty,\nonumber
\end{align*}
which shows the validity of \eqref{infiniteintegral}, and hence  the sharpness of \eqref{hardy-log-weight-global}.

In the other case, i.e., $\Sigma=\partial\Omega$, again  both  \eqref{inequality-lemm-global} and \eqref{pp-0p-001} become equalities and  $c_{\mathcal{V}}(\Sigma)>0$. Let $s_0\in (0,\min\{D, c_{\mathcal{V}}(\Sigma)\})$ with
\begin{align*}
\det \mathcal {A}(t,\mathbf{n})\leq 2\, \mathbf{s}_\Lambda(t)^{m-n-1}=2, \text{ for }(t,\mathbf{n})\in[0,s_0]\times \mathcal {V}S\Sigma.
\end{align*}
Since $D<\mathfrak{c}_{\Lambda,K}$ and $n=m-1$, both $\psi(r)^{1+\varepsilon}$ and $\phi(r)^{p-1}$ are bounded on $\T_{s_0}\cap \Omega$. Thus, \eqref{dv-g-Def} yields
\begin{align*}
\int_{{\T_{s_0}}\cap\,\Omega}  \psi(r)^{1+\varepsilon}\phi(r)^{p-1}|\psi'(r)|\dvol_g\leq C \int_{{\T_{s_0}}\cap\,\Omega}\psi'(r)\dvol_g\leq C\int^{s_0}_0\psi'(t){\dd}t=C\psi(s_0)<+\infty,
\end{align*}
i.e., $\psi(r)^{1+\varepsilon}\phi(r)^{p-1}|\psi'(r)|\in L^1({\T_{s_0}}\cap\,\Omega)$.
On the other hand, by \eqref{inequality-lemm-global} and \eqref{pp-0p-011} we have
\begin{align*}
\psi(r)^{-(1+\varepsilon)}\phi(r)^{p-1}|\psi'(r)|=s^{(1+\varepsilon)}\left[\log\Big({\mathbf{s}_\Lambda(D)\over \mathbf{s}_\Lambda(r)}\Big)\right]^{-(1+\varepsilon)(1+s_1)+\beta} \frac{\mathbf{s}_\Lambda(r)^{-(1+\varepsilon)(s_2+1)+\alpha-p }\mathbf{c}_\Lambda(r)}{(\mathbf{c}_\Lambda(r)-K\mathbf{s}_\Lambda(r))^{n}}.
\end{align*}
Since $-(1+\varepsilon)(1+s_1)+\beta=-1-\varepsilon(s_1+1)>-1$ for any $\varepsilon>0$, Proposition \ref{basicloganalysis} yields  $\psi^{-(1+\varepsilon)}(r)\phi(r)^{p-1}|\psi'(r)|\in L^1(\Omega\backslash{\T_{s_0}})$.
Thus, \eqref{Ambrosio-condition-1} holds, which together with Corollary \ref{geomertricmeaningofsharpness2} implies the sharpness of \eqref{hardy-log-weight-global}.

The non-existence of extremal functions in \eqref{hardy-log-weight-global} is a direct consequence of Theorem  \ref{nonexiglobal}.
\end{proof}



Note that $\Omega\subset \T_D$ implies $\sup_{x\in \Omega}r(x)\leq  D$; however, the latter inequality in general is not enough to follow the argument from the proof of Theorem \ref{log-weight-thm-global} as the next example shows.

\begin{example}Let
$M:=\mathbb{R}\times \mathbb{S}$ be a cylinder equipped by the canonical metric and let $(t,s)\in (-\infty,+\infty)\times [0,2\pi)$ be a natural coordinate system of $M$.  Let $\Sigma:=\mathbb{R}\times \{0\}$, $A:=[-1,1]\times \{\pi\}$ and $\Omega:=M\backslash A$.

 Thus, $\Omega$ is a noncompact domain with $\sup_{x\in \Omega}r(x)=\pi$. Moreover, it is obvious that
 \[
 r(x)=\pi, \quad \forall\,x\in (\mathbb{R}\backslash[-1,1])\times \{\pi\}\subset \Omega.
 \]
Choose $\lambda=\Lambda=0=K=\kappa$ and $\alpha=p-(m-n)=p-1$ and let $(\phi,\psi)$ be as in the proof of Theorem \ref{log-weight-thm-global}. For the compact set $\mathscr{K}:=[2,3]\times \mathbb{S}\subset \Omega$, since $\beta<-1$,  by \eqref{pp-0p-011}
 we have
\[
 \int_{\mathscr{K}}\phi(r)^{p-1}|\psi'(r)|\dvol_g
= 2\int_0^\pi\left[\log\left({\pi\over r}\right)\right]^{ \beta}r^{-1} {\dd}r=+\infty,
\]
i.e., $\phi(r)^{p-1}|\psi'(r)| \notin L_{\loc}^1(\Omega)$.
\end{example}

\section{Application II: Sharp Hardy inequalities on manifolds with lower curvature bounds}\label{applicI}

In this section we study Hardy inequalities on manifolds with curvature bounded from below. The generic setting of the present section is described in  the following definition.

\begin{definition}\label{Curvature-Below} Let $(M,g)$ be an $m$-dimensional Riemannian manifold, $i:\Sigma\hookrightarrow M$ be an $n$-dimensional closed submanifold and $\Omega$ be a non-empty domain in $M$ satisfying \eqref{natural-domain}. Given $\lambda,\kappa\in \mathbb{R}$, the triple
 $(M,\Sigma,\Omega)$ satisfies the {\it lower curvature bound condition $(\lambda,\kappa)$}  if one of the following conditions holds:
\begin{enumerate}[{\rm (a)}]
\item\label{0diem}    ${\Ric}_M\geq \lambda$ and $n=0$;

\smallskip

\item\label{m-1diem}   ${\Ric}_M\geq \lambda$, $n=m-1$ and $\tr \mathfrak{A}^\mathbf{n}\geq n\kappa$ for any inward normal vector $\mathbf{n}$  w.r.t. $\Omega$;

\smallskip

\item\label{other-dime}  $\sec_M\geq \lambda$, $1\leq n\leq m-2$ and  $\tr \mathfrak{A}^\mathbf{n}\geq n\kappa$ for any $\mathbf{n}\in \mathcal {V}S\Sigma$.

\end{enumerate}
\end{definition}

\begin{remark}\label{remarkcurvaturebleow}We use $n\kappa$ to denote the lower bound  of $\tr \mathfrak{A}^\mathbf{n}$. More precisely,
\begin{itemize}
\item \eqref{0diem} means that $\Sigma$ is a point and hence, the Weingarten map always vanishes, when $\kappa$ can be chosen as any real number;

\item if $\kappa=0$ in \eqref{m-1diem}, then $\Sigma$ is called {\it weakly mean convex} (cf. \cite{LLL,CLZ});

\item  \eqref{other-dime} implies $\kappa\leq 0$, because $n\kappa\leq\tr \mathfrak{A}^{-\mathbf{n}}=-\tr \mathfrak{A}^{\mathbf{n}}\leq -n\kappa$ for any $\mathbf{n}\in \mathcal {V}S\Sigma$.
Moreover, if $\kappa=0$, the above inequality yields
\[
0=\tr \mathfrak{A}^{\mathbf{n}}=n\langle H,\mathbf{n}\rangle,\quad \forall\,\mathbf{n}\in \mathcal {V}S\Sigma,
\]
which implies that $H=0$, i.e., $\Sigma$ is a minimal submanifold.

\end{itemize}
 As in the previous section, although we do not emphasize the sign of $\kappa$, we assume that $\kappa\leq 0$ whenever $1\leq n\leq m-2$.
\end{remark}

\subsection{
Hardy inequalities involving double-curvature weights
} \label{section-6-1}

\begin{theorem}\label{distance-weight-hardy-general} Let $(M,g)$ be an $m$-dimensional complete Riemannian manifold, $i:\Sigma \hookrightarrow M$ be an $n$-dimensional closed submanifold  and $\Omega$ be a non-empty domain in $M$ satisfying \eqref{natural-domain}.  Denote $r(x):=d(\Sigma,x)$  the distance function from $\Sigma$.
Suppose that $(M,\Sigma,\Omega)$ satisfies the lower curvature bound condition $(\lambda,\kappa)$.
Given any pair $(\Lambda,K)\leq (\lambda,\min\{\kappa,0\})$ with $\Lambda<\lambda/4$ if $\lambda>0$,
the following statements hold$:$
\begin{enumerate}[{\rm (i)}]
\item\label{basicriccboundedbelow} for $p>1$, $\beta<-(m-n)$, there holds
\begin{equation}\label{Hardy-Ineq-curvature-condition51}
\int_{\Omega_\Sigma}|\na u|^p\frac{\mathbf{c}_\Lambda(r)^{1-p}\mathbf{s}_\Lambda(r)^{p+\beta}}{(\mathbf{c}_\Lambda(r)-K \mathbf{s}_\Lambda(r))^n}\dvol_g\geq \Big|{\beta+m-n\over p}\Big|^p\int_{\Omega_\Sigma} |u|^p\frac{\mathbf{c}_\Lambda(r)\mathbf{s}_\Lambda(r)^\beta}{(\mathbf{c}_\Lambda(r)-K \mathbf{s}_\Lambda(r))^n} \dvol_g,
\end{equation}
 for  any $u\in C^\infty_0(\Omega_\Sigma)$.
In particular, if $\Sigma$ is compact and either $\Omega=M$ or $\partial\Omega=\Sigma$, then \eqref{Hardy-Ineq-curvature-condition51} is sharp and there are no extremal functions in $C^\infty_0(\Omega_\Sigma);$

\smallskip

\item\label{basicriccboundedbelowe2} for $p>1$, $\beta<-(m-n)$ with $p+\beta>-(m-n)$, inequality \eqref{Hardy-Ineq-curvature-condition51} holds for every $u\in C^\infty_0(\Omega,\Sigma)$.
%
Furthermore, if $\Sigma$ is compact and either $\Omega=M$ or $\partial\Omega=\Sigma$, then \eqref{Hardy-Ineq-curvature-condition51} is sharp over $C^\infty_0(\Omega,\Sigma)$ and there are no extremal functions in $C^\infty_0(\Omega,\Sigma)$.

\end{enumerate}
\end{theorem}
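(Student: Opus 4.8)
The plan is to feed the explicit pair — the lower‑curvature analogue of \eqref{newphipsi-2} —
\begin{equation*}
\phi(t):=(\mathbf{c}_\Lambda(t)-K\mathbf{s}_\Lambda(t))^{-\frac{n}{p-1}}\,\mathbf{s}_\Lambda(t)^{-\frac{m-n-1}{p-1}},\qquad
\psi(t):=\mathbf{s}_\Lambda(t)^{\beta+m-n},
\end{equation*}
into Theorem \ref{Hardy-thm-general} (for part (i)) and into Theorem \ref{generalhardclosed} (for part (ii)). Since $\beta<-(m-n)$, the exponent $\beta+m-n$ is negative, so $\psi$ is strictly decreasing with $\psi(0^{+})=+\infty$ and $\psi'<0$ throughout. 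First I would pin down the domain of definition of $\phi(r),\psi(r),\psi'(r)$ on $M\setminus\Sigma$: by \eqref{estimatecklambda} one has $\min\{\mathfrak r_\Lambda,\mathfrak t_{\Lambda,K}\}=+\infty$ whenever $\Lambda\le 0$ and $K\le 0$; and when $\lambda>0$ the curvature hypothesis (any of the three cases of Definition \ref{Curvature-Below}) forces $M$ compact with $\operatorname{diam}(M)\le\pi/\sqrt{\lambda}$ by Bonnet--Myers, so the restriction $\Lambda<\lambda/4$ gives $\mathfrak r_\Lambda=\pi/(2\sqrt{\Lambda})>\sup_M r$, while $\mathfrak t_{\Lambda,K}\ge\mathfrak r_\Lambda$ by \eqref{estimatecklambda} because $K\le 0$. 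Thus $\phi(r),\psi(r),\psi'(r)$ are well defined and nowhere zero on $M\setminus\Sigma$.

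The heart of the matter is the sign/divergence condition. By \eqref{plaplacephi}, on $\mathcal V\mathscr D\Sigma\setminus\Sigma$ one has
\begin{equation*}
\dii\!\big(\sgn(\psi'(r))\,\phi(r)^{p-1}\nabla r\big)=-\,\phi(r)^{p-1}\big(\Delta r-G_{\Lambda,K}(r)\big),
\end{equation*}
since $\sgn(\psi')=-1$ and $\phi$ is chosen so that $-(p-1)\phi'/\phi=G_{\Lambda,K}$. Now Theorem \ref{Laplacian-comparison-Sigma} gives $\Delta r\le G_{\lambda,\tr\mathfrak A^{\mathbf n}/n}(r)$; the bound $\tr\mathfrak A^{\mathbf n}\ge n\kappa$ built into Definition \ref{Curvature-Below}, together with the monotonicity of $G$ (Proposition \ref{gestiamtegra}), gives $G_{\lambda,\tr\mathfrak A^{\mathbf n}/n}(r)\le G_{\lambda,\kappa}(r)$; and $(\Lambda,K)\le(\lambda,\kappa)$ gives $G_{\lambda,\kappa}(r)\le G_{\Lambda,K}(r)$. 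Hence $\Delta r-G_{\Lambda,K}(r)\le 0$, so the displayed divergence is $\ge 0$ pointwise; since $M\setminus\mathcal V\mathscr D\Sigma=\Sigma\cup\mathcal V\mathscr C\Sigma$ is $\vol_g$‑null and, for a distance function, the distributional Laplacian of $r$ is dominated by its absolutely continuous part (the singular part on the focal cut locus being a non‑positive measure — exactly the sign a lower curvature bound produces), this inequality persists in the weak sense on $\Omega_\Sigma$. This verifies Assumption \ref{Basciasumptionfunction}/\eqref{nansigcontroll} and, adding that $\phi,\psi\in C^{2}$ and that $\phi(r)^{p-1}|\psi'(r)|/|\nabla\log\psi(r)|^{p}$ is a smooth positive function of $r$, also Assumption \ref{Basciasumptionfunction2}/\eqref{nansigcontroll2},\eqref{localboundedofnu}. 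The remaining items are routine: $\psi(r)$ is monotone and locally Lipschitz off $\Sigma$, $\{\phi(r)\psi(r)\psi'(r)=0\}$ is empty, and using \eqref{dv-g-Def} with $\det\mathcal A(t,\mathbf n)\sim t^{m-n-1}$ as $t\to0^{+}$ (see \eqref{smallsestimatedatA}) one checks that $\phi(r)^{p-1},\phi(r)^{p-1}\psi(r),\phi(r)^{p-1}|\psi'(r)|$ and $\phi(r)^{p-1}|\psi'(r)|/|\nabla\log\psi(r)|^{p}$ lie in $L^{1}_{\loc}(\Omega_\Sigma)$ for every $\beta<-(m-n)$, while the last weight — which behaves like $t^{\,p+\beta+m-n-1}$ near $\Sigma$ — lies in $L^{1}_{\loc}(M)$ precisely when $p+\beta>-(m-n)$; this is why part (ii) carries that extra restriction, and part of the reason $C^\infty_0(\Omega)$ cannot be used (the other being that $M$ may be compact, in which case constants belong to $C^\infty_0(\Omega)$ and contradict the inequality). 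So Assumption \ref{Basciasumptionfunction} holds under the hypotheses of part (i) and Assumption \ref{Basciasumptionfunction2} under those of part (ii); Theorems \ref{Hardy-thm-general} and \ref{generalhardclosed} then yield \eqref{Hardy-Ineq-curvature-condition51} on $C^\infty_0(\Omega_\Sigma)$ and on $C^\infty_0(\Omega,\Sigma)$ respectively, after rewriting $|\nabla\log\psi(r)|=|\psi'(r)|/\psi(r)$ and collecting the powers of $\mathbf s_\Lambda,\mathbf c_\Lambda$, which produces exactly the constant $|(\beta+m-n)/p|^{p}$.

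For sharpness and non‑existence, assume $\Sigma$ compact (hence $\mathfrak c_{\mathcal V}(\Sigma)>0$) and $\Omega=M$ or $\partial\Omega=\Sigma$. For part (i): since $\psi(0^{+})=+\infty$, $\psi'<0$ and (in the unbounded case) $\psi(t)\to0$ as $t\to+\infty$, the function $\nu_\varepsilon$ of Lemma \ref{Ambrosio-lem2} extends by $0$ to $\Sigma\cup\partial\Omega$ and decays at infinity, so Corollary \ref{geomertricmeaningofsharpness2} applies once the two conditions in \eqref{Ambrosio-condition-2} are checked: the one near $\Sigma$ reduces via \eqref{dv-g-Def} to $\int_{0}t^{-\varepsilon(\beta+m-n)-1}\,dt<\infty$, true because $\beta+m-n<0$, and the one away from $\Sigma$ follows from the volume comparison \eqref{Lap-deta-controll} (with the mean curvature bounded, $\Sigma$ being compact) together with $2(\beta+m-n)<0$. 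Non‑existence of extremals in $C^\infty_0(\Omega_\Sigma)$ is then immediate from Theorem \ref{achieve-thm}. For part (ii): sharpness over $C^\infty_0(\Omega,\Sigma)$ follows from the sharpness just obtained, since $C^\infty_0(\Omega_\Sigma)\subset C^\infty_0(\Omega,\Sigma)$ (Remark \ref{sharpnonextronclosed}), and non‑existence over $C^\infty_0(\Omega,\Sigma)$ follows from condition \eqref{limteexideoin3} of Remark \ref{sharpnonextronclosed}, because on any $V$ whose closure meets $\Sigma$ in a set of positive $\vol_{i^{*}g}$‑measure one has $\int_{V}\phi(r)^{p-1}|\nabla\log\psi(r)|\,\dvol_g\gtrsim\int_{0}\mathbf c_\Lambda(t)/\mathbf s_\Lambda(t)\,dt=+\infty$.

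I expect the main obstacle to be the weak‑divergence step: Theorem \ref{Laplacian-comparison-Sigma} only gives a pointwise comparison on the focal cut domain, and pushing the resulting inequality across $\mathcal V\mathscr C\Sigma$ in the D'Ambrosio--Dipierro sense relies on the non‑positivity of the singular part of $\Delta r$ there — favourable precisely for a lower curvature bound, and the structural reason $C^\infty_0(\Omega_\Sigma)$ and $C^\infty_0(\Omega,\Sigma)$ (rather than $C^\infty_0(\Omega)$) are the natural function spaces here, in contrast with the upper‑curvature Theorem \ref{distance-weight-hardy-genera255}. A secondary, bookkeeping‑type point is keeping track of the admissible ranges for $(\Lambda,K)$ and of the sharp threshold $p+\beta>-(m-n)$ governing the passage from $C^\infty_0(\Omega_\Sigma)$ to $C^\infty_0(\Omega,\Sigma)$.
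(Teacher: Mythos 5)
Your proposal is correct and follows essentially the same route as the paper: the same pair $(\phi,\psi)$ as in \eqref{newphipsi}, verification of Assumptions \ref{Basciasumptionfunction}/\ref{Basciasumptionfunction2} via Theorem \ref{Laplacian-comparison-Sigma} and Proposition \ref{gestiamtegra} (with $p+\beta>-(m-n)$ entering exactly through the $L^1_{\loc}(M)$ requirement), and the same sharpness/non-existence arguments via Corollary \ref{geomertricmeaningofsharpness2} with condition \eqref{Ambrosio-condition-2}, Theorem \ref{achieve-thm}, and Remark \ref{sharpnonextronclosed}. The only minor deviation is that for $\lambda>0$ you bound $\sup_{\Omega}r$ by classical Bonnet--Myers, whereas the paper deduces $c_{\mathcal{V}}(\mathbf{n})\le \mathfrak{t}_{\lambda,\kappa}\le 2\mathfrak{r}_{\lambda}<\mathfrak{r}_{\Lambda}$ directly from the focal comparison \eqref{Lap-deta-controll}; both yield the same well-definedness of $\phi(r),\psi(r)$ on $M\setminus\Sigma$.
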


\begin{proof}\eqref{basicriccboundedbelow} Let us define
 \begin{equation}\label{newphipsi}
\phi(t):=(\mathbf{c}_\Lambda(t)-K\mathbf{s}_\Lambda(t))^{-\frac{n}{p-1}}\mathbf{s}_\Lambda(t)^{-\frac{m-n-1}{p-1}}, \quad \psi(t):=\mathbf{s}_\Lambda(t)^{\beta+m-n}, \quad t\in (0,\min\{\mathfrak{r}_\Lambda,\mathfrak{t}_{\Lambda,K} \}).
\end{equation}
In addition,  for any $\lambda,\kappa\in \mathbb{R}$ and
   $\mathbf{n}\in \mathcal {V}S\Sigma$, it follows from \eqref{Lap-deta-controll}, \eqref{tlkdefin}, \eqref{relationcandr} and \eqref{footcomprison}      that
\begin{equation}\label{cnvcontroll}
 c_\mathcal {V}(\mathbf{n})\leq  \mathfrak{t}_{\lambda,\kappa}\leq 2\mathfrak{r}_{\lambda}
 \begin{cases}
<\mathfrak{r}_{\Lambda}= \min\{\mathfrak{r}_\Lambda,\mathfrak{t}_{\Lambda,K}\},& \text{ if } \lambda>0;\\
\\
=\mathfrak{r}_{\Lambda}= \min\{\mathfrak{r}_\Lambda,\mathfrak{t}_{\Lambda,K}\}=+\infty,& \text{ if } \lambda\leq 0.
\end{cases}
\end{equation}
 Therefore, Assumption \ref{Basciasumptionfunction2}/\eqref{smoothofphi2}\eqref{smoothofpsi2}\eqref{negligbileset2} directly follow.
 Moreover,  Theorem \ref{Laplacian-comparison-Sigma} and Proposition \ref{gestiamtegra} provide
\begin{align*}
\Delta r\leq G_{\lambda,\kappa}(r)\leq G_{\Lambda,K}(r)=-(p-1)\frac{\phi'(r)}{\phi(r)},\quad \forall\,x\in  \mathcal {V}\mathscr{D}\Sigma\backslash \Sigma,
\end{align*}
which implies together with \eqref{plaplacephi} Assumption \ref{Basciasumptionfunction2}/\eqref{nansigcontroll2}.
Since $\phi(r), \psi(r)$ are locally bounded and $\psi'(r)$ is locally Lipschitz  in $M\backslash\Sigma$, it is a direct consequence that
\[
\phi(r)^{p-1}, \phi(r)^{p-1}\psi(r),\phi(r)^{p-1}|\psi'(r)|\in L^1_{\loc}(\Omega_\Sigma).
\]
In particular, $|\psi'(r)|$ is always nonzero and locally bounded in $M\backslash\Sigma$. Thus, a direct calculation yields
\begin{align}
\frac{\phi(r)^{p-1}|\psi'(r)|}{|\nabla\log(\psi(r))|^p}= \frac{\phi(r)^{p-1}\psi(r)^p}{|\psi'(r)|^{p-1}}\in L^1_{\loc}(M\backslash\Sigma),\label{lqinteg}
\end{align}
which shows the validity of Assumption \ref{Basciasumptionfunction}/\eqref{integrable}. Thus, Assumption \ref{Basciasumptionfunction} holds as it is weaker than Assumption \ref{Basciasumptionfunction2}. Accordingly, \eqref{Hardy-Ineq-curvature-condition51} follows  directly by Theorem \ref{Hardy-thm-general}.

According to Corollary \ref{geomertricmeaningofsharpness2}, in order to obtain the sharpness, it suffices to prove  \eqref{Ambrosio-condition-2}. First,
Proposition \ref{gestiamtegra} implies for any $t\in (0,\mathfrak{t}_{\lambda,\kappa})$ and small $\varepsilon>0$ that
\begin{align*}
&\left.\log\left(\left[ \mathbf{c}_\lambda(s)-\kappa\mathbf{s}_\lambda(s) \right]^n \mathbf{s}^{m-n-1}_\lambda(s)\right)\right|_{s=\varepsilon}^{s=t}=\int^{t}_{\e}G_{\lambda,\kappa}(s){\dd}s\\
\leq& \int^{t}_{\e}G_{\Lambda,K}(s){\dd}s=\left.\log\left(\left[ \mathbf{c}_\Lambda(s)-K\mathbf{s}_\Lambda(s) \right]^n \mathbf{s}^{m-n-1}_\Lambda(s)\right)\right|_{s=\varepsilon}^{s=t},
\end{align*}
which furnishes
\begin{equation}\label{cscontrola11}
\left[ \mathbf{c}_\lambda(t)-\kappa\mathbf{s}_\lambda(t) \right]^n \mathbf{s}^{m-n-1}_\lambda(t)\leq \left[ \mathbf{c}_\Lambda(t)-K\mathbf{s}_\Lambda(t) \right]^n \mathbf{s}^{m-n-1}_\Lambda(t).
\end{equation}
Due to \eqref{cnvcontroll} and the compactness of $\Sigma$,   we have
\[
0<c_\mathcal {V}(\Sigma)\leq \sup_{\mathbf{n}\in \mathcal {V}S\Sigma}c_\mathcal {V}(\mathbf{n})\leq \mathfrak{t}_{\lambda,\kappa}\leq 2\mathfrak{r}_{\lambda} \leq \mathfrak{r}_{\Lambda},
\]
where we used the assumption that $\Lambda<\lambda/4$ whenever $\lambda>0$. Therefore, for any $s_0\in (0,c_\mathcal {V}(\Sigma))$,
 by \eqref{Lap-deta-controll} and  \eqref{cscontrola11} we obtain
\begin{align*}
&\int_{{\T_{s_0}}\cap\Omega}\psi(r)^{-\e}\phi(r)^{p-1}|\nabla(\log\psi(r))| \dvol_g\\
=&\int_{\Sigma}{\dvol}_{i^*g}(x)\int_{\mathcal {V}S_{x}\Sigma}d\nu_x(\mathbf{n})\int_0^{s_0} |\alpha| \frac{\mathbf{c}_\Lambda(t)\mathbf{s}_{\Lambda}(t)^{-(m-n)-\alpha\varepsilon} }{(\mathbf{c}_\Lambda(t)-K\mathbf{s}_\Lambda(t))^n}  \det \mathcal {A}(t,\mathbf{n}){\dd}t\\
\leq & |\alpha| \int_{\Sigma}{\dvol}_{i^*g}(x)\int_{\mathcal {V}S_{x}\Sigma}d\nu_x(\mathbf{n})\int^{s_0}_0 \mathbf{s}_\Lambda(t)^{-\alpha \varepsilon -1} \mathbf{c}_\Lambda(t){\dd}t\\
=& \frac{1}{\varepsilon} \vol(\mathbb{S}^{m-n-1})\vol_{i^*g}(\Sigma)\mathbf{s}_\Lambda(s_0)^{-\alpha\varepsilon}<+\infty,
\end{align*}
where $\alpha:=\beta+m-n<0$. A similar computation yields
\begin{align*}
\int_{{\T_{s_0}}\backslash\Omega}\psi(r)^{2+\e}\phi(r)^{p-1}|\nabla(\log\psi(r))| \dvol_g\leq  |\alpha| \int_{\Sigma}{\dvol}_{i^*g}(x)\int_{\mathcal {V}S_{x}\Sigma}d\nu_x(\mathbf{n})\int_{s_0}^{c_\mathcal {V}(\mathbf{n})} \mathbf{s}_\Lambda(t)^{\alpha(2+\varepsilon) -1} \mathbf{c}_\Lambda(t){\dd}t<+\infty.
\end{align*}
Therefore, \eqref{Ambrosio-condition-2} holds, which implies the sharpness of \eqref{Hardy-Ineq-curvature-condition51}. In addition,   Theorem \ref{achieve-thm} implies the non-existence of extremal functions for \eqref{Hardy-Ineq-curvature-condition51} in $C^\infty_0(\Omega_\Sigma)$.

\medskip

\eqref{basicriccboundedbelowe2} We first show \eqref{Hardy-Ineq-curvature-condition51} for $C^\infty_0(\Omega,\Sigma)$ and its sharpness.
According  to Theorem \ref{generalhardclosed} and Remark \ref{sharpnonextronclosed}, it suffices to show that Assumption \ref{Basciasumptionfunction2} is satisfied. Moreover, comparing Assumption \ref{Basciasumptionfunction} and Assumption \ref{Basciasumptionfunction2}, we only need to prove that
\begin{equation}\label{qglobalinteg}
\frac{\phi(r)^{p-1}|\psi'(r)|}{ |\nabla(\log \psi(r))|^{p}}\in L^{1}_{\loc}(M), \quad\text{ $\frac{\partial}{\partial r}\log\left(\frac{\phi(r)^{p-1}|\psi'(r)|}{ |\nabla(\log \psi(r))|^{p}}\right)$ is locally bounded in $M\backslash\Sigma$}.
\end{equation}
 The latter property from \eqref{qglobalinteg} easily follows, since $\mathbf{c}_\Lambda(r)$, $\mathbf{s}_\Lambda(r)$ and $(\mathbf{c}_\Lambda(r)-K \mathbf{s}_\Lambda(r))$ are nonzero and locally bounded in $M\backslash\Sigma$.
For the former property of \eqref{qglobalinteg},
due to \eqref{lqinteg}, we only need to check its local integrability around $\Sigma$. Therefore, without loss of generality, we may assume that $\Sigma$ is compact; otherwise, we choose a compact subset of $\Sigma$.
Now, the first limit of \eqref{smallsestimatedatA} together with the compactness of $\mathcal {V}S\Sigma$ provides a small $\epsilon\in (0,c_\mathcal {V}(\Sigma))$ such that
\begin{equation}\label{standarddetestimate}
\det  \mathcal {A}(t,\mathbf{n})\leq 2t^{m-n-1}, \quad \forall\,(t,\mathbf{n})\in  (0,\epsilon)\times \mathcal {V}S\Sigma.
\end{equation}
Since $\mathbf{s}_\Lambda(t)\sim t$ as $t\to 0^+$, by choosing eventually a smaller $\epsilon$,  we may assume
\begin{equation}\label{weightedestimate}
\frac{\phi(t)^{p-1}|\psi'(t)|}{ |(\log \psi(t))'|^{p}}= \frac{\mathbf{c}_\Lambda(t)^{1-p}\mathbf{s}_\Lambda(t)^{p+\beta}}{(\mathbf{c}_\Lambda(t)-K \mathbf{s}_\Lambda(t))^n}\leq 2 t^{p+\beta}, \quad \forall\,t\in (0,\epsilon).
\end{equation}
Due to $p+\beta>-(m - n)$, the volume expression \eqref{dv-g-Def} combined with \eqref{standarddetestimate} and \eqref{weightedestimate} yields
\begin{align*}
0\leq \int_{\T_\epsilon}   \frac{\phi(r)^{p-1}|\psi'(r)|}{ |\nabla(\log \psi(r))|^{p}}  \dvol_g\leq 4\vol(\mathbb{S}^{m-n-1})\vol_{i^*g}(\Sigma)\int^{\epsilon}_0  t^{p+\beta+m-n-1}{\dd}t<+\infty,
\end{align*}
which together with \eqref{lqinteg} implies $\frac{\phi(r)^{p-1}|\psi'(r)|}{ |\nabla(\log \psi(r))|^{p}}\in L^{1}_{\loc}(M)$.

 It remains to show the nonexistence of   extremal functions. Owing to the compactness of $\Sigma$ and \eqref{smallsestimatedatA}, choose a small $\delta\in (0, c_\mathcal {V}(\Sigma))$ such that
 \[
 \frac{\det \mathcal {A}(t,\mathbf{n})}{(\mathbf{c}_\Lambda(t)-K\mathbf{s}_\Lambda(t))^n}\geq \frac12 \mathbf{s}_{\Lambda}(t)^{m-n-1}, \quad \text{ for }(t,\mathbf{n})\in[0,\delta]\times \mathcal {V}S\Sigma.
 \]
Thus,
for any $\varepsilon>0$ we have
\begin{align*}
\int^\delta_\varepsilon \phi(t)^{p-1}|(\log \psi(t))'| \det \mathcal {A}(t,\mathbf{n}){\dd}t\geq \frac{|\alpha|}2\int^\delta_\varepsilon \mathbf{s}_{\Lambda}(t)^{-1} \mathbf{c}_{\Lambda}(t){\dd}t\rightarrow +\infty, \quad \text{ as }\varepsilon\rightarrow 0^+,
\end{align*}
 which implies Condition \eqref{limteexideoin3} in Remark \ref{sharpnonextronclosed} and   the nonexistence of   extremal functions.
\end{proof}

\begin{remark}
It should be noticed that \eqref{cnvcontroll} implies  $M\subset \overline{\T_{\mathfrak{t}_{\lambda,\kappa}}}$. In fact, for every $x\in \mathcal {V}\mathscr{C}\Sigma \backslash\Sigma$, there exists a unique minimal normal geodesic from $\Sigma$ to $x$ with $\dot{\gamma}(0)=\mathbf{n}\in \mathcal {V}S\Sigma$, see Lemma \ref{prop-1-3}. Clearly, the length of $\gamma$ is not  larger than  $c_\mathcal {V}(\mathbf{n}) $, which combined with \eqref{cnvcontroll} implies  $M\subset \overline{\T_{\mathfrak{t}_{\lambda,\kappa}}}$.  Obviously, this result can be considered as a submanifold version of the Bonnet-Myers theorem.
\end{remark}

\begin{remark}\label{lambdapostivevcase}
In the assumption of Theorem \ref{distance-weight-hardy-general} we require that $\Lambda<\lambda/4$ if $\lambda>0$; however,
 this condition can be relaxed to ``$\Lambda\leq \lambda/4$ if $\lambda>0$" whenever $n>0$. In fact, such an assumption is needed to establish \eqref{cnvcontroll}, i.e.,
\begin{equation}\label{estinjectiradi}
c_\mathcal {V}(\mathbf{n})< \min\{\mathfrak{r}_\Lambda,\mathfrak{t}_{\Lambda,K}\},
\end{equation}
 which guarantees the fact that the functions $\phi,\psi$ defined in \eqref{newphipsi} satisfy Assumption \ref{Basciasumptionfunction2}.
When $n>0$ and $\lambda>0$, in view of \eqref{estimatecklambda} and \eqref{tlkdefin}, we have
\[
c_\mathcal {V}(\mathbf{n})\leq  \mathfrak{t}_{\lambda,\kappa}=\mathfrak{c}_{\lambda,\kappa}< 2\mathfrak{r}_{\lambda}\leq \mathfrak{r}_{\Lambda} =\min\{\mathfrak{r}_\Lambda,\mathfrak{t}_{\Lambda,K}\},
\]
i.e., \eqref{estinjectiradi} remains valid, and the same argument as above can be followed.
Furthermore, if $n>0$, $\kappa=0$ and $\lambda>0$,  a similar argument applies whenever $\Lambda<\lambda$.
\end{remark}

\begin{remark} Although one could expect that \eqref{Hardy-Ineq-curvature-condition51} can be extended to $C^\infty_0(\Omega)$ (i.e., Theorem \ref{Hardy-thm-2}), it is easy to see that Assumption \ref{Basciasumptionfunction3}/\eqref{derivativeintegr} fails. Moreover, if $\Omega=M$ is a compact manifold, constant functions  belong to $C^\infty_0(\Omega)$, for which \eqref{Hardy-Ineq-curvature-condition51} clearly fails.
\end{remark}

An interesting application of Theorem \ref{distance-weight-hardy-general} to spheres is as follows, providing three different kinds of Hardy inequalities.

\begin{example}\label{example-sinh-spheres}Let $\MM_\lambda=\mathbb{S}^m$ and let $\Sigma$ be the great sphere $\mathbb{S}^{m-1}\subset\mathbb{S}^m$ (i.e., $n=m-1$).
Let $\Omega$ be an open hemisphere with $\partial \Omega=\Sigma$. Obviously, $(M,\Sigma,\Omega)$ satisfies the (non-optimal) lower curvature bound condition $(\lambda,\kappa)=(1,0)$.

Given $p>1$ and $\beta<-1$, by choosing different pairs of $(\Lambda,0)\leq (1,0)$ (i.e., $\Lambda=1/4,0,-1$), Theorem \ref{distance-weight-hardy-general} together with Remark \ref{lambdapostivevcase} implies for every  $u\in C^\infty_0(\Omega)$ that
\begin{align*}
\int_{\Omega}|\nabla u|^p r^{\beta+p}\dvol_g&\geq \left|  \frac{\beta+1}{p} \right|^p \int_{\Omega}|u|^p r^\beta \dvol_g,\\
\int_{\Omega}|\na u|^p\cos\left(\frac{r}2\right)^{2-m-p}\sin\left(\frac{r}2\right)^{p+\beta}\dvol_g&\geq \Big|{\beta+1\over {2p}}\Big|^p\int_{\Omega} |u|^p\cos\left(\frac{r}2\right)^{2-m}\sin\left(\frac{r}2\right)^\beta \dvol_g,\\
\int_{\Omega}|\na u|^p\cosh(r)^{2-m-p}\sinh(r)^{p+\beta}\dvol_g&\geq \Big|{\beta+1\over p}\Big|^p\int_{\Omega} |u|^p\cosh(r)^{2-m}\sinh(r)^\beta \dvol_g.
\end{align*}
In particular, all of them are sharp and no extremal functions exist in $C^\infty_0(\Omega)$.
\end{example}

\begin{remark}\rm
	A similar observation can be made as in Remark \ref{barbatis-remark}. Indeed, the domain $\Omega$ in Theorem \ref{distance-weight-hardy-general} is arbitrary  but
	the sharpness  depends on $c_\mathcal {V}(\Sigma)>0$. In addition, if
	\begin{equation*}\label{r finite-0}
		\sup_{x\in \Omega} r(x)<+\infty,
	\end{equation*}
	holds instead of $c_\mathcal {V}(\Sigma)>0$, one can prove formally the same improved sharp Hardy inequality as \eqref{newsharpss344} on the  spaces $C^\infty_0(\Omega_\Sigma)$ and $C^\infty_0(\Omega,\Sigma)$  instead of $C^\infty_0(\Omega)$, which is another natural extension of the result of Barbatis et al. \cite{BFT} (see also Chen et al. \cite{CLZ}).
\end{remark}

\subsection{Hardy inequalities involving logarithmic double-curvature weights
} \label{section-6-2}

The pair $(\phi,\psi)$ is defined as follows.
Given $\Lambda, K,s_1,s_2\in \mathbb{R}$ and $D\in (0,\min\{\mathfrak{r}_\Lambda,\mathfrak{t}_{\Lambda,K}\})$, $L\in(0,D]$, let
\begin{equation}
\phi(t):=(\mathbf{c}_\Lambda(t)-K\mathbf{s}_{\Lambda}(t))^{-{n\over p-1}}\mathbf{s}_\Lambda(t)^{-{m-n-1\over p-1}},\quad  \psi(t):=\psi(t,s_1,s_2):=\int^L_t \left[\log\left({\mathbf{s}_\Lambda(D)\over \mathbf{s}_\Lambda(\tau)}\right)\right]^{s_1} \mathbf{s}_\Lambda(\tau)^{s_2}\mathbf{c}_\Lambda(\tau){\dd}\tau.\label{def-psi-log-general}
\end{equation}

%
%
%
%
%
%
%
%
%
%
%
%
%
%
%

\begin{lemma}\label{computation-lem}
	The function $\psi$ in \eqref{def-psi-log-general} is well-defined and strictly decreasing in $(0,L]$ either when
	 $L<D$ and  $s_1,s_2\in \mathbb{R}$, or when $L=D$ and   $s_1>-1$ and $s_2\in \mathbb{R}$.
%
%
%
%
%
%
%
%
%
%
%
%
%
%
%
	Moreover, given $L\in (0,D]$, if
$s>0$, $s_1>-1$ and $s_2\leq -1$ satisfy
\[
s_1+1\geq s+(s_2+1)\log{\mathbf{s}_\Lambda(D)\over \mathbf{s}_\Lambda(L)},
\]
then
\begin{equation}
\psi(t)\leq s^{-1}\left[\log\left({\mathbf{s}_\Lambda(D)\over \mathbf{s}_\Lambda(t)}\right)\right]^{s_1+1} \mathbf{s}_\Lambda(t)^{s_2+1}=s^{-1}\left[\log\left({\mathbf{s}_\Lambda(D)\over \mathbf{s}_\Lambda(t)}\right)\right]{\mathbf{s}_\Lambda(t)\over \mathbf{c}_\Lambda(t)}|\psi'(t)|,\ \ t\in (0,L], \label{inequality-lemm-general}
\end{equation}
with equality if $s_2=-1$, $s=s_1+1$ and $L=D$.
\end{lemma}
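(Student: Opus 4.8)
The plan is to prove Lemma \ref{computation-lem} in two parts, mirroring the structure of Lemma \ref{computation-lem-global} but adapting to the fact that here $\psi$ is an \emph{integral from $t$ up to $L$}, hence decreasing. First I would settle the well-definedness and monotonicity. Since $\mathbf{s}_\Lambda$ is strictly increasing and positive on $(0,\mathfrak{r}_\Lambda]$, the substitution $\sigma:=\mathbf{s}_\Lambda(\tau)$ turns the defining integral into $\int_{\mathbf{s}_\Lambda(t)}^{\mathbf{s}_\Lambda(L)}\big[\log(\mathbf{s}_\Lambda(D)/\sigma)\big]^{s_1}\sigma^{s_2}\,{\dd}\sigma$; when $L<D$ the integrand is continuous on the compact interval $[\mathbf{s}_\Lambda(t),\mathbf{s}_\Lambda(L)]$ for any $s_1,s_2\in\mathbb{R}$, and when $L=D$ the only possible singularity is at the upper endpoint $\sigma=\mathbf{s}_\Lambda(D)$, where $\log(\mathbf{s}_\Lambda(D)/\sigma)\to 0^+$, so integrability there holds precisely when $s_1>-1$ — this is exactly the $H_2$-type criterion of Proposition \ref{basicloganalysis}, which I would invoke directly. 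Strict monotonicity is immediate from $\psi'(t)=-\big[\log(\mathbf{s}_\Lambda(D)/\mathbf{s}_\Lambda(t))\big]^{s_1}\mathbf{s}_\Lambda(t)^{s_2}\mathbf{c}_\Lambda(t)<0$ on $(0,L]$, using $t<D\le\mathfrak{r}_\Lambda$ so that $\mathbf{c}_\Lambda(t)>0$ and $\mathbf{s}_\Lambda(t)<\mathbf{s}_\Lambda(D)$.

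For the inequality \eqref{inequality-lemm-general}, I would set
\[
F(t):=s^{-1}\left[\log\left({\mathbf{s}_\Lambda(D)\over \mathbf{s}_\Lambda(t)}\right)\right]^{s_1+1}\mathbf{s}_\Lambda(t)^{s_2+1}-\psi(t),\qquad t\in(0,L],
\]
and show $F\ge 0$. At $t=L$ one has $\psi(L)=0$, while the first term is nonnegative, so $F(L)\ge 0$. Differentiating,
\[
F'(t)=s^{-1}\left[\log\left({\mathbf{s}_\Lambda(D)\over \mathbf{s}_\Lambda(t)}\right)\right]^{s_1}\mathbf{s}_\Lambda(t)^{s_2}\mathbf{c}_\Lambda(t)\left[(s_2+1)\log\left({\mathbf{s}_\Lambda(D)\over \mathbf{s}_\Lambda(t)}\right)-(s_1+1)\right]+\left[\log\left({\mathbf{s}_\Lambda(D)\over \mathbf{s}_\Lambda(t)}\right)\right]^{s_1}\mathbf{s}_\Lambda(t)^{s_2}\mathbf{c}_\Lambda(t),
\]
where the last term is $-\psi'(t)$. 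Factoring out the common positive quantity $\big[\log(\mathbf{s}_\Lambda(D)/\mathbf{s}_\Lambda(t))\big]^{s_1}\mathbf{s}_\Lambda(t)^{s_2}\mathbf{c}_\Lambda(t)$, the bracket becomes $s^{-1}\big[(s_2+1)\log(\mathbf{s}_\Lambda(D)/\mathbf{s}_\Lambda(t))-(s_1+1)\big]+1$. The function $t\mapsto \log(\mathbf{s}_\Lambda(D)/\mathbf{s}_\Lambda(t))$ is decreasing, so on $(0,L]$ it is $\ge \log(\mathbf{s}_\Lambda(D)/\mathbf{s}_\Lambda(L))$; since $s_2+1\le 0$, the quantity $(s_2+1)\log(\mathbf{s}_\Lambda(D)/\mathbf{s}_\Lambda(t))$ is $\le (s_2+1)\log(\mathbf{s}_\Lambda(D)/\mathbf{s}_\Lambda(L))$, and the hypothesis $s_1+1\ge s+(s_2+1)\log(\mathbf{s}_\Lambda(D)/\mathbf{s}_\Lambda(L))$ then gives $(s_2+1)\log(\mathbf{s}_\Lambda(D)/\mathbf{s}_\Lambda(t))-(s_1+1)\le -s$, whence the bracket is $\le s^{-1}(-s)+1=0$. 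Thus $F'(t)\le 0$ on $(0,L]$, so $F$ is nonincreasing there and $F(t)\ge F(L)\ge 0$, which is \eqref{inequality-lemm-general}; the second equality in \eqref{inequality-lemm-general} is just the formula for $\psi'(t)$ rewritten. Finally, in the equality case $s_2=-1$, $s=s_1+1$, $L=D$, every inequality above collapses: $s_2+1=0$ kills the first summand in $F'$, so $F'\equiv 0$ and $F(t)\equiv F(D)=0$.

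I do not anticipate a serious obstacle here; the argument is a routine sign analysis of $F'$. The one point requiring care is the interplay of the three constraints ($s>0$, $s_1>-1$, $s_2\le -1$, plus the displayed inequality on $s_1+1$): I must make sure the monotonicity step uses $s_2+1\le 0$ in the correct direction and that the evaluation at the endpoint $L$ is the relevant one — i.e., that $F(L)\ge 0$ rather than needing a limit at $0^+$, which is why the integral defining $\psi$ runs \emph{down to} $t$ rather than up from $0$. (If $L=D$ one should also double-check that $F$ extends continuously to $t=D$ with $F(D)=0$, using $s_1>-1$ so that $[\log(\mathbf{s}_\Lambda(D)/\mathbf{s}_\Lambda(t))]^{s_1+1}\to 0$.) Everything else follows the template of Lemma \ref{computation-lem-global}.
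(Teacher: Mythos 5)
Your proposal is correct and follows essentially the same route as the paper: well-definedness via the $H_2$-criterion of Proposition \ref{basicloganalysis}, strict monotonicity from the sign of $\psi'$, and then a monotone auxiliary function whose derivative is controlled by the hypothesis $s_1+1\geq s+(s_2+1)\log\big(\mathbf{s}_\Lambda(D)/\mathbf{s}_\Lambda(L)\big)$ together with $s_2+1\leq 0$, concluded by evaluating at the endpoint $t=L$ where $\psi(L)=0$. The only difference is the sign convention for $F$ (you take the bound minus $\psi$, the paper takes $\psi$ minus the bound), which is immaterial.
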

\begin{proof}
The well-definiteness of $\psi$ is based on Proposition \ref{basicloganalysis}; moreover, it is a decreasing positive function, as
	\begin{equation}\label{derivapsi}
		\psi'(t)=-\left[\log\left({\mathbf{s}_\Lambda(D)\over \mathbf{s}_\Lambda(t)}\right)\right]^{s_1} \mathbf{s}_\Lambda(t)^{s_2}\mathbf{c}_\Lambda(t)<0,\quad t\in (0,L).
	\end{equation}
	Let
 \[
 F(t):=\psi(t)-s^{-1}\left[\log\left({\mathbf{s}_\Lambda(D)\over \mathbf{s}_\Lambda(t)}\right)\right]^{s_1+1} \mathbf{s}_\Lambda(t)^{s_2+1}.
 \]
  Since $\mathbf{s}_\Lambda$ is  increasing  in $[0,D]$, a direct calculation together with \eqref{derivapsi} yields
\[
F'(t)=s^{-1}\left[\log\left({\mathbf{s}_\Lambda(D)\over \mathbf{s}_\Lambda(t)}\right)\right]^{s_1} \mathbf{s}_\Lambda(t)^{s_2}\mathbf{c}_\Lambda(t)\left[s_1+1-s-(s_2+1)\log\left({\mathbf{s}_\Lambda(D)\over \mathbf{s}_\Lambda(t)}\right)\right]\geq0.
\]
Clearly, $\psi(L)=0$ and $F(L)\leq0$. Thus,  $F(t)\leq F(L)\leq0$ for any $t\in (0,L]$, which is exactly (\ref{inequality-lemm-general}). In particular,  if $s_2=-1$, $s=s_1+1$ and $L=D$, then $F'(t)=0$ and $F(t)=F(L)=F(D)=0$.
\end{proof}

In addition, elementary computations imply the fact that $\Lambda\mapsto \frac{\mathbf{s}_\Lambda(D)}{\mathbf{s}_\Lambda(L)}$ is decreasing in $\Lambda\in (-\infty,\lambda)$ whenever $L\in (0,D]$; therefore, we have the following monotonicity property which plays a crucial role in the sequel.
\begin{lemma}\label{monotonocity-a-function} Given $\lambda\in \mathbb{R}$, $D\in (0,\mathfrak{r}_\lambda)$ and $L\in(0,D]$,  we have
$$
\frac{\mathbf{s}_\lambda(D)}{\mathbf{s}_\lambda(L)}\leq \frac{\mathbf{s}_\Lambda(D)}{\mathbf{s}_\Lambda(L)},\quad \forall\,\Lambda\leq \lambda.
$$
\end{lemma}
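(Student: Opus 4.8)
\textbf{Proof proposal for Lemma \ref{monotonocity-a-function}.}
The plan is to reduce the claimed inequality to a single-variable monotonicity statement. Since $D\in(0,\mathfrak{r}_\lambda)$ and $L\in(0,D]$, the quantities $\mathbf{s}_\Lambda(D)$ and $\mathbf{s}_\Lambda(L)$ are positive for every $\Lambda\leq\lambda$ (recall $\mathfrak{r}_\Lambda\geq\mathfrak{r}_\lambda>D$ by \eqref{r-0-Def} since $\Lambda\leq\lambda$, and $\mathbf{s}_\Lambda$ is strictly increasing and positive on $(0,\mathfrak{r}_\Lambda]$). Hence the ratio $R(\Lambda):=\mathbf{s}_\Lambda(D)/\mathbf{s}_\Lambda(L)$ is well-defined and positive on $(-\infty,\lambda]$, and the assertion is precisely that $R$ is non-increasing in $\Lambda$ on this interval, which would give $R(\Lambda)\geq R(\lambda)$ for $\Lambda\leq\lambda$ — i.e. $\mathbf{s}_\lambda(D)/\mathbf{s}_\lambda(L)\leq\mathbf{s}_\Lambda(D)/\mathbf{s}_\Lambda(L)$.

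First I would establish a clean integral/series representation that makes the $\Lambda$-dependence transparent. The function $\mathbf{s}_\Lambda(t)$ has the power-series expansion $\mathbf{s}_\Lambda(t)=\sum_{k\geq 0}(-\Lambda)^k t^{2k+1}/(2k+1)!$ (valid uniformly for $t$ in compact subsets of $[0,\mathfrak{r}_\Lambda)$, and in all three cases $\lambda>0,\lambda=0,\lambda<0$). Then
\[
R(\Lambda)=\frac{\sum_{k\geq 0}(-\Lambda)^k\,D^{2k+1}/(2k+1)!}{\sum_{k\geq 0}(-\Lambda)^k\,L^{2k+1}/(2k+1)!}.
\]
Writing $a_k:=D^{2k+1}/(2k+1)!$ and $b_k:=L^{2k+1}/(2k+1)!$, one has $a_k/b_k=(D/L)^{2k}$, which is non-decreasing in $k$ since $D\geq L$. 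The standard way to conclude is to differentiate $R$ in $\Lambda$ and show $R'(\Lambda)\leq 0$: a direct computation gives, with $x:=-\Lambda$,
\[
\frac{d}{dx}\frac{\sum_k a_k x^k}{\sum_k b_k x^k}
=\frac{\left(\sum_k k a_k x^{k-1}\right)\left(\sum_j b_j x^j\right)-\left(\sum_k a_k x^k\right)\left(\sum_j j b_j x^{j-1}\right)}{\left(\sum_j b_j x^j\right)^2},
\]
and the numerator equals $\sum_{k,j}(k-j)a_k b_j x^{k+j-1}=\tfrac12\sum_{k,j}(k-j)(a_k b_j-a_j b_k)x^{k+j-1}$, which is $\geq 0$ because $(k-j)$ and $(a_k b_j - a_j b_k)=b_k b_j\big((D/L)^{2k}-(D/L)^{2j}\big)$ have the same sign. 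Hence $dR/dx\geq 0$, i.e. $dR/d\Lambda\leq 0$, and $R$ is non-increasing in $\Lambda$ as required.

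Alternatively — and this may be cleaner to typeset — I would avoid series and instead use a direct comparison: fix $L<D$ (the case $L=D$ is trivial since both sides equal $1$) and show $\Lambda\mapsto \log\mathbf{s}_\Lambda(D)-\log\mathbf{s}_\Lambda(L)$ is non-increasing by computing $\partial_\Lambda\log\mathbf{s}_\Lambda(t)$ and checking it is increasing in $t\in(0,\mathfrak{r}_\lambda)$; the latter reduces to a sign analysis of the Wronskian-type expression $\mathbf{s}_\Lambda(t)\,\partial_\Lambda\mathbf{c}_\Lambda(t)-\mathbf{c}_\Lambda(t)\,\partial_\Lambda\mathbf{s}_\Lambda(t)$, using that $\partial_\Lambda\mathbf{s}_\Lambda$ solves an inhomogeneous version of $f''+\Lambda f=0$ (namely $(\partial_\Lambda\mathbf{s}_\Lambda)''+\Lambda\,\partial_\Lambda\mathbf{s}_\Lambda=-\mathbf{s}_\Lambda$, obtained by differentiating \eqref{relations} in $\Lambda$). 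I expect the main obstacle to be purely bookkeeping: handling the three sign regimes of $\lambda$ (and of $\Lambda$) uniformly and making sure all the functions stay positive and the differentiations under the sum/integral are justified on $(0,\mathfrak{r}_\lambda)$; the underlying monotonicity is robust and the inequality degenerates to equality exactly when $L=D$.
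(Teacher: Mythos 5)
Your main (series) argument is complete only for $\Lambda\le 0$, and it has a genuine gap in exactly the remaining case that the lemma must cover: when $\lambda>0$ the hypothesis $\Lambda\le\lambda$ allows $\Lambda\in(0,\lambda]$, i.e. $x=-\Lambda<0$, and then the factor $x^{k+j-1}$ in your symmetrized numerator $\tfrac12\sum_{k,j}(k-j)(a_kb_j-a_jb_k)\,x^{k+j-1}$ changes sign with the parity of $k+j-1$, so the term-by-term positivity argument collapses (the conclusion is still true for these $\Lambda$, but not for that reason). Your fallback sketch is the right idea but states the required monotonicity backwards: since $\frac{d}{d\Lambda}\log\frac{\mathbf{s}_\Lambda(D)}{\mathbf{s}_\Lambda(L)}=\partial_\Lambda\log\mathbf{s}_\Lambda(D)-\partial_\Lambda\log\mathbf{s}_\Lambda(L)$ and $D\ge L$, you need $t\mapsto\partial_\Lambda\log\mathbf{s}_\Lambda(t)$ to be non-\emph{increasing} in $t$, not increasing; and that step is only announced via a Wronskian sign analysis, not carried out. (Incidentally, $a_kb_j-a_jb_k=b_kb_j\big((D/L)^{2k+1}-(D/L)^{2j+1}\big)$, not with exponents $2k,2j$; the sign is the same, so this is only cosmetic.)

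For comparison, the paper offers no proof at all: it simply asserts that ``elementary computations'' give the monotonicity of $\Lambda\mapsto \mathbf{s}_\Lambda(D)/\mathbf{s}_\Lambda(L)$. The clean way to close your gap is precisely that elementary computation. For $\Lambda=u^2>0$ (with $uD,uL\in(0,\pi)$ because $D<\mathfrak{r}_\lambda\le\mathfrak{r}_\Lambda$ by \eqref{r-0-Def}), one has $\frac{d}{du}\log\frac{\sin(uD)}{\sin(uL)}=D\cot(uD)-L\cot(uL)=\frac1u\big[(uD)\cot(uD)-(uL)\cot(uL)\big]<0$, since $v\mapsto v\cot v$ is strictly decreasing on $(0,\pi)$ — its derivative is $-\big(v-\tfrac12\sin 2v\big)/\sin^2 v<0$, which is exactly the positivity of $B$ in \eqref{lasttemnumber}; hence the ratio is decreasing in $\Lambda>0$. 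For $\Lambda=-w^2<0$ use instead that $v\mapsto v\coth v$ is increasing (its derivative $\coth v-v/\sinh^2 v>0$ is the positivity of $B$ in \eqref{Bshchcontroll}), which gives $\frac{d}{dw}\log\frac{\sinh(wD)}{\sinh(wL)}>0$, i.e. again decreasing in $\Lambda$; the value at $\Lambda=0$ is glued in by continuity of $\Lambda\mapsto\mathbf{s}_\Lambda(t)$. With this (or by keeping your series argument for $\Lambda\le0$ and invoking the cotangent computation only for $\Lambda\in(0,\lambda]$) the lemma is proved in all sign regimes.
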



We now derive the following Hardy inequality with logarithmic weights.

\begin{theorem}\label{log-weight-thm-general} Let $(M,g)$ be an $m$-dimensional complete Riemannian manifold, $i:\Sigma \hookrightarrow M$ be an $n$-dimensional  closed submanifold and $\Omega$ be a non-empty domain in $M$ satisfying \eqref{natural-domain} and $\sup_{x\in \Omega}r(x)<+\infty$.     Suppose  $(M,\Sigma,\Omega)$ satisfies the lower curvature bound condition $(\lambda,\kappa)$. If $D\in(0,\min\{\mathfrak{r}_\lambda,\mathfrak{t}_{\lambda,\kappa}\})$ and  $s,\alpha,\beta\in \mathbb{R}$ are such that
\[
\sup_{x\in \Omega}r(x)\leq D,\quad   s>0,  \quad \beta>-1,\quad \alpha-p+m-n\leq 0, \quad \beta+1\geq s+(\alpha-p+m-n)\log\left({\mathbf{s}_\lambda(D)\over \sup_{x\in \Omega}\mathbf{s}_\lambda(r(x))}\right),
\]
 then for any $(\Lambda,K)\leq (\lambda,\min\{\kappa,0\})$ with $\Lambda<\lambda/4$ if $\lambda>0$, we have
\begin{align} & \int_{\Omega_\Sigma}|\na u|^p\left[\log\left({\mathbf{s}_\Lambda(D)\over \mathbf{s}_\Lambda(r)}\right)\right]^{p+\beta}\frac{\mathbf{c}_\Lambda(r)^{1-p}\mathbf{s}_\Lambda(r)^{\alpha}}{(\mathbf{c}_\Lambda(r)-K\mathbf{s}_\Lambda(r))^n} \dvol_g\nonumber\\
\geq& \left({s\over p}\right)^p\int_{\Omega_\Sigma}  |u|^p \left[\log\left({\mathbf{s}_\Lambda(D)\over \mathbf{s}_\Lambda(r)}\right)\right]^\beta \frac{\mathbf{c}_\Lambda(r)\mathbf{s}_\Lambda(r)^{\alpha-p}}{(\mathbf{c}_\Lambda(r)-K\mathbf{s}_\Lambda(r))^n} \dvol_g,\quad \forall\, u\in C^\infty_0(\Omega_\Sigma).\label{hardy-log-weight-general}
\end{align}
Moreover, if either $\Omega=M$ or $\Sigma=\partial\Omega$, and
\begin{equation}\label{compsharpnessloginequ}
\text{$\Sigma$ is compact},\quad \alpha=p-(m-n),\quad s=\beta+1,\quad \sup_{x\in \Omega}r(x)=D,
\end{equation}
then \eqref{hardy-log-weight-general} is sharp.
\end{theorem}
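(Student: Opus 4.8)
The strategy mirrors the proof of Theorem~\ref{distance-weight-hardy-general}\eqref{basicriccboundedbelow}: reduce \eqref{hardy-log-weight-general} to the generic inequality of Theorem~\ref{Hardy-thm-general} by checking Assumption~\ref{Basciasumptionfunction} for a suitable pair $(\phi,\psi)$. Concretely, I would take $(\phi,\psi)$ as in \eqref{def-psi-log-general} with the parameters
\[
s_1:=\beta>-1,\qquad s_2:=\alpha-p+(m-n)-1\leq -1,\qquad L:=\sup_{x\in\Omega}r(x)\,(\leq D),
\]
so that $\phi(t)=(\mathbf{c}_\Lambda(t)-K\mathbf{s}_\Lambda(t))^{-n/(p-1)}\mathbf{s}_\Lambda(t)^{-(m-n-1)/(p-1)}$ and $\psi(t)=\psi(t,s_1,s_2)$. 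First I would note that the choice $(\Lambda,K)\leq(\lambda,\min\{\kappa,0\})$ with $\Lambda<\lambda/4$ if $\lambda>0$ guarantees, exactly as in \eqref{cnvcontroll}, that $c_{\mathcal V}(\mathbf{n})\leq\mathfrak{t}_{\lambda,\kappa}<\min\{\mathfrak{r}_\Lambda,\mathfrak{t}_{\Lambda,K}\}$, hence $\phi(r),\psi(r)$ are well-defined on $\Omega_\Sigma$ (using also Lemma~\ref{computation-lem} for the well-definiteness and strict monotonicity of $\psi$ on $(0,L]$). Assumption~\ref{Basciasumptionfunction}/\eqref{smoothofphi}\eqref{smoothofpsi}\eqref{negligbileset} then follow immediately, and \eqref{nansigcontroll} is obtained by combining the Heintze--Karcher-type estimate $\Delta r\leq G_{\lambda,\kappa}(r)$ of Theorem~\ref{Laplacian-comparison-Sigma} with $G_{\lambda,\kappa}(r)\leq G_{\Lambda,K}(r)$ from Proposition~\ref{gestiamtegra}, together with \eqref{plaplacephi} and $\psi'<0$ (so $\sgn(\psi'(r))=-1$).

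The heart of the computational part is the weight identification. Applying Lemma~\ref{computation-lem}, whose hypothesis $s_1+1\geq s+(s_2+1)\log\big(\mathbf{s}_\Lambda(D)/\mathbf{s}_\Lambda(L)\big)$ I would derive from the stated assumption on $\beta,s,\alpha$ by using $s_2+1=\alpha-p+m-n\leq0$ together with the monotonicity $\mathbf{s}_\lambda(D)/\mathbf{s}_\lambda(L)\leq \mathbf{s}_\Lambda(D)/\mathbf{s}_\Lambda(L)$ of Lemma~\ref{monotonocity-a-function}, one gets \eqref{inequality-lemm-general} and hence
\[
\frac{\phi(r)^{p-1}|\psi'(r)|}{|\nabla(\log\psi(r))|^{p}}=\frac{\phi(r)^{p-1}\psi(r)^{p}}{|\psi'(r)|^{p-1}}\leq s^{-p}\Big[\log\tfrac{\mathbf{s}_\Lambda(D)}{\mathbf{s}_\Lambda(r)}\Big]^{p+\beta}\frac{\mathbf{c}_\Lambda(r)^{1-p}\mathbf{s}_\Lambda(r)^{\alpha}}{(\mathbf{c}_\Lambda(r)-K\mathbf{s}_\Lambda(r))^{n}},
\]
while $\phi(r)^{p-1}|\psi'(r)|=\big[\log(\mathbf{s}_\Lambda(D)/\mathbf{s}_\Lambda(r))\big]^{\beta}\mathbf{c}_\Lambda(r)\mathbf{s}_\Lambda(r)^{\alpha-p}(\mathbf{c}_\Lambda(r)-K\mathbf{s}_\Lambda(r))^{-n}$. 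The local integrability in Assumption~\ref{Basciasumptionfunction}/\eqref{integrable} is verified near $\Sigma$ using \eqref{smallsestimatedatA}$_1$ ($\det\mathcal A(t,\mathbf{n})\sim t^{m-n-1}$), \eqref{dv-g-Def}, and Proposition~\ref{basicloganalysis} (noting $s_2+p>-1$ for the first weight and $s_2>-1$... in fact $s_2\leq-1$, so here one needs the bound coming from the exponents $p+\beta$ and $\beta$ of the logarithm together with $\alpha-p+m-n\leq0$, i.e. $s_2\geq -(m-n)$ after including the volume density), and away from $\Sigma$ by continuity of $\det\mathcal A$ on compact sets. Then Theorem~\ref{Hardy-thm-general} applied to $(\phi,\psi)$, combined with the two weight relations above, yields \eqref{hardy-log-weight-general} with constant $(s/p)^{p}$.

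For sharpness, under \eqref{compsharpnessloginequ} one has $s_2=-1$, $s=s_1+1$ and $L=D$, so \eqref{inequality-lemm-general} becomes an equality and the two weights in \eqref{hardy-log-weight-general} coincide exactly with those generated by $(\phi,\psi)$. Since $\Sigma$ is compact, $\mathfrak{c}_{\mathcal V}(\Sigma)>0$ and $\Omega$ is bounded (as $\Omega\subset\T_D$ with $\overline{\T_D}$ compact), so I would invoke Corollary~\ref{geomertricmeaningofsharpness2} in the case $\psi(0^+)=+\infty$, $\psi'<0$: indeed $\psi(t)\sim(s_1+1)^{-1}[\log(\mathbf{s}_\Lambda(D)/\mathbf{s}_\Lambda(t))]^{s_1+1}\to+\infty$ as $t\to0^+$ since $s_1+1>0$. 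It remains to check \eqref{Ambrosio-condition-2}, which I would do by the same scheme as in Theorem~\ref{distance-weight-hardy-general}\eqref{basicriccboundedbelow}: use $\det\mathcal A(t,\mathbf{n})\leq(\mathbf{c}_\lambda(t)-\kappa\mathbf{s}_\lambda(t))^{n}\mathbf{s}_\lambda(t)^{m-n-1}$ (Theorem~\ref{Laplacian-comparison-Sigma}) to dominate the density, reduce to one-dimensional integrals over $(0,s_0)$ and $(s_0,c_{\mathcal V}(\mathbf{n}))$, and conclude via Proposition~\ref{basicloganalysis} that $\psi(r)^{-(1+\varepsilon)}\phi(r)^{p-1}|\psi'(r)|\in L^1(\T_{s_0}\cap\Omega)$ (using $-(1+\varepsilon)(s_1+1)+s_1=-1-\varepsilon(s_1+1)>-1$) and $\psi(r)^{1+\varepsilon}\phi(r)^{p-1}|\psi'(r)|\in L^1(\Omega\setminus\T_{s_0})$. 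Non-existence of extremal functions in $C^\infty_0(\Omega_\Sigma)$ is then immediate from Theorem~\ref{achieve-thm}. The main obstacle I anticipate is the bookkeeping in the two integrability checks — around $\Sigma$ the weights carry a logarithmic singularity that must be balanced against the volume factor $t^{m-n-1}$, and near the focal cut radius one must carefully invoke the borderline cases of Proposition~\ref{basicloganalysis} (the $L=D$ endpoint with exponent $s_1>-1$); the conversion between the $\mathbf{s}_\lambda$-hypothesis on the parameters and the $\mathbf{s}_\Lambda$-form required by Lemma~\ref{computation-lem}, handled through Lemma~\ref{monotonocity-a-function}, is the other delicate point.
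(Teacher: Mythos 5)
Your proposal follows essentially the same route as the paper's proof: the same pair $(\phi,\psi)$ from \eqref{def-psi-log-general} with $s_1=\beta$, $s_2=\alpha-p+(m-n)-1$, $L=\sup_{x\in\Omega}r(x)$, the same verification of Assumption \ref{Basciasumptionfunction} via Theorem \ref{Laplacian-comparison-Sigma}, Proposition \ref{gestiamtegra}, \eqref{plaplacephi}, Lemma \ref{computation-lem} and Lemma \ref{monotonocity-a-function}, the same application of Theorem \ref{Hardy-thm-general} through \eqref{pp-0p-00}, and the same sharpness scheme (equality case of \eqref{inequality-lemm-general}, Corollary \ref{geomertricmeaningofsharpness2}, verification of \eqref{Ambrosio-condition-2} by Theorem \ref{Laplacian-comparison-Sigma} and Proposition \ref{basicloganalysis}, non-existence by Theorem \ref{achieve-thm}).

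The one place where your reasoning goes astray is the check of Assumption \ref{Basciasumptionfunction}/\eqref{integrable}: you attempt to prove integrability of the weights \emph{near} $\Sigma$ by combining \eqref{dv-g-Def}, \eqref{smallsestimatedatA} and Proposition \ref{basicloganalysis}, notice yourself that $s_2\leq-1$ obstructs this, and propose a patch that does not work: after multiplying by $\det\mathcal {A}(t,\mathbf{n})\sim t^{m-n-1}$, the radial integrand of $\phi(r)^{p-1}|\psi'(r)|$ behaves like $\left[\log\left(\mathbf{s}_\Lambda(D)/\mathbf{s}_\Lambda(t)\right)\right]^{\beta}t^{s_2}$ with $s_2\leq-1$ and $\beta>-1$, which by Proposition \ref{basicloganalysis} is \emph{not} integrable at $t=0$. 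The saving point — and the reason this theorem is stated on $C^\infty_0(\Omega_\Sigma)$ rather than $C^\infty_0(\Omega)$ — is that Assumption \ref{Basciasumptionfunction} only requires $L^1_{\loc}(\Omega_\Sigma)$: compact subsets of $\Omega_\Sigma$ are disjoint from $\Sigma$, so the local boundedness of $\phi(r)$, $\psi(r)$, $\psi'(r)$ and of the right-hand side of \eqref{pp-0p-00} on such sets already gives everything, and no estimate near $\Sigma$ is needed (this is exactly how the paper argues). Apart from this, there is a small sign slip in the sharpness part: the logarithmic exponent $-(1+\varepsilon)(\beta+1)+\beta=-1-\varepsilon(\beta+1)$ is $<-1$, not $>-1$, and this is precisely the condition Proposition \ref{basicloganalysis} needs for convergence of the integral over $\T_{s_0}\cap\Omega$ when the $\mathbf{s}_\Lambda$-power equals $-1$. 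With these two corrections your argument coincides with the paper's.
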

\begin{proof} In view of the assumption,  choose
\begin{equation}\label{constlogfixed}
L:=\sup_{x\in \Omega}r(x)\leq D<\min\{\mathfrak{r}_\lambda,\mathfrak{t}_{\lambda,\kappa}\},\quad s_1:=\beta>-1, \quad s_2:=\alpha-p+(m-n)-1\leq -1.
\end{equation}
 Due to Lemma \ref{computation-lem}, the pair of functions $(\phi,\psi)$ constructed by \eqref{def-psi-log-general} is well-defined on $(0,L]$. Note that \eqref{cnvcontroll} remains valid and hence, Assumption \ref{Basciasumptionfunction}/\eqref{smoothofphi}\eqref{smoothofpsi}\eqref{negligbileset} hold. Moreover, according to Theorem \ref{Laplacian-comparison-Sigma} and Proposition \ref{gestiamtegra}, we have
\begin{align*}
\Delta r\leq G_{\lambda,\kappa}(r)\leq G_{\Lambda,K}(r)=-(p-1)\frac{\phi'(r)}{\phi(r)},\quad \forall\,x\in  \mathcal {V}\mathscr{D}\Sigma,
\end{align*}
which combined with \eqref{plaplacephi} and \eqref{derivapsi} yields  Assumption \ref{Basciasumptionfunction}/\eqref{nansigcontroll}.
Now we are going to show that Assumption \ref{Basciasumptionfunction}/\eqref{integrable} holds. Note that $\phi(r),\psi(r)$ are locally bounded and the latter is  locally Lipschitz in $\Omega_\Sigma$. Thus, one has
$
\phi(r)^{p-1}, \phi(r)^{p-1}\psi(r),\phi(r)^{p-1}|\psi'(r)|\in L^1_{\lo}(\Omega_\Sigma).
$
Moreover, by the assumption and Lemma \ref{monotonocity-a-function} we obtain
\begin{align*}
s_1+1&=\beta+1\geq s+(\alpha-p+m-n)\log\left({\mathbf{s}_\lambda(D)\over \sup_{x\in \Omega}\mathbf{s}_\lambda(r(x))}\right)\\
&=s+(s_2+1)\log\left({\mathbf{s}_\lambda(D)\over  \mathbf{s}_\lambda(L)}\right)\geq s+(s_2+1)\log\left({\mathbf{s}_\Lambda(D)\over  \mathbf{s}_\Lambda(L)}\right),
\end{align*}
which combined with Lemma \ref{computation-lem} yields \eqref{inequality-lemm-general}. Thus,
 we have
\begin{align}
\frac{\phi(r)^{p-1} |\psi'(r)|}{|\nabla(\log \psi(r))|^p}=\,&(\mathbf{c}_\Lambda(r)-K\mathbf{s}_\Lambda(r))^{-n}\mathbf{s}_\Lambda(r)^{-(m-n-1)}\frac{\psi(r)^p}{|\psi'(r)|^{p-1}}\notag\\
\leq &
s^{-p}\left[\log\left({\mathbf{s}_\Lambda(D)\over \mathbf{s}_\Lambda(r)}\right)\right]^{p+\beta}\frac{\mathbf{c}_\Lambda(r)^{1-p}\mathbf{s}_\Lambda(r)^{\alpha}}{(\mathbf{c}_\Lambda(r)-K\mathbf{s}_\Lambda(r))^n}, \label{pp-0p-00}
\end{align}
where the latter is obviously locally bounded in $\Omega_\Sigma$. Thus, Assumption \ref{Basciasumptionfunction}/\eqref{integrable} holds.
From above, Assumption \ref{Basciasumptionfunction} is satisfied and hence,  \eqref{hardy-log-weight-general} follows by \eqref{Hardy-Inequ-general} and \eqref{pp-0p-00}.

Assume either  $\Omega=M$ or $\Sigma=\partial\Omega$; we show the sharpness of \eqref{hardy-log-weight-general} under \eqref{compsharpnessloginequ}.
Due to \eqref{constlogfixed}, we have
\[
s_1=\beta>-1,\quad s_2=-1,\quad s=\beta+1>0,\quad L=\sup_{x\in \Omega}r(x)=D,
\]
which combined with
Lemma \ref{computation-lem} yields
\begin{equation}
\psi(t)=(\beta+1)^{-1}\left[\log\left({\mathbf{s}_\Lambda(D)\over \mathbf{s}_\Lambda(t)}\right)\right]^{\beta+1},\quad \forall\, t\in (0,D],\label{def-psi-log-general-special}
\end{equation}
which furnishes $\psi(0^+)=+\infty$ and $\psi'(t)<0$. Since now $\Omega$ is a bounded domain, in view of Corollary \ref{geomertricmeaningofsharpness2}, it remains to show \eqref{Ambrosio-condition-2}. Indeed,
the compactness of $\Sigma$ implies $c_\mathcal {V}(\Sigma)>0$. Choose $s_0\in (0,\min\{c_\mathcal {V}(\Sigma),D\})$.
Thus, \eqref{def-psi-log-general-special} together with  \eqref{Lap-deta-controll} and \eqref{cscontrola11} (i.e., the integral version of Proposition \ref{gestiamtegra}) yields
\begin{align} &\int_{\T_{s_0}\cap\,\Omega} \psi(r)^{-(1+\e)}\phi(r)^{p-1}|\psi'(r)|\dvol_g\nonumber\\
\leq&(\beta+1)^{1+\e}\int_{r<s_0}\left[\log\left({\mathbf{s}_\Lambda(D)\over \mathbf{s}_\Lambda(r)}\right)\right]^{-(1+\e)(\beta+1)+\beta}\frac{\mathbf{c}_\Lambda(r)\mathbf{s}_\Lambda(r)^{-(m-n)}}{(\mathbf{c}_\Lambda(r)-K\mathbf{s}_\Lambda(r))^n}\dvol_g\nonumber\\
\leq& (\beta+1)^{1+\e}\vol(\mathbb{S}^{m-n-1})\vol_{i^*g}(\Sigma) \int_0^{s_0}\left[\log\left({\mathbf{s}_\Lambda(D)\over \mathbf{s}_\Lambda(r)}\right)\right]^{-(1+\e)(\beta+1)+\beta}\mathbf{s}_\Lambda(r)^{-1}\mathbf{c}_\Lambda(r){\dd}r<+\infty,\nonumber
\end{align}
where the last inequality follows by $-(1+\e)(\beta+1)+\beta<-1$ and  Proposition \ref{basicloganalysis}. In the same way, by  $(1+\e)(\beta+1)+\beta>-1$ and Lemma \ref{computation-lem}  we have
\begin{align}
&\int_{\Omega\backslash{\T_{s_0}}} \psi(r)^{1+\e}\phi(r)^{p-1}|\psi'(r)|\dvol_g\leq(\beta+1)^{-(1+\e)}\int_{r>s_0} \left[\log\left({\mathbf{s}_\Lambda(D)\over \mathbf{s}_\Lambda(r)}\right)\right]^{(1+\e)(\beta+1)+\beta}\frac{\mathbf{c}_\Lambda(r)\mathbf{s}_\Lambda(r)^{-(m-n)}}{(\mathbf{c}_\Lambda(r)-K\mathbf{s}_\Lambda(r))^n}\dvol_g\nonumber\\
&\leq(\beta+1)^{-(1+\e)} \vol(\mathbb{S}^{m-n-1})\vol_{i^*g}(\Sigma)\int_{s_0}^{D} \left[\log\left({\mathbf{s}_\Lambda(D)\over \mathbf{s}_\Lambda(r)}\right)\right]^{(1+\e)(\beta+1)+\beta}\mathbf{s}_\Lambda(r)^{-1}\mathbf{c}_\Lambda(r){\dd}r<+\infty.\nonumber
\end{align}
 Thus,  \eqref{Ambrosio-condition-2}   and the sharpness follow.
\end{proof}

\begin{remark}
On account of Lemma \ref{Ambrosio-lem2} and Corollary \ref{finsharpcorollary}, by the same calculation one can show that \eqref{hardy-log-weight-general} is also sharp when $\Sigma$ is compact with $\Sigma\subset\Omega$, $\partial\Omega =\{r=D\}$, $\alpha=p-(m-n)$ and  $s=\beta+1$.
\end{remark}

\appendix

\section{Monotonicity of $G_{\lambda,\kappa}$}\label{monotG}

In this section, we provide the proof of Proposition \ref{gestiamtegra} that states some important monotonicity properties of the function $G_{\lambda,\kappa}$ defined by \eqref{Gfunctiondefined}, i.e.,
\begin{align}\label{Gdefexrpession}
G_{\lambda,\kappa}(t)=\left\{
\begin{array}{lll} \sqrt{\lambda}\left[-n{\sqrt{\lambda} +\kappa \cot(\sqrt{\lambda}t)\over \sqrt{\lambda}\cot(\sqrt{\lambda}t)-\kappa }+(m-n-1)\cot(\sqrt{\lambda}t)\right], && \text{ if }\lambda>0,\\
\\
 -{n\kappa\over 1-\kappa t}+{m-n-1\over t}, && \text{ if }\lambda=0,\\
\\
\sqrt{-\lambda}\left[-n{-\sqrt{-\lambda} +\kappa \coth(\sqrt{-\lambda}t)\over \sqrt{-\lambda}\coth(\sqrt{-\lambda}t)-\kappa }+(m-n-1)\coth(\sqrt{-\lambda}t)\right], && \text{ if }\lambda<0.
\end{array}
\right.
\end{align}
Although the proof is elementary, for the sake of completeness, we provide its proof.

\begin{proof}[Proof of Proposition \ref{gestiamtegra}] The monotonicity in $\kappa$   follows by a similar calculation as in  the proof of Theorem \ref{Laplacian-comparison-Sigma} (see Step 1). Hence, we focus on the monotonicity in $\lambda$.

\smallskip

\noindent{\it Case 1}: $\lambda>0$. For convenience, set $z:=\sqrt{\lambda}$. Thus, in view of \eqref{Gdefexrpession},
we study the monotonicity of
\begin{equation*} z
\mapsto z\left[-n{z+\kappa \cot(zt)\over z\cot(zt)-\kappa }+(m-n-1)\cot(zt)\right]=:\mathcal {G}_{\kappa}(z,t). \label{w-mapsto-Laplaican-0}
\end{equation*}
A direct calculation yields
\begin{align}
\frac{\partial \mathcal {G}_{\kappa}}{\partial z}(z,t)=&-n{z^2[\cot(zt)+zt\text{csc}^2(zt)]+\kappa^2[zt\text{csc}^2(zt)-\cot(zt)]-2\kappa z\over (z\cot(zt)-\kappa)^2 }\notag\\
&+(m-n-1)[\cot(zt)-zt\csc^2(zt)].\label{particG}
\end{align}
On the other hand, we have the identities
\begin{align}
A(x):=x\csc^2(x)+\cot(x)=\frac{x+\sin(x)\cos(x)}{\sin^2(x)}=\frac{x+\frac12\sin (2x)}{\sin^2(x)}>0, \quad \forall\,x>0.\notag\\
B(x):=x\csc^2(x)-\cot(x)=\frac{x-\sin(x)\cos(x)}{\sin^2(x)}=\frac{x-\frac12\sin (2x)}{\sin^2(x)}>0, \quad \forall\,x>0.\label{lasttemnumber}
\end{align}
Since  $\sin^2x\leq x^2$, we have $x^2\csc^2x-1\geq 0$ and hence,
\begin{align*}
z^2A(x)+\kappa^2B(x) &\geq 2z|\kappa|A(x)B(x)
=  2z|\kappa| \left( x^2\csc^4x-\cot^2x\right)\\
&=2z|\kappa|\left( \csc^2x (x^2\csc^2x-1)+1  \right)
\geq  2z|\kappa|\geq 2\kappa z,
\end{align*}
which together with  \eqref{particG} and \eqref{lasttemnumber} implies $\frac{\partial \mathcal {G}_{\kappa}}{\partial z}(z,t)<0$, i.e., $G_{\lambda,\kappa}(t)$ is decreasing in $\lambda$.
Furthermore, by using $\lim_{\lambda\rightarrow 0^+}\sqrt{\lambda}\cot(\sqrt{\lambda}t)=1/t$, we obtain
\begin{align*}
\lim_{\lambda\rightarrow 0^+}G_{\lambda,\kappa}(t)=-{n\kappa\over 1-\kappa t}+{m-n-1\over t}=G_{0,\kappa}(t).
\end{align*}

\noindent{\it Case 2}: $\lambda<0$.
By setting $w:=\sqrt{-\lambda}$ in \eqref{Gdefexrpession}, we need to study the monotonicity of
\begin{equation*}
w\mapsto w\left[-n{-w+\kappa \coth(wt)\over w\coth(wt)-\kappa }+(m-n-1)\coth(wt)\right]=:\mathcal {G}_{\kappa}(w,t).
\end{equation*}
Similarly, we have
\begin{align}
\frac{\partial \mathcal {G}_{\kappa}}{\partial w}(w,t)=&n{w^2[wt \text{csch}^2(wt)+\coth(wt)]+\kappa^2[\coth(wt)-wt \text{csch}^2(wt)]-2\kappa w\over (w\coth(wt)-\kappa)^2 }\notag\\
&+(m-n-1)[\coth(wt)-wt\text{csch}^2(wt)].\label{w-mapsto-Laplaican}
\end{align}
Let
\begin{align}
A(x):=\coth(x)+x\text{csch}^2(x)>0, \quad
B(x):=\coth(x)-x \text{csch}^2(x)>0, \quad \forall\,x>0.\label{Bshchcontroll}
\end{align}
Since  $1+\text{csch}^2x=\coth^2x$ and $\sinh^2x\geq x^2$ for any $x\geq0$, we have
\begin{align*}
w^2A(x)+\kappa^2B(x)&\geq   2w|\kappa|A(x)B(x)=  2w|\kappa|(\coth^2x-x^2\text{csch}^4x)\\
&=2w|\kappa|(\text{csch}^2x(1-x^2\text{csch}^2x)+1)\geq 2w|\kappa|\geq 2w \kappa.
\end{align*}
which together with \eqref{w-mapsto-Laplaican} and \eqref{Bshchcontroll} yields  $\frac{\partial \mathcal {G}_{\kappa}}{\partial w}(w,t)>0$, which concludes the proof.  
\end{proof}

%

\section{Weighted Sobolev spaces}\label{Soblevspace}
 The results in this section -- which are of interest of their own right --  are natural extensions of the ones from Chen et al. \cite[Appendix A]{CLZ}, where we establish
 some  properties of the weighted Sobolev spaces defined by Definition \ref{Sobolevspaces} under Assumption \ref{Basciasumptionfunction2}. Throughout the section, we assume the validity of Assumption \ref{Basciasumptionfunction2}.

A standard argument together with  Remark \ref{assp2reason} yields the following result.
\begin{lemma}\label{nullmeaure}
Let ${\dmu}:=\frac{\phi(r)^{p-1}|\psi'(r)|}{ |\nabla(\log \psi(r))|^{p}}{\dvol}_g$. Then ${\dmu}$ is a $\sigma$-finite measure on $M$. Moreover, $\mu(E)=0$ if and only if $\vol_g(E)=0$
for any Borel set $E\subset \Sigma\cup \Omega$. In particular, $\mu(\Sigma)=0$.
\end{lemma}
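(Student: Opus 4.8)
\textbf{Proof proposal for Lemma \ref{nullmeaure}.}
The plan is to unpack the definition of $\mu$ and compare it pointwise with $\vol_g$ using the structural hypotheses in Assumption \ref{Basciasumptionfunction2}. First I would record that, by Assumption \ref{Basciasumptionfunction2}/\eqref{integrable2}, the density
\[
\rho(x):=\frac{\phi(r(x))^{p-1}|\psi'(r(x))|}{|\nabla(\log\psi(r(x)))|^{p}}=\frac{\phi(r(x))^{p-1}\psi(r(x))^{p}}{|\psi'(r(x))|^{p-1}}
\]
belongs to $L^1_{\loc}(M)$; in particular, it is finite $\vol_g$-a.e. and locally integrable, so $\mu$ is a $\sigma$-finite Borel measure on $M$ (cover $M$ by countably many relatively compact coordinate balls, each of finite $\mu$-measure). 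This already gives the first assertion, and shows $\mu$ is absolutely continuous with respect to $\vol_g$, so $\vol_g(E)=0\Rightarrow\mu(E)=0$ for every Borel set.

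Next I would address the converse on $\Sigma\cup\Omega$. Since $\mu(\Sigma\cap\Omega')$ for $\Sigma$ (resp.\ any piece of it) is controlled by the fact that $\Sigma$ is a closed submanifold of dimension $n\le m-1$, hence $\vol_g(\Sigma)=0$ by \eqref{dv-g-Def} and Lemma \ref{prop-1-3}/\eqref{cutlocuszero} (the Fermi change of variables gives $\vol_g$-measure $0$ to the image of $\{t=0\}$); consequently $\mu(\Sigma)=0$ as well, which is the last assertion. For a general Borel set $E\subset\Sigma\cup\Omega$ with $\mu(E)=0$, write $E=(E\cap\Sigma)\cup(E\cap\Omega_\Sigma)$. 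The piece $E\cap\Sigma$ has $\vol_g$-measure $0$ as just noted. For $E\cap\Omega_\Sigma$, the key point is that on $\Omega_\Sigma$ the density $\rho$ is strictly positive $\vol_g$-a.e.: indeed $\phi(r),\psi(r),\psi'(r)$ are well-defined on $M\setminus\Sigma$ by \eqref{smoothofphi2}, and by \eqref{negligbileset2} the set $\{x\in\Omega_\Sigma:\phi(r(x))\,\psi(r(x))\,\psi'(r(x))=0\}$ is $\vol_g$-negligible; off this negligible set $\rho(x)>0$. Therefore $0=\mu(E\cap\Omega_\Sigma)=\int_{E\cap\Omega_\Sigma}\rho\,\dvol_g$ with $\rho>0$ a.e.\ on $E\cap\Omega_\Sigma$ forces $\vol_g(E\cap\Omega_\Sigma)=0$, and combining the two pieces gives $\vol_g(E)=0$.

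I do not anticipate a genuine obstacle here — every ingredient is already available in the excerpt. The only point requiring a modicum of care is the strict positivity step: one must invoke \eqref{negligbileset2} to discard the zero set of $\phi(r)\psi(r)\psi'(r)$ and note that, away from it, each of $\phi(r)^{p-1}$, $\psi(r)^{p}$, $|\psi'(r)|^{1-p}$ is strictly positive, so their product $\rho$ is too; then absolute continuity in both directions on $\Sigma\cup\Omega$ follows from the elementary fact that a measure with an a.e.-positive locally integrable density shares the null sets of the reference measure. The $\sigma$-finiteness and $\mu(\Sigma)=0$ claims are then immediate corollaries of what was established along the way.
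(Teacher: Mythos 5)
Your proposal is correct and is precisely the "standard argument" the paper alludes to without writing out: absolute continuity of $\mu$ with respect to $\vol_g$ from the $L^1_{\loc}(M)$ density in Assumption \ref{Basciasumptionfunction2}/\eqref{integrable2} (which also gives $\sigma$-finiteness, $M$ being $\sigma$-compact), plus the converse on $\Sigma\cup\Omega$ from $\vol_g(\Sigma)=0$ and the $\vol_g$-a.e.\ strict positivity of the density on $\Omega_\Sigma$ guaranteed by Assumption \ref{Basciasumptionfunction2}/\eqref{negligbileset2}. No gaps; the only step needing the care you already gave it is discarding the null set where $\phi(r)\psi(r)\psi'(r)=0$ before asserting positivity.
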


In the sequel, we prefer to use ${\dmu}$ rather than $\dvol_g$ to investigate weighted Sobolev spaces. Let $U\subset M$ be an open set.
 For any $s\in (1,\infty)$, we define
  $L^s({U},\phi,\psi)$ (resp., $L^s(T{U},\phi,\psi)$) as the completion of $C_0({U})$ (resp., $\Gamma_0(T{U})$, i.e., the space of  continuous tangent vector fields with compact support in ${U}$) under the norm
\begin{align*}
[u]_{s,\mu}:=\left( \int_{{{U}}}|u|^s{\dmu}\right)^{\frac1s}, \ \ \  \left(\text{ resp., }  [X]_{s,\mu}:=\left( \int_{{{U}}}|X|^s{\dmu}\right)^{\frac1s} \right).
\end{align*}

The standard theory of Lebesgue spaces yields the following result.
\begin{theorem}\label{LPreflexive}
Both $L^s({U},\phi,\psi)$ and $L^s(T{U},\phi,\psi)$ are reflexive. Hence, if a sequence $\{u_i\}$ $($resp., $\{X_i\}$$)$ is bounded in
$L^s({U},\phi,\psi)$ $($resp., $L^s(T{U},\phi,\psi)$$),$ then there exists a subsequence $\{u_{i_k}\}$ $($resp., $\{X_{i_k}\}$$)$ and $u\in L^s({U},\phi,\psi)$ $($resp., $X\in L^s(T{U},\phi,\psi)$$)$ such that $u_{i_k}\rightarrow u$
weakly in $L^s({U},\phi,\psi)$ $($resp., $X_{i_k}\rightarrow X$
weakly in $L^s(T{U},\phi,\psi)$$)$ as $k\rightarrow \infty$.
\end{theorem}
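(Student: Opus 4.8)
\textbf{Proof proposal for Theorem \ref{LPreflexive}.}
The plan is to reduce the statement entirely to the corresponding classical facts about $L^s$-spaces on a measure space. First I would observe that by Lemma \ref{nullmeaure} the measure $\mu$ is a $\sigma$-finite Borel measure on $M$, hence its restriction to the open set $U$ is a $\sigma$-finite measure as well. By Definition of $L^s(U,\phi,\psi)$ as the completion of $C_0(U)$ under $[\cdot]_{s,\mu}$, and since $C_0(U)$ is dense in the usual Lebesgue space $L^s(U,\mu)$ (standard approximation by continuous functions with compact support for $\sigma$-finite Borel measures on a locally compact second-countable space), the completion $L^s(U,\phi,\psi)$ is canonically isometrically isomorphic to $L^s(U,\mu)$. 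For $1<s<\infty$, the space $L^s(U,\mu)$ is uniformly convex by the Clarkson inequalities, and a uniformly convex Banach space is reflexive by the Milman--Pettis theorem; thus $L^s(U,\phi,\psi)$ is reflexive.

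Next I would treat the vector-field space $L^s(TU,\phi,\psi)$. Here the point is that the pointwise norm $|X|_g$ turns a measurable section of $TU$ into a measurable nonnegative function, so $[X]_{s,\mu}=\bigl(\int_U |X|_g^s\,\dmu\bigr)^{1/s}$ defines a genuine $L^s$-type norm. One way to make the reflexivity transparent is to use a countable atlas of coordinate charts subordinate to a partition of unity (possible since $M$ is second-countable), in each chart identifying $TU$ locally with $\mathbb R^m$ and observing that the $g$-norm is comparable to the Euclidean norm on compact sets; this realizes $L^s(TU,\phi,\psi)$ as a closed subspace (in fact an isometric copy, up to the chart identifications) of an $\ell^s$-sum of spaces of the form $L^s(\text{chart}, \mu; \mathbb R^m)$. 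Since $L^s(\,\cdot\,;\mathbb R^m)$ is reflexive for $1<s<\infty$ (being isometric to a finite direct sum of scalar $L^s$-spaces), and $\ell^s$-sums and closed subspaces of reflexive spaces are reflexive, the space $L^s(TU,\phi,\psi)$ is reflexive. Alternatively, and more cleanly, one notes that $L^s(TU,\phi,\psi)$ is again uniformly convex directly from Clarkson's inequalities applied pointwise to $|X(x)|_g$ and $|Y(x)|_g$ together with the triangle inequality in $T_xU$, and invokes Milman--Pettis once more.

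Finally, the sequential weak compactness statement is the standard consequence of reflexivity: in a reflexive Banach space every bounded sequence has a weakly convergent subsequence, by the Eberlein--\v Smulian theorem. Applying this to the bounded sequence $\{u_i\}$ in the reflexive space $L^s(U,\phi,\psi)$ (respectively $\{X_i\}$ in $L^s(TU,\phi,\psi)$) yields the desired subsequence $\{u_{i_k}\}$ converging weakly to some $u\in L^s(U,\phi,\psi)$ (respectively $\{X_{i_k}\}$ converging weakly to some $X\in L^s(TU,\phi,\psi)$). I do not expect any genuine obstacle here; the only point requiring a little care is the identification of the abstract completions with honest Lebesgue spaces, i.e.\ checking density of $C_0$ and $\Gamma_0$ in the measurable $L^s$-spaces under $\mu$, which is routine given $\sigma$-finiteness of $\mu$ and the manifold structure of $M$.
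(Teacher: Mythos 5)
Your proposal is correct and is essentially the paper's own argument: the paper proves this theorem simply by invoking "the standard theory of Lebesgue spaces," which is exactly what you spell out (identification of the completion with $L^s(U,\mu)$ via density of $C_0(U)$ for the Radon measure $\mu$ — locally finite by Assumption \ref{Basciasumptionfunction2}/(d) — uniform convexity/Milman--Pettis for reflexivity, and weak sequential compactness of bounded sequences in a reflexive space), with the vector-field case handled by the same $L^s$-of-Hilbert-fibre reasoning. No gap; your added care about density of $C_0(U)$ and $\Gamma_0(TU)$ in the concrete weighted $L^s$-spaces is precisely the only point the paper leaves implicit.
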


Let  $W^{1,p}(U, \phi,\psi)$  be  as in Definition \ref{Sobolevspaces}; thus,  for any $u\in C^\infty({U})$, we have $\|u\|^p_{p}=[u]^p_{p,\mu}+[\nabla u]^p_{p,\mu}$.
\begin{remark}\label{smoothdivege}It is easy to check that the weak gradient in the sense of $W^{1,p}({U},\phi,\psi)$ satisfies the following properties:
\begin{enumerate}[{\rm (a)}]

\item\label{nablaproperty} $\nabla(\lambda  u+\zeta  v)=\lambda\nabla  u+\zeta\nabla  v$ for any $\lambda,\zeta\in \mathbb{R}$ and $u,v\in W^{1,p}({U},\phi,\psi)$;

\item\label{weakdiverlemma} For any smooth vector field $X$ with compact support in ${U}_\Sigma$,   one has
\begin{equation}\label{newdivelemm}
\int_{{{U}}} g( \nabla u ,X) {\dmu}=-\int_{{{U}}}u \di_\mu X {\dmu},\quad \forall\,u\in W^{1,p}({U},\phi,\psi),
\end{equation}
where
$\di_\mu (X) {\dd}\mu:={\dd}(X\rfloor {\dd}\mu) $, i.e.,
\begin{equation}\label{mudivergence}
 \di_\mu(X)=\di(X)+\langle X,\nabla r\rangle \frac{\partial}{\partial r}\log\left( \frac{\phi(r)^{p-1}|\psi'(r)|}{ |\nabla(\log \psi(r))|^{p}}  \right).
\end{equation}
In particular, $|\di_\mu(X)|<+\infty$ follows by Assumption \ref{Basciasumptionfunction2}/\eqref{localboundedofnu} and the compactness of $\supp X$.

\end{enumerate}
\end{remark}

In the sequel, $U$ is either $M$ or $\Omega_\Sigma$.
The same argument as in Corollary \ref{finsharpcorollary} yields the following result.
\begin{lemma}\label{lipshccompact}
For
any globally Lipschitz function $u$  with compact support in $U$, we have
$u\in W_0^{1,p}({U},\phi,\psi)$.
\end{lemma}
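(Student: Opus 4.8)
The statement to prove is Lemma~\ref{lipshccompact}: if $u$ is a globally Lipschitz function with compact support in $U$ (where $U$ is either $M$ or $\Omega_\Sigma$), then $u\in W^{1,p}_0(U,\phi,\psi)$. The strategy mirrors the proof of Corollary~\ref{finsharpcorollary}: approximate $u$ in the $\|\cdot\|_p$-norm by smooth compactly supported functions. First I would invoke Lemma~\ref{lipsconverppax} (with the open set taken to be $U$) to produce a sequence $w_n\in C^\infty_0(U)\cap\Lip_0(U)$ with $\supp w_n\subset\mathscr{K}$ for a fixed compact $\mathscr{K}\subset U$, with uniform bounds $|w_n|\le C$, $|\nabla w_n|\le\dil(w_n)\le C$, with $w_n\rightrightarrows u$ and $\|w_n-u\|_{W^{1,1}(U)}\to 0$. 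Passing to a subsequence (as in the proof of Proposition~\ref{generadiverge}), I may assume $w_n\to u$ and $\nabla w_n\to\nabla u$ pointwise $\vol_g$-a.e. on $U$; and after possibly enlarging $C$ and $\mathscr{K}$ we may also assume $\supp u\subset\mathscr{K}$, $|u|\le C$, $|\nabla u|\le C$ a.e.

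The key step is then to estimate $\|w_n-u\|_p^p$ and show it tends to $0$. By the definition of the norm in \eqref{w-norm},
\begin{align*}
	\|w_n-u\|_p^p = \int_{\mathscr{K}}|w_n-u|^p\,\frac{\phi(r)^{p-1}|\psi'(r)|}{|\nabla(\log\psi(r))|^p}\dvol_g + \int_{\mathscr{K}}|\nabla w_n-\nabla u|^p\,\frac{\phi(r)^{p-1}|\psi'(r)|}{|\nabla(\log\psi(r))|^p}\dvol_g.
\end{align*}
Both integrands are bounded pointwise by $(2C)^p\,\dfrac{\phi(r)^{p-1}|\psi'(r)|}{|\nabla(\log\psi(r))|^p}\,\chi_{\mathscr{K}}$, which lies in $L^1(U)$ by Assumption~\ref{Basciasumptionfunction2}/\eqref{integrable2} (the term $\frac{\phi(r)^{p-1}|\psi'(r)|}{|\nabla(\log\psi(r))|^p}\in L^1_{\loc}(M)$, hence integrable over the compact $\mathscr{K}$). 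Since both integrands converge to $0$ $\vol_g$-a.e. on $\mathscr{K}$, Lebesgue's dominated convergence theorem gives $\|w_n-u\|_p\to 0$. Therefore $u$ is a limit of elements of $C^\infty_0(U)$ in the $\|\cdot\|_p$-norm, i.e. $u\in W^{1,p}_0(U,\phi,\psi)$ by Definition~\ref{Sobolevspaces}.

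I do not anticipate a serious obstacle here; the argument is essentially a transcription of the density part of Corollary~\ref{finsharpcorollary} with the measure $\phi(r)^{p-1}|\psi'(r)|\dvol_g$ replaced by $\dmu=\frac{\phi(r)^{p-1}|\psi'(r)|}{|\nabla(\log\psi(r))|^p}\dvol_g$. The only point deserving care is that both the function part and the gradient part of the norm use the same weight $\frac{\phi(r)^{p-1}|\psi'(r)|}{|\nabla(\log\psi(r))|^p}$ (unlike the norm $\|\cdot\|_D$ of Definition~\ref{D1pspace}), so a single application of Assumption~\ref{Basciasumptionfunction2}/\eqref{integrable2} supplies the dominating function for both terms; no separate handling of $|u|^p\phi(r)^{p-1}|\psi'(r)|$ is needed, which in fact simplifies matters compared to Corollary~\ref{finsharpcorollary}. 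One should also note that the hypothesis that $U$ is $M$ or $\Omega_\Sigma$ is exactly what is needed so that Lemma~\ref{lipsconverppax} applies and so that $\mathscr{K}\subset U$ is a legitimate compact subset; no further structure is used.
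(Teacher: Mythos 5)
Your proposal is correct and coincides with the paper's proof: the paper simply states that Lemma \ref{lipshccompact} follows by ``the same argument as in Corollary \ref{finsharpcorollary}'', which is precisely your route of approximating $u$ via Lemma \ref{lipsconverppax} and passing to the limit in the $\|\cdot\|_p$-norm by dominated convergence, using Assumption \ref{Basciasumptionfunction2}/\eqref{integrable2} to dominate both terms with the single weight $\frac{\phi(r)^{p-1}|\psi'(r)|}{|\nabla(\log\psi(r))|^p}$ on the compact set $\mathscr{K}$.
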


Due to Lemma \ref{lipshccompact}, the following result can be proved in the same way as in Hebey \cite[Theorem 2.7]{Hebey}.
\begin{theorem}\label{MandM0isequal}
$W^{1,p}(M,\phi,\psi)=W_0^{1,p}(M,\phi,\psi)$.
\end{theorem}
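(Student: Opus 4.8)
\textbf{Proof strategy for Theorem \ref{MandM0isequal}.}
The plan is to follow the classical Meyers--Serrin / Hebey density argument adapted to the weighted measure $\dmu$, using the earlier structural facts already at our disposal: the weighted Sobolev spaces are reflexive (Theorem \ref{LPreflexive}), the weighted divergence formula \eqref{newdivelemm}--\eqref{mudivergence} holds, globally Lipschitz functions with compact support lie in $W^{1,p}_0(M,\phi,\psi)$ (Lemma \ref{lipshccompact}), and the singular set $\Sigma$ has $\mu$-measure zero (Lemma \ref{nullmeaure}). Since the inclusion $W^{1,p}_0(M,\phi,\psi)\subset W^{1,p}(M,\phi,\psi)$ is immediate, it suffices to prove the reverse inclusion: every $u\in W^{1,p}(M,\phi,\psi)$ can be approximated in the $\|\cdot\|_p$-norm by elements of $C^\infty_0(M)$.

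The first step is a truncation-and-cutoff reduction. Given $u\in W^{1,p}(M,\phi,\psi)$, one replaces $u$ by $u_{N}:=\max\{-N,\min\{N,u\}\}$ (truncation at height $N$) and then by $\eta_R u_N$, where $\eta_R$ is a Lipschitz cutoff equal to $1$ on a geodesic ball $B_R(x_0)$, supported in $B_{2R}(x_0)$, with $|\nabla\eta_R|\le C/R$; using the dominated convergence theorem together with the finiteness of $[u]_{p,\mu}$ and $[\nabla u]_{p,\mu}$ one shows $\eta_R u_N\to u$ in $\|\cdot\|_p$ as $N,R\to\infty$ (here completeness of $(M,g)$ guarantees $B_{2R}(x_0)$ is relatively compact). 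Thus it is enough to approximate a bounded $W^{1,p}$-function with compact support. The second step regularizes such a function: one covers its (compact) support by finitely many coordinate charts, uses a smooth partition of unity to localize, and mollifies each piece by convolution with a standard mollifier in the chart. Because $\phi(r),\psi(r)$ and the density $\frac{\phi(r)^{p-1}|\psi'(r)|}{|\nabla(\log\psi(r))|^p}$ are, by Assumption \ref{Basciasumptionfunction2}, locally bounded and locally bounded away from zero on $M\setminus\Sigma$, on any compact subset of a chart avoiding $\Sigma$ the weighted norm $\|\cdot\|_p$ is comparable to the ordinary (unweighted) $W^{1,p}$-norm in that chart, so the usual mollification estimates transfer directly; across $\Sigma$ one exploits $\mu(\Sigma)=0$ together with a secondary cutoff away from a small tubular neighborhood $\T_\delta$ whose $\mu$-measure tends to $0$ with $\delta$, exactly as in Corollary \ref{finsharpcorollary}. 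Combining the two steps via a diagonal argument yields a sequence in $C^\infty_0(M)$ converging to $u$ in $\|\cdot\|_p$, which is the desired density.

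The main obstacle is the interaction of the mollification with the weight near $\Sigma$: one cannot simply mollify across the singular locus because the density degenerates or blows up there. The resolution is the one already used in the paper — because $\mu(\Sigma)=0$ and $\frac{\phi(r)^{p-1}|\psi'(r)|}{|\nabla(\log\psi(r))|^p}\in L^1_{\loc}(M)$ by Assumption \ref{Basciasumptionfunction2}/\eqref{integrable2}, one first shows that the cutoff functions $1-\chi_{\T_\delta}$ (suitably smoothed, with gradient controlled using the local boundedness in \eqref{localboundedofnu}) satisfy $\|\,u-(1-\zeta_\delta)u\,\|_p\to 0$ as $\delta\to 0^+$, after which all mollification takes place on compact subsets of $M\setminus\Sigma$ where the weight is harmless. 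Everything else is routine, and the argument is formally identical to Hebey \cite[Theorem 2.7]{Hebey} once these weighted-measure substitutes are in place, so we omit the detailed estimates.
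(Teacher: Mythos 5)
Your proposal treats $W^{1,p}(M,\phi,\psi)$ as if it were defined through weak derivatives, so that a Meyers--Serrin-type smoothing (charts, partition of unity, mollification) is needed. But in Definition \ref{Sobolevspaces} the space is the \emph{completion} of $C^\infty_p(M)=\{u\in C^\infty(M):\|u\|_p<\infty\}$ under $\|\cdot\|_p$, so every element already comes with an approximating sequence of smooth functions of finite norm. Hence the whole content of the theorem is the cutoff at infinity: it suffices to show that each $u\in C^\infty_p(M)$ lies in $W^{1,p}_0(M,\phi,\psi)$. This is exactly Hebey's argument, which is what the paper invokes: take Lipschitz cutoffs $\eta_R$ with $\eta_R=1$ on $B_R(x_0)$, $\supp\eta_R\subset B_{R+1}(x_0)$ and $|\nabla\eta_R|\leq C$ uniformly in $R$ (completeness of $M$ is used here); then $\eta_R u$ is Lipschitz with compact support, hence in $W^{1,p}_0(M,\phi,\psi)$ by Lemma \ref{lipshccompact}, and $\eta_R u\to u$ in $\|\cdot\|_p$ by dominated convergence since $[u]_{p,\mu}+[\nabla u]_{p,\mu}<\infty$. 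No mollification, no truncation in $N$, and in particular no special treatment of $\Sigma$ is required; your first step (with $|\nabla\eta_R|\le C$ rather than $C/R$, which alone does not suffice for the gradient term unless combined with $(1-\eta_R)\nabla u\to0$) already finishes the proof once Lemma \ref{lipshccompact} is applied.

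The step of your argument that is genuinely wrong is the resolution of the ``main obstacle'': you claim that, with $\zeta_\delta$ a smoothed indicator of $\T_\delta$, one has $\|u-(1-\zeta_\delta)u\|_p\to0$ as $\delta\to0^+$, so that all regularization can be pushed to compact subsets of $M\setminus\Sigma$. The zeroth-order part of this norm is controlled by $\mu(\T_{2\delta})\to0$, but the gradient part contains $\int_{\T_{2\delta}\setminus\T_\delta}|u|^p|\nabla\zeta_\delta|^p\,{\dd}\mu\sim\delta^{-p}\int_{\T_{2\delta}\setminus\T_\delta}|u|^p\,{\dd}\mu$, and Assumption \ref{Basciasumptionfunction2}/\eqref{localboundedofnu} bounds the radial logarithmic derivative of the weight, not $|\nabla\zeta_\delta|$; nothing makes this term small. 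In the unweighted model ($w\equiv1$, $\Sigma$ a hypersurface, $u\equiv1$ near $\Sigma$) it is of order $\delta^{1-p}\to\infty$. If such a cutoff argument were valid it would show that $C^\infty_0(M\setminus\Sigma)$ is $\|\cdot\|_p$-dense in $W^{1,p}(M,\phi,\psi)$, which is false in general and would make the capacity machinery of Appendix \ref{Soblevspace} (Lemma \ref{continuouslemma}, Theorem \ref{maintheoreminaapendix}), developed precisely to decide membership in $W^{1,p}_0(\Omega_\Sigma,\phi,\psi)$, superfluous. Note also that the appeal to Corollary \ref{finsharpcorollary} does not transfer: there the truncation $\max\{\nu_\e-\iota,0\}$ works because $\nu_\e$ vanishes continuously on $\Sigma$, whereas a general $u$ does not. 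Finally, your use of truncation and of pointwise operations on abstract completion elements would in any case rely on Corollaries \ref{maxminlenew} and \ref{trunctionfun}, which the paper proves after this theorem.
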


Theorem \ref{MandM0isequal} combined with the same proof of Zhao \cite[Lemma B.4]{Z3} produces the following result.
\begin{corollary}\label{compwp}
If $u\in W^{1,p}(M,\phi,\psi)$ with compact support in ${\Omega_\Sigma}$, then $u|_{\Omega_\Sigma}\in W_0^{1,p}(\Omega_\Sigma,\phi,\psi)$.
\end{corollary}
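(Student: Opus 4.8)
\textbf{Proof proposal for Corollary \ref{compwp}.}
The plan is to deduce the statement from Theorem \ref{MandM0isequal} by a cutoff/truncation-free argument: since $u\in W^{1,p}(M,\phi,\psi)$ has compact support contained in the open set $\Omega_\Sigma$, one wants to approximate it by elements of $C_0^\infty(\Omega_\Sigma)$ in the norm $\|\cdot\|_p$. First I would invoke Theorem \ref{MandM0isequal} to obtain a sequence $u_i\in C_0^\infty(M)$ with $u_i\to u$ in $W^{1,p}(M,\phi,\psi)$; the problem is that the supports of the $u_i$ need not lie in $\Omega_\Sigma$. To fix this, I would fix a cutoff function $\chi\in C_0^\infty(\Omega_\Sigma)$ with $0\le\chi\le1$ and $\chi\equiv 1$ on a neighborhood of $\supp u$ (possible since $\supp u$ is compact and contained in the open set $\Omega_\Sigma$), and replace $u_i$ by $\chi u_i\in C_0^\infty(\Omega_\Sigma)$. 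Because $\chi u=u$, it remains to show $\chi u_i\to\chi u=u$ in $\|\cdot\|_p$.

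The convergence of $\chi u_i$ to $u$ would be verified by estimating, using the Leibniz rule for the weak gradient (Remark \ref{smoothdivege}/\eqref{nablaproperty} applied on smooth functions, or directly the pointwise product rule for the smooth $u_i$),
\[
\|\chi u_i-\chi u\|_p^p=[\chi(u_i-u)]_{p,\mu}^p+[\nabla(\chi(u_i-u))]_{p,\mu}^p,
\]
and then bounding $|\nabla(\chi(u_i-u))|\le |\chi|\,|\nabla(u_i-u)|+|u_i-u|\,|\nabla\chi|$. Since $\chi$ and $\nabla\chi$ are bounded with support in the compact set $K:=\supp\chi\subset\Omega_\Sigma$, and since on $K$ the weight $\frac{\phi(r)^{p-1}|\psi'(r)|}{|\nabla(\log\psi(r))|^p}$ is locally integrable (Assumption \ref{Basciasumptionfunction2}/\eqref{integrable2}, restricted to $\Omega_\Sigma$), all the relevant terms are controlled by a constant depending only on $\chi$ times $\|u_i-u\|_p\to0$. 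Hence $\chi u_i$ is a sequence in $C_0^\infty(\Omega_\Sigma)$ converging to $u$ in the $W^{1,p}(\Omega_\Sigma,\phi,\psi)$-norm, which is exactly the assertion $u|_{\Omega_\Sigma}\in W_0^{1,p}(\Omega_\Sigma,\phi,\psi)$.

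The main obstacle I anticipate is purely bookkeeping about the weight near the removed set $\Sigma$: one must be careful that $K=\supp\chi$ stays at positive distance from $\Sigma$ (which is automatic, since $\chi\in C_0^\infty(\Omega_\Sigma)$ and $\Omega_\Sigma$ is open), so that all weights appearing in the estimates are bounded above and below on $K$ and the local integrability from Assumption \ref{Basciasumptionfunction2}/\eqref{integrable2} genuinely applies on a full neighborhood of $K$. A secondary technical point is justifying the product rule for the weak gradient in $W^{1,p}(M,\phi,\psi)$ when multiplying a Sobolev function by a smooth compactly supported cutoff; this is standard and can be reduced to the smooth approximants $u_i$, passing to the limit using the uniform bounds on $\chi,\nabla\chi$ together with reflexivity and weak convergence in $L^p(TM,\phi,\psi)$ (Theorem \ref{LPreflexive}), exactly in the spirit of Zhao \cite[Lemma B.4]{Z3} as indicated in the excerpt.
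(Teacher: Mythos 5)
Your proposal is correct and is essentially the paper's argument: the paper simply cites Theorem \ref{MandM0isequal} together with the proof of Zhao \cite[Lemma B.4]{Z3}, which is precisely your cutoff argument (approximate $u$ by $u_i\in C_0^\infty(M)$ and multiply by $\chi\in C_0^\infty(\Omega_\Sigma)$ with $\chi\equiv1$ near $\supp u$). Your estimate works because both the function and gradient parts of $\|\cdot\|_p$ carry the same weight ${\dd}\mu$, so $[|u_i-u|\,|\nabla\chi|]_{p,\mu}\leq \|\nabla\chi\|_\infty\|u_i-u\|_{p}\to0$ without any extra integrability input.
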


In order to show the reflexivity of $W^{1,p}(M,\phi,\psi)$, we
recall Mazur's lemma (cf. Renardy and Rogers \cite[Lemma 10.19]{RR}).

\begin{lemma}[Mazur's lemma]\label{Marzurlemma} Assume that $X$ is a Banach space and that
$x_i\rightarrow x$ weakly in $X$ as $i \rightarrow \infty$. Then there exists a sequence of convex
combinations
\[
\tilde{x}_i=\sum_{j=i}^{m_i}a_{i,j}x_j,\ a_{i,j}\geq 0,\ \sum_{j=i}^{m_i}a_{i,j}=1,
\]
such that $\tilde{x}_i\rightarrow x$ strongly in $X$ as $i\rightarrow \infty$.
\end{lemma}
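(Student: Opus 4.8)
The plan is to obtain the statement from the standard principle that for a \emph{convex} subset of a Banach space the weak closure coincides with the norm closure, a fact which is itself a consequence of the Hahn--Banach separation theorem.

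First I would fix, for each index $i$, the norm-closed convex set
\[
K_i:=\overline{\mathrm{conv}}\,\{x_j:j\ge i\},
\]
the closure of the convex hull of the $i$-th tail of the sequence. The goal will then be to show that $x\in K_i$ for every $i$: once this is known, any point of $K_i$ can be approximated in norm by finite convex combinations $\sum_{j=i}^{m_i}a_{i,j}x_j$ with $a_{i,j}\ge 0$ and $\sum_{j=i}^{m_i} a_{i,j}=1$, so choosing for each $i$ such a combination $\tilde x_i$ within distance $1/i$ of $x$ and letting $i\to\infty$ produces exactly the sequence required by the lemma.

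Next I would verify that $x$ lies in the weak closure of $\{x_j:j\ge i\}$ for every $i$. This is immediate: since $x_i\to x$ weakly, every weak neighbourhood of $x$ contains $x_j$ for all sufficiently large $j$, hence meets the $i$-th tail; therefore $x$ also belongs to the weak closure of the convex set $\mathrm{conv}\{x_j:j\ge i\}$.

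The key --- and essentially only substantive --- step will be to conclude that the weak closure of $\mathrm{conv}\{x_j:j\ge i\}$ equals its norm closure $K_i$, so that $x\in K_i$. If instead $x\notin K_i$, then, $K_i$ being closed and convex, the Hahn--Banach separation theorem would yield a continuous linear functional strictly separating $x$ from $K_i$, which is incompatible with $x$ being a weak limit of points of $K_i$. This forces $x\in K_i$ for all $i$ and finishes the proof. I do not anticipate any genuine obstacle beyond a careful application of Hahn--Banach separation in a locally convex setting, which is why in the text the lemma is merely recalled with a reference to Renardy and Rogers.
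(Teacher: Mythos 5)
Your argument is correct: the paper does not prove this lemma at all but simply recalls it with a citation to Renardy and Rogers, and your proof is the standard one — place $x$ in the norm-closed convex hull $K_i$ of each tail via the Hahn--Banach separation theorem (weak closure of a convex set equals its norm closure), then pick a finite convex combination within $1/i$ of $x$. The separation step is sound: if $x\notin K_i$, a functional $f$ and scalar $\alpha$ with $f(y)\leq\alpha<f(x)$ for all $y\in K_i$ would contradict $f(x_j)\to f(x)$, since $x_j\in K_i$ for $j\geq i$. Nothing is missing.
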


\begin{theorem}\label{weakconvergece}
$W^{1,p}(M,\phi,\psi)$ is reflexive, i.e.,  given  a bounded sequence $\{u_i\}$  in $W^{1,p}(M,\phi,\psi)$, there exists a subsequence $\{u_{i_k}\}$ and $u\in  W^{1,p}(M,\phi,\psi)$ such that $u_{i_k}\rightarrow u$ weakly in $L^p(M,\phi,\psi)$  and $\nabla u_{i_k}\rightarrow \nabla u$ weakly in $L^p(TM,\phi,\psi)$ as $k\rightarrow \infty$.
\end{theorem}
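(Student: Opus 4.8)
\textbf{Proof proposal for Theorem \ref{weakconvergece}.} The plan is to transfer reflexivity from the product Lebesgue space to $W^{1,p}(M,\phi,\psi)$ by realizing the Sobolev space as a closed subspace of $L^p(M,\phi,\psi)\times L^p(TM,\phi,\psi)$ via the isometric embedding $u\mapsto (u,\nabla u)$. By Theorem \ref{LPreflexive}, the product $L^p(M,\phi,\psi)\times L^p(TM,\phi,\psi)$ is reflexive, so a closed subspace of it is reflexive; hence it suffices to show the image of this embedding is closed, which is equivalent to showing: if $u_i\to u$ in $L^p(M,\phi,\psi)$ and $\nabla u_i\to W$ in $L^p(TM,\phi,\psi)$ (strongly, for Cauchy sequences), then $W=\nabla u$ in the sense of $W^{1,p}(M,\phi,\psi)$. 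Actually, since $W^{1,p}(M,\phi,\psi)$ is by definition the completion of $C^\infty_p(M)$ under $\|\cdot\|_p$, one does not even need to prove closedness abstractly — one needs only that the weak gradient is well-defined on the completion, which is built into Definition \ref{Sobolevspaces} and Remark \ref{smoothdivege}. So the embedding $u\mapsto(u,\nabla u)$ is a linear isometry of $W^{1,p}(M,\phi,\psi)$ onto a \emph{closed} subspace of the reflexive product space, and therefore $W^{1,p}(M,\phi,\psi)$ is reflexive.

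First I would spell out the embedding: for $u\in W^{1,p}(M,\phi,\psi)$ set $\iota(u):=(u,\nabla u)\in L^p(M,\phi,\psi)\times L^p(TM,\phi,\psi)$, equipped with the norm $\|(v,X)\|:=([v]_{p,\mu}^p+[X]_{p,\mu}^p)^{1/p}$; then $\|\iota(u)\|=\|u\|_p$, so $\iota$ is an isometry onto its image $\iota(W^{1,p}(M,\phi,\psi))$. Since $W^{1,p}(M,\phi,\psi)$ is complete (being a completion) and $\iota$ is an isometry, the image is a complete, hence closed, subspace. By Theorem \ref{LPreflexive} and the fact that a finite product of reflexive Banach spaces is reflexive, $L^p(M,\phi,\psi)\times L^p(TM,\phi,\psi)$ is reflexive; a closed subspace of a reflexive space is reflexive; therefore $W^{1,p}(M,\phi,\psi)$ is reflexive.

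The remaining content of the statement is the concrete weak-compactness conclusion, which I would deduce as follows. Given a bounded sequence $\{u_i\}\subset W^{1,p}(M,\phi,\psi)$, reflexivity gives a subsequence $\{u_{i_k}\}$ converging weakly in $W^{1,p}(M,\phi,\psi)$ to some $u$. Since the coordinate projections $W^{1,p}(M,\phi,\psi)\to L^p(M,\phi,\psi)$, $v\mapsto v$, and $W^{1,p}(M,\phi,\psi)\to L^p(TM,\phi,\psi)$, $v\mapsto\nabla v$, are bounded linear maps, they are weak-to-weak continuous, so $u_{i_k}\rightharpoonup u$ in $L^p(M,\phi,\psi)$ and $\nabla u_{i_k}\rightharpoonup \nabla u$ in $L^p(TM,\phi,\psi)$. (If one prefers to avoid invoking abstract reflexivity of $W^{1,p}$ directly, one can instead use Theorem \ref{LPreflexive} twice to extract a subsequence with $u_{i_k}\rightharpoonup u$ in $L^p(M,\phi,\psi)$ and $\nabla u_{i_k}\rightharpoonup V$ in $L^p(TM,\phi,\psi)$, then identify $V=\nabla u$ using the weak divergence identity \eqref{newdivelemm}: for every smooth compactly supported vector field $X$ on $M\setminus\Sigma$ one has $\int g(\nabla u_{i_k},X)\,{\dd}\mu=-\int u_{i_k}\,\di_\mu X\,{\dd}\mu$, and passing to the limit on both sides — legitimate since $\di_\mu X$ is bounded with compact support by Remark \ref{smoothdivege}/\eqref{weakdiverlemma} and Assumption \ref{Basciasumptionfunction2}/\eqref{localboundedofnu} — yields $\int g(V,X)\,{\dd}\mu=-\int u\,\di_\mu X\,{\dd}\mu$; Mazur's lemma, Lemma \ref{Marzurlemma}, applied to the convex combinations of the pairs $(u_{i_k},\nabla u_{i_k})$, then shows $u\in W^{1,p}(M,\phi,\psi)$ with $\nabla u=V$.) I expect the only genuinely delicate point to be this identification step — ensuring that the weak limit $V$ of the gradients is actually the weak gradient of the limit $u$ in the precise sense of Definition \ref{Sobolevspaces}; this is where the compact-support divergence identity \eqref{newdivelemm} and Mazur's lemma do the real work, exactly as in Chen et al. \cite[Appendix A]{CLZ}. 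Everything else is the routine transfer of reflexivity through an isometric embedding.
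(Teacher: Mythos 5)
Your proposal is correct, and your primary route differs from the paper's. The paper argues hands-on: it applies Theorem \ref{LPreflexive} to extract a subsequence with $u_{i_k}\rightharpoonup u$ in $L^p(M,\phi,\psi)$ and $\nabla u_{i_k}\rightharpoonup X$ in $L^p(TM,\phi,\psi)$, then uses Mazur's lemma (Lemma \ref{Marzurlemma}) on the pairs $(u_{i_k},\nabla u_{i_k})$ to produce convex combinations $v_l\in W^{1,p}(M,\phi,\psi)$ converging \emph{strongly} in both components; these form a Cauchy sequence in $\|\cdot\|_p$, and completeness of the completion identifies the limit as $u\in W^{1,p}(M,\phi,\psi)$ with $\nabla u=X$. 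Your main argument instead transfers reflexivity abstractly: $u\mapsto(u,\nabla u)$ is an isometry of the complete space $W^{1,p}(M,\phi,\psi)$ onto a necessarily closed subspace of the reflexive product $L^p(M,\phi,\psi)\times L^p(TM,\phi,\psi)$, and then you use that bounded sequences in a reflexive space have weakly convergent subsequences together with weak-to-weak continuity of the coordinate maps. This is cleaner and makes the word ``reflexive'' in the statement literally true as a Banach-space property, but it imports standard facts the paper does not state (product and closed-subspace stability of reflexivity, Eberlein--\v{S}mulian-type sequential weak compactness), whereas the paper's Mazur argument is self-contained given Theorem \ref{LPreflexive} and Lemma \ref{Marzurlemma}. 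Your parenthetical alternative (extract component weak limits, then identify $V=\nabla u$) is essentially the paper's proof; note that once you invoke Mazur plus completeness there, the preliminary identification of $V$ via the divergence identity \eqref{newdivelemm} is not needed, though it is a valid cross-check. Both your route and the paper's share the same implicit point, namely that elements of the completion are identified with their $L^p$-components and the gradient is the continuous extension from $C^\infty_p(M)$, which is exactly where Remark \ref{smoothdivege} enters.
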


\begin{proof}
Since $\{u_i\}$ is a bounded sequence in $W^{1,p}(M,\phi,\psi)$,  Theorem \ref{LPreflexive} yields    a subsequence $\{u_{i_k}\}$ and $u\in L^p(M,\phi,\psi)$ and a vector field $X\in  L^p(TM,\phi,\psi)$ such that $u_{i_k}\rightarrow u$ weakly in $L^p (M,\phi,\psi)$ and $\nabla u_{i_k}\rightarrow X$ weakly in $L^p(T M,\phi,\psi)$ as $k\rightarrow \infty$.
Consider the Banach space $L^p(M,\phi,\psi)\times L^p(TM,\phi,\psi)$ endowed with the product metric. Obviously,
\[
(u_{i_k},\nabla u_{i_k})\rightarrow (u,X) \text{ weakly in }L^p(M,\phi,\psi)\times L^p(TM,\phi,\psi).
\]
Thus, by Lemma \ref{Marzurlemma} we have a  sequence of convex combinations such that
\[
\sum_{k=l}^{m_l}a_{l,k}(u_{i_k},\nabla u_{i_k})\rightarrow (u,X) \text{ strongly in }L^p(M,\phi,\psi)\times L^p(TM,\phi,\psi), \text{ as }l\rightarrow \infty.
\]
Set $v_l:=\sum_{k=l}^{m_l}a_{l,k}u_{i_k}\in W^{1,p}(M,\phi,\psi)$. Then both $v_l\rightarrow u$ strongly  and $\nabla v_l\rightarrow X$ strongly in the corresponding $[\cdot]_{p,\mu}$-norms. Hence, $(v_l)$ is a Cauchy sequence in $W^{1,p}(M,\phi,\psi)$, which implies $\nabla u=X$ and $u\in W^{1,p}(M,\phi,\psi)$.
\end{proof}

Theorem \ref{weakconvergece} together with the dominated convergence theorem implies the following  two consequences; we omit their proofs because they are standard.

\begin{corollary}\label{maxminlenew}
If $u\in {W^{1,p}}(M,\phi,\psi)$, then $u_+:=\max\{u,0\}$, $u_-:=-\min\{u,0\}$ and $|u|=u_+-u_-$ are all in ${W^{1,p}}(M,\phi,\psi)$.  In particular, $\nabla|u|=\sgn(u) \nabla u$.
\end{corollary}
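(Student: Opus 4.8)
\textbf{Proof proposal for Corollary~\ref{maxminlenew}.}
The plan is to treat $u_+$ first; the cases of $u_-$ and $|u|$ then follow from $u_-=(-u)_+$, $|u|=u_++u_-$ and the linearity of the weak gradient recorded in Remark~\ref{smoothdivege}/\eqref{nablaproperty}. Let $u\in W^{1,p}(M,\phi,\psi)$ and pick a sequence $u_i\in C^\infty_0(M)$ with $\|u_i-u\|_p\to 0$; passing to a subsequence we may assume $u_i\to u$ and $\nabla u_i\to\nabla u$ pointwise $\mu$-a.e. (using Lemma~\ref{nullmeaure} to identify $\mu$-null and $\vol_g$-null sets). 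First I would regularize the function $t\mapsto t_+$: for small $\delta>0$ set
\[
\beta_\delta(t):=\begin{cases}\sqrt{t^2+\delta^2}-\delta,& t\geq 0,\\[1mm] 0,& t<0,\end{cases}
\]
which is $C^1$, globally Lipschitz with constant $1$, satisfies $\beta_\delta(t)\to t_+$ and $\beta_\delta'(t)\to\chi_{\{t>0\}}$ as $\delta\to 0^+$ (and $0\leq\beta_\delta'\leq 1$). Then $\beta_\delta\circ u_i\in\Lip_0(M)\cap C^\infty$ except at the single level $\{u_i=0\}$; to stay genuinely smooth one can instead use a $C^\infty$ approximation of $\beta_\delta$, or simply invoke Lemma~\ref{lipshccompact} which guarantees $\beta_\delta\circ u_i\in W^{1,p}_0(M,\phi,\psi)$ once we check it is globally Lipschitz with compact support — which it is, since $\beta_\delta(0)=0$ forces $\supp(\beta_\delta\circ u_i)\subset\supp u_i$.

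Next I would run a double limit. For fixed $\delta$, the chain rule for Lipschitz composites gives $\nabla(\beta_\delta\circ u_i)=\beta_\delta'(u_i)\nabla u_i$ $\mu$-a.e.; since $|\beta_\delta'|\leq 1$ and $|\beta_\delta\circ u_i|\leq|u_i|$, the bounds $|\beta_\delta\circ u_i|^p\leq|u_i|^p$ and $|\nabla(\beta_\delta\circ u_i)|^p\leq|\nabla u_i|^p$ together with $u_i\to u$ in $W^{1,p}$ and the dominated convergence theorem show that $\beta_\delta\circ u_i\to\beta_\delta\circ u$ in $L^p(M,\phi,\psi)$ and $\beta_\delta'(u_i)\nabla u_i\to\beta_\delta'(u)\nabla u$ in $L^p(TM,\phi,\psi)$ as $i\to\infty$; hence $\beta_\delta\circ u\in W^{1,p}(M,\phi,\psi)$ with $\nabla(\beta_\delta\circ u)=\beta_\delta'(u)\nabla u$ (here one must be slightly careful when $\nabla u_i\to\nabla u$ only weakly, in which case I would instead appeal to Theorem~\ref{weakconvergece} together with Mazur's lemma, Lemma~\ref{Marzurlemma}, exactly as in the proof of Theorem~\ref{weakconvergece}, to upgrade to strong convergence of suitable convex combinations). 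Now let $\delta\to 0^+$: $\beta_\delta\circ u\to u_+$ pointwise with $|\beta_\delta\circ u|\leq|u|$, and $\beta_\delta'(u)\nabla u\to\chi_{\{u>0\}}\nabla u$ pointwise $\mu$-a.e. (on $\{u=0\}$ one uses that $\nabla u=0$ $\mu$-a.e.\ there, a standard fact for Sobolev functions obtained from the same approximation argument), with $|\beta_\delta'(u)\nabla u|\leq|\nabla u|$. Dominated convergence again yields $u_+\in W^{1,p}(M,\phi,\psi)$ and $\nabla u_+=\chi_{\{u>0\}}\nabla u$. Applying this to $-u$ gives $\nabla u_-=-\chi_{\{u<0\}}\nabla u$, and adding, $\nabla|u|=\nabla u_++\nabla u_-=(\chi_{\{u>0\}}-\chi_{\{u<0\}})\nabla u=\sgn(u)\nabla u$.

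The one genuinely delicate point is the assertion that $\nabla u=0$ $\mu$-a.e.\ on $\{u=0\}$, which is what makes the pointwise identities for $\nabla u_\pm$ consistent at the zero set; I would establish it by the truncation trick, writing $u=\lim_i u_i$ in $W^{1,p}$, applying the already-proved relation $\nabla(\beta_\delta\circ u)=\beta_\delta'(u)\nabla u$ and letting $\delta\to0$ along $\pm u$, then observing that on $\{u=0\}$ both limiting gradients vanish. All the functional-analytic machinery needed — reflexivity of the $L^p$-spaces (Theorem~\ref{LPreflexive}), reflexivity of $W^{1,p}(M,\phi,\psi)$ (Theorem~\ref{weakconvergece}), Mazur's lemma (Lemma~\ref{Marzurlemma}), and the fact that globally Lipschitz compactly supported functions lie in $W^{1,p}_0$ (Lemma~\ref{lipshccompact}) — is already in place, so beyond this zero-set subtlety the argument is routine; it is essentially the extension to the weighted measure $\mu$ of the classical fact that Sobolev spaces are lattices, carried out as in Chen et al.~\cite[Appendix A]{CLZ}.
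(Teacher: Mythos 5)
Your argument is correct, and it is a legitimate way to fill in what the paper leaves out: the paper omits the proof of this corollary entirely, saying only that it follows from Theorem~\ref{weakconvergece} and the dominated convergence theorem, so the hinted route is a weak-compactness argument (bounded sequence $(u_i)_+$ of Lipschitz truncations, extract a weak limit, identify it), whereas you work instead with the $C^1$ regularization $\beta_\delta$ of $t\mapsto t_+$ and a double limit in $i$ and $\delta$. Your route is in fact a bit cleaner than the hinted one on two counts. First, since the $u_i$ are chosen as an approximating sequence from the very definition of $W^{1,p}(M,\phi,\psi)$ as a completion (via Theorem~\ref{MandM0isequal}), the convergence $\nabla u_i\to\nabla u$ is automatically strong in $L^p(TM,\phi,\psi)$, so your parenthetical worry about only-weak convergence and the detour through Theorem~\ref{weakconvergece} and Mazur's lemma is unnecessary; the splitting $\beta_\delta'(u_i)\nabla u_i-\beta_\delta'(u)\nabla u=\beta_\delta'(u_i)(\nabla u_i-\nabla u)+(\beta_\delta'(u_i)-\beta_\delta'(u))\nabla u$ plus dominated convergence already closes the first limit. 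Second, the point you single out as genuinely delicate — that $\nabla u=0$ $\mu$-a.e.\ on $\{u=0\}$ — is not actually an input to your argument: because $\beta_\delta'(0)=0$ for every $\delta$, the pointwise limit of $\beta_\delta'(u)\nabla u$ on $\{u=0\}$ is $0$ regardless, so you get $\nabla u_+=\chi_{\{u>0\}}\nabla u$ directly, and the vanishing of $\nabla u$ a.e.\ on $\{u=0\}$ then falls out as a consequence of $\nabla u=\nabla u_+-\nabla u_-=\chi_{\{u\neq 0\}}\nabla u$ rather than having to be established separately. (You also quietly correct the statement's typo, writing $|u|=u_++u_-$, which is the identity actually used.)
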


\begin{corollary}\label{trunctionfun}
Given $u\in W^{1,p}(M,\phi,\psi)$, we have $\max\{0,\min\{u,1\}\}\in W^{1,p}(M,\phi,\psi)$. Moreover,
for any $\lambda>0$, set $u_\lambda:=\max\{-\lambda,\min\{u,\lambda\}\}$. Then $u_\lambda\rightarrow u$ in $ W^{1,p}(M,\phi,\psi)$ as $\lambda \rightarrow +\infty$.
\end{corollary}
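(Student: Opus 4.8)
The plan is to reduce both assertions to three facts already established: the density statement $W^{1,p}(M,\phi,\psi)=W^{1,p}_0(M,\phi,\psi)$ (Theorem \ref{MandM0isequal}), which permits approximation by $C^\infty_0(M)$; the membership $u\in W^{1,p}_0(M,\phi,\psi)$ for every compactly supported Lipschitz $u$ (Lemma \ref{lipshccompact}); and the reflexivity of $W^{1,p}(M,\phi,\psi)$ together with $\vol_g$-a.e.\ convergence of suitable subsequences (Theorem \ref{weakconvergece}). Throughout I abbreviate ${\dd}\mu:=\frac{\phi(r)^{p-1}|\psi'(r)|}{|\nabla(\log\psi(r))|^p}\dvol_g$, and I use repeatedly that $\nabla w=0$ holds $\vol_g$-a.e.\ on each level set $\{w=c\}$, so that the $\vol_g$-measure-zero sets on which a truncation map fails to be differentiable do not affect the computation of weak gradients.

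For the first claim, put $T(t):=\max\{0,\min\{t,1\}\}$, a $1$-Lipschitz function with $T(0)=0$. Given $u\in W^{1,p}(M,\phi,\psi)$, choose $u_i\in C^\infty_0(M)$ with $u_i\to u$ in $W^{1,p}(M,\phi,\psi)$. Since $T(0)=0$, each $T(u_i)$ is a Lipschitz function supported in $\supp u_i$, hence $T(u_i)\in W^{1,p}_0(M,\phi,\psi)$ by Lemma \ref{lipshccompact}, with weak gradient equal to its $\vol_g$-a.e.\ pointwise gradient $\chi_{\{0<u_i<1\}}\nabla u_i$. From $|T(u_i)-T(u)|\le|u_i-u|$ one gets $T(u_i)\to T(u)$ in $L^p(M,\phi,\psi)$, and $|\nabla T(u_i)|\le|\nabla u_i|$ shows that $\{T(u_i)\}$ is bounded in $W^{1,p}(M,\phi,\psi)$; Theorem \ref{weakconvergece} then yields a subsequence converging weakly in $L^p(M,\phi,\psi)$, with weakly converging gradients, to some $w\in W^{1,p}(M,\phi,\psi)$, and uniqueness of weak limits forces $w=T(u)$. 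Thus $\max\{0,\min\{u,1\}\}\in W^{1,p}(M,\phi,\psi)$.

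For the second claim I would avoid the identity $\min\{u,\lambda\}=\lambda-(\lambda-u)_+$ (constants need not lie in $W^{1,p}(M,\phi,\psi)$ when $M$ is noncompact) and instead write $u=u_+-u_-$; by Corollary \ref{maxminlenew} both $u_\pm\in W^{1,p}(M,\phi,\psi)$ with $\nabla u_\pm=\chi_{\{\pm u>0\}}\nabla u$ $\vol_g$-a.e. Applying the first claim to the scalar multiple $u_+/\lambda$ and multiplying by $\lambda$ (linearity of the weak gradient, Remark \ref{smoothdivege}/\eqref{nablaproperty}) gives $\min\{u_+,\lambda\}=\lambda\,T(u_+/\lambda)\in W^{1,p}(M,\phi,\psi)$, and likewise $\min\{u_-,\lambda\}$, so $u_\lambda=\min\{u_+,\lambda\}-\min\{u_-,\lambda\}\in W^{1,p}(M,\phi,\psi)$. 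Rerunning the approximation argument of the first claim on $u_\pm$ — passing to a subsequence along which the approximants and their gradients converge $\vol_g$-a.e., and using the level-set fact on $\{u_\pm=\lambda\}$ — identifies $\nabla\min\{u_\pm,\lambda\}=\chi_{\{u_\pm<\lambda\}}\nabla u_\pm$, hence $\nabla u_\lambda=\chi_{\{|u|<\lambda\}}\nabla u$ $\vol_g$-a.e. Now $|u_\lambda|\le|u|$ with $u_\lambda\to u$ pointwise, so $|u_\lambda-u|^p\le 2^p|u|^p\in L^1(M,{\dd}\mu)$ and dominated convergence gives $[u_\lambda-u]_{p,\mu}\to0$; similarly $|\nabla u_\lambda-\nabla u|=\chi_{\{|u|\ge\lambda\}}|\nabla u|\le|\nabla u|$ converges to $0$ $\vol_g$-a.e.\ (since $|u|<\infty$ a.e.), so $[\nabla u_\lambda-\nabla u]_{p,\mu}\to0$, i.e.\ $\|u_\lambda-u\|_p\to0$ as $\lambda\to+\infty$.

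The step I expect to require the most care is the identification of $\nabla\min\{u_\pm,\lambda\}$: one must know that the weak gradient in $W^{1,p}(M,\phi,\psi)$ of a compactly supported Lipschitz function coincides $\vol_g$-a.e.\ with its classical gradient (which is implicit in the construction behind Lemma \ref{lipshccompact}), so that composition with a truncation obeys the chain rule, and one must discard the level set $\{u=\lambda\}$ — where the truncated derivative is ambiguous — by invoking $\nabla u=0$ there $\vol_g$-a.e. Reflexivity (Theorem \ref{weakconvergece}) is exactly what converts $L^p$-boundedness plus $\vol_g$-a.e.\ convergence of the approximants into the correct weak gradient in the weighted space; with that in hand, the remaining estimates are routine applications of the dominated convergence theorem.
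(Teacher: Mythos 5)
Your argument is correct and follows essentially the route the paper itself indicates for this (omitted, "standard") proof: Lipschitz truncations of smooth approximants via Theorem \ref{MandM0isequal} and Lemma \ref{lipshccompact}, passage to the limit by the weak-compactness Theorem \ref{weakconvergece} (equivalently Theorem \ref{pointswisestrongly}), and the dominated convergence theorem for the convergence $u_\lambda\to u$. The only ingredient not literally available in the paper is the Stampacchia-type fact that $\nabla u=0$ $\vol_g$-a.e.\ on the level set $\{|u|=\lambda\}$, which you rightly flag as the delicate step; it is of the same "standard" nature as Corollary \ref{maxminlenew} and can be supplied within your scheme (for instance by first establishing $\nabla\min\{u_\pm,\lambda\}=\chi_{\{u_\pm<\lambda\}}\nabla u_\pm$ at the levels $\lambda$ with $\mu(\{|u|=\lambda\})=0$, which are all but countably many, and then passing monotonically in $\lambda$ to arbitrary levels), so there is no genuine gap.
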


\begin{proposition}\label{usefulproweakdre}Given $u,v\in W^{1,p}(M,\phi,\psi)$, the  following statements hold$:$
\begin{enumerate}[{\rm (i)}]

\item\label{maxgrad} $\max\{u,v\}\in W^{1,p}(M,\phi,\psi)$ and $|\nabla \max\{u,v\}|\leq \max\{|\nabla u |, |\nabla v | \}$;

\item\label{Neunirule} if  $\|u\|_\infty+\|v\|_\infty<+\infty$, then $uv\in W^{1,p}(M,\phi,\psi)$ and $\nabla (uv)=u\nabla v+v\nabla u$.

\end{enumerate}

\end{proposition}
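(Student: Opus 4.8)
\textbf{Proof plan for Proposition \ref{usefulproweakdre}.} The plan is to deduce both statements from the already-established machinery: the reflexivity Theorem \ref{weakconvergece}, the lattice property Corollary \ref{maxminlenew}, the truncation Corollary \ref{trunctionfun}, and the weak-gradient linearity Remark \ref{smoothdivege}/\eqref{nablaproperty}. The strategy throughout is the standard one: first verify the claimed formulas for smooth functions (where $\nabla$ is the genuine gradient and all calculus rules are classical), then pass to the limit along approximating sequences in $C^\infty(M)$ using the density that defines $W^{1,p}(M,\phi,\psi)$ together with Mazur-type/weak-compactness arguments to identify the weak gradients.

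For \eqref{maxgrad}, I would first write $\max\{u,v\}=\tfrac12(u+v)+\tfrac12|u-v|$. By Remark \ref{smoothdivege}/\eqref{nablaproperty} we have $u-v\in W^{1,p}(M,\phi,\psi)$, and Corollary \ref{maxminlenew} gives $|u-v|\in W^{1,p}(M,\phi,\psi)$ with $\nabla|u-v|=\sgn(u-v)\nabla(u-v)$; linearity again yields $\max\{u,v\}\in W^{1,p}(M,\phi,\psi)$ with
\[
\nabla\max\{u,v\}=\tfrac12\nabla(u+v)+\tfrac12\sgn(u-v)\nabla(u-v),
\]
which equals $\nabla u$ on $\{u>v\}$ and $\nabla v$ on $\{u<v\}$ (and on $\{u=v\}$ the two gradients agree $\mu$-a.e.\ by the usual argument, since $\nabla u=\nabla v$ $\mu$-a.e.\ on that level set). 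The pointwise bound $|\nabla\max\{u,v\}|\leq\max\{|\nabla u|,|\nabla v|\}$ then holds $\mu$-a.e.

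For \eqref{Neunirule}, I would approximate: take $u_i,v_i\in C^\infty(M)$ with $u_i\to u$, $v_i\to v$ in $W^{1,p}(M,\phi,\psi)$, and by Corollary \ref{trunctionfun} (truncation at level $\|u\|_\infty$, resp.\ $\|v\|_\infty$) arrange that $\|u_i\|_\infty$, $\|v_i\|_\infty$ stay uniformly bounded — here one must be a little careful, since truncation produces Lipschitz rather than smooth functions, so one smooths once more, or works directly with the Lipschitz truncations via Lemma \ref{lipshccompact}-type density; in any case $u_iv_i$ is Lipschitz (or smooth) with $\nabla(u_iv_i)=u_i\nabla v_i+v_i\nabla u_i$ classically. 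The uniform $L^\infty$-bounds together with the convergence $u_i\to u$, $\nabla u_i\to\nabla u$ (and similarly for $v$) in $L^p(\cdot,\phi,\psi)$ and $\mu$-a.e.\ (after passing to a subsequence) let me apply the dominated convergence theorem to conclude $u_iv_i\to uv$ and $u_i\nabla v_i+v_i\nabla u_i\to u\nabla v+v\nabla u$ in the respective norms; hence $(u_iv_i)$ is Cauchy in $W^{1,p}(M,\phi,\psi)$, so $uv\in W^{1,p}(M,\phi,\psi)$ with $\nabla(uv)=u\nabla v+v\nabla u$.

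The main obstacle I anticipate is the bookkeeping in \eqref{Neunirule}: one needs approximating sequences that converge in $W^{1,p}$, are uniformly bounded in $L^\infty$, \emph{and} are regular enough for the classical product rule to apply. Reconciling these three requirements forces a two-step approximation (density in $C^\infty$, then truncation, then re-smoothing — or invoking Lemma \ref{lipshccompact} to stay in the Lipschitz class), and one must check that the dominating functions in the passage to the limit genuinely lie in $L^1(M,\mu)$, which is where Assumption \ref{Basciasumptionfunction2}/\eqref{integrable2} and the $\mu$-finiteness from Lemma \ref{nullmeaure} come in. The level-set subtlety in \eqref{maxgrad} (behaviour of the gradients on $\{u=v\}$) is routine but should be mentioned for completeness.
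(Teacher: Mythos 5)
Your proposal is correct and follows essentially the same route as the paper: part (i) via the identity $\max\{u,v\}=\tfrac12(u+v+|u-v|)$ together with Corollary \ref{maxminlenew} and Remark \ref{smoothdivege}/\eqref{nablaproperty}, and part (ii) via smooth approximation (Theorem \ref{MandM0isequal}), truncation at a level dominating $\|u\|_\infty+\|v\|_\infty$, the Lipschitz-with-compact-support route through Lemma \ref{lipshccompact} (exactly the option you suggest instead of re-smoothing), and a dominated-convergence/Cauchy-sequence argument identifying $\nabla(uv)=u\nabla v+v\nabla u$. No gaps to flag.
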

\begin{proof}
  Since $\max\{a,b\}=\frac12(a+b+|a-b|)$,  \eqref{maxgrad} immediately follows by Corollary \ref{maxminlenew} and Remark \ref{smoothdivege}/\eqref{nablaproperty}.
In order to prove  \eqref{Neunirule}, by Theorem \ref{MandM0isequal} we have two sequences $\{u_i\},\{v_i\}$ in $C^\infty_0(M)$ such that $u_i\rightarrow u$ and $v_i\rightarrow v$ under $\|\cdot\|_{p}$. Let $u^*_i:= \max\{-\lambda,\min\{u_i,\lambda\}\}$ and $v^*_i:= \max\{-\lambda,\min\{v_i,\lambda\}\}$, where $\lambda$ is a constant satisfying $\|u\|_\infty+\|v\|_\infty\leq \lambda$. Thus,  $u^*_iv^*_i$ is a Lipschitz function with compact  support, which together with Lemma \ref{lipshccompact} and Theorem \ref{MandM0isequal} implies $u^*_iv^*_i\in W^{1,p}(M,\phi,\psi)$.
    Moreover,
    it follows from Corollary \ref{trunctionfun}  that $u^*_i,v^*_i\in W^{1,p}(M,\phi,\psi)$ and  $u^*_i\rightarrow u$, $v^*_i\rightarrow v$ under $\|\cdot\|_{p}$. Since  $\|u^*_i\|_\infty+\|v^*_i\|_\infty\leq 2\lambda$,
  it is easy to check $u^*_iv^*_i\rightarrow uv$ and $\nabla(u^*_iv^*_i)\rightarrow v\nabla u+u\nabla v$ under the corresponding $[\cdot]_{p,\mu}$-norms.
  Hence, $\{u^*_iv^*_i\}$ is a Cauchy sequence in $W^{1,p}(M,\phi,\psi)$. We conclude the proof by $u^*_iv^*_i\rightarrow uv$ in $\|\cdot\|_{p}$.
  \end{proof}

\begin{theorem}\label{pointswisestrongly}
Suppose that $\{u_i\}$ is a bounded sequence in $W^{1,p}(M,\phi,\psi)$ and $u_i\rightarrow u$ pointwise a.e.\ in ${M}$. Then $u\in W^{1,p}(M,\phi,\psi)$, $u_i\rightarrow u$ weakly in $L^p(M,\phi,\psi)$ and $\nabla u_i\rightarrow \nabla u$ weakly in $L^p(TM,\phi,\psi)$.
\end{theorem}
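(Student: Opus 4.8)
\textbf{Proof plan for Theorem \ref{pointswisestrongly}.} The plan is to combine the reflexivity of $W^{1,p}(M,\phi,\psi)$ established in Theorem \ref{weakconvergece} with a Mazur-type convex-combination argument to upgrade the weak limit into a genuine Sobolev function, and then to identify that limit with $u$ using the pointwise a.e.\ convergence hypothesis. First I would invoke Theorem \ref{weakconvergece}: since $\{u_i\}$ is bounded in $W^{1,p}(M,\phi,\psi)$, there is a subsequence $\{u_{i_k}\}$ and an element $\tilde u\in W^{1,p}(M,\phi,\psi)$ with $u_{i_k}\rightharpoonup \tilde u$ weakly in $L^p(M,\phi,\psi)$ and $\nabla u_{i_k}\rightharpoonup \nabla \tilde u$ weakly in $L^p(TM,\phi,\psi)$. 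The point is then to show $\tilde u=u$ $\mu$-a.e., which by Lemma \ref{nullmeaure} is the same as $\vol_g$-a.e.\ on $M$.

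To identify the limit, I would apply Mazur's lemma (Lemma \ref{Marzurlemma}) in the product space $L^p(M,\phi,\psi)\times L^p(TM,\phi,\psi)$ to the weakly convergent sequence $(u_{i_k},\nabla u_{i_k})\rightharpoonup(\tilde u,\nabla \tilde u)$, producing convex combinations $v_l=\sum_{k=l}^{m_l}a_{l,k}u_{i_k}$ with $v_l\to \tilde u$ and $\nabla v_l\to\nabla\tilde u$ strongly in the respective $[\cdot]_{p,\mu}$-norms. Passing to a further subsequence, $v_l\to\tilde u$ pointwise $\mu$-a.e., hence $\vol_g$-a.e. On the other hand, each $v_l$ is a convex combination of the tails $u_{i_k}$, $k\geq l$, and since $u_{i_k}\to u$ pointwise a.e.\ (the hypothesis applies to the subsequence as well), for a.e.\ fixed $x$ we have $u_{i_k}(x)\to u(x)$, so any convex combination of the tail values $\{u_{i_k}(x):k\geq l\}$ lies in an interval shrinking to $\{u(x)\}$ as $l\to\infty$; therefore $v_l(x)\to u(x)$ a.e. Comparing the two pointwise limits gives $\tilde u=u$ $\vol_g$-a.e., hence $u\in W^{1,p}(M,\phi,\psi)$ with $\nabla u=\nabla\tilde u$, and $u_{i_k}\rightharpoonup u$, $\nabla u_{i_k}\rightharpoonup\nabla u$ weakly.

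Finally, to promote the conclusion from the subsequence $\{u_{i_k}\}$ to the full sequence $\{u_i\}$, I would use the standard subsequence-of-subsequences argument: the weak limits of \emph{every} weakly convergent subsequence of $\{u_i\}$ (which exist by boundedness and reflexivity) must again equal $u$ by exactly the reasoning above, since the pointwise a.e.\ convergence $u_i\to u$ is inherited by every subsequence; as the weak topology on a reflexive space restricted to bounded sets is metrizable, a sequence all of whose subsequences have a further subsequence converging weakly to the same limit converges weakly to that limit. This yields $u_i\rightharpoonup u$ in $L^p(M,\phi,\psi)$ and $\nabla u_i\rightharpoonup\nabla u$ in $L^p(TM,\phi,\psi)$.

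The main obstacle I anticipate is the careful handling of the pointwise identification of the Mazur convex combinations with $u$: one must be attentive that the convex combinations $v_l$ involve \emph{only} indices $k\geq l$ (the tail), so that the elementary estimate $\min_{k\geq l}u_{i_k}(x)\leq v_l(x)\leq\max_{k\geq l}u_{i_k}(x)$ forces $v_l(x)\to u(x)$; this is where the convergence $u_i\to u$ is genuinely used, and it is the step where a naive argument could fail. Everything else is routine, relying on the reflexivity from Theorem \ref{weakconvergece}, Lemma \ref{nullmeaure} (to pass between $\mu$-null and $\vol_g$-null sets), and Lemma \ref{Marzurlemma}.
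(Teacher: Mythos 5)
Your proposal is correct and follows essentially the same route as the paper: extract a weakly convergent subsequence via Theorem \ref{weakconvergece}, identify the weak limit with $u$ through Mazur's lemma applied to tail convex combinations together with the pointwise a.e.\ convergence and Lemma \ref{nullmeaure}, and then upgrade to the full sequence by the subsequence-of-subsequences argument. The only cosmetic difference is your appeal to metrizability of the weak topology on bounded sets, which is unnecessary since the subsequence principle holds in any topological space.
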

\begin{proof}Choose an arbitrary subsequence $\{u_{i_k}\}$ of $\{u_i\}$.
According to Theorem \ref{weakconvergece},  there is a subsequence  of $\{u_{i_k}\}$, still denoted by $\{u_{i_k}\}$, and a function $v\in W^{1,p}(M,\phi,\psi)$ such that $u_{i_k}\rightarrow v$ weakly in $L^p(M,\phi,\psi)$ and $\nabla u_{i_k}\rightarrow \nabla v$ weakly in $L^p(TM,\phi,\psi)$. We now show that $v=u$ a.e.\ in ${M}$.

  Lemma \ref{Marzurlemma} together with the proof of Theorem \ref{weakconvergece} provides a sequence of convex combinations with
\[
u^*_{k}:=\sum_{j=k}^{m_k}a_{k,j}u_{i_j}\rightarrow v,\ \ \nabla u^*_k=\sum_{j=k}^{m_k}a_{k,j}\nabla u_{i_j}\rightarrow \nabla v, \text{ as }k\rightarrow \infty,
\]
in the corresponding $[\cdot]_{p,\mu}$-norms. Thus, the standard theory of $L^p$-space together with Lemma \ref{nullmeaure} yields a subsequence of $\{u^*_k\}$ converging  pointwise  to $v$ a.e. in ${M}$. On the other hand, since $\{u_j\}$ converges to $u$ point-wisely, we see that $\{u^*_k\}$ converges to $u$ point-wisely as well. Consequently,
  $v=u$ a.e. and therefore $\nabla u=\nabla v$ a.e. in ${M}$.

Therefore, we proved that for every subsequence of $\{u_i\}$, there is another subsequence $\{u_{i_k}\}$ such that  $u_{i_k}\rightarrow u$ weakly in $L^p(M,\phi,\psi)$ and $\nabla u_{i_k}\rightarrow \nabla u$ weakly in $L^p(TM,\phi,\psi)$, which indicates that the original sequence
$\{u_i\}$ also satisfies such a property.
\end{proof}

\begin{corollary}\label{maxepcro}
 Given $u\in W^{1,p}(M,\phi,\psi)$, we have $\max\{u-\epsilon,0\}\in  W^{1,p}(M,\phi,\psi)$ for any $\epsilon>0$.
\end{corollary}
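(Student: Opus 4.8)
\textbf{Proof proposal for Corollary \ref{maxepcro}.}
The statement asserts that for $u \in W^{1,p}(M,\phi,\psi)$ and $\epsilon > 0$ one has $\max\{u-\epsilon,0\} \in W^{1,p}(M,\phi,\psi)$. The plan is to reduce this to the results already established about truncations and about weak limits of bounded sequences that converge pointwise. First I would set $v := u - \epsilon$ regarded as a formal expression; note, however, that $\epsilon$ (as a constant function on a possibly noncompact or compact $M$) need not itself lie in $W^{1,p}(M,\phi,\psi)$, so I cannot simply invoke linearity plus Corollary \ref{maxminlenew} directly. Instead I would work with truncations of $u$ from Corollary \ref{trunctionfun}: for $\lambda > 0$ put $u_\lambda := \max\{-\lambda,\min\{u,\lambda\}\}$, which lies in $W^{1,p}(M,\phi,\psi)$ and satisfies $u_\lambda \to u$ in $W^{1,p}(M,\phi,\psi)$ as $\lambda \to \infty$.

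The key step is to produce, for fixed $\epsilon$, the function $\max\{u-\epsilon,0\}$ as a bounded-in-$W^{1,p}$ sequence converging pointwise a.e., so that Theorem \ref{pointswisestrongly} applies. Concretely, for each large $\lambda$ consider $w_\lambda := \max\{u_\lambda, \epsilon\}$; this is a max of two $W^{1,p}(M,\phi,\psi)$-elements — here $\epsilon$ appears only through the operation $\max\{\cdot,\epsilon\}$, which I would justify as follows. Writing $\max\{a,\epsilon\} = \epsilon + \max\{a-\epsilon,0\}$ is circular, so instead I would use that $\max\{u_\lambda,\epsilon\} - \epsilon = \tfrac12\big(u_\lambda - \epsilon + |u_\lambda - \epsilon|\big)$ only formally; the clean route is to apply Proposition \ref{usefulproweakdre}/\eqref{maxgrad} after first noting $\min\{u_\lambda,\epsilon\}$ and $u_\lambda$ are both in the space. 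Actually the cleanest is: $\max\{u-\epsilon,0\} = u_\lambda - \min\{u_\lambda,\epsilon\}$ on the set $\{u \le \lambda\}$, and for $\lambda > \epsilon$ one checks $\min\{u_\lambda,\epsilon\}=\min\{u,\epsilon\}$ is independent of $\lambda$ and lies in $W^{1,p}(M,\phi,\psi)$ by Corollary \ref{trunctionfun} (applied after rescaling by $\epsilon$). Then $u_\lambda - \min\{u_\lambda,\epsilon\} \in W^{1,p}(M,\phi,\psi)$ by Remark \ref{smoothdivege}/\eqref{nablaproperty}, these functions are uniformly bounded in $W^{1,p}(M,\phi,\psi)$ (their norms are controlled by $\|u_\lambda\|_p + \|\min\{u,\epsilon\}\|_p \le \|u\|_p + C$), and they converge pointwise a.e.\ to $\max\{u-\epsilon,0\}$ as $\lambda \to \infty$.

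With the bounded sequence $u_\lambda - \min\{u,\epsilon\}$ in hand, Theorem \ref{pointswisestrongly} yields that the pointwise limit $\max\{u-\epsilon,0\}$ belongs to $W^{1,p}(M,\phi,\psi)$ (with $\nabla \max\{u-\epsilon,0\} = \chi_{\{u>\epsilon\}}\nabla u$ as the weak gradient, though this identification is not needed for the statement). The main obstacle I anticipate is the bookkeeping around the constant $\epsilon$: since constants are not automatically in the weighted space, every step must phrase the relevant operation as one of $\min\{\cdot,\epsilon\}$ or $\max\{\cdot,\epsilon\}$ applied to genuine elements of $W^{1,p}(M,\phi,\psi)$, rather than as addition of the constant $\epsilon$. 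Once that is handled — using Corollary \ref{trunctionfun} to get $\min\{u,\epsilon\}\in W^{1,p}(M,\phi,\psi)$ (note $\min\{u,\epsilon\} = \epsilon\cdot\max\{0,\min\{u/\epsilon,1\}\}$ after the obvious scaling, and $\max\{0,\min\{\cdot,1\}\}$ preserves the space by that corollary) — the rest is a routine application of the uniform bound plus pointwise convergence machinery already developed in this appendix.
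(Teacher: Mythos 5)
Your overall strategy (truncate, build a norm-bounded sequence converging pointwise a.e., invoke Theorem \ref{pointswisestrongly}) is sound, but the one step you yourself identified as the crux is justified by a false identity: $\epsilon\cdot\max\{0,\min\{u/\epsilon,1\}\}$ equals $\min\{u_+,\epsilon\}$, not $\min\{u,\epsilon\}$; on $\{u<0\}$ the left-hand side vanishes while $\min\{u,\epsilon\}=u<0$. So as written you have not shown $\min\{u,\epsilon\}\in W^{1,p}(M,\phi,\psi)$, and without that the sequence $u_\lambda-\min\{u,\epsilon\}$ is not known to lie in the space. The gap is easily repaired: write $\min\{u,\epsilon\}=\min\{u_+,\epsilon\}-u_-$, where $u_-\in W^{1,p}(M,\phi,\psi)$ by Corollary \ref{maxminlenew} and $\min\{u_+,\epsilon\}=\epsilon\,\max\{0,\min\{u/\epsilon,1\}\}\in W^{1,p}(M,\phi,\psi)$ by Corollary \ref{trunctionfun} together with linearity (Remark \ref{smoothdivege}/\eqref{nablaproperty}). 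A second, harmless slip: $\min\{u_\lambda,\epsilon\}=\min\{u,\epsilon\}$ fails on $\{u<-\lambda\}$, where the left side is $-\lambda$; this does not matter because your final sequence uses $\min\{u,\epsilon\}$ itself, and $u_\lambda-\min\{u,\epsilon\}$ (say along $\lambda=j\in\mathbb{N}$, $j>\epsilon$) is bounded in $\|\cdot\|_p$ (boundedness of $\|u_\lambda\|_p$ follows from the convergence in Corollary \ref{trunctionfun}) and converges pointwise a.e.\ to $u-\min\{u,\epsilon\}=\max\{u-\epsilon,0\}$, so Theorem \ref{pointswisestrongly} applies. With these corrections your proof is complete.

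For comparison, the paper sidesteps the constant-$\epsilon$ issue entirely: by Theorem \ref{MandM0isequal} it chooses $u_i\in C^\infty_0(M)$ with $u_i\to u$ in $\|\cdot\|_p$ and (after a subsequence) pointwise a.e.; then $v_i:=\max\{u_i-\epsilon,0\}$ is Lipschitz with \emph{compact support} (since $u_i$ vanishes outside a compact set and $\epsilon>0$), hence $v_i\in W^{1,p}(M,\phi,\psi)$ by Lemma \ref{lipshccompact}, with $\|v_i\|_p\le\|u_i\|_p$ uniformly bounded, and Theorem \ref{pointswisestrongly} concludes. Your (repaired) route avoids the density theorem \ref{MandM0isequal} and runs purely on the lattice operations of the appendix, which is a legitimate alternative; the paper's route is shorter precisely because compact support of the approximants makes the subtraction of $\epsilon$ innocuous.
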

\begin{proof}
Due to Theorem \ref{MandM0isequal}, there exists a sequence $\{u_i\}$ in $C^\infty_0(M)$ such that $u_i\rightarrow u$ under $\|\cdot\|_{p}$ and $\|u_i\|_{p}\leq \|u\|_{p}+1$. By passing to a subsequence, we may assume that $\{u_i\}$ converges to $u$ point-wisely. Let $v_i:=\max\{u_i-\epsilon,0\}$. Since $v_i$ is a Lipschitz function with compact support, Lemma \ref{lipshccompact} yields $v_i\in W^{1,p}(M,\phi,\psi)$. Moreover, note that $\|v_i\|_{p}\leq \|u_i\|_{p}\leq  \|u\|_{p}+1$ and
$\{v_i\}$ converges to $\max\{u-\epsilon,0\}$ point-wisely, which  implies $\max\{u-\epsilon,0\}\in  W^{1,p}(M,\phi,\psi)$ through  Theorem \ref{pointswisestrongly}.
\end{proof}

Now we introduce the capacity with respect to $\|\cdot\|_{p}$.

\begin{definition}The {\it $p$-capacity} of a set $E\subset M$ is defined by
\begin{align*}
\Ca_{p}(E):=\inf_{u\in \mathscr{A}(E)}\|u\|^p_{p},
\end{align*}
where $\mathscr{A}(E)=\{ u\in {W^{1,p}}(M, \phi,\psi)\,:\,u\geq 1 \text{ on  a neighborhood of }E \}$.
In particular, set $\Ca_{p}(E):=\infty$ if $\mathscr{A}(E)=\emptyset$.
\end{definition}

Proceeding as in the proof of Kinnunen and Martio \cite[Remark 3.1]{KM} and using Corollary \ref{trunctionfun}, we have the following result.

\begin{proposition}
Let $\mathscr{A}'(E):=\{u\in {W^{1,p}}(M, \phi,\psi)\,:\,0\leq u\leq 1 \text{ and } u=1\text{ on  a neighborhood of }E \}$. Then
$
\Ca_{p}(E)=\inf_{u\in \mathscr{A}'(E)}\|u\|^p_{p}.
$
\end{proposition}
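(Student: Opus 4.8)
The plan is to show the two definitions of $p$-capacity agree by proving both inequalities. Since $\mathscr{A}'(E) \subseteq \mathscr{A}(E)$ trivially, we immediately get $\Ca_{p}(E) \leq \inf_{u \in \mathscr{A}'(E)} \|u\|_p^p$. The content lies in the reverse inequality: given any admissible $u \in \mathscr{A}(E)$, I must produce $v \in \mathscr{A}'(E)$ with $\|v\|_p^p \leq \|u\|_p^p$. If $\mathscr{A}(E) = \emptyset$ both sides are $+\infty$ and there is nothing to prove, so assume $\mathscr{A}(E) \neq \emptyset$.

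The key step is the truncation $v := \max\{0, \min\{u, 1\}\}$. First I would note that if $u \geq 1$ on a neighborhood $V$ of $E$, then $v = 1$ on $V$, so $v$ satisfies the boundary condition of $\mathscr{A}'(E)$, and clearly $0 \leq v \leq 1$ by construction; hence $v \in \mathscr{A}'(E)$ provided $v \in W^{1,p}(M,\phi,\psi)$. The membership $v \in W^{1,p}(M,\phi,\psi)$ is exactly the first assertion of Corollary \ref{trunctionfun}. It remains to check $\|v\|_p \leq \|u\|_p$. Writing out the norm \eqref{w-norm}, this amounts to the pointwise bounds $|v| \leq |u|$ and $|\nabla v| \leq |\nabla u|$ holding $\vol_g$-a.e. (equivalently $\mu$-a.e., by Lemma \ref{nullmeaure}). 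The bound $|v| \leq |u|$ is immediate from the definition of $v$. For the gradient, the standard chain rule for the weak gradient under composition with a Lipschitz function — which here is encoded through Corollaries \ref{maxminlenew}, \ref{trunctionfun} and Proposition \ref{usefulproweakdre} (the latter giving $|\nabla \max\{u,w\}| \leq \max\{|\nabla u|, |\nabla w|\}$) — yields that $\nabla v$ vanishes a.e. on the set $\{u \leq 0\} \cup \{u \geq 1\}$ and equals $\nabla u$ a.e. on $\{0 < u < 1\}$, so in particular $|\nabla v| \leq |\nabla u|$ a.e. Combining these, $\|v\|_p^p \leq \|u\|_p^p$, and taking the infimum over $u \in \mathscr{A}(E)$ gives $\inf_{v \in \mathscr{A}'(E)} \|v\|_p^p \leq \Ca_{p}(E)$.

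The main obstacle is purely bookkeeping with the weak gradient: one must be careful that the truncation operation commutes with the (weighted) weak derivative and that $\nabla v = 0$ a.e. on $\{u \geq 1\}$ rather than merely $|\nabla v| \leq |\nabla u|$ there — though for the norm estimate the weaker statement already suffices. All the needed closure and locality properties of $W^{1,p}(M,\phi,\psi)$ under truncation are available from the appendix results cited above, so once these are invoked the argument is a short two-line comparison of norms. This is precisely the adaptation of Kinnunen and Martio \cite[Remark 3.1]{KM} to the weighted setting, with Corollary \ref{trunctionfun} supplying the one nontrivial functional-analytic input.
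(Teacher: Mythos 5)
Your proof is correct and follows essentially the same route as the paper, which simply invokes the truncation argument of Kinnunen--Martio \cite[Remark 3.1]{KM} together with Corollary \ref{trunctionfun}: truncate $u\in\mathscr{A}(E)$ to $v=\max\{0,\min\{u,1\}\}$, check admissibility, and compare norms. Your remark that only the bound $|\nabla v|\le|\nabla u|$ a.e.\ (or even just $\|v\|_{p}\le\|u\|_{p}$, obtainable by truncating smooth approximants and using the weak lower semicontinuity from Theorem \ref{pointswisestrongly}) is needed is exactly the right way to sidestep any locality/chain-rule bookkeeping.
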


The following result collects the basic properties of  $\Ca_{p}$; since the proofs are standard, based on Theorem \ref{pointswisestrongly}, Proposition \ref{usefulproweakdre} and Lemma \ref{Marzurlemma}, we omit them.

\begin{theorem}\label{propoertyfocap}
The  $p$-capacity is an outer measure, that is
\begin{enumerate}[{\rm (i)}]
\item\label{Cap1} $\Ca_{p}(\emptyset)=0$;
\item\label{Cap2} $\Ca_{p}(E_1)\leq \Ca_{p}(E_2)$ if $E_1\subset E_2$;
\item\label{Cap3}
$\Ca_{p}\left( \bigcup_{i=1}^\infty E_i  \right)\leq \sum_{i=1}^\infty\Ca_{p}(E_i)$;
\item\label{Cap4} $\Ca_{p}(\cdot)$ is outer regular, i.e., $\Ca_{p}(E)=\inf\left\{ \Ca_{p}(O)\,:\, O \text{ is open with }E\subset O \right\}$.
\end{enumerate}
\end{theorem}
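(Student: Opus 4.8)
The plan is to check the four outer-measure properties separately, with countable subadditivity \eqref{Cap3} being the only nontrivial one; the remaining three follow formally from the definition of $\Ca_{p}$ and the monotonicity of the admissible classes $\mathscr{A}(E)$. For \eqref{Cap1}, the requirement defining $\mathscr{A}(\emptyset)$ is vacuous, so $0\in\mathscr{A}(\emptyset)$ and $\Ca_{p}(\emptyset)\le\|0\|_{p}^{p}=0$. For \eqref{Cap2}, if $E_1\subset E_2$ and $u\ge1$ on an open set containing $E_2$, then that same open set contains $E_1$ and $u\ge1$ there; hence $\mathscr{A}(E_2)\subset\mathscr{A}(E_1)$, and passing to infima of $\|\cdot\|_{p}^{p}$ reverses the inclusion to give $\Ca_{p}(E_1)\le\Ca_{p}(E_2)$. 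For \eqref{Cap4}, the inequality ``$\le$'' against the open hull is immediate from \eqref{Cap2}; for ``$\ge$'' (assuming the hull is finite, otherwise there is nothing to prove), fix $\e>0$, pick $u\in\mathscr{A}(E)$ with $\|u\|_{p}^{p}\le\Ca_{p}(E)+\e$, let $O\supset E$ be an open set on which $u\ge1$, observe $u\in\mathscr{A}(O)$, so $\Ca_{p}(O)\le\|u\|_{p}^{p}\le\Ca_{p}(E)+\e$, and let $\e\to0$.

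The substance is \eqref{Cap3}. First I would reduce to competitors with values in $[0,1]$ via the proposition preceding the theorem (which rests on Corollary \ref{trunctionfun}); one may also assume $\sum_{i}\Ca_{p}(E_i)<+\infty$, since otherwise the claim is vacuous. Given $\e>0$, choose $u_i\in\mathscr{A}'(E_i)$ with $0\le u_i\le1$ and $\|u_i\|_{p}^{p}\le\Ca_{p}(E_i)+2^{-i}\e$, and set $v_k:=\max\{u_1,\dots,u_k\}$. Iterating Proposition \ref{usefulproweakdre}/\eqref{maxgrad} shows $v_k\in W^{1,p}(M,\phi,\psi)$ with $|\nabla v_k|\le\max_{1\le i\le k}|\nabla u_i|$ $\mu$-a.e.; since also $v_k=\max\{u_1,\dots,u_k\}$ with all $u_i\ge0$ one has $v_k^{p}\le\sum_{i=1}^{k}u_i^{p}$ pointwise, whence $\|v_k\|_{p}^{p}\le\sum_{i=1}^{k}\|u_i\|_{p}^{p}\le\sum_{i=1}^{\infty}\Ca_{p}(E_i)+\e$. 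Thus $\{v_k\}$ is bounded in $W^{1,p}(M,\phi,\psi)$; it is nondecreasing in $k$ and converges pointwise $\vol_g$-a.e. (equivalently $\mu$-a.e., by Lemma \ref{nullmeaure}) to $v:=\sup_{i}u_i\in[0,1]$. By Theorem \ref{pointswisestrongly}, $v\in W^{1,p}(M,\phi,\psi)$ with $v_k\rightharpoonup v$ and $\nabla v_k\rightharpoonup\nabla v$ weakly in the respective weighted $L^{p}$ spaces, and weak lower semicontinuity of their norms gives $\|v\|_{p}^{p}\le\liminf_k\|v_k\|_{p}^{p}\le\sum_{i=1}^{\infty}\Ca_{p}(E_i)+\e$. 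Finally each $u_i\equiv1$ on an open neighbourhood of $E_i$, so $v\equiv1$ on their union, an open neighbourhood of $\bigcup_iE_i$; hence $v\in\mathscr{A}\big(\bigcup_iE_i\big)$ and $\Ca_{p}\big(\bigcup_iE_i\big)\le\|v\|_{p}^{p}\le\sum_i\Ca_{p}(E_i)+\e$, and letting $\e\to0$ concludes \eqref{Cap3}.

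The main obstacle is the limiting step $v_k\to v$: one needs both that the truncated maxima remain in the weighted Sobolev space with a pointwise-controlled gradient — this is precisely Proposition \ref{usefulproweakdre}/\eqref{maxgrad}, itself resting on Corollary \ref{maxminlenew} and the reflexivity statement Theorem \ref{weakconvergece} — and that boundedness together with pointwise a.e.\ convergence upgrades to weak convergence of the functions and their gradients, which is the content of Theorem \ref{pointswisestrongly} (where Mazur's lemma, Lemma \ref{Marzurlemma}, intervenes behind the scenes). Once these are available, the remainder of \eqref{Cap3} is bookkeeping with the $2^{-i}\e$ splitting, and the items \eqref{Cap1}, \eqref{Cap2}, \eqref{Cap4} require nothing beyond the definition of $\Ca_{p}$.
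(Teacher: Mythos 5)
Your proof is correct and follows exactly the route the paper intends: the paper omits the argument as standard, citing Theorem \ref{pointswisestrongly}, Proposition \ref{usefulproweakdre} and Mazur's lemma as the ingredients, and your treatment of \eqref{Cap3} via truncated maxima, the gradient bound of Proposition \ref{usefulproweakdre}/\eqref{maxgrad}, and weak lower semicontinuity after Theorem \ref{pointswisestrongly} is precisely that standard (Kinnunen--Martio-type) argument. The only cosmetic slip is in \eqref{Cap4}, where the trivial case is $\Ca_{p}(E)=+\infty$ (so that a near-optimal $u\in\mathscr{A}(E)$ exists otherwise), not the finiteness of the open hull; with that rewording the proof stands as written.
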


\begin{definition}
We say that a property holds {\it $p$-quasieverywhere in a set $E$}, if it holds except for a subset of $p$-capacity zero in $E$.
In addition,
a function $u:M\rightarrow [-\infty,+\infty]$ is {\it $p$-quasicontinuous} if for any $\epsilon>0$, there is a set $E$ such that $\Ca_{p}(E)<\epsilon$ and the restriction  $u|_{M\backslash E}$ is continuous.
\end{definition}

A suitable modification of the argument from Chen et al. \cite[Theorem A.16]{CLZ}, based on the the above notions, provides the following result:

\begin{theorem}\label{maintheoreminaapendix}
If $u\in W^{1,p}(M,\phi,\psi)$ is $p$-quasicontinuous and $u=0$ $p$-quasieverywhere in $M\backslash {\Omega_\Sigma}$, then ${u|_{{\Omega_\Sigma}}}\in W^{1,p}_0(\Omega_\Sigma,\phi,\psi)$.
\end{theorem}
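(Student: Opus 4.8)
\textbf{Proof proposal for Theorem \ref{maintheoreminaapendix}.}
The plan is to mimic the classical Euclidean argument (cf. Kinnunen--Martio and \cite[Theorem A.16]{CLZ}) adapted to the weighted setting governed by Assumption \ref{Basciasumptionfunction2}. The key reduction is: since $u$ is $p$-quasicontinuous with $u=0$ $p$-quasieverywhere on $M\backslash\Omega_\Sigma$, and $p$-capacity is an outer measure which dominates $\mu$-measure on $\Sigma\cup\Omega$ (Theorem \ref{propoertyfocap} together with Lemma \ref{nullmeaure}), it suffices to produce, for each $\iota>0$, a function $u_\iota\in W^{1,p}(M,\phi,\psi)$ supported in a set well inside $\Omega_\Sigma$ and converging to $u$ in $\|\cdot\|_{p}$; then Corollary \ref{compwp} (equivalently Theorem \ref{MandM0isequal}) finishes the job because a $W^{1,p}(M,\phi,\psi)$-function compactly supported in $\Omega_\Sigma$ lies in $W^{1,p}_0(\Omega_\Sigma,\phi,\psi)$.

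First I would reduce to the case $u\geq 0$ and bounded, using Corollary \ref{maxminlenew} (so $u=u_+-u_-$, each piece still quasicontinuous and vanishing quasieverywhere off $\Omega_\Sigma$) and Corollary \ref{trunctionfun} (truncation $u_\lambda\to u$ in $W^{1,p}(M,\phi,\psi)$ as $\lambda\to\infty$, with truncations of quasicontinuous functions again quasicontinuous). Next, for fixed $\epsilon>0$, set $v_\epsilon:=\max\{u-\epsilon,0\}$, which lies in $W^{1,p}(M,\phi,\psi)$ by Corollary \ref{maxepcro}; the point is that $v_\epsilon\to u$ in $\|\cdot\|_{p}$ as $\epsilon\to 0^+$ (dominated convergence via Assumption \ref{Basciasumptionfunction2}/\eqref{integrable2} and Theorem \ref{pointswisestrongly}), while $v_\epsilon$ has the crucial advantage that $\{v_\epsilon>0\}$ is, up to a set of $p$-capacity zero, contained in $\{u>\epsilon\}\subset\Omega_\Sigma$. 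Then I would use $p$-quasicontinuity of $u$: choose an open set $O$ with $\Ca_p(O)$ small on which the "bad" behaviour of $u$ is confined, so that $\{v_\epsilon>0\}\setminus O$ is a relatively closed subset of $\Omega_\Sigma$, and multiply $v_\epsilon$ by a suitable cutoff $\eta$ obtained from an admissible function for $\Ca_p(O)$ (using Proposition \ref{usefulproweakdre}/\eqref{Neunirule} to control $\nabla(\eta v_\epsilon)$ and the definition of $p$-capacity to make the error term $\|\eta v_\epsilon - v_\epsilon\|_p$ small). This yields functions in $W^{1,p}(M,\phi,\psi)$ supported in $\Omega_\Sigma$ and approximating $u$, as required.

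The main obstacle I expect is the capacitary bookkeeping at the interface $\partial\Omega_\Sigma$ (which includes $\Sigma$ in the codimension-$\geq 2$ case and $\partial\Omega$ in the hypersurface case): one must verify that "$u=0$ $p$-quasieverywhere on $M\backslash\Omega_\Sigma$" genuinely lets one subtract off a neighbourhood of the bad set without losing $W^{1,p}$-membership, and that the cutoff construction respects the weight's possible degeneracy/blow-up near $\Sigma$. This is exactly where Assumption \ref{Basciasumptionfunction2}/\eqref{localboundedofnu} and the divergence identity \eqref{newdivelemm}--\eqref{mudivergence} are needed, ensuring the capacity $\Ca_p$ is well-behaved (an outer measure with the regularity properties of Theorem \ref{propoertyfocap}) and that quasicontinuous representatives interact correctly with the truncation and product operations. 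The remaining steps --- extracting the cutoff from the near-extremal of $\Ca_p(O)$, passing to convex combinations via Mazur's lemma (Lemma \ref{Marzurlemma}) to upgrade weak convergence to strong, and the final diagonal argument over $\epsilon\to 0$ and the capacity parameter --- are routine given the toolbox already assembled in this appendix.
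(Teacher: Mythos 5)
Your outline follows essentially the route the paper itself intends: the paper gives no detailed argument for Theorem \ref{maintheoreminaapendix}, declaring it a ``suitable modification'' of Chen--Leung--Zhao \cite{CLZ} (Theorem A.16), which is exactly the Kinnunen--Martio-type scheme you describe (reduce to bounded nonnegative $u$ via Corollaries \ref{maxminlenew} and \ref{trunctionfun}, pass to $v_\epsilon=\max\{u-\epsilon,0\}$, use quasicontinuity to confine the exceptional set to an open set $O$ of small capacity, cut off with a capacitary function, and conclude via Corollary \ref{compwp}). One cosmetic remark: the assertion that capacity ``dominates $\mu$-measure'' is not what Theorem \ref{propoertyfocap} states; what you actually need is only that $\Ca_p$-null sets are $\mu$-null, which follows directly from the definition of admissible functions together with Lemma \ref{nullmeaure}.

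Two steps in your sketch need tightening. First, the cutoff must be $1-\eta$, with $\eta$ a (truncated) admissible function for $\Ca_p(O)$, so that $\eta=1$ on a neighbourhood of $O$ and $(1-\eta)v_\epsilon$ vanishes there; the error is then $\|\eta v_\epsilon\|_{p}$, controlled by $\Ca_p(O)$ and the boundedness of $v_\epsilon$ through Proposition \ref{usefulproweakdre}/(ii) — as written (``make $\|\eta v_\epsilon-v_\epsilon\|_p$ small'' with $\eta$ itself admissible) the cutoff points the wrong way. Second, and more substantively, Corollary \ref{compwp} requires \emph{compact} support in $\Omega_\Sigma$, whereas your construction only yields support in the closed set $\{x\in M\setminus O:\ u(x)\ge \epsilon\}\subset \Omega_\Sigma$, which need not be compact when $M$ is noncompact. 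You need an additional exhaustion step before invoking Corollary \ref{compwp}: multiply by Lipschitz cutoffs $\chi_R$ of metric balls with $|\nabla \chi_R|$ uniformly bounded, note $\chi_R(1-\eta)v_\epsilon\in W^{1,p}(M,\phi,\psi)$ by Lemma \ref{lipshccompact} and Proposition \ref{usefulproweakdre}, and use dominated convergence to get $\chi_R(1-\eta)v_\epsilon\to(1-\eta)v_\epsilon$ in $\|\cdot\|_{p}$ as $R\to\infty$. With these repairs your argument goes through and coincides with the proof the paper points to.
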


\textbf{Declarations.} 

\textit{Conflict of interest.} The authors state that there is no conflict of interest.

\end{document}